\DeclareMathOperator{\im}{im}
\DeclareMathOperator{\re}{Re}
\DeclareMathOperator{\End}{End}
\DeclareMathOperator{\ev}{ev}
\DeclareMathOperator{\Vect}{Vect}
\DeclareMathOperator{\Rm}{Rm}
\DeclareMathOperator{\Ric}{Ric}
\DeclareMathOperator{\coker}{coker}
\DeclareMathOperator{\ind}{ind}
\DeclareMathOperator{\Diff}{Diff}
\DeclareMathOperator{\SO}{SO}
\DeclareMathOperator{\U}{U}
\DeclareMathOperator{\PSL}{PSL}
\DeclareMathOperator{\Isom}{Isom}
\newcommand{\Q}{\mathbb Q}
\newcommand{\R}{\mathbb R}
\newcommand{\C}{\mathbb C}
\newcommand{\Z}{\mathbb Z}
\newcommand{\N}{\mathbb N}
\newcommand{\J}{\mathscr J}
\newcommand{\D}{\mathscr D}
\newcommand{\F}{\mathscr F}
\newcommand{\CC}{\mathscr C}
\newcommand{\G}{\mathscr G}
\newcommand{\X}{\mathscr X}
\newcommand{\A}{\mathscr A}
\newcommand{\B}{\mathscr B}
\newcommand{\V}{\mathscr V}
\newcommand{\W}{\mathscr W}
\newcommand{\T}{\mathscr T}
\newcommand{\M}{\mathscr M}
\newcommand{\E}{\mathscr E}
\newcommand{\z}[1]{\tensor[^0]{#1}{}}
\newcommand{\e}[1]{\tensor[^e]{#1}{}}
\newcommand{\JES}{J_{\mathrm{ES}}}
\newcommand{\JAHS}{J_{\mathrm{AHS}}}
\newcommand{\diff}{\mathrm{d}}
\newcommand{\del}{\partial}
\newcommand{\delb}{\bar{\del}}
\newcommand{\dvol}{\mathrm{dvol}}
\newcommand{\so}{\mathfrak{so}}
\newcommand{\g}{\mathfrak{g}}
\renewcommand{\P}{\mathbb P}
\renewcommand{\H}{\mathbb H}
\renewcommand{\O}{\mathscr O}
\renewcommand{\tilde}{\widetilde}
\renewcommand{\leq}{\leqslant}
\renewcommand{\geq}{\geqslant}
\renewcommand{\L}{\mathscr L}
\renewcommand{\Re}{\mathop{\mathrm{Re}}}
\theoremstyle{plain}
	\newtheorem{theorem}{Theorem}
	\newtheorem{proposition}[theorem]{Proposition}
	\newtheorem{lemma}[theorem]{Lemma}
	\newtheorem{corollary}[theorem]{Corollary}
\theoremstyle{definition}
	\newtheorem{definition}[theorem]{Definition}
	\newtheorem{remark}[theorem]{Remark}
\theoremstyle{plain}
	\newtheorem*{theorem*}{Theorem}
	\newtheorem*{proposition*}{Proposition}
	\newtheorem*{lemma*}{Lemma}
	\newtheorem*{corollary*}{Corollary}
	\newtheorem*{conjecture*}{Conjecture}
\theoremstyle{definition}
	\newtheorem*{definition*}{Definition}
	\newtheorem*{remark*}{Remark}
	\newtheorem*{remarks*}{Remarks}
\newcommand*{\defeq}{\mathrel{\vcenter{\baselineskip0.5ex \lineskiplimit0pt
                     \hbox{\scriptsize.}\hbox{\scriptsize.}}}%
                     =}
\title{Knots, minimal surfaces and $J$-holomorphic curves}
\author{Joel Fine\\ Département de mathématiques\\ Université libre de Bruxelles\\ Belgique}
\date{ }
\numberwithin{equation}{section}
\numberwithin{theorem}{section}
\begin{document}

\maketitle

\vfill

\abstract{
Let $K$ be a knot in the 3-sphere, viewed as the ideal boundary of hyperbolic 4-space $\H^4$. We prove that the number of minimal discs in $\H^4$ with ideal boundary $K$ is a knot invariant. I.e.\ the number is finite and doesn't change under isotopies of $K$. In fact this gives a family of knot invariants, indexed by an integer describing the extrinsic topology of how the disc sits in $\H^4$. These invariants can be seen as Gromov--Witten invariants counting $J$-holomorphic discs in the twistor space $Z$ of $\H^4$. Whilst Gromov--Witten theory suggests the general scheme for defining the invariants, there are substantial differences in how this must be carried out in our situation. These are due to the fact that the geometry of both $\H^4$ and $Z$ becomes singular at infinity, and so the $J$-holomorphic curve equation is degenerate, rather than elliptic, at the boundary. This means that both the Fredholm and compactness arguments involve completely new features.
}

\vfill
\vfill

\newpage

\vfill

\tableofcontents

\vfill

\newpage

\section{Introduction}\label{introduction-section}

\subsection{Overview and discussion}

Let $K\subset \del \H^4 \cong S^3$ be a knot in the ideal boundary of hyperbolic 4-space. The main result of this paper is that \emph{the number of minimal discs in $\H^4$ with ideal boundary $K$ is a knot invariant}. In other words, the number of minimal discs is finite and it doesn't change under isotopies of $K$. The phrase ``number of minimal discs'' needs to be carefully interpreted here. The knot $K$ must be generic in a certain sense and the number of minimal discs is a signed count. A more careful statement is given in \S\ref{statement} below; for the full details, see Defintion~\ref{definition-of-disc-invariants} and Theorem~\ref{invariance-of-disc-invariants}.

This article also lays the groundwork for counting minimal surfaces with more complicated topology, with the ultimate goal of defining a family of integer invariants $n_{\chi,d}(K)$ of an oriented link $K$, indexed by a pair of integers, $\chi, d$ where we count compatibly oriented minimal fillings of $K$, with Euler characteristic $\chi$ and ``twistorial self-linking number'' $d$. This second index $d$ describes the topology of how the surface sits in $\H^4$ (see \S\ref{self-linking}). There are, however,  difficulties to be overcome to rigorously define these counts, which we describe in~\S\ref{failure-properness}.

We follow the broad strategy employed in the definition of other enumerative invariants in Fredholm differential topology. Indeed these ``minimal surface link invariants'' can be seen as a kind of Gromov--Witten invariant of the twistor space $Z$, which is an infinite-volume 6-dimensional symplectic manifold fibring over $\H^4$. However, whilst the general set-up is familiar, there are substantial differences in the details compared with ``standard'' Gromov--Witten invariants. These are all ultimately due to the fact that the geometry of both $\H^4$ and $Z$ becomes singular at infinity and so the $J$-holomorphic equation for a map $u\colon \Sigma \to Z$ is \emph{degenerate}, rather than elliptic, along $\del \Sigma$. This means that both the Fredholm theory and compactness arguments involve completely new features. In particular, the proof of compactness near infinity for sequences of $J$-holomorphic curves in $Z$ relies entirely on the properties of minimal surfaces in $\H^4$. 

From  a technical perspective, this article treats all the new difficulties of counting $J$-holomorphic curves which are due purely to the singular geometry of $Z$ near infinity. We give a brief outline of the new techniques required in \S\ref{outline-proof}. 
 This is sufficient to count minimal discs. When the domains of the curves have more complicated topology, one encounters the usual problems of bubbling and nodes which always arise in defining Gromov--Witten invariants. We show that these problems can only occur in a compact region of $Z$ (see Theorems~\ref{convergence-near-infinity} and~\ref{convergence-interior}) and so one can hopefully deal with them by applying more standard methods. See the discussion of \S\ref{failure-properness} for more details.

It should be stressed at this point that the idea of counting minimal surfaces goes back at least to Tomi and Tromba in 1978 \cite{Tomi-Tromba}, although to the best of our knowledge, the majority of subsequent work in this direction has involved minimal \emph{hypersurfaces}. We give a brief discussion of how some of the known results relate to ours in \S\ref{context-ms}.   

A natural hope is to compare these counts of minimal surfaces with traditional link invariants. In this direction, the twistorial picture reveals a very intriguing potential connection between counts of minimal surfaces in $\H^4$ and the HOMFLYPT polynomial. As we briefly explain in \S\ref{context-skeins-on-branes}, comparing with the work of Ekholm and Shende on open Gromov--Witten invariants in the conifold \cite{Ekholm-Shende} leads to the speculation that \emph{the counts of minimal surfaces in $\H^4$ with ideal boundary $K$ may be closely related to the coefficients of the HOMFLYPT polynomial of $K$.}


Going beyond minimal surfaces in $\H^4$, this work can be seen as the first step in a larger programme to define link invariants. Let $K \subset Y$ be a link in a compact 3-manifold. Given an asymptotically hyperbolic 4-manifold $M$ with ideal boundary $Y$, we would like to define an invariant of $K$ by counting minimal surfaces in $M$ which are asymptotic to $K$. When $M$ is negatively curved, the conformal infinity is totally umbilic, and we are considering knots in $Y$ rather than links, the techniques developed here can most likely be adapted to define an invariant counting minimal discs. For counting minimal surfaces with more complicated topology the same difficulties described in \S\ref{failure-properness} arise.

Another potential source of link invariants comes from the $J$-holomorphic curve perspective. Let $X$ be a compact 6-manifold with boundary $\del X \cong S^2 \times Y$. Suppose that there is a certain type of symplectic form on the interior of $X$, which blows up at $\del X$ transverse to the $S^2$ fibres. The singularity should be modelled on the asymptotic behaviour of the symplectic structure on the hyperbolic twistor space $Z \to \H^4$. Given a link $K \subset Y$, we would now like to define an invariant of $K$ by counting $J$-holomorphic curves in $X$ whose boundary curves in $\del X$ project down to $K$. The techniques of \S\ref{Fredholm-theory} can be modified to show that this is a Fredholm problem, provided one uses an almost complex structure $J$ which also has the correct type of singularity near $\del X$. However, the same obstacles discussed in \S\ref{failure-properness} must also be overcome in the definition of link invariants in this context. 



The rest of the article is as follows. \S\ref{twistors} gives the geometric background on twistor theory, and derives the asymptotic behaviour of $J$-holomorphic curves in the twistor space of $\H^4$. \S\ref{Fredholm-theory} lays out the Fredholm theory for the degenerate $J$-holomorphic curve equation. \S\ref{compactness-at-infinity} and \S\ref{convergence-interior-section} prove compactness results for sequences of minimal surfaces in $\H^4$ whose ideal boundaries converge. Up to here we work with minimal surfaces of genus $g$ filling links. \S\ref{the-invariants} focuses on minimal discs filling knots, putting all the pieces together in Defintion~\ref{definition-of-disc-invariants}, of the knot invariant given by counting minimal discs, and Theorem~\ref{invariance-of-disc-invariants} which proves that this count really is invariant under isotopy of the boundary knot.

Before proceeding, we say a word about our notation. We use a bar to denote a compact manifold with boundary, so $\overline{\Sigma}$ is a compact surface with $\del \Sigma \neq \emptyset$. We write the interior by omitting the bar, $\mathrm{Int}(\overline{\Sigma}) = \Sigma$. This is standard in several texts on analysis over manifolds with boundary, but conflicts with notation in complex geometry where the bar denotes the conjugate Riemann surface or conjugate complex vector bundle. To avoid this conflict, we do not use bars for conjugation, instead writing for example $T^*\Sigma^{0,1}$ for the anti-holomorphic cotangent bundle of a Riemann surface. Finally, by $\overline{\H}^4$ we mean the compactification of $\H^4$ given by adjoining the ideal boundary. Note that the conformal structure of $\H^4$ extends to $\overline{\H}^4$, which is conformally diffeomorphic to a closed 4-ball with the Euclidean metric.

\subsection{Statement of main result}\label{statement}

Let $\overline{\Sigma}$ be a compact connected surface of genus $g$, with boundary having $c>0$ components.  We write
\[
\M_{g,c} = \frac{\{ (f,j) \}}{\sim}
\] 
where
\begin{itemize}
\item $j$ is a complex structure on $\overline{\Sigma}$.
\item $f \colon (\overline{\Sigma},j) \to \overline{\H}^4$ is $C^{2,\alpha}$ and conformal. 

We allow critical points, where $\diff f = 0$; such maps are sometimes called ``weakly conformal''.
\item At the boundary, $f|_{\del \Sigma} \colon \del \Sigma \to \del \H^4$ is an embedding with image a $C^{2,\alpha}$ link. 
\item $f(\overline\Sigma) \cap \del \H^4 =f(\del \Sigma)$, with this intersection being transverse.
\item On the interior, $f|_\Sigma \colon \Sigma \to \H^4$ is harmonic.
\item $(f,j) \sim (f',j')$ if they are related by a diffeomorphism of $\overline{\Sigma}$.
\end{itemize}
The fact that $f$ is conformal and harmonic on $\Sigma$ means that $f(\Sigma)$ is a minimal surface, potentially immersed and with branch points where $\diff f=0$. (The precise definition of $\M_{g,c}$ that we use in the text has technical differences---see Definition~\ref{definition-moduli-space}---but we elide these points in the introduction.) 

Denote by $\L_c$ the space of $C^{2,\alpha}$ links in $\del \H^3 \cong S^3$ with $c$ components. Write \[\beta \colon \M_{g,c} \to \L_c\] for the map which sends the minimal surface $[f,j]$ to its ideal boundary $f(\del \Sigma)$. Our main results are as follows:
\begin{enumerate}
\item $\M_{g,c}$ is a smooth Banach manifold (Theorem~\ref{smooth-moduli-space}).
\item $\beta \colon \M_{g,c} \to \L_c$ is Fredholm and has index~0 (Theorem~\ref{boundary-map-Fredholm}).
\item There is a distinguished trivialisation of the determinant line bundle $\det (\diff \beta)$ over the region of $\M_{g,c}$ in which $\dim (\coker \diff \beta) \leq 1$ (Proposition~\ref{orientations}).
\item Given a sequence $[f_n,j_n] \in \M_{g,c}$ for which $\beta[f_n,j_n]$ converges to a link $K \in \L_c$, a subsequence of $[f_n,j_n]$ converges to a possibly nodal minimal surface with ideal boundary equal to $K$ (Theorem~\ref{convergence-interior}). 

The potential appearance of nodes prevents $\beta$ from being proper.
\item When $g=0$ and $c=1$, so we are considering minimal discs, no nodes occur and $\beta$ is proper (Theorem~\ref{properness-for-discs}).
\end{enumerate}

From here, the minimal disc invariant is defined as follows. Let $K\in \L_1$ be a regular value of $\beta \colon \M_{0,1} \to \L_1$. The above results imply that $\beta^{-1}(K)$ is a compact oriented 0-manifold; we define $n(K)$ to be the signed count of the number of points it contains. Suppose that $K_0,K_1 \in \L_1$ are regular values of $\beta$ which are connected by a path in $\L_1$, i.e.\ they represent the same topological knot. We can also chose this path to be transverse to $\beta$; then the above results imply that its preimage in $\M_{0,1}$ is a compact oriented 1-cobordism between $\beta^{-1}(K_0)$ and $\beta^{-1}(K_1)$, proving $n(K_0) = n(K_1)$. In other words, $n(K)$ is a genuine knot invariant. 

In fact, there is a refinement of $n(K)$. There is an integer-valued topological invariant of the maps to $\H^4$ which appear here, which we call the ``self-linking number''. This is constant on the connected components of $\M_{g,c}$ and so one can try and count minimal surfaces with self-linking number equal to $d \in \Z$, giving a family of link invariants $n_{d}(K)$ parametrised by $d \in \Z$. When the surface is immersed, $d$ is defined as follows. Let $f \colon \overline{\Sigma} \to \overline{\H}^4$ be an immersed minimal surface with embedded ideal boundary $K$. Choose a nowhere vanishing section $v$ of the normal bundle to $\overline{\Sigma}$. It is a simple consequence of minimality that $\overline{\Sigma}$ meets $\del_\infty \H^4$ at right-angles and so $v|_K$ is a normal framing of $K$. We define $d$ to be the self-linking number of this framing. It turns out to depend only on $f$, and not the choice of $v$. See~\S\ref{self-linking} for the full discussion.

We close this description of the minimal disc invariant with an application: \emph{given an unknotted $C^{2,\alpha}$ circle $U \subset \del \H^4$, there exists a minimal disc in $\H^4$ with boundary $U$ and self-linking number $0$.} To prove this  consider a great circle $C \subset \del\H^4$, bounding a totally geodesic $\H^2 \subset \H^4$ the maximum principle quickly shows that this $\H^2$ is the \emph{only} minimal surface filling $C$. It follows fairly easily that $n_0(C)=1$. Since $U$ is isotopic to $C$, $n_0(U)=1$ also. If there were no minimal discs filling $U$ with $d=0$, then $U$ would vacuously be a regular value of $\beta$ and so $n_0(U)=0$, a contradiction. See Theorem~\ref{unknot-always-filled} for the details. 

We remark that the existence of a minimal disc filling $U$ also follows from the general theory of the Plateau problem, but that this approach gives no information on the self-linking number $d$. Put briefly, in the half-space model $\H^4 = \{ (x,y_1,y_2,y_3) : x>0\}$, consider the translate $U_\epsilon$ of $U$ which lies in the horosphere $x=\epsilon$. The solution of the Plateau problem in $x \geq \epsilon$ gives a minimal disc filling $U_\epsilon$. (Note there is no control over the self-linking number of these discs.) Finally take the limit as $\epsilon \to 0$ to find a disc filling $U$. This idea goes back to Anderson's seminal work on the asymptotic Plateau problem in hyperbolic space~\cite{Anderson}.


\subsection{The failure of properness of \texorpdfstring{$\beta$}{beta} in general}\label{failure-properness}

We next discuss in more detail the properness of the boundary value map $\beta$, with an eye on how to count other minimal surfaces besides discs. This will also bring in to play the twistorial point of view. 

Let $[f_n,j_n] \in \M_{g,c}$ be a sequence of minimal surfaces whose ideal boundaries converge. Theorem~\ref{convergence-interior} shows that the only way in which properness can fail is if the complex structures $j_n$ themselves degenerate, i.e.\ they converge modulo diffeomorphisms to a \emph{nodal} Riemann surface with boundary. One important fact is that this limiting domain is constrained by the existence of the maps $f_n$: \emph{no nodes can occur on the boundary} (Corollary~\ref{Gamma-bounded-length}). Theorem~\ref{convergence-interior} then shows that the maps $f_n$ themselves converge to a nodal conformal harmonic map, i.e.\ a finite collection of conformal harmonic maps $h_j$ defined on Riemann surfaces $\overline{\Sigma}_j$ each with non-empty boundary (components of the limiting nodal Riemann surface). The $h_j$ obey the same conditions as those in the definition of $\M_{g,c}$, and they agree on those interior points of the $\Sigma_j$ which are identified in the nodes. 

The appearance of nodes prevents $\beta$ from being proper. There is an instructive concrete example of this behaviour, due to Manh~Tien Nguyen \cite{Nguyen}.  Let $H_0 \subset \del \H^4$ be the standard Hopf link, made of two great circles $C_1, C_2$ which are the boundaries of a pair of totally geodesic copies of $\H^2_1, \H^2_2 \subset \H^4$ of $\H^2$ which meet orthogonally in a single point. Nguyen has shown that the singular surface $\H^2_1 \cup \H^2_2$ is the \emph{only} minimal surface whose ideal boundary is $H_0$. He also constructs a family $H_t \subset \del \H^4$ of links isotopic to $H_0$, parametrised by $t \in \R^2$, each of which are filled by a minimal annulus. As $t\to 0$, $H_t$ converges to the standard Hopf link $H_0$; meanwhile the annuli degenerate and converge in the limit to the nodal surface $\H^2_1 \cup \H^2_2$. So $\beta$ fails to be proper over the family $H_t$. 

We can still hope to count minimal annuli, however. Notice that in Nguyen's example, \emph{the singularity occurs in codimension~2} (because $t\in \R^2$). We call a link $K \in \L_c$ \emph{bad} if it is the limit of a sequence $K_n$ of the form $K_n = \beta[f_n,j_n]$ where $[f_n,j_n] \in \M_{g,c}$ and the $j_n$ degenerate (modulo diffeomorphisms) to a nodal Riemann surface. The above example shows that the standard Hopf link $H_0$ is a bad link. Away from the bad links, the map $\beta$ is proper (by Theorem~\ref{convergence-interior}). Suppose for a moment that the bad locus is codimension~2 in $\L_c$, as suggested by Nguyen's example. Then a pair of good links in the same isotopy class could be joined by a path avoiding bad links; since $\beta$ is proper over the good links, the count of minimal fillings could be defined and would still be an isotopy invariant of the link.

To explain, heuristically at least, why the set of bad links should have codimension~2, we move to the $J$-holomorphic curve interpretation. We recall the background very briefly here; more details are given in \S\ref{twistors}. The twistor space $\pi \colon \overline{Z} \to \overline{\H}^4$ is the 2-sphere bundle whose fibres parametrise orthogonal almost-complex structures at points of $\overline{\H}^4$. There is an almost-complex structure $J$ on the interior $Z$ (due to Eells--Salamon \cite{Eells-Salamon}) which has the following special property: a map $u \colon (\Sigma,j) \to Z$ is $J$-holomorphic if its projection $f = \pi \circ u$ is conformal and harmonic. Eells--Salamon show that this gives a 1-1 correspondence between conformal harmonic maps to $\H^4$ and $J$-holomorphic curves in $Z$. So $\M_{g,c}$ is also a moduli space of $J$-holomorphic curves in $Z$, with a certain prescribed behaviour at infinity. 

Now suppose we attempt to define the invariant counting minimal annuli, as in the discussion of the Hopf links above. Let $[f_n,j_n]$ be a sequence of minimal annuli which converge to a pair of intersecting minimal discs. The minimal annuli in $\H^4$ lift to a sequence of $J$-holomoprhic annuli $u_n \colon (\overline{\Sigma},j_n) \to \overline{Z}$. There are now two possibilities. The first is that the $u_n$ converge to a pair of $J$-holomorphic discs, which cover the two minimal discs in $\H^4$. This is actually what happens in Nguyen's example above. The second possibility is that a bubble forms at the same time as the node (or even a whole bubble tree). In this case, the limit is a pair of $J$-holomorphic discs which are joined by a twistor $S^2$-fibre (as these are the only $J$-holomorphic spheres in $Z$). 

Bad behaviour of the first type (when no bubble appears) happens in codimension~2. This is because asking for a pair of surfaces to intersect in the 6-manifold $Z$ is a codimension~2 phenomenon. This needs a transversality result, which follows from the Fredholm theory constructed here, showing that by moving the boundary one has  ``enough'' freedom to move $J$-holomorphic curves around.  (This is a consequence of Theorem~\ref{ev-is-transverse}, which shows that the evaluation map from the moduli space of marked $J$-holomorphic curves is a submersion.) 

Bad behaviour of the second type (when a bubble appears) is more complicated. Heuristically, it should be of codimension~4. This is because we are now asking for two codimension~2 phenomena to occur simultaneously (the two different points of intersection with the fibre). However, at this point transversality fails: the twistor fibre has index~0 and so ``should'' be an isolated $J$-holomorphic sphere. However, the fibre is obstructed: it moves in a 4-dimensional family (the fibres of $Z \to \H^4$) and so the singular configurations actually fill generic links. 

These nodal $J$-holomorphic curves are hopefully not harmful to the definition of a minimal surface invariant, however, for the reason that they can't necessarily be smoothed out. This is precisely \emph{because} the twistor fibre is obstructed. If one were to try and smooth out a nodal $J$-holomorphic curve containing a twistor fibre, by gluing to make an approximate solution and then perturbing to a genuine one, there would be a 4~dimensional cokernel to overcome, due to the 4-dimensional obstruction space of the twistor fibre itself. One would only hope to be able to proceed by first moving the boundary link to make sure the gluing errors were orthogonal to the cokernel. This means we would have to first carefully position the boundary link so as to lie in a subspace of codimension~4 in the space of all links. In this way we see that, whilst the singular configurations due to the second type of bad behaviour fill generic links, the expectation is that they actually should only truly arise as limits for a codimension~4 subspace of links.  

Whilst the above argument is heuristically convincing, it is certainly a long way from a proof. To make it rigorous one would need to study the Kuranishi space describing small deformations of a nodal $J$-holomorphic curve including a twistor fibre (or, more generally, with a closed component mapping to a twistor fibre). In the standard setting of Gromov--Witten invariants these ideas are well established (going back at least to Ionel \cite{Ionel}). One key ingredient is the Fredholm theory of the Cauchy--Riemann equation. What must be checked is if the Fredholm theory relevant to our situation (given in \S\ref{Fredholm-theory}) combines well with these methods to give a suitable description of the Kuranisi space. 

Another potential problem is the appearance of ``ghost components'' in the limit: a closed component of the limiting nodal domain on which the $J$-holomorphic map is constant. If this component meets two other components each of which run out to infinity, there is no problem, since this behaviour is codimension~2 in the space of links, as a consequence of Theorem~\ref{ev-is-transverse}. If the ghost component only meets one other infinite component, the situation is again more complicated. One could try to argue, again via the Kuranishi space as in Doan--Walpuski's recent work~\cite{Doan-Walpuski}, that the corresponding node on the infinite component is a branch point. If this can be done, we can use the fact that branch points occur in codimension~4 to proceed (Theorem~\ref{branch-points-codim2}).



\subsection{Remarks on the proofs}\label{outline-proof}

We now comment on the proofs of our results and in particular the way they differ from standard results on $J$-holomorphic curves. There are two aspects: Fredholm theory and compactness arguments. The Fredholm theory of the $J$-holomorphic curve equation for a map $u \colon \overline{\Sigma} \to \overline{Z}$ is made complicated by the fact that the equation degenerates at $\del \Sigma$. The symbol of the linearisation \emph{vanishes} in directions normal to $u(\overline{\Sigma})$. Tangentially however the equation is elliptic in the standard sense. Mazzeo and Melrose have developed a theory of degenerate operators, called the 0-calculus, which we use in the normal directions. Meanwhile, we must apply classical elliptic theory in the tangental directions. Combining these two approaches poses certain technical problems, which is one reason for the length of \S\ref{Fredholm-theory}. A more conceptual hurdle is the absence of a general index theorem in the 0-calculus. To deal with this, we use an ad hoc method to compute the index of the lineraised Cauchy--Riemann operator in the normal directions. This hinges on the fact that $Z$ is an ``almost Calabi--Yau'' (see \S\ref{boundary-map-Fredholm-section}).

The degeneracy at infinity brings one interesting feature. In the standard setting of Grom\-ov--Witten theory, the moduli space of $J$-holomorphic curves is finite dimensional but not necessarily cut out transversely. To deal with this (and other similar issues such as transversality of the evaluation map) one uses the almost complex structure $J$ as a parameter. Letting $J$ vary as well gives an infinite dimensional moduli space which is now smooth (at least in good circumstances), and one applies Sard--Smale to show that for a generic $J$ the moduli space is actually cut out transversely after all. In our setting however, the degeneracy of the equation at infinity means that the moduli space of $J$-holomorphic curves is a smooth infinite dimensional Banach manifold straight away, with no need to vary $J$. Instead, the boundary link $K$ is our ``free parameter''. Whenever in the standard story one would expect to use the freedom to perturb $J$, in our discussion we perturb $K$ instead. 

When it comes to compactness, our situation also requires a new approach, relying entirely on the fact that our $J$-holomorphic curves project to minimal surfaces in $\H^4$. On the one hand, on the interior things are much more straightforward than the standard setting. Given a compact Riemann surface with boundary $(\overline{\Sigma},j)$ we use the corresponding complete hyperbolic metric on the interior $\Sigma$ to measure the energy density of maps from $\Sigma$. A simple application of the Bochner formula tells us that (at least for the maps we are considering) when $f \colon \Sigma \to \H^4$ is conformal and harmonic, it has bounded energy density. It follows from interior regularity for harmonic maps that $f$ is bounded in $C^k$ for all $k$ (see Proposition~\ref{interior-estimate}). 

The challenge is to control the maps near infinity. Take a sequence $[f_n,j_n] \in \M_{g,c}$ of conformal harmonic maps, for which $f_n(\del \Sigma)$ converges in $C^{2,\alpha}$. We extract a subsequence which converges near infinity. The difficulty is that, because the metric of $\H^4$ is singular at $\del \H^4$, the minimal surface equation is degenerate at infinity and so one cannot appeal directly to elliptic estimates. Instead we work in three stages. Firstly we use barriers, and the notion of ``convex hull'' (introduced by Anderson \cite{Anderson}) to get uniform $C^0$ control of the minimal surfaces near infinity (see \S\ref{convex-hull-section}). The second step is to upgrade this to $C^1$ control, writing the surfaces as graphs over a fixed cylindrical region near infinity. The argument here hinges on a deep result due to White, constraining the region in which area can blow up in a sequence of minimal surfaces \cite{White} (see \S\ref{C1-control-section}). The final step is to increase this to $C^2$ control and then $C^{2,\alpha}$-convergence. For this, we use the fact that minimal surfaces in $\H^4$ are also solutions of the fourth-order Willmore equation. The point is that the Willmore equation is conformally invariant and so is genuinely elliptic, rather than degenerate, along $\del \Sigma$. From here, together with the already established $C^1$-control, we can at last use estimates from the theory of elliptic PDE to increase the control of the surfaces, first in $C^2$ and then sufficiently to find the $C^{2,\alpha}$-convergent subsequence, at least near infinity (see~\S\ref{C2-control-section} and~\S\ref{C2alpha-control-section}).

\subsection{Context: counting minimal surfaces}\label{context-ms}

The idea of counting minimal surfaces goes back to the work of Tomi and Tromba in 1978 on the classical Plateau problem~\cite{Tomi-Tromba}. We briefly recall their result. Let $C \subset \R^3$ be a closed surface bounding a convex region. Given a closed loop $L \subset C$, Tomi and Tromba prove that there is an embedded minimal disc with boundary $L$. The long-standing solution of the Plateau problem (due to Douglas \cite{Douglas} and Rado \cite{Rado}) shows that a minimal disc filling $L$ exists, the point is to prove that one can actually find an \emph{embedded} disc. To do this, Tomi and Tromba consider the space $\M$ of all embedded minimal discs whose boundary lies in $C$ and the space $\L$ of all loops in $C$. They show that $\M$ is a smooth Banach manifold, and that the boundary map $\beta \colon \M \to \L$ is Fredholm of index zero and, crucially, proper. This means that $\beta$ has a well-defined $\Z_2$-valued degree. Since the intersection of $C$ with a plane is filled by a single embedded minimal disc (the part of the plane itself lying inside $C$), this degree is non-zero and so $\beta$ must be surjective. 

One important difference with our situation is that for the classical Plateau problem the minimal surface equation is not degenerate. Another significant difference is that these minimal surfaces are \emph{hypersurfaces}. This is why Tomi and Tromba can work with \emph{embedded} surfaces. In our case, where the minimal surfaces have codimension~2, we must allow immersions and branch points. This complicates things significantly since the surfaces are not graphical near a branch point and, moreover, the maximum principle is not available. One of the key parts in our compactness argument is to show that, given a sequence of minimal surfaces whose boundary links converge, any immersed points or branch points can't escape to infinity (this is the heart of the proof of Theorem~\ref{C1-bound-at-infinity}).

The asymptotic Plateau problem in hyperbolic space was first considered by Anderson~\cite{Anderson}. He proved that given any $(k-1)$-dimensional submanifold $Y \subset \del \H^{n}$ (where $k<n$), there is an area-minimising locally-integral $k$-current in $\H^n$ with ideal boundary $Y$. This seminal work has led to a whole branch of research. Rather than attempt to list results here, we refer the reader to the excellent survey of Coskunuzer~\cite{Coskunuzer}.

The direct hyperbolic analogue of Tomi and Tromba's work was considered by Alexakis and Mazzeo \cite{Alexakis-Mazzeo}. This paper had a huge influence on the present article. Write $\mathscr{N}_{g,c}$ for the space of properly embedded minimal surfaces in \emph{three}-dimensional hyperbolic space $\H^3$, of genus $g$, whose ideal boundary has $c$ components and which are $C^{k,\alpha}$-submanifolds of $\overline{\H}^3$ up to the boundary, for some $k \geq 2$ and $0< \alpha <1$. Write $\E_c$ for the space of $C^{k,\alpha}$ embeddings of $c$ disjoint circles in $\del \H^3 \cong S^2$. Alexakis and Mazzeo prove that $\mathscr{N}_{g,c}$ is a smooth Banach manifold and that the boundary map $b \colon \mathscr{N}_{g,c} \to \E_c$ is Fredholm of index~0. They also consider the properness of $b$, but unfortunately the proof they give is not complete. It was in discussions about this, that Mazzeo explained to the author how to use White's regularity result from~\cite{White} in the compactness agruments.

In any case, the properness of $b$ follows from the results in this article. The upshot is that given an isotopy class of $c$ disjoint circles $C = C_1\sqcup\cdots \sqcup C_c \subset S^2$,  there is a well-defined degree, $\delta_{g}(C)$, given by counting properly-embedded genus $g$ minimal fillings of $C$ in $\H^3$. Just as in the work of Tomi and Tromba, when $c=1$ one can compute this degree for the equator. This is filled by a totally geodesic copy of $\H^2$ and nothing else and so $\delta_0(C_1)=1$ for a single circle, whilst  $\delta_g(C_1)=0$ for all $g>0$. Unfortunately, for $c>1$, the Alexakis--Mazzeo degrees give no more information on the existence of minimal surfaces. In \cite{Nguyen} Nguyen has shown that when $c>1$, all the degrees vanish: $\delta_g(C_1 \sqcup \cdots \sqcup C_c) =0$. 

Nguyen's results also bear on our situation. He proves the following:

\begin{theorem}[Nguyen, \cite{Nguyen}]
Let $K_1, K_2$ be two $C^2$ links in $S^3$. It is possible to arrange them in an unlinked fashion $K_1 \sqcup K_2 \subset \del \H^4$ so that there is no connected minimal surface in $\H^4$ with ideal boundary equal to $K_1 \sqcup K_2$. 
\end{theorem}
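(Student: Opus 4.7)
The plan is to use the conformal freedom of $S^3 = \partial \H^4$ to compress both links into very small, far-apart round balls, and then rule out connected minimal fillings by combining the convex-hull property of minimal surfaces with a barrier argument in $\H^4$.

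Choose a scale parameter $\epsilon > 0$ and, for each $i$, a smooth ambient isotopy of $S^3$ carrying $K_i$ into a round ball $B_i^\epsilon \subset S^3$ of spherical radius $\epsilon$, with $B_1^\epsilon$ and $B_2^\epsilon$ placed at antipodal points. Writing $K_i^\epsilon$ for the resulting link, the union $K_1^\epsilon \sqcup K_2^\epsilon$ is unlinked (separated by the equatorial $S^2$) while each $K_i^\epsilon$ still represents the prescribed isotopy class. The claim is that for $\epsilon$ small enough, no connected minimal surface in $\H^4$ has ideal boundary $K_1^\epsilon \sqcup K_2^\epsilon$. Any hypothetical such filling $\Sigma^\epsilon$ must lie in the hyperbolic convex hull of its ideal boundary, and in the Poincar\'e ball model this convex hull collapses, as $\epsilon \to 0$, onto the geodesic $\gamma \subset \H^4$ joining the two antipodal ball centers; concretely, it sits inside an hourglass-shaped tube whose cross-sectional hyperbolic radius is $O(\epsilon)$ at the midpoint of $\gamma$.

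To exploit this confinement, let $\H^3_0 \subset \H^4$ be the totally geodesic hyperplane whose ideal boundary is the equatorial $S^2$ separating the two hemispheres, and let $t$ denote signed hyperbolic distance to $\H^3_0$. The standard identity $\mathrm{Hess}(t) = \tanh(t) \, (g - dt \otimes dt)$ in $\H^4$ gives, on the minimal surface,
\[
\Delta_{\Sigma^\epsilon}(t^2) \;=\; 2\, t \tanh(t) \bigl(2 - |\nabla^{\Sigma^\epsilon} t|^2 \bigr) \;+\; 2 |\nabla^{\Sigma^\epsilon} t|^2 \;\geq\; 0,
\]
with strict inequality except on the thin locus $\{t = 0,\ \nabla^{\Sigma^\epsilon} t = 0\}$, so $t^2|_{\Sigma^\epsilon}$ is subharmonic. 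Connectedness of $\Sigma^\epsilon$ forces its ideal boundary to meet both hemispheres, so $t|_{\Sigma^\epsilon}$ takes both signs and $\Sigma^\epsilon$ must cross $\H^3_0$ transversally at some interior point. The hyperbolic monotonicity formula for minimal $2$-surfaces in $\H^4$ (valid at such an interior crossing point, since the distance to $\partial_\infty \Sigma^\epsilon$ inside $\H^4$ is approximately $\log(1/\epsilon)$) gives a lower bound of the form $\mathrm{Area}(\Sigma^\epsilon \cap B_r) \geq 2\pi(\cosh r - 1)$ for a geodesic ball of radius $r \lesssim \log(1/\epsilon)$ about this point. On the other hand, the thin-tube confinement from the previous step combined with a coarea slicing by equidistants to $\gamma$ provides an upper bound on $\mathrm{Area}(\Sigma^\epsilon \cap B_r)$ of order $\epsilon \cdot r$. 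For $\epsilon$ below an explicit threshold these two bounds are incompatible, delivering the contradiction.

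The main obstacle is the interaction between the unbounded behavior of $t$ at ideal infinity and the subharmonicity of $t^2$ on $\Sigma^\epsilon$: one cannot simply invoke the maximum principle to push $\Sigma^\epsilon$ out to the ideal boundary, because the minimal-surface equation itself degenerates there, and must instead combine the convex-hull confinement with the monotonicity lower bound in a quantitatively matched way. The upper bound on area inside the thin tube is also delicate, since minimal $2$-surfaces in codimension $2$ are not graphs over $\gamma$ and can in principle wind; controlling this winding requires using the curvature estimate coming from minimality together with the small cross-sectional radius, so that the surface is essentially graphical over a segment of $\gamma$ away from the two ends. Making these two estimates meet cleanly, uniformly up to the ideal boundary, is the substantive part of the proof.
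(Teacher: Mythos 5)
This statement is a cited external result of Nguyen, not something the paper proves — so there is no "paper proof" to compare against, and I can only assess your proposal on its own terms. You should know that the paper presents this theorem purely by reference to \cite{Nguyen} and does not sketch any argument.

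Your overall strategy (shrink the two links into small antipodal balls, confine any connected filling to the thin convex-hull tube around the connecting geodesic, then pit an area lower bound from hyperbolic monotonicity against an area upper bound from the confinement) is an attractive plan, and the early steps are sound. The convex-hull collapse to a tube of hyperbolic radius $O(\epsilon)$ is correct, the computation $\Delta_{\Sigma}(t^2) = 2t\tanh(t)(2-|\nabla^\Sigma t|^2)+2|\nabla^\Sigma t|^2\geq 0$ is correct, and the monotonicity lower bound $\mathrm{Area}(\Sigma\cap B_r)\geq 2\pi(\cosh r -1)$ is valid since the surface has no finite boundary. (The subharmonicity of $t^2$ is actually unneeded: the surface crosses $\H^3_0$ simply because it is connected and $t$ takes both signs, by the intermediate value theorem.)

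The genuine gap is the upper bound on area inside the thin tube, and I don't think it can be repaired along the lines you indicate. The coarea formula gives $\int_\Sigma |\nabla^\Sigma s|\,dA = \int_{\R}\mathcal{H}^1(\Sigma\cap\{s=c\})\,dc$, which is a \emph{lower} bound for $\mathrm{Area}(\Sigma)$ in terms of slice lengths, not an upper bound: where $|\nabla^\Sigma s|$ is small (the surface nearly tangent to a cross-sectional $\H^3$), the surface can accumulate arbitrarily much area that the slicing does not see. So "small cross-sections times length" is not an a priori bound. Your acknowledgement that one must control winding is exactly the heart of the matter, but the remedy you gesture at — "the curvature estimate coming from minimality" — is not available in codimension 2. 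Pointwise curvature estimates of Schoen–Simon–Yau type are a codimension-1 (stable hypersurface) phenomenon; in codimension 2 there are minimal cones (e.g.\ holomorphic cones in $\C^2\subset\R^4$) showing that confinement of a minimal $2$-surface in a thin tube does not force it to be graphical or have bounded second fundamental form. Without an a priori area or density bound for the minimal filling that is uniform in $\epsilon$, the contradiction with monotonicity is not established. This is not a presentational gap you can elide; it is a missing lemma that, if it holds at all, would itself be a nontrivial theorem.
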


Assuming one can eventually define link invariants by counting minimal surfaces, an important corollary of this would be that for a \emph{split} link, the count of connected minimal fillings would vanish. 


A pessimist might worry that the minimal surface link-invariants also all vanish, with the exception of $n_0(U)=1$ for the unknot. This seems very unlikely. Firstly, there is Nguyen's work on Hopf links mentioned above: the standard Hopf link has a \emph{unique} filling, albeit by a nodal minimal surface, while the nearby Hopf links $H_t$ are filled by minimal annuli which converge back to this nodal surface. This strongly suggests that the minimal annuli invariant of the Hopf link should equal~1 (assuming it can be defined in the first place!). A second reason to be hopeful that these invariants are non-trivial is given in the next section.

\subsection{Context: \texorpdfstring{$J$}{J}-holomorphic curves and the conifold}\label{context-skeins-on-branes}

There are many different approaches to link-invariants defined via $J$-holomorphic curves. We briefly describe one here which, whilst at first sight very different, is surely closely related to counting minimal surfaces in $\H^4$.

In \cite{Ooguri-Vafa} Ooguri and Vafa conjectured how to recover the coefficients of the HOMFLYPT polynomial of a link $K \subset S^3$ via open Gromov--Witten invariants. This has subsequently been proved by Ekholm and Shende \cite{Ekholm-Shende}. The idea involves two 6-manifolds. The first is the cotangent bundle $T^*S^3$ of the 3-sphere and the second is the total space of $\O(-1) \oplus \O(-1) \to \C\P^1$, which we denote by $X$ to ease the notation. The manifolds $X$ and $T^*S^3$  are related by a ``conifold transition''. Consider the cone $C = \{ xw=yz\}$ in $\C^3$. Away from the origin this is a Kähler manifold. One can check that the complement of the zero section in  $T^*S^3$ is symplectomorphic to $C \setminus \{0\}$. Meanwhile, the natural inclusion $\O(-1) \to \C^2$ on each summand gives a map $X \to \C^4$ with image $C$; this is a biholomorphism between the complement of the zero section in $X$ and $C\setminus\{0\}$.

Write $\omega_0$ for the closed 2-form on $X$ given by pulling back the Euclidean Kähler form from $\C^4$, write $\omega_{\mathrm{FS}}$ for the pull-back to $X$ of the Fubini--Study form from $\C\P^1$ and put $\omega_t = \omega_0 + t\omega_{\mathrm{FS}}$.  This is symplectic for $t>0$. At $t=0$ it degenerates on the central $\C\P^1$, and away from there $(X\setminus \C\P^1,\omega_0)$ is symplectomorphic to $T^*S^3$ minus the zero section. 

Now given an oriented link $K \subset S^3$, the conormal $L_K \subset T^*S^3$ of $K$ is a Lagrangian submanifold. It meets the zero section of $T^*S^3$, but it can be perturbed away from it by adding a small closed 1-form dual to the tangent direction of $K$. Via the above mentioned identifications with $C\setminus{0}$, it then gives a submanifold in $X$, which we continue to denote $L_K$. Since $L_K$ is Lagrangian in $T^*S^3$, we have $\omega_0|_{L_K}=0$. At the same time, $\omega_{\mathrm{FS}}|_{L_K}=0$ so that $L_K$ is a Lagrangian in $(X,\omega_t)$. The idea now is to consider the open Gromov--Witten invariants counting $J$-holomorphic curves in $X$ with boundary in $L_K$. Ekholm and Shende show how to define these invariants and prove Ooguri and Vafa's conjecture that, when correctly assembled, the counts of $J$-holomorphic curves recover the coefficients of the HOMFLYPT polynomial of $K$. 

The reason this appears closely related to the minimal surface link-invariant is because, for $t >0$, there is a path $\Omega_t$ of symplectic forms on the twistor space $Z \to \H^4$ and \emph{the resolved conifold $(X,\omega_t)$ is symplectomorphic to the twistor space $(Z, \Omega_t)$} (see \cite{Fine-Panov}). This is a good reason to be optimistic about both the existence and non-triviality of the minimal surface link-invariants. Nonetheless there are important differences between the two situations. Ekholm and Shende work with generic almost complex structures which are integrable near $K \subset L_K$. This certainly excludes the Eells--Salamon almost complex structure. More significantly, they use Lagrangian boundary conditions, rather than $J$-holomorphic curves which run out to infinity.  This means that Ekholm--Shende's result says nothing directly about the minimal surfaces we count here.

\subsection{Acknowledgements}

It is a pleasure to thank Marcelo Alves, Rafe Mazzeo, Manh Tien Nguyen and Marco Usula for many important conversations during several stages of this work. I would also like to thank Antoine Gloria, Roman Golovko, Michael Khanevsky, and Ivan Smith for their helpful comments and insights. 

Various parts of this research were supported by ERC consolidator grant ``SymplecticEinstein'' 646649, an Excellence of Science grant 30950721, and a grant from the Fonds Thelam of the Fondation Roi Baudouin. 

\section{The twistor geometry of \texorpdfstring{$\H^4$}{H4}}\label{twistors}

In this section we describe the relevant parts of the twistor geometry of $\H^4$. We derive the asymptotic behaviour of a $J$-holomorphic map into the twistor space (Proposition~\ref{asymptotic-expansion}) and collect various lemmas and definitions which will be important later.

\subsection{Twistor spaces}

We begin by recalling the definition of the twistor space of a 4-manifold, and of the Eells--Salamon almost-complex structure it carries. Twistor spaces were introduced by Penrose \cite{Penrose} and then first explored in the Riemannian setting by Atiyah, Hitchin and Singer \cite{Atiyah-Hitchin-Singer}. Eells and Salamon made the connection with minimal surfaces in \cite{Eells-Salamon}.

\begin{definition}
Let $(M,g)$ be an oriented 4-manifold and let $p \in M$. The \emph{twistor fibre at $p$} is the set $Z_p$ of all almost complex structures on $T_pM$ which are orthogonal with respect to $g$ and which induce the chosen orientation.  One can check that $\SO(T_pM)$ acts transitively on $Z_p$ with stabiliser a copy of $\U(2)$ and so $Z_p \cong \SO(4)/\U(2) \cong S^2$. Moreover, the action of $\SO(T_pM)$ on $Z_p$ means these twistor fibres fit together to give an $S^2$-bundle $\pi \colon Z \to M$, associated to the principal bundle of oriented frames of $(M,g)$. $Z$ is called the \emph{twistor space} of $(M,g)$.  
\end{definition}

\begin{remark}\label{conformal-invariance-twistor}
Notice that this definition depends only on the conformal class of $g$: the conformally equivalent metric $e^fg$ determines the same twistor space as $g$. Consider hyperbolic space $\H^4$ and its compactification $\overline{\H}^4$ obtained by adding the ideal boundary $\del \H^4 \cong S^3$. Since the conformal class of the hyperbolic metric extends to $\overline{\H}^4$, so does the twistor space, giving an $S^2$-bundle $\pi \colon \overline{Z} \to \overline{\H}^4$. 
\end{remark}

The Levi-Civita connection of $(M,g)$ determines a splitting $TZ = V \oplus H$ of the tangent bundle to $Z$. The vertical bundle $V = \ker \diff \pi \subset TZ$ consists of those vectors which are tangent to the fibres of the twistor projection $\pi$. On the complement $H$, there is an isomorphism $\diff \pi \colon H \to \pi^*TM$. 

Using this splitting one can define two almost-complex structures on $Z$. First note that the vertical tangent bundle $V \to Z$ is naturally a complex vector bundle, since the fibres of $\pi$ are oriented round 2-spheres. We denote the fibrewise linear complex structure of $V$ by $J_V$. The horizontal bundle $H$ is also naturally a complex vector bundle. To see this, given a point $z \in Z$, write $j_z$ for the corresponding almost complex structure on $T_{\pi(z)}M$. Since $\diff \pi \colon H_z \to T_{\pi(z)}M$ is an isomoprhism, we can interpret $j_z$ as a complex structure on $H_z$ and as $z$ varies these structures fit together to make $H$ into a complex vector bundle whose fibrewise linear complex structure we denote by~$J_H$. 

\begin{definition}
The \emph{Atiyah--Hitchin--Singer} almost-complex structure $\JAHS$ on $Z$ is given with respect to $TZ=V \oplus H$ by
\[
\JAHS = J_V \oplus J_H.
\]
Meanwhile, the \emph{Eells--Salamon} almost-complex structure $\JES$ on $Z$ is given with respect to $TZ = V \oplus H$ by
\[
\JES = (-J_V )\oplus J_H.
\]
\end{definition}

\begin{remark}
The splitting $TZ = V \oplus H$ is defined by the Levi--Civita connection and so is \emph{not} conformally invariant. This means that a~priori neither are $\JAHS$ or $\JES$. In fact, $\JAHS$ \emph{is} conformally invariant and so, in the case of hyperbolic space, it extends smoothly up to the boundary of $\overline{Z}$. As we will see, $\JES$ does not extend; it has singularities at the boundary, transverse to the fibres of $\pi$. We will focus entirely on $\JES$. We only mention $\JAHS$ in passing, because it is the typical choice made in twistor theory. This is because when the metric $g$ is anti-self-dual (as is the case for $\H^4$ for example) $\JAHS$ is integrable, and $(Z,\JAHS)$ is a complex manifold. In contrast, $\JES$ is \emph{never} integrable. 
\end{remark}

We close this quick review of twistor spaces by describing the natural metric on the twistor space of a Riemannian 4-manifold.

\begin{definition}\label{twistor-metric}
Given an oriented Riemannian 4-manifold $(M,g)$ there is a unique metric on the twistor space $\pi \colon Z \to M$  for which $\pi$ is a Riemannian submersion, the fibres have their natural round metrics (with scalar curvature equal to~1) and the Levi-Civita splitting $TZ = V \oplus H$ is orthogonal. We call this metric the \emph{twistor metric} on $Z$.
\end{definition}

\subsection{The Eells--Salamon correspondence}

Eells and Salamon showed how $\JES$-holomorphic curves in $Z$ correspond to minimal surfaces in $(M,g)$ \cite{Eells-Salamon}. We briefly recall this here.
 
\begin{definition}
Given an oriented 2-dimensional subspace $L \subset T_pM$, there is a unique $z \in Z_p$ for which $L$ is a $j_z$ complex line, with the correct orientation. This is because $j_z$ is completely determined on $L$, and then also on $L^{\perp}$ since $j_z$ must induce the overall orientation on $T_pM = L \oplus L^\perp$. From here we can lift oriented surfaces in $M$ up to $Z$. Let $\Sigma \subset M$ be an embedded oriented surface. The \emph{twistor lift} of $\Sigma$ is the surface $\tilde{\Sigma} \subset Z$ lying over $\Sigma$ whose point above $p \in \Sigma$ is the unique point $z \in Z_p$ for which $T_p\Sigma \subset T_pM$ is a $j_z$-complex line, with correct orientation. 

The definition extends in an obvious way to immersed oriented surfaces $\Sigma \subset M$. It also extends to branched immersions $\Sigma \to M$ from an oriented surface, the point being that the image of a branched immersion still has a well-defined tangent space at the branch point. (See~\cite{Eells-Salamon} for the details). 
\end{definition}

The utility of twistor lifts for studying $J$-holomorphic curves begins with the following observation. (This appears in \cite{Eells-Salamon}, but the proof is short and the result important, so we give the argument here.)

\begin{lemma}[Eells--Salamon \cite{Eells-Salamon}]
Let $u \colon \Sigma \to Z$ be a non-constant $J$-holomorphic map from a Riemann surface for either $\JES$ or $\JAHS$. Suppose moreover that $u(\Sigma)$ contains no fibres of $\pi \colon Z \to M$. Then $u$ is the twistor lift of $\pi \circ u$. 
\end{lemma}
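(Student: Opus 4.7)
The strategy is to unpack the $J$-holomorphic equation using the Levi--Civita splitting $TZ = V \oplus H$, project down to $M$, and verify that $d(\pi\circ u)$ is complex linear with respect to the almost complex structure on $T_{\pi(u(p))}M$ that $u(p)$ itself designates. Fix $p \in \Sigma$ and $v \in T_p\Sigma$, and decompose $du(v) = du(v)_V + du(v)_H$. For either $J = \JAHS$ or $J = \JES$, the splitting $V \oplus H$ is $J$-invariant, with $J$ acting on $H$ as $J_H$ (the difference between the two is just a sign on $V$). Hence the equation $du(jv) = J\cdot du(v)$ forces $du(jv)_H = J_H du(v)_H$. Since $d\pi$ annihilates $V$, applying $d\pi$ and using the defining property of $J_H$ (namely $d\pi \circ J_H = j_{u(p)} \circ d\pi$ on $H_{u(p)}$) gives
\[
d(\pi\circ u)(jv) = d\pi\bigl(J_H du(v)_H\bigr) = j_{u(p)}\bigl(d\pi\,du(v)_H\bigr) = j_{u(p)}\, d(\pi\circ u)(v).
\]

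Set $f := \pi \circ u$. At any $p \in \Sigma$ where $df$ has rank $2$, the above identity says that $df(T_p\Sigma) \subset T_{f(p)}M$ is a $j_{u(p)}$-complex line; moreover the orientation induced by $(v, jv)$ matches the $j_{u(p)}$-orientation because $j_{u(p)}\,df(v) = df(jv)$. By definition of the twistor lift, $\tilde f(p)$ is the unique element of $Z_{f(p)}$ with exactly this property, so $u(p) = \tilde f(p)$ at every such $p$.

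It remains to show that the set $S \subset \Sigma$ on which $df$ has rank $<2$ has empty interior, after which $u = \tilde f$ on the dense open set $\Sigma \setminus S$ and the equality extends to all of $\Sigma$ by continuity (the twistor lift extends continuously across branch points, as recalled in the text). At $p \in S$, the image $du(T_p\Sigma)$ is either zero (critical point of $u$) or a $2$-plane contained in $V_{u(p)}$, which forces $du(T_p\Sigma) = V_{u(p)}$, i.e.\ $u$ is ``fibre-tangent'' at $p$. If $S$ had an interior point $p_0$, the latter alternative must hold on an open neighbourhood (since critical points of a non-constant $J$-holomorphic map are isolated), so $f = \pi\circ u$ would be locally constant near $p_0$. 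Then $u$ would map a whole open set into a single twistor fibre; by unique continuation for $J$-holomorphic maps the whole image $u(\Sigma)$ would lie in that fibre, and since a non-constant $J$-holomorphic map to a $2$-sphere is surjective, $u(\Sigma)$ would equal that fibre, contradicting the hypothesis.

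The main obstacle is the last paragraph: ruling out a positive-measure ``fibre-tangent'' locus. The plan is to invoke Aronszajn-type unique continuation for the (elliptic) Cauchy--Riemann-type equation satisfied by $u$, which is the standard tool in $J$-holomorphic curve theory; everything else is a direct unpacking of the definitions of $J_H$, $\JAHS$, $\JES$, and the twistor lift.
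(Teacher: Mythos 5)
Your proof takes essentially the same route as the paper's: both use the $J$-invariant splitting $TZ = V \oplus H$ and the fact that $\diff\pi\colon (H_z, J|_H) \to (T_{\pi(z)}M, j_z)$ is a complex-linear isomorphism for either $\JES$ or $\JAHS$, to show that at a non-vertical immersion point the tangent to $\pi\circ u$ is a $j_{u(p)}$-complex line with the correct orientation. Where you differ is in filling in the density claim: the paper dispatches it with the parenthetical ``this is a dense set of points (since we assume $u(\Sigma)$ contains no twistor fibres),'' while you spell out that critical points are isolated and that a positive-measure vertical locus would force $u$ into a fibre by unique continuation. That added detail is welcome.

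One small overstatement at the end: ``a non-constant $J$-holomorphic map to a $2$-sphere is surjective'' holds for \emph{closed} Riemann surfaces but fails for open ones (the inclusion of a disc into $\C\P^1$, for instance), and $\Sigma$ here is typically the interior of a surface with boundary. You do not actually need surjectivity: once unique continuation gives $u(\Sigma) \subseteq Z_p$ (or more precisely that a whole connected component maps into $Z_p$), you already contradict the intended hypothesis, which --- as Theorem~\ref{Eells-Salamon} makes explicit --- is that no \emph{component} of the curve is a twistor fibre, rather than the literal reading that no fibre is a subset of the image. With that interpretation in place, the argument closes cleanly; better simply to drop the surjectivity step than to rely on it.
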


\begin{proof}
Let $p \in \Sigma$ be a point at which $u$ is an immersion. To ease notation write $z=u(p)$. The image $\diff u(T_p\Sigma) \subset T_{z}Z$ is a complex line. Recall that there is a complex-linear splitting $T_z(Z) =V_z \oplus H_z$. Unless $u_*(T_p\Sigma)$ is vertical, its horizontal projection is a complex line $L \subset H_z$ and this complex line projects under $\diff \pi$ to the tangent space of $(\pi \circ u)(\Sigma)$. Moreover, for either $J=\JES$ or $J=\JAHS$, the map $\diff \pi \colon (H_z,J|_H) \to (T_{\pi(z)}M,j_z)$ is a complex linear isomorphism. It follows that $z$ is the twistor lift of $\pi(u(p))$. This shows that $u$ is the twistor lift of $\pi\circ u$ at all points where $u$ is a non-vertical immersion. This is a dense set of points (since we assume $u(\Sigma)$ contains no twistor fibres) and so the result follows for every point by continuity. 
\end{proof}

The magic of $\JES$ is the following result which, put briefly, says that twistor lifts of minimal surfaces in $(M,g)$ are precisely the $\JES$-holomorphic curves in $Z$. 

\begin{theorem}[Eells--Salamon \cite{Eells-Salamon}]\label{Eells-Salamon}
Let $f \colon \Sigma \to (M,g)$ be a branched conformal immersion into an oriented Riemannian 4-manifold. The twistor lift $u \colon \Sigma \to Z$ is $\JES$-holomorphic if and only if $f$ is also harmonic (and so $f(\Sigma)$ is a minimal surface in $M$). This gives a 1-1 correspondence between branched conformal harmonic maps to $M$ and $\JES$-holomorphic curves in $Z$ with no vertical components (i.e., no components are fibres of $\pi \colon Z \to M$).
\end{theorem}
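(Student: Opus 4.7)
\medskip

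\noindent\emph{Proof plan.}  The preceding lemma already establishes that any non-vertical $\JES$-holomorphic curve is the twistor lift of its projection, so what remains is to show, for a branched conformal immersion $f$ with twistor lift $u$, the equivalence
\[
u \text{ is } \JES\text{-holomorphic} \quad \Longleftrightarrow \quad f \text{ is harmonic.}
\]
The plan is to work at an immersion point $p \in \Sigma$ (the result then extends across branch points by continuity, using that a branched immersion is an immersion off a discrete set and that both sides of the equivalence are closed conditions on $f$). Set $z = u(p)$ and use the Levi--Civita splitting $TZ = V \oplus H$ to decompose
\[
\diff u = \diff u^H + \diff u^V.
\]
The $\JES$-holomorphic equation $\JES \circ \diff u = \diff u \circ j$ then splits into a horizontal piece and a vertical piece, since both $V$ and $H$ are $\JES$-invariant.

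\medskip

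The first step is to dispatch the horizontal piece. Because $\diff u^H$ is related to $\diff f$ by the complex linear isomorphism $\diff \pi \colon (H_z, J_H) \to (T_{f(p)}M, j_z)$, the horizontal equation becomes $j_z \circ \diff f = \diff f \circ j$ on $T_p\Sigma$. By the very definition of the twistor lift, $T_{f(p)}f(\Sigma) \subset T_{f(p)}M$ is a $j_z$-complex line, and $f$ is conformal on $T_p\Sigma$; these two facts together imply the horizontal equation is automatic. So the content lies entirely in the vertical part.

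\medskip

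The second, and main, step is to identify the vertical part $\diff u^V$ with a tensor built from the second fundamental form of $f(\Sigma)$. Choose a local oriented orthonormal frame $(e_1, e_2, e_3, e_4)$ near $f(p)$ with $e_1, e_2$ tangent to $f(\Sigma)$ at $p$, so that $z$ is the almost complex structure sending $e_1 \mapsto e_2$ and $e_3 \mapsto e_4$. For $X \in T_p\Sigma$, $\diff u^V(X)$ is the vertical vector given by $\nabla^{LC}_X j_{u(\cdot)} \in \End(T_{f(p)}M)$, viewed as an element of $T_z Z_p$. A direct computation in this frame expresses $\diff u^V$ in terms of the components $\langle \nabla^{LC}_{e_i} e_j, e_k\rangle$ with $i \in \{1,2\}$, $j \in \{1,2\}$, $k \in \{3,4\}$ --- i.e., precisely the second fundamental form $\mathrm{II}$ of $f(\Sigma)$ inside $M$. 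Once this identification is made, the vertical component of $\JES \circ \diff u - \diff u \circ j$ becomes, using the $(-J_V)$ sign, a linear expression in $\mathrm{II}$ whose vanishing is equivalent to $\tr_g \mathrm{II} = 0$, that is, to minimality of $f(\Sigma)$. Since $f$ is conformal, minimality and harmonicity coincide, completing the equivalence.

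\medskip

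The hard part is the vertical computation: one must track carefully how the Levi--Civita connection on the $\SO(4)$-frame bundle induces the connection on the associated $Z_p \cong \SO(4)/\U(2)$ bundle, and how a tangent vector to the fibre $Z_p$ at $z$ is identified with a skew-symmetric endomorphism of $T_{f(p)}M$ anticommuting with $z$. The sign of $J_V$ is the whole game: the same computation shows that $\JAHS$-holomorphicity of the twistor lift is the much more restrictive super-minimal condition (mean curvature zero \emph{and} $\nabla^\perp \mathrm{II}$ satisfying a further algebraic constraint), which is why $\JES$, and not $\JAHS$, is the almost-complex structure adapted to minimal surface theory. Finally, the 1-1 correspondence between branched conformal harmonic maps and non-vertical $\JES$-holomorphic curves follows by combining the previous lemma (which produces $f = \pi \circ u$ from such a $u$) with the equivalence just established.
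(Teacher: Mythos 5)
The paper does not prove this theorem --- it states it as Theorem~\ref{Eells-Salamon} and cites \cite{Eells-Salamon} for the argument, adding later ``(See~\cite{Eells-Salamon} for the details)'' for the branched case. So there is no in-paper proof to compare against. Your outline follows the route taken in Eells and Salamon's original paper, and the skeleton is sound: the reduction via the preceding lemma, the automatic nature of the horizontal equation for a conformal immersion (by the definition of the twistor lift), and the continuity argument across the discrete set of branch points are all correct.

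Two remarks. First, the argument as written is a plan: the crucial step --- showing that the $\JES$-vertical Cauchy--Riemann equation is equivalent to $\tr_g\mathrm{II} = 0$ --- is asserted rather than carried out, and this is where the whole content of the theorem lies. Concretely, identifying $\diff u^V(X)$ with $\nabla_X j_u$ as a skew endomorphism anticommuting with $j_z$, one computes in the adapted frame $(\nabla_{e_1} j_u)(e_1) = \mathrm{II}(e_1,e_2) - j_z\,\mathrm{II}(e_1,e_1)$ and $(\nabla_{e_2} j_u)(e_1) = \mathrm{II}(e_2,e_2) - j_z\,\mathrm{II}(e_1,e_2)$; the $\JES$-vertical equation $-j_z\circ(\nabla_{e_1}j_u)=\nabla_{e_2}j_u$ then reduces, after the $\mathrm{II}(e_1,e_2)$ terms cancel, to $\mathrm{II}(e_1,e_1)+\mathrm{II}(e_2,e_2)=0$. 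That cancellation is precisely where the $(-J_V)$ sign does its work, so it should be exhibited rather than just quoted. Second, the parenthetical about $\JAHS$-holomorphicity is inaccurate: the superminimal/isotropy condition in a 4-manifold is an \emph{algebraic} constraint on $\mathrm{II}$ (isotropy of the $(2,0)$-part in the complexified normal bundle), not a constraint on $\nabla^\perp\mathrm{II}$; moreover, the opposite sign choice picks out the complementary trace-free component of $\diff u^V$ rather than a strictly stronger condition. This is a side remark and does not affect the main argument, but it should be stated correctly.
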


\subsection{The twistor space of \texorpdfstring{$\H^4$}{H4}}

From now on, we will write $(Z,J)$ for the twistor space of $\H^4$ with its Eells--Salamon almost complex structure. In this section we will give an explicit formula for $J$. We use half-space coordinates $x, y_1, y_2, y_3$ on $\H^4$ with $x >0$, in which the hyperbolic metric is $g = x^{-2}(\diff x^2 + \diff y^2)$. These induce coordinates $x,y_i,z_i$ on $Z$ as follows. The tuple $(x,y_i,z_i)$ corresponds to the linear endomorphism on $T_{(x,y_i)}\H^4$ given by $j_z = z_1J_z+ z_2J_2+z_3J_3$ where the $J_i$ are defined by
\begin{alignat*}{2}
J_1(x\del_x) &= x\del_{y_1}, 
  &\quad 
    J_1(x\del_{y_2}) &= x\del_{y_3}.  \\
J_2(x\del_x) &= x\del_{y_2}, 
  &\quad 
    J_2(x\del_{y_3}) &= x\del_{y_1}.  \\
J_3(x\del_x) &= x\del_{y_3}, 
  &\quad 
    J_3(x\del_{y_1}) &= x\del_{y_2}.
\end{alignat*}
One checks that $j_z^2 = -(z_1^2+z_2^2+z_3^2)$ and so $Z$ is identified with the points $(x,y_iz_i)$ for which $z_1^2+_2^2+z_3^2=1$. We call $(x,y_i,z_i)$ \emph{half-space twistor coordinates} (despite the fact that they are not true coordinates because of the condition $ z^2_1+z_2^2+z_3^2=1$). 

To describe the Levi-Civita connection on $Z$ we first introduce the orthonormal coframe $\alpha_0 = x^{-1}\diff x$, $\alpha_i = x^{-1}\diff y_i$. Calculation then shows that the Levi-Civita connection on $Z$ is given in our coordinates by the following 1-form with values in $V$:
\[
A 
  = 
    \alpha_1 \otimes \left(z_2 \del_{z_3} - z_3 \del_{z_2}\right)
    +
    \alpha_2 \otimes \left(z_3 \del_{z_1} - z_1 \del_{z_3}\right)
    +
    \alpha_3 \otimes \left(z_1 \del_{z_2} - z_2 \del_{z_1}\right)
\]
The horizontal distribution $H \subset TZ$ is given by $\ker A$.

The $\alpha_i$ are horizontal 1-forms, i.e.\ they annihilate the vertical tangent spaces. The vertical 1-forms are given by
\begin{eqnarray}
  \beta_1 & = & \diff z_1 + (z_2\alpha_3 - z_3 \alpha_2)\nonumber\\
  \beta_2 & = &\diff z_2 + (z_3\alpha_1 - z_1 \alpha_3)\label{vertical-1-forms}\\
  \beta_3 & = & \diff z_3 + (z_1\alpha_2 - z_2 \alpha_1)\nonumber
\end{eqnarray}
These are vertical in the sense that they annihilate $H$. Note that the $\beta_i$ are independent if we ignore the constraint $z_1^2+z_2^2+z_3^2=1$, but become dependent when restricted to $Z$. At $(x,y_i,z_i) \in Z$, the linear combination $p_1\beta_1 + p_2 \beta_2 + p_3 \beta_3$ restricts to zero on $Z$ if and only if $(p_1,p_2,p_3)$ is proportional to $(z_1,z_2,z_3)$. We also have the dual frame of horizontal and vertical vectors:
\begin{eqnarray}
  h_0 &=& x \del_x \nonumber\\
  h_1 &=& x\del_{y_1} + z_2 \del_{z_3} - z_3 \del_{z_2}\quad \text{ etc.}
  \label{HV-vectors}\\
  v_i &=& \del_{z_i} \nonumber
\end{eqnarray}

We next describe the Eells--Salamon almost-complex structure $J$. It is convenient to extend $J$ to every value of $z$ and not just those for which $z_1^2+z_2^2+z_3^2=1$. We continue to denote this extension by $J$ and hope that no confusion occurs. We have the following:
\begin{eqnarray}
  J(v_1) &=& z_2 v_3 - z_3 v_2 \quad\text{ etc.}\nonumber\\
  J(h_0) &=& z_1h_1 + z_2h_2 + z_3 h_3\label{J-edge-vector-formulae}\\
  J(h_1) &=&-z_1 h_0 -z_2h_3 + z_3 h_2 \quad \text{ etc.}\nonumber
\end{eqnarray} 
The point is that at $(x,y_i,z_i)$, $J$ is equal to $j_z$ on the horizontal space and cross-product with $-z$ on the vertical space. One checks that this gives the above formula.  Note that $J$ has a 1-dimensional kernel, given by $z_1 v_1 + z_2 v_2+ z_3 v_3$; at a point of $Z$, the orthogonal complement to this kernel is precisely the tangent space to $Z$ and here $J$ restricts to the Eells--Salamon almost-complex structure.

We now give a formula for $J$ in half-space twistor coordinates, that is to say with respect to the coordinate vector fields $\del_x, \del_{y_i}, \del_{z_i}$. To ease the notation, we write the following
\begin{align}
z &= 
	\begin{pmatrix} z_1\\z_2\\z_3\end{pmatrix}\\
R(z)
	&=
		\begin{pmatrix}
		0 & -z_3 & z_2\\
		z_3 & 0 & -z_1\\
		-z_ 2 & z_1 & 0
		\end{pmatrix}
		\label{R}\\
P(z)
	&=
		\begin{pmatrix}
		z_2^2 + z_3^2 & -z_1z_2 & -z_1z_3 \\
		-z_1z_2 & z_1^2 + z_3^2 & -z_2z_3\\
		-z_1z_3 & -z_2z_3 & z_1^2 + z_2^2
		\end{pmatrix}
		\label{P}
\end{align}
So $R(z)$ is the matrix representing the cross-product with $z$ and, when $|z|=1$, $P(z) = 1 - zz^T$ is the matrix representing orthogonal projection onto the plane perpendicular to $z$.

The following formula follows quickly from~\eqref{HV-vectors} and~\eqref{J-edge-vector-formulae}.

\begin{lemma}\label{J-in-local-coordinates}
Let $(x,y_i,z_i)$ be half space twistor coordinates. With respect to the local frame $\del_x, \del_{y_i}, \del_{z_i}$, we have the following matrix representative for $J$:
\[
J
\begin{pmatrix}\del_x\\ \del_{y_i}\\ \del_{z_i}\end{pmatrix}
	=
		\begin{pmatrix}
		0 & -z^T & 0\\
		z & R(z) & 0\\
		0 & 2x^{-1}P(z) & -R(z)
		\end{pmatrix}
\begin{pmatrix}\del_x\\ \del_{y_i}\\ \del_{z_i}\end{pmatrix}
\]
\end{lemma}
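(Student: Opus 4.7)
The lemma is a direct linear-algebra translation from the horizontal--vertical frame $(h_0,h_i,v_i)$, in which $J$ is given by \eqref{J-edge-vector-formulae}, to the coordinate frame $(\del_x,\del_{y_i},\del_{z_i})$. The plan is as follows.

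First I would invert the change-of-frame equations \eqref{HV-vectors} to express the coordinate fields in the $(h_0,h_i,v_i)$-basis:
\[
\del_x = x^{-1} h_0, \qquad x\del_{y_1} = h_1 - z_2 v_3 + z_3 v_2 \ \text{(and its two cyclic permutations)}, \qquad \del_{z_i} = v_i.
\]
Then apply $J$ to each coordinate vector using \eqref{J-edge-vector-formulae} and its cyclic permutations, and substitute $h_0 = x\del_x$, $h_i = x\del_{y_i} + (\text{vertical correction})$, $v_i = \del_{z_i}$ back in to read off the matrix entries.

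Two of the blocks are immediate. For $J(\del_x) = x^{-1}\sum_i z_i h_i$, the three vertical corrections contained in the $h_i$'s cancel cyclically (the coefficient of each $v_k$ is $z_\ell z_m - z_m z_\ell = 0$), leaving a pure horizontal combination that yields the $\del_x$-row. For $J(\del_{z_i}) = J(v_i)$, the formulas in \eqref{J-edge-vector-formulae} are already in $v$-form, and since $v_i = \del_{z_i}$ they produce directly the bottom-right block involving $R(z)$ from~\eqref{R}, the matrix of cross-product with $z$.

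The middle rows ($J(\del_{y_i})$) are the only ones requiring genuine bookkeeping. The horizontal-to-horizontal part of $J(h_i)$ accounts for the $\del_x$-entries (coming from $-z_i h_0$) and the middle block, which is essentially the cross-product structure encoded by $R(z)$. The vertical contributions accumulate from two sources: the vertical parts sitting inside the $h_j$'s that appear in $J(h_i)$, and $J$ acting on the initial vertical correction needed to write $x\del_{y_i}$ in the $(h,v)$-basis. The key structural observation that makes the answer recognisable as $P(z)$ is that both sources amount to applying ``cross-product with $z$'' twice to $v$; using $R(z)^2 v = (z\cdot v)z - |z|^2 v$, which on the constraint $|z|=1$ equals $-(I-zz^T)v = -P(z)v$, delivers the matrix $P(z)$ from \eqref{P}. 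The two contributions turn out to be equal in magnitude and sign, which explains the factor of $2$ in front of $x^{-1}P(z)$.

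The main obstacle is precisely this vertical-correction bookkeeping: keeping track of which terms contribute to the horizontal-to-horizontal block, which to the horizontal-to-vertical block, and ensuring the two vertical contributions combine to give $2x^{-1}P(z)$ rather than cancelling or doubling in an unrecognisable form. The cleanest way to organise the computation is to exploit the manifest cyclic symmetry in $(z_1,z_2,z_3)$: carry out the expansion explicitly only for $J(\del_{y_1})$, identify the appearance of $R(z)^2 = -P(z)$, and then fill in the remaining two middle rows by permuting indices cyclically.
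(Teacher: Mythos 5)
The paper gives no proof here---it simply says the formula "follows quickly from~\eqref{HV-vectors} and~\eqref{J-edge-vector-formulae}"---and your plan carries out exactly the implied computation: invert the change of frame, apply $J$ in the $(h,v)$-basis, substitute back, and exploit cyclic symmetry. The zero- and cross-product blocks come out as you say, and the polynomial identity $R(z)^2 = zz^T - |z|^2 I = -P(z)$ (which in fact holds for all $z$, not only on $|z|=1$, since $P(z)=|z|^2 I - zz^T$ by inspection of~\eqref{P}) is indeed the observation that makes $P(z)$ appear. One small caution in your sign bookkeeping: each of the two vertical contributions to $J(\del_{y_i})$ is $-R(z)^2 e_i = +P(z)e_i$ rather than $R(z)^2 e_i$ (one picks up a sign from $w_i=-R(z)e_i$, the other from $J|_V=-R(z)$), so the two pieces sum to $+2P(z)$ as in the lemma; your phrasing ``both sources amount to applying cross-product with $z$ twice'' is slightly loose and, taken literally, would land you on $-2P(z)$. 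Since you do conclude with $P(z)$ and the factor of $2$ correctly, this is a presentational imprecision rather than a gap, but it is precisely the place where one would want to slow down when writing out the details.
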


The most important thing to take away from this formula is the $x^{-1}$ term. The precise way in which $J$ blows up as $x \to 0$ is what drives the entire story. 


\subsection{Asymptotic expansion of \texorpdfstring{$J$}{J}-holomorphic curves}

In this section we determine the asymptotic behaviour of $J$-holomorphic curves in the twistor space of $\H^4$. Let $\overline{\Sigma}$ be a compact Riemann surface with boundary and $u \colon \overline{\Sigma} \to \overline{Z}$ be a smooth map. We pull back the half-space twistor coordinates $(x,y_i,z_i)$ via $u$ and consider them as functions on $\overline{\Sigma}$. The main result of  this section, Proposition~\ref{asymptotic-expansion} ,says that when $\delb u$ vanishes to sufficiently high order at $\del \Sigma$ the asymptotic behaviour of $u$ is completely determined by the boundary value of $y$, i.e. by $\pi\circ u|_{\del \Sigma}$.


Since we are only interested in the asymptotic behaviour of $u$, we consider maps from the cylinder $\overline{C}=S^1 \times [0,1)$, with  coordinates $s \in S^1$ and $t \in [0,1)$. The complex structure on $\overline{C}$ is given by $j(\del_s) = \del_t$. Given a map $u \colon \overline{C} \to \overline{Z}$, we pull back the twistor coordinates $x,y_i,z_i$ and treat them as functions of $(s,t)$. We assume that $x,y$ are $C^{2,\alpha}$ whilst $z$ is $C^{1,\alpha}$ (for some choice of $0<\alpha<1$). We write the Taylor expansions of $x,y_i,z_i$ with respect to $t$ at $t=0$ as follows:
\begin{equation}
\begin{aligned}
x &= a(s)t + b(s)t^2 + O(t^{2+\alpha})\\
y &= \gamma(s) + \psi(s) t + \phi(s) t^2 + O(t^{2+\alpha})\\
z & = \zeta(s) + \eta(s) t + O(t^{1+\alpha}).
\end{aligned}
\label{taylor}
\end{equation}

\begin{proposition}\label{asymptotic-expansion}
Let $u\colon \overline{C} \to \overline{Z}$, with Taylor expansion \eqref{taylor} at $t=0$. We assume that $a(t)>0$ (so that the image of $u$ is transverse to the boundary of $\overline{Z}$). The asymptotic condition
\begin{equation}
\delb u (\del_t) = O(t^{1+\alpha})\del_x + O(t^{1+\alpha}) \del_y + O(t^{\alpha}) \del_z
\label{asymptotically-holomorphic}
\end{equation}
is equivalent to the following formulae for the coefficients of the Taylor expansion~\eqref{taylor}:
\begin{equation}
\begin{aligned}
a &= |\dot \gamma|,\\
b &=0,\\
\psi &=0,\\
\zeta&= -|\dot\gamma|^{-1}\dot \gamma,\\
\eta &= R(\zeta)(\dot\zeta),\\
\phi&= \frac{1}{2} \ddot{\gamma} + \dot{a}\zeta
\end{aligned}
\label{taylor-coefficients-determined}
\end{equation}
where $R$ is defined in~\eqref{R}, the dot denotes $\diff/\diff s$ and $|\dot \gamma|^2 = |\dot \gamma_1|^2 + |\dot \gamma_2|^2 + |\dot \gamma_3|^2$.  In particular, all the coefficients in~\eqref{taylor-coefficients-determined} are determined by~$\gamma$. 
\end{proposition}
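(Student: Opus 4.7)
My plan is to write the condition \eqref{asymptotically-holomorphic} componentwise in the frame $\del_x, \del_{y_i}, \del_{z_i}$ using the matrix for $J$ from Lemma~\ref{J-in-local-coordinates}, and then match Taylor coefficients in $t$. Since $j(\del_s) = \del_t$, one has $\delb u(\del_t) = \tfrac{1}{2}(\del_t u - J(\del_s u))$, so the asymptotic condition translates into the three equations
\begin{align*}
\del_t x &= -z^T\del_s y + O(t^{1+\alpha}),\\
\del_t y &= z\,\del_s x + R(z)\del_s y + O(t^{1+\alpha}),\\
\del_t z &= 2x^{-1}P(z)\del_s y - R(z)\del_s z + O(t^{\alpha}).
\end{align*}

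The decisive feature is the $2x^{-1}$ factor in the third equation, which creates an \emph{a priori} $t^{-1}$ divergence that must be killed. Extracting the $t^{-1}$ coefficient of $2x^{-1}P(z)\del_s y$ gives $\tfrac{2}{a}P(\zeta)\dot\gamma$, and forcing this to vanish yields $P(\zeta)\dot\gamma = 0$. Because $u$ lands in $Z$ we have $|\zeta|^2 = 1$, so $P(\zeta) = 1 - \zeta\zeta^T$ is orthogonal projection onto $\zeta^\perp$; hence $\dot\gamma$ is parallel to $\zeta$. Combined with the $t=0$ term of the first equation, $a = -\zeta^T\dot\gamma$, this gives $\zeta = -|\dot\gamma|^{-1}\dot\gamma$ and $a = |\dot\gamma|$. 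The $t=0$ term of the second equation then gives $\psi = R(\zeta)\dot\gamma$, which vanishes because $R(\zeta)\zeta = \zeta\times\zeta = 0$.

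With $\psi = 0$, and therefore $\dot\psi = 0$, I would push each equation to the next order. For $P(z)\del_s y$ itself, the identity $P(z) = 1-zz^T$ together with $\dot\gamma = -a\zeta$ and the orthogonality $\zeta^T\eta = 0$ (forced by differentiating $|z|^2=1$) produces $P(z)\del_s y = a\eta\, t + O(t^{1+\alpha})$, so $2x^{-1}P(z)\del_s y = 2\eta + O(t^{\alpha})$. The $t^0$ balance in the third equation then reads $\eta = 2\eta - R(\zeta)\dot\zeta$, giving $\eta = R(\zeta)\dot\zeta$. The $t^1$ balance of the first equation is $2b = -\eta^T\dot\gamma = a\,\zeta^T\eta = 0$. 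Finally, the $t^1$ balance of the second equation is $2\phi = \zeta\dot a + R(\eta)\dot\gamma$; using the BAC--CAB identity, $R(\eta)\dot\gamma = -aR(\eta)\zeta = a\,\zeta\times\eta = a\,\zeta\times(\zeta\times\dot\zeta) = -a\dot\zeta$, which after comparison with $\ddot\gamma = -\dot a \zeta - a\dot\zeta$ rearranges to $\phi = \tfrac{1}{2}\ddot\gamma + \dot a\,\zeta$.

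The converse is immediate by substituting \eqref{taylor-coefficients-determined} back into the three scalar equations and checking each holds to the stated order. The main technical obstacle is the interplay of two constraints when expanding the singular term: the map is confined to the quadric $|z|^2=1$, so both the projection identity $P(\zeta) = 1 - \zeta\zeta^T$ and the relation $\zeta^T\eta = 0$ are available, whereas $J$ in Lemma~\ref{J-in-local-coordinates} is written for an ambient extension off this quadric. Keeping track of which simplifications hold only after restriction to $Z$ is the central bookkeeping task; once $P(\zeta)\dot\gamma = 0$ is established, the cascade down to $a$, $\psi$, $\eta$, $\phi$ and $b$ is forced by matching successive powers of $t$.
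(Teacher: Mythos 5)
Your proof is correct and follows essentially the same route as the paper's: expand the three scalar equations for $\delb u(\del_t)$ in powers of $t$, kill the $t^{-1}$ singularity coming from the $2x^{-1}$ term to pin down $\zeta$ and $a$, then cascade through successive orders using $|z|^2=1$ (so $\zeta^T\eta=0$) and cross-product identities to obtain $\psi$, $b$, $\eta$, $\phi$. The only cosmetic difference is the overall factor of $\tfrac12$ in your definition of $\delb u(\del_t)$, which does not affect the vanishing orders.
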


\begin{proof}
Using Lemma~\ref{J-in-local-coordinates}, we write out the given condition on $\delb u(\del_t) = \del_t u - J(u)(\del_s u)$:
\begin{align}
\del_t x + z^T \del_s y &= O(t^{1+\alpha}),\label{x-dbar-u}\\
\del_t y - (\del_s x)z - R(z)(\del_s y) &=O(t^{1+\alpha}),\label{y-dbar-u}\\
\del_t z -2x^{-1}P(z)(\del_s y) + R(z)(\del_s z) &=O(t^{\alpha}).\label{z-dbar-u}
\end{align}
The leading order term on the left-hand side of~\eqref{z-dbar-u} is at $O(t^{-1})$. Setting this equal to zero gives
\[
P(\zeta)(\dot\gamma) = 0
\]
This means that $\zeta$ is parallel to $\dot \gamma$. The condition that $|z|^2=1$ means that $\zeta$ is unit length and so there are two possibilities: $\zeta = \pm |\dot\gamma|^{-1}\dot \gamma$. To fix the sign, consider the leading order term on the left of~\eqref{x-dbar-u}. This is at $O(1)$ and setting it equal to zero gives
\[
a + \zeta^T \dot \gamma = 0
\]
Now $a>0$ which means we must have $\zeta = - |\dot \gamma|^{-1}\dot \gamma$ and $a = |\dot \gamma|$ as claimed. 

Next we consider the leading order term on the left of~\eqref{y-dbar-u}. This is $O(1)$ and setting it equal to zero gives
\[
\psi - R(\zeta)(\dot \gamma) =0
\]
Since $\zeta$ is parallel to $\dot \gamma$, we deduce that $\psi = 0$. Turn now to the $O(t)$ term on the left of~\eqref{x-dbar-u}. This gives 
\[
2b + \eta^T\dot \gamma = 0
\]
The fact that $|z|^2=1$ means $\eta^T \zeta =0$ and, since $\zeta$ and $\dot \gamma$ are parallel, we deduce that $\eta^T\dot \gamma=0$ and so $b=0$. 

We next determine $\eta$. Setting the $O(1)$ term on the left of~\eqref{z-dbar-u} to zero gives
\[
\eta + \frac{2}{a}(\eta \zeta^T + \zeta \eta^T)\dot \gamma + R(\zeta)(\dot \zeta) =0
\]
(Here we have used the formula $P(z) = 1 - zz^T$ to expand $P(z)$ in powers of $t$.) Using the facts that $\zeta = -|\dot \gamma|^{-1}\dot \gamma$, that $\eta^T \dot \gamma =0$ and that $a =|\dot \gamma|$ we conclude that $\eta = R(\zeta)(\dot \zeta)$.

Finally, we determine $\phi$. Setting the $O(t)$ term in~\eqref{y-dbar-u} equal to zero gives
\[
2\phi -\dot{a} \zeta - R(\eta)(\dot \gamma) = 0.
\]
Note that $R(\eta)(\dot\gamma)=-a R(R(\zeta)(\dot\zeta))(\zeta)$. When vectors $u$ and $v$ are perpendicular, one checks that $R(R(u)(v))(u)=v$. Since $\zeta$ is unit length, $\zeta$ and $\dot \zeta$ are orthogonal and so $R(\eta)(\dot\gamma) = -a\dot{\zeta}$. This means that
\[
\phi = \frac{1}{2}(\dot{a}\zeta -a\dot{\zeta}) = \frac{1}{2} \ddot{\gamma} + \dot{a}\zeta
\]
as claimed.
\end{proof}

\begin{remark}
It is not possible to extend this expansion. Suppose that the components of $\delb u(\del_t)$ vanish to one higher order. If one attempts to solve for the next order coefficients in $x,y,z$ one finds that the terms coming from~\eqref{z-dbar-u} involving the $O(t^2)$-coefficient of $z$ all cancel, leaving it undetermined. This in turn makes it impossible to determine the $O(t^3)$ coefficients of $x$ and $y$. This phenomenon is well-known for minimal surfaces: for an asymptotically minimal surface, the ideal boundary uniquely determines its jet up to $O(t^2)$ but no further. This is shown in \cite{Alexakis-Mazzeo} for minimal surfaces in $\H^3$ and in \cite{Fine-Herfray} for minimal surfaces in general $n$-dimensional Poincaré--Einstein manifolds. We will see another manifestation of this when we compute the indicial roots of the linearised $J$-holomorphic equation in \S\ref{normal-operator-section}.
\end{remark}

\begin{remark}
There is a geometric interpretation to the first terms computed in Proposition~\ref{asymptotic-expansion}. The fact that $\psi = 0$ says that, under the twistor projection $\pi \colon Z \to \H^4$, the asymptotically minimal surface $(\pi \circ u)(\Sigma)$ meets the ideal boundary $\del \H^4$ at right angles. This can be proved directly using totally geodesic copies of $\H^3$ as barriers. 

To give the geometric explanation of $\zeta = - |\dot\gamma|^{-1}\dot\gamma$, we need a short digression. We work momentarily with the Euclidean metric on the closed unit ball $\overline{B} \subset \R^4$, which is conformally equivalent to the hyperbolic metric. Given a point $p \in S^3 = \del B$ on the boundary, and a unit tangent vector $v \in T_pS^3$, there is a unique compatible almost complex structure $j_v$ on $T_pB$ which sends $v$ to the inward unit normal $n$ at $p$. This is because $j_v$ must preserve the orthogonal complement $\left\langle v,n \right\rangle^\perp$ and its action here is determined by its action on $\left\langle v,n \right\rangle$ and the fact that it is orientation compatible. In this way, we set up an identification between $\del Z\to S^3$ and the $S^2$-bundle $P \to S^3$ of oriented directions in $TS^3$. The minimal surface $(\pi\circ u)(\Sigma)$ meets $S^3$ in a closed embedded curve---a knot, or link if $\del \Sigma$ has several components---and each component of this curve has two natural lifts to $P$ (depending on the orientation). Now Proposition~\ref{asymptotic-expansion} says that when $u$ is asymptotically $J$-holomorphic,  $u(\del \Sigma) \subset P \cong \del Z$ is precisely one of these lifts of $(\pi \circ u)(\del \Sigma) \subset S^3$.  
\end{remark}

\subsection{The edge geometry of twistor space}

As Lemma~\ref{J-in-local-coordinates} shows, $J$ blows up at the boundary of $\overline{Z}$. This means that given an arbitrary smooth map $u \colon \overline{\Sigma} \to \overline{Z}$ which sends $\del \Sigma$ to $\del Z$, the expression $\delb u$ doesn't make sense up to the boundary. It will be convenient to fix this by working with the edge tangent bundle of $Z$. Edge geometry gives a framework in which singularities like those appearing in $J$ can be treated as smooth up to the boundary. We briefly recall those parts of this theory that we will need and refer to \cite{Amman-Lauter-Nistor,Mazzeo} for more details. 

\begin{definition}
Let $\overline{X}$ be a compact manifold with boundary $\del X$. An \emph{edge structure} on $\overline{X}$ is a choice of a smooth submersion $\pi \colon \del X \to Y$ making the boundary into a fibre bundle over a compact manifold $Y$. The \emph{edge vector fields} on $\overline{X}$ are those vector fields which at points of $\del X$ are tangent to the fibres of $\pi$. We write $\Vect_e(\overline{X})$ for the set of edge vector fields, which is a Lie subalgebra of the algebra $\Vect(\overline{X})$ of all vector fields on $\overline{X}$. One special case of an edge structure is the \emph{$0$-structure on $\overline{X}$}; this is the edge structure in which the projection $\pi \colon \del X \to \del X$  is the identity map. The corresponding \emph{$0$-vector fields}, denoted $\Vect_0(\overline{X})$, are those that vanish on the boundary. 
\end{definition}

The cases of interest to us are the twistor space $\overline{Z}$ with edge structure $\del Z \to S^3$ given by the twistor projection, and a compact Riemann surface $\overline{\Sigma}$ with boundary, together with its $0$-structure. 

It is convenient to use coordinates which are adapted to an edge structure. The twistor coordinates from above on $\overline{Z}$ are just such a set of coordiantes. In general, given $p \in \del X$, we trivialise $\pi \colon \del X \to Y$ near $\pi(p)$. Choose coordinates $y_i$ on $Y$ near $\pi(p)$ and coordinates $z_\alpha$ on the fibre of $\pi$ through $p$.  These extend, via the local trivialisation of $\pi$, to functions which we continue to denote $y_i,z_\alpha$ defined on an open subset of $\del X$ containing $p$. To extend them further into the interior, we pick a collar neighbourhood of the boundary, i.e.\ an open set $U \subset \overline{X}$ containing $\del X$ and a diffeomorphism $U \cong \del X \times [0,\delta)$. The functions $y_i,z_\alpha$ on $\del X$ now extend into $U$ in the obvious way. Finally, projection $U \to [0,\delta)$ defines a function which we denote $x$. All together this gives a local system of coordinates $(x,y_i,z_\alpha)$ defined on an open set in $\overline{X}$ containing the boundary point $p \in \del X$.

\begin{definition}
In edge coordinates the edge vector fields are generated by $x\del_x, x\del_{y_i}$ and $\del_{z_\alpha}$, with coefficient functions which are smooth up to $x=0$. Locally then, $\Vect_e(\overline{X})$ is freely generated over $C^\infty(\overline{X})$ by $n+1$ vector fields (where $n+1 = \dim\overline{X})$.  It follows from the Serre--Swann Theorem that there is a rank-$(n+1)$ vector bundle $\e{TX} \to \overline{X}$ whose space of sections is isomorphic to $\Vect_e(\overline{X})$ as a $C^\infty(\overline{X})$-module. $\e{TX}$ is called the \emph{edge tangent bundle}. Equivalently, we can declare $\e{TX}$ to be isomorphic to $TX$ on the interior and trivial over any local edge coordinate patch $(x, y_i, z_\alpha)$ near the boundary as above. We then define the transition functions by the transformation rules obeyed by the $n+1$ vector fields $x \del_x, x \del_{y_i}, \del_{z_\alpha}$. The \emph{edge cotangent bundle} is the dual of $\e{TX}$, denoted  $\e{T^*X}$. It is locally spanned by the covectors $x^{-1}\diff x, x^{-1}\diff y_i, \diff z_\alpha$, again with coefficient functions which are smooth up to $x=0$.

It follows from the definition that the inclusion $\Vect_e(\overline{X}) \subset \Vect(\overline{X})$ is induced by a homomorphism of vector bundles $\e{T X} \to T\overline{X}$, called the \emph{anchor map}. This is an isomorphism on the interior whilst over the boundary it has image equal to $\ker \diff\pi$, i.e.\ the vectors tangent to the fibres of $\pi$.  In the case of a $0$-structure we talk of the $0$-tangent bundle $\z{T X}$.  
\end{definition}

\begin{definition}
An \emph{edge almost-complex structure} is an endomorphism $J$ of $\e{TX}$ which has $J^2 = -1$. 
\end{definition}

It follows from Lemma~\ref{J-in-local-coordinates} that the Eells--Salamon almost-complex structure $J$ on the hyperbolic twistor space $Z$ extends to $\e{TZ}$ to give an edge almost-complex structure on $\overline{Z}$. For the $0$-tangent bundle, there is a canonical isomorphism between $\End(\z{TX}) \cong \End(TX)$. This is because $\End(\z{TX}) = \z{T^*X} \otimes \z{TX}$ and $(x^{-1}\diff y_i )\otimes (x\del_{y_j}) = \diff y_i \otimes \del_{y_j}$. It follows that we can interpret the ordinary complex structure on a Riemann surface $\overline{\Sigma}$ with boundary as a complex structure on $\z{T\Sigma}$.

Of course one can consider edge singularities for any type of tensor. Another which we will use are edge metrics. 

\begin{definition}
An \emph{edge metric} is a fibrewise positive-definite inner-product $g$ on the the edge tangent bundle $\e{TX}$. In the case of the $0$~edge structure and $\z{TX}$, we talk of $0$-metrics.
\end{definition}

On the interior, $g$ is a Riemannian metric in the traditional sense, which becomes singular at $\del X$ with singularities controlled by the edge structure. The hyperbolic metric on $\H^4$ extends to a $0$-metric on $\overline{\H}^4$ whilst the twistor metric on $Z$ extends to an edge metric on $\overline{Z}$. Similarly, given a Riemann surface $\overline{\Sigma}$ with boundary, the natural complete hyperbolic metric on the interior $\Sigma$ extends to a $0$-metric on $\overline{\Sigma}$.

%
%

We next explain how to lift the differential of a map $u \colon \overline{\Sigma} \to \overline{Z}$ to a bundle map between the zero and edge tangent bundles. 

\begin{lemma}\label{lift-d-to-edge}
Let $u \colon \overline{\Sigma} \to \overline{Z}$ be $C^1$. We assume that $u(\overline{\Sigma}) \cap \del Z = \del \Sigma$ and that this intersection is transverse. Then there is a unique lift of $\diff u$, which we denote by $\diff_e u$, which makes the following diagram commute (here the vertical arrows are the anchor maps).
\[
\begin{CD}
  \z{T\Sigma} @>\diff_e u>> u^*\e{TZ}\\
  @VVV @VVV\\
  T\Sigma @>\diff u>> u^*TZ 
\end{CD}
\]
If $u$ is $C^{k,\alpha}$, then $\diff_e u \in C^{k-1,\alpha}(\z{T^*\Sigma} \otimes u^*\e{TZ})$
\end{lemma}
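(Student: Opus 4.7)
The plan is to establish uniqueness first, then produce the lift in local edge coordinates.

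For uniqueness, observe that on the interior $\Sigma$ both anchor maps $\z{T\Sigma}\to T\Sigma$ and $u^*\e{TZ}\to u^*TZ$ are isomorphisms, so the commuting square in the statement forces $\diff_e u$ to equal the conjugate of $\diff u$ by these isomorphisms on $\Sigma$. By continuity up to $\del \Sigma$, this determines $\diff_e u$ on all of $\overline{\Sigma}$.

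For existence, I would work locally near a boundary point $p\in \del\Sigma$ with $u(p)\in\del Z$. Choose local edge coordinates $(x,y_i,z_\alpha)$ on $\overline{Z}$ near $u(p)$, with $x$ a boundary defining function, $y_i$ coordinates on $S^3$, and $z_\alpha$ coordinates on the fibre $S^2$. Pick coordinates $(s,t)$ on $\overline{\Sigma}$ near $p$, with $t=0$ on $\del\Sigma$ and $t>0$ inside. Locally $\e{TZ}$ is framed by $(x\del_x, x\del_{y_i}, \del_{z_\alpha})$ and $\z{T\Sigma}$ by $(t\del_s, t\del_t)$. Writing $u = (x(s,t), y_i(s,t), z_\alpha(s,t))$, the assumption $u(\del\Sigma)\subset\del Z$ gives $x(s,0)=0$, which automatically forces $\del_s x|_{t=0}=0$. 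Transversality of the intersection is then equivalent to $\del_t x|_{t=0}>0$. Hadamard's lemma yields a factorisation $x(s,t)=t\tilde x(s,t)$ with $\tilde x$ positive on $\{t=0\}$, and with $\tilde x\in C^{k-1,\alpha}$ when $u\in C^{k,\alpha}$.

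The diagram then forces the explicit formula
\[
\diff_e u(t\del_s) = \frac{\del_s x}{\tilde x}(x\del_x) + \frac{\del_s y_i}{\tilde x}(x\del_{y_i}) + (t\del_s z_\alpha)\,\del_{z_\alpha},
\]
with an analogous expression for $\diff_e u(t\del_t)$. The key algebraic identities are $(t\del_\bullet x)/x = (\del_\bullet x)/\tilde x$ and $(t\del_\bullet y_i)/x = (\del_\bullet y_i)/\tilde x$. Each coefficient is now a product or quotient of $C^{k-1,\alpha}$ functions with the non-vanishing $C^{k-1,\alpha}$ function $\tilde x$, and is therefore itself $C^{k-1,\alpha}$. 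This continuously extends the interior lift to the boundary with the claimed regularity.

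The only real obstacle is reconciling the $x^{-1}$ singularities carried by every edge cotangent vector with the factors of $t$ carried by every $0$-tangent vector: these cancel precisely when $t/x=1/\tilde x$ extends continuously and non-vanishingly to $\{t=0\}$, which is exactly what transversality provides through Hadamard. Everything else is bookkeeping.
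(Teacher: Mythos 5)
Your proof is correct and follows essentially the same idea as the paper's, but takes a slightly more computational route. The paper dispatches the whole issue in one line by making the particular choice of boundary defining function $t = x \circ u$ (legitimate precisely because of the transversality hypothesis), so that $t$ and the pulled-back $x$ are literally the same function on $\overline{\Sigma}$ and the Hadamard quotient $\tilde x = x/t$ is identically $1$. With that choice the paper simply defines $\diff_e u(tv) = t\,\diff u(v)$ and observes this is automatically a section of $u^*\e{TZ}$. You instead keep a general $t$ and absorb the ratio into $\tilde x$ via Hadamard's lemma, which costs you the extra bookkeeping of the $1/\tilde x$ factors in the explicit formula, but has the minor pedagogical advantage of showing that the construction is independent of the choice of boundary defining function rather than relying on a privileged one. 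Both establish uniqueness the same way (by density of the interior, where the anchor maps are isomorphisms), and both correctly track the drop to $C^{k-1,\alpha}$ regularity. So: same argument at heart; the paper's version is shorter because it picks coordinates adapted to the problem, yours is marginally more explicit about why the cancellation of $x^{-1}$ against $t$ works for an arbitrary $t$.
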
  
\begin{proof}
The transversality condition on $u$ means that we can use $t = x \circ u$ as a boundary defining function on $\overline{\Sigma}$. Any section of $\z{T\Sigma}$ has the form $tv$ for $v$ a section of $T\Sigma$. We then set $\diff_eu(tv) = t \diff u(v)$, which is a section of $u^*\e{TZ}$.
\end{proof}

Now both $\z{T\Sigma}$ and $\e{TZ}$ are complex vector bundles and so it makes sense to talk about the anti-linear part of $\diff_e u$, which agrees with $\delb u$ over the interior. This gives the following.

\begin{corollary}\label{edge-dbar}
Let $u \colon \overline{\Sigma} \to \overline{Z}$ be $C^1$. We assume that $u(\overline{\Sigma}) \cap \del Z = \del \Sigma$ and that this intersection is transverse. Then $\delb u$, initially only defined over the interior, extends up to the boundary as a section of $\z{T^*\Sigma}^{0,1} \otimes u^*\e{TZ}$. If $u$ is $C^{k,\alpha}$ then $\delb u \in C^{k-1,\alpha}(\z{T^*\Sigma}^{0,1} \otimes u^*\e{TZ})$.
\end{corollary}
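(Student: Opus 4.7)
The plan is to extract the statement directly from Lemma~\ref{lift-d-to-edge}, by noting that the $(0,1)$-part of a real bundle map between complex vector bundles is a functorial operation which commutes with restriction to the interior. So the proof is essentially one paragraph.

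First I would recall the two complex structures involved. On the domain side, the ordinary complex structure $j$ of the Riemann surface $\overline{\Sigma}$ can be interpreted as an endomorphism of $\z{T\Sigma}$, as noted in the discussion following the definition of edge almost-complex structures (using the canonical isomorphism $\End(\z{TX}) \cong \End(TX)$ coming from the fact that each factor $x$ in $\z{T^*X}$ cancels with a factor $x^{-1}$ in $\z{TX}$). On the target side, Lemma~\ref{J-in-local-coordinates} shows that the Eells--Salamon almost-complex structure on $Z$ extends to an edge almost-complex structure $J$ on $\overline{Z}$; pulling it back via $u$ gives a fibrewise complex structure on $u^*\e{TZ}$.

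Next I would apply Lemma~\ref{lift-d-to-edge} to obtain $\diff_e u \in C^{k-1,\alpha}(\z{T^*\Sigma} \otimes u^*\e{TZ})$ as a lift of $\diff u$. Since the domain and codomain of $\diff_e u$ are both complex vector bundles up to the boundary, we may decompose
\[
\diff_e u = (\diff_e u)^{1,0} + (\diff_e u)^{0,1}
\]
into its complex-linear and complex-antilinear parts. Both parts are sections of the corresponding bundles, and both inherit the $C^{k-1,\alpha}$ regularity of $\diff_e u$. In particular, $(\diff_e u)^{0,1}$ is a section of $\z{T^*\Sigma}^{0,1} \otimes u^*\e{TZ}$ of class $C^{k-1,\alpha}$.

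The one thing to check is that this extension really is $\delb u$ on the interior. Over $\Sigma$ the anchor maps $\z{T\Sigma} \to T\Sigma$ and $\e{TZ} \to TZ$ are complex-linear isomorphisms (as the edge complex structures on both sides were constructed precisely to match the interior ones under the anchor). Thus the commutative diagram in Lemma~\ref{lift-d-to-edge} identifies $\diff_e u$ with $\diff u$ on the interior, and consequently identifies the anti-linear parts: $(\diff_e u)^{0,1}|_\Sigma = (\diff u)^{0,1} = \delb u$, which is precisely the required extension. There is no real obstacle here---the whole point of setting up the edge tangent bundle and edge almost-complex structure in the previous subsection was precisely to make this functorial decomposition available up to the boundary.
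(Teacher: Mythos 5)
Your proposal is correct and matches the paper's own (very brief) argument: the paper simply notes, immediately before stating the corollary, that since both $\z{T\Sigma}$ and $\e{TZ}$ are complex vector bundles one can take the anti-linear part of $\diff_e u$ from Lemma~\ref{lift-d-to-edge}, and this agrees with $\delb u$ over the interior. Your extra check that the anchor maps intertwine the complex structures, so that the anti-linear part really does restrict to $\delb u$ on $\Sigma$, is exactly the implicit verification the paper is relying on.
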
  

Recall for a moment Proposition~\ref{asymptotic-expansion}, which shows that for a map $u \colon \overline{\Sigma} \to \overline{Z}$,  the asymptotic decay of $\delb u$ constrains the Taylor expansion of $u$ at $\del \Sigma$. We can now interpret the asymptotic decay condition~\eqref{asymptotically-holomorphic} as follows: it says precisely that the section $\delb u$ of $\z{T^*\Sigma}^{0,1} \otimes u^*\e{TZ}$ is $O(t^{1+\alpha})$ when measured with respect to the $0$-metric on $\overline{\Sigma}$ and edge metric on $\overline{Z}$. This is because $t\del_t$ has bounded length in $\z{T\Sigma}$ and $t\del_x, t\del_y, \del_z$ have bounded length in $\e{TZ}$.

The final piece of edge geometry we will need is that of edge connections. 

\begin{definition}\label{edge-connection}
Let $\overline{X}$ be a compact manifold with boundary and edge structure $\pi \colon \del X \to Y$. Let  $\overline{E} \to \overline{X}$ be a vector bundle and write $E \to X$ for the restriction of $\overline{E}$ to the interior. An \emph{edge connection} $\nabla$ in $\overline{E}$ is a connection in $E \to X$ of the form $\nabla = D + A$ where $D$ is a genuine connection in $\overline{E}$ and $A \in \Gamma(T^*X \otimes \End(E))$ extends up to the boundary as a section of $\e{T^*X}\otimes \End(\overline{E})$. 

Equivalently, a connection $\nabla$ in $E \to X$ is an edge connection in $\overline{E}$ if given any section $s \in \Gamma(\overline{E})$ and edge vector field $V \in \Vect_e(\overline{X})$, the section $\nabla_V(s)$, defined a~priori only over $X$, extends to a section of $\overline{E} \to \overline{X}$. 

When the edge structure is a 0-structure, we say $\nabla$ is a \emph{0-connection}. 
\end{definition}

One readily checks that the tensor product of edge connections is again an edge connection. Similarly an edge connection in $\overline{E}$ induces a dual edge connection in $\overline{E}^*$ in the obvious way.

Now let $h$ be an edge metric on $\overline{X}$. Over the interior, $h$ is an ordinary Riemannian metric and so determines a Levi-Civita connection on $TX$. The next lemma asserts that this extends up to the boundary as an edge connection on $\e{TX}$. For a proof see, for example, \cite{Amman-Lauter-Nistor}.

\begin{lemma}
Given an edge metric on $\overline{X}$, the Levi-Civita connection on $T X \to X$ (or  any associated tensor bundle) extends to an edge connection on $\e{T X} \to \overline{X}$. 
\end{lemma}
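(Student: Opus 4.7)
The plan is to apply the Koszul formula directly in a local edge frame and verify that all terms remain smooth up to the boundary. First I would fix local edge coordinates $(x, y_i, z_\alpha)$ near a boundary point and work with the associated local edge frame $e_I = \{x\del_x,\, x\del_{y_i},\, \del_{z_\alpha}\}$ of $\e{TX}$. Since the edge metric $h$ is a fibrewise inner product on $\e{TX}$ which is smooth and non-degenerate up to $\del X$, the Gram matrix $h_{IJ} := h(e_I, e_J)$ and its inverse $h^{IJ}$ are smooth functions on $\overline{X}$ up to the boundary.

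The key observation is that $\Vect_e(\overline{X})$ is a Lie subalgebra of $\Vect(\overline{X})$, so Lie brackets of the frame fields again lie in $\Vect_e(\overline{X})$: writing $[e_I, e_J] = c_{IJ}^K e_K$, the structure functions $c_{IJ}^K$ are smooth up to the boundary (a short direct computation gives, e.g., $[x\del_x, x\del_{y_i}] = x\del_{y_i}$). Applied to the edge frame, the Koszul formula for the interior Levi-Civita connection of the Riemannian metric induced by $h$ on $X$ reads
\begin{equation*}
2 h(\nabla_{e_I} e_J, e_K) = e_I(h_{JK}) + e_J(h_{IK}) - e_K(h_{IJ}) + h([e_I, e_J], e_K) - h([e_I, e_K], e_J) - h([e_J, e_K], e_I).
\end{equation*}
Each term on the right is smooth up to $\del X$: the $e_I$ act as derivations on $C^{\infty}(\overline{X})$, and $h$ paired with edge vector fields gives smooth functions. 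Non-degeneracy of $h$ up to the boundary then uniquely determines the coefficients $\Gamma_{IJ}^K$ of $\nabla_{e_I} e_J = \Gamma_{IJ}^K e_K$, and these are smooth on $\overline{X}$. By the Leibniz rule, $\nabla_V s$ extends to a smooth section of $\e{TX}$ for every edge vector field $V$ and every $s \in \Gamma(\e{TX})$, which is exactly the equivalent characterisation of an edge connection given in Definition~\ref{edge-connection}.

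For associated tensor bundles, I would invoke the elementary fact that tensor products and duals of edge connections are again edge connections. This is visible directly in local edge frames: if $\nabla, \nabla'$ are edge connections on $\overline{E}, \overline{F}$, then the Leibniz formula $(\nabla \otimes 1 + 1 \otimes \nabla')_V (s \otimes t) = \nabla_V s \otimes t + s \otimes \nabla'_V t$ and the dual formula $(\nabla^* \xi)(s) = V(\xi(s)) - \xi(\nabla_V s)$ produce sections which are smooth up to $\del X$. Iterating shows the induced Levi-Civita connection on any tensor bundle $\e{TX}^{\otimes p} \otimes \e{T^*X}^{\otimes q}$ is an edge connection.

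No serious obstacle is anticipated: the content is essentially bookkeeping in an edge frame, and the only structural input needed beyond the definitions is that $\Vect_e(\overline{X})$ is a Lie algebra, which is built into the notion of edge structure. The slightly delicate point is keeping track of which of the three natural smoothness conditions (of $h_{IJ}$, of the bracket coefficients $c_{IJ}^K$, and of $e_I$ acting as a derivation on $C^\infty(\overline{X})$) is being used in each term of the Koszul identity.
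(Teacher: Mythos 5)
The paper does not prove this lemma itself; it defers to Ammann--Lauter--Nistor, and your Koszul-formula argument in a local edge frame is the standard proof (and almost certainly the one in that reference). Your bookkeeping is correct: the three smoothness inputs you isolate (smoothness of $h_{IJ}$ and its inverse up to $\del X$, closedness of $\Vect_e(\overline{X})$ under brackets so the $c_{IJ}^K$ are smooth, and the fact that edge vector fields act as derivations on $C^\infty(\overline{X})$) are exactly what is needed, and the Leibniz-rule step correctly converts the frame computation into the equivalent characterisation of Definition~\ref{edge-connection}.
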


When taken together with Lemma~\ref{lift-d-to-edge} this gives the following:

\begin{lemma}
Let $u \colon \overline{\Sigma} \to \overline{Z}$ be $C^1$. We assume that $u(\overline{\Sigma})\cap \del Z = \del \Sigma$ and that this intersection is transverse. If $\overline{E} \to \overline{Z}$ carries an edge connection $\nabla$, then $u^*\nabla$ is a $0$-connection in $u^*\overline{E} \to \overline{Z}$. In particular, the pull-back of the Levi-Civita connection via $u$ is a $0$-connection in $u^*\e{TZ}$.
\end{lemma}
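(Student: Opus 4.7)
The plan is to use the equivalent characterization of $0$-connections given in Definition~\ref{edge-connection}: it suffices to show that for every $0$-vector field $V \in \Vect_0(\overline{\Sigma})$ and every smooth section $s \in \Gamma(u^*\overline{E})$, the section $(u^*\nabla)_V s$, defined a priori only over the interior $\Sigma$, extends smoothly to $\overline{\Sigma}$. By the Leibniz rule and a partition of unity subordinate to a local trivialisation of $\overline{E}$, it is enough to verify this for sections of the form $s = u^* t$ with $t$ a smooth local section of $\overline{E}$. Decomposing $\nabla = D + A$ with $D$ a genuine connection on $\overline{E}$ and $A \in \Gamma(\e{T^*Z} \otimes \End(\overline{E}))$, one has over $\Sigma$
\[
(u^*\nabla)_V(u^* t) \;=\; u^*\bigl(D_{\diff u(V)} t\bigr) + u^*\bigl(A(\diff u(V))\, t\bigr),
\]
so it suffices to show that each of the two summands extends smoothly up to $\del \Sigma$.

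The key input is Lemma~\ref{lift-d-to-edge}: since $V$ is a smooth section of $\z{T\Sigma}$, its image $\diff u(V)$, defined a priori only over $\Sigma$, lifts to $\diff_e u(V) \in \Gamma(u^*\e{TZ})$, smooth over all of $\overline{\Sigma}$. In other words, $\diff u(V)$ need not extend as a genuine vector field along $u$, but it does so as an \emph{edge} vector. From here both terms extend. For the $D$-term, post-compose $\diff_e u(V)$ with the anchor map $\e{TZ} \to TZ$ to obtain a smooth section of $u^*TZ$ over $\overline{\Sigma}$ (whose component transverse to $\del Z$ vanishes at the boundary); pairing with the genuine connection $D$ produces the smooth section $D_{\diff u(V)} t$ of $u^*\overline{E}$. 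For the $A$-term, pair $A \in \Gamma(\e{T^*Z}\otimes\End(\overline{E}))$ with $\diff_e u(V) \in \Gamma(u^*\e{TZ})$ via the canonical dual pairing $\e{T^*Z} \otimes \e{TZ} \to \R$, which is smooth up to the boundary by construction of the edge (co)tangent bundle; this yields a smooth section of $u^*\End(\overline{E})$, which acts on $t$ to give a smooth section of $u^*\overline{E}$. Pulling back through $u$ preserves smoothness, so $(u^*\nabla)_V(u^*t)$ extends as required.

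There is no genuine obstacle here beyond bookkeeping: one must check that the ``interior-only'' definition of $u^*\nabla$ agrees, after extension, with the ``edge'' interpretation obtained through Lemma~\ref{lift-d-to-edge}, but this is automatic because the anchor maps are isomorphisms on $\Sigma$ and $\diff_e u$ is defined precisely to lift $\diff u$ there. The final clause about the Levi-Civita connection follows at once, since the preceding lemma asserts that it extends to an edge connection on $\e{TZ}$.
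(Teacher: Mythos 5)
Your proof is correct and follows exactly the route the paper intends: the paper states this lemma without an explicit proof, remarking only that it is obtained by combining Lemma~\ref{lift-d-to-edge} with the preceding lemma on edge connections, and your argument fills in precisely that combination, splitting $\nabla = D + A$ and using $\diff_e u$ to pair the edge part with a $0$-vector field. The only minor caveat is that since $u$ is merely $C^1$, ``extends smoothly'' should be read as ``extends continuously'' (or with whatever regularity $\diff_e u$ inherits), but this does not affect the substance.
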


\section{Fredholm theory}
\label{Fredholm-theory}

In this section we lay out the Fredholm theory for $J$-holomorphic curves $u \colon \overline{\Sigma} \to \overline{Z}$. The main results in this section are:
\begin{itemize}
\item Theorem~\ref{smooth-moduli-space}: the moduli space of $J$-holomorphic curves is an infinite dimensional Banach manifold, proved in~\S\ref{smooth-moduli-space-section}. 
\item Theorem~\ref{ev-is-transverse}: the evaluation map on the moduli space of pointed $J$-holomorphic curves is a submersion, proved in~\S\ref{ev-is-transverse-section}.
\item Theorem~\ref{branch-points-codim2}: the $J$-holomorphic curves with branch points form a set of codimension 4 in the moduli space, whilst the $J$-holomorphic curves which project to minimal surfaces with branch points have codimension 2; this is proved in~\S\ref{branch-points-codim2-section}.
\item Theorem~\ref{boundary-map-Fredholm}: the map sending a $J$-holomoprhic curve $u$ to its boundary $\pi(u(\del \Sigma))$ in $S^3$ is Fredholm with index zero, proved in~\S\ref{boundary-map-Fredholm-section}. 
\end{itemize}

\subsection{Weighted Hölder spaces}

We begin with the Banach norms which are adapted to our setting. These are standard in the study of $0$-differential operators. They make sense in any dimension, but we will apply them exclusively to bundles over a compact surface $\overline{\Sigma}$ with boundary. We fix a choice of $0$-metric $h$ on the interior $\Sigma$ (for example a complete hyperbolic metric). This metric determines Hölder norms on functions which we denote by $\Lambda^{k,\alpha}$. Given a vector bundle $\overline{E} \to \overline{\Sigma}$ with fibrewise metric and compatible $0$-connection (Definition~\ref{edge-connection}), we can talk about the $\Lambda^{k,\alpha}$ norm of a section of $\overline{E}$, defined using the metric $h$ its Levi-Civita connection. It is a direct consequence of the definition of a $0$-connection that different choices of $0$-connection give equivalent $\Lambda^{k,\alpha}$ norms. (As always with Hölder norms on sections, one must decide how to compare values of a section at different points. See, for example, \S3 of \cite{Lee} for a careful discussion of one way to do this in our setting.)

We use the $\Lambda$ notation to distinguish this norm from the usual $C^{k,\alpha}$-norm as defined with respect to a metric which is \emph{smooth} up to the boundary of~$\overline{\Sigma}$. It is important to note that the $\Lambda^{k,\alpha}$-norm is weaker than the $C^{k,\alpha}$-norm. For example, in the half-space model of the hyperbolic plane, with $h= t^{-2}(\diff s^2 + \diff t^2)$, the $\Lambda^1$-norm of a function gives uniform bounds on $t\del_t f$ and $t\del_s f$, whereas the $C^1$-norm gives uniform bounds on $\del_t f$ and $\del_s f$ which is stronger. 

We will also use weighted versions of these Hölder spaces. Let $t \colon \overline{\Sigma} \to [0,\infty)$ be a boundary defining function, i.e.~$t^{-1}(0) = \del \Sigma$ and $\diff t$ is non-zero at all points of the boundary. Given $\delta\in \R$, we write
\[
t^\delta \Lambda^{k,\alpha} = \{ t^\delta f : f \in \Lambda^{k,\alpha}\}
\]
and we use the norm
\[
\| \phi \|_{t^\delta\Lambda^{k,\alpha}} = \| t^{-\delta}\phi\|_{\Lambda^{k,\alpha}}
\]
One checks that different choices of boundary defining functions lead to the same weighted spaces and equivalent norms. 

Despite their differences, there are relationships between the $C^{k,\alpha}$ Hölder spaces and the weighted $\Lambda^{k,\alpha}$ spaces. To describe this we need to introduce some notation. The boundary defining function $t$ determines a rescaled metric $\overline{h} = t^2h$ which is smooth up to the boundary. We write $\nabla_t$ for differentiation along the gradient flow of $t$ with respect to $\overline{h}$. Given an integer $0\leq j \leq k$, we define a continuous linear map $T_j$ by
\begin{equation}
\begin{aligned}
T_j 
	&\colon 
		C^{k,\alpha}(\overline{\Sigma}) \to 
			C^{k,\alpha}(\del \Sigma) \oplus 
			C^{k-1,\alpha}(\del \Sigma) \oplus \cdots \oplus 
			C^{k-j,\alpha}(\del \Sigma)\\
T_j(f) & = 
		\left(f|_{\del \Sigma} , \nabla_t f|_{\del \Sigma}, \ldots, \nabla_t^j f|_{\del \Sigma} \right)
\end{aligned}\label{taylor-map}
\end{equation}

\begin{definition}\label{vanishing-at-boundary}
We write $C^{k,\alpha}_j = \ker T_{j}$ for the subset of functions which vanish up to order $j$ along $\del \Sigma$. Note that $C^{k,\alpha}_j$ does not depend on the choice of boundary defining function $t$ (even though the map $T_j$ does). $C^{k,\alpha}_j$ is a a closed subspace of $C^{k,\alpha}$ and so is a Banach space with the same $C^{k,\alpha}$ norm. 
\end{definition}

The following is proved in Lemmas~3.1 and~3.7 in Lee's monograph \cite{Lee}. (Warning, our notation differs: Lee writes $C^{k,\alpha}_{(s)}$ for the subspace of functions which are $O(t^s)$. By Lee's Lemma~3.1, his space $C^{k,\alpha}_{(j+1)}$ is equal to our space $C^{k,\alpha}_j$.)

\begin{lemma}[Consequences of Lemmas~3.1 and~3.7 of \cite{Lee}]\label{normal-and-zero-holder}
For $1\leq j \leq k$, there is a continuous inclusion
\[
C^{k,\alpha}_j \hookrightarrow t^{j+\alpha}\Lambda^{k,\alpha}
\]
In the other direction, there is a continuous inclusion
\[
t^{j+\alpha}\Lambda^{k,\alpha} \hookrightarrow C^{j,\alpha}_j
\]
In particular the Banach spaces $t^{k+\alpha}\Lambda^{k,\alpha}$ and $C^{k,\alpha}_k$ are isomorphic.
\end{lemma}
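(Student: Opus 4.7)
The proof is essentially local near the boundary, so I would first reduce to a model calculation via a partition of unity. In boundary-adapted coordinates $(t,y_1,\ldots,y_{n-1})$ where $t$ is the given boundary defining function, the $0$-metric $h$ is quasi-isometric to the model $t^{-2}(\diff t^2 + \sum \diff y_i^2)$ while $\overline{h}=t^2 h$ extends smoothly to the boundary. In this picture, bounded $\Lambda^{k,\alpha}$-norm corresponds to bounded $(t\partial_t)^a(t\partial_y)^b$ derivatives (up to order $k$) with H\"older continuity measured in the hyperbolic distance, while bounded $C^{k,\alpha}$-norm corresponds to bounded $\partial_t^a \partial_y^b$ derivatives with H\"older continuity measured in Euclidean distance.

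For the first inclusion, let $f\in C^{k,\alpha}_j$. Since $\nabla_t^j f\in C^{k-j,\alpha}\subset C^{0,\alpha}$ and vanishes on $\del\Sigma$, one has $\nabla_t^j f=O(t^\alpha)$; integrating $j$ times in the normal direction gives $f=O(t^{j+\alpha})$. Setting $g=t^{-(j+\alpha)}f$, the goal is to bound 0-derivatives $(t\partial)^m g$ for $m\le k$. The key identity is that for each $0$-vector field $V$, $V(t)/t$ is a smooth function up to the boundary, so applying $V$ to $t^{-(j+\alpha)}f$ produces a combination of terms of the form $t^{-(j+\alpha)+s}\partial^r f$ with $r\leq s$; iterating, every $(t\partial)^m g$ becomes a sum of terms $t^{-(j+\alpha)+m'}\partial^{m'}f$ for $0\le m'\le m\le k$, and each such term is bounded (indeed $O(t^{\min(0,\,j+\alpha-m')})$) because $f\in C^{k,\alpha}_j$. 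The H\"older seminorm in the $0$-metric then follows from the comparison between hyperbolic and Euclidean distances for points at height $\gtrsim t$.

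For the second inclusion, write $f=t^{j+\alpha}g$ with $g\in \Lambda^{k,\alpha}$ and compute $\partial_t^m f$ by Leibniz: a typical term is $t^{j+\alpha-\ell}(\partial_t^{m-\ell}g)$. Since $g$ has $k$ bounded $0$-derivatives, one can write $\partial_t^{m-\ell}g = t^{-(m-\ell)}\sum_{|I|\le m-\ell}a_I(t\partial)^I g$ with $a_I$ bounded; the whole contribution is $t^{j+\alpha-m}\cdot(\text{bounded})$ provided $m-\ell\le k$, hence $m\le j$ suffices. Thus $\partial_t^m f$ is bounded for $m\le j-1$ and $\partial_t^j f=O(t^\alpha)$, so $f\in C^{j,\alpha}$ with all normal derivatives of order $\le j-1$ vanishing on $\del\Sigma$ and $\partial_t^j f\to 0$ there, i.e.\ $f\in C^{j,\alpha}_j$. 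Tangential derivatives $\partial_{y_i}^a$ are handled identically (they commute with $t$), and the H\"older bound on $\partial_t^j f=t^\alpha\tilde g$ follows from the standard fact that $t^\alpha$ times a bounded $\Lambda^{0}$ function is H\"older in the smooth metric, which is exactly the content of Lee's Lemma~3.1 cited in the statement.

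The main technical obstacle is the last step: controlling the $C^{j,\alpha}$ H\"older seminorm of the top normal derivative from only $\Lambda$-type regularity of $g$. This requires carefully comparing H\"older seminorms computed in the hyperbolic versus Euclidean metrics on cubes of side $\sim t$ stacked towards the boundary, which is the heart of the model computation in \cite{Lee}. Once that comparison is established, the full lemma follows by patching via partition of unity, and setting $j=k$ yields the isomorphism $C^{k,\alpha}_k\cong t^{k+\alpha}\Lambda^{k,\alpha}$.
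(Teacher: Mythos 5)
The paper itself offers no proof of this lemma: it simply cites Lemmas 3.1 and 3.7 of Lee's monograph and explains the translation of notation. Your proposal therefore cannot ``take the same approach as the paper''; it attempts the argument the paper delegates. That said, your outline follows the standard route and is essentially sound.

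A few remarks. The mechanism you describe --- expand $(t\partial)^m$ applied to $t^{-(j+\alpha)}f$, or $\partial_t^m$ applied to $t^{j+\alpha}g$, via Leibniz, collect terms of the form $t^{\pm(j+\alpha)\mp m'}\partial^{m'}(\cdot)$, and use vanishing of the Taylor coefficients through order $j$ on one side and boundedness of the $0$-derivatives on the other --- is the right one, and you correctly identify that the essentially hard step is the comparison of the Hölder seminorms computed in the degenerate and nondegenerate metrics, which is what Lee's Lemma~3.7 establishes. Deferring that step to Lee is consistent with how the paper itself uses the result.

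There is, however, a sign error in your exponent. You claim each term $t^{-(j+\alpha)+m'}\partial^{m'}f$ is $O\bigl(t^{\min(0,\,j+\alpha-m')}\bigr)$. Unpacking: for $m'\leq j$ the vanishing gives $\partial^{m'}f=O(t^{j+\alpha-m'})$ so the product is $O(1)$; for $m'>j$ one only has $\partial^{m'}f=O(1)$, but then the prefactor $t^{m'-j-\alpha}$ decays since $m'-j-\alpha\geq 1-\alpha>0$. So the correct estimate is $O\bigl(t^{\max(0,\,m'-j-\alpha)}\bigr)$; your expression $t^{\min(0,\,j+\alpha-m')}$ is the reciprocal of this and would actually blow up when $m'>j$. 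Since the conclusion you draw (each term is bounded) is what matters and is correct, this is only a typographical slip, but it should be fixed.

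Two smaller points worth being explicit about if you flesh this out. First, in the second inclusion you need $j+\alpha-m\geq 0$ so that $t^{j+\alpha-m}$ does not blow up, which is exactly why the target space carries only $j$ (not $k$) ordinary derivatives; you gesture at this with ``hence $m\le j$ suffices'' but it deserves a sentence. Second, the purely-normal estimate $\nabla_t^j f=O(t^\alpha)$ from vanishing of the $j$-th Taylor coefficient and $C^{0,\alpha}$ regularity, followed by integrating $j$ times, must be done with some care when mixed tangential and normal derivatives are present; the partition of unity and boundary-adapted coordinates you invoke at the outset take care of this, but the integration step really concerns a boundary value problem in the collar and one should check the tangential derivatives commute through.
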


A final technical remark: when we talk of ``the Hölder space $\Lambda^{k,\alpha}$'' we mean the completion of smooth functions in the $\Lambda^{k,\alpha}$-norm. This space is \emph{smaller} than the space of $k$-times differentiable functions for which the $\Lambda^{k,\alpha}$ norm is finite. A similar remark applies to the Hölder space $C^{k,\alpha}$. The reason we use the smaller Hölder spaces is that they are separable, and this will be important later on, when we come to apply the Sard--Smale theorem.

\subsection{Brief review of 0-elliptic operators} \label{0-calculus-review}

Next we turn to the class of differential operators we will use, giving a very brief overview of the results we will need from the $0$-calculus, developed by Mazzeo and Melrose \cite{Mazzeo-Melrose,Mazzeo}. To avoid introducing more notation, we focus on $0$-differential operators over a compact surface $\overline\Sigma$ with boundary although the general theory is insensitive to dimension.

Let $D$ be a differential operator acting on functions over $\overline{\Sigma}$. We choose coordinates $(s,t)$ centred on a point of the boundary $p \in \del \Sigma$, with $t \geq0$ a boundary defining function. In these coordinates we write
\begin{equation}
D(f) = \sum_{i,j} a_{i,j}(s,t)(t\del_t)^i(t\del_s)^j f
\label{D-in-coordinates}
\end{equation}
\begin{definition}
$D$ is a $0$-differential operator with smooth coefficients if $a_{i,j}$ are smooth up to $t =0$. Similarly, $D$ is a $0$-differential operator with $C^{k,\alpha}$ coefficients if $a_{i,j}$ are of class $C^{k,\alpha}$ up to $t=0$. 
\end{definition}
Notice that if we defined the coefficients of a $0$-differential operator in the ordinary way, relative to $\del_t,\del_s$ rather than $t\del_t, t\del_s$, then they would vanish on the boundary (with the exception of $a_{0,0}$).  

The definition of $0$-differential operators acting on sections of bundles is similar. One first chooses local trivialisations of the bundles; the coefficients $a_{i,j}$ are now matrix-valued functions, but otherwise the definition is unchanged. 

It is a simple matter to check that a $0$-differential operator of order $r$ is a bounded linear map between the weighted Hölder spaces, $t^\delta \Lambda^{k+r,\alpha} \to t^\delta \Lambda^{k,\alpha}$. The $0$-calculus gives conditions on both $D$ and $\delta$ under which these maps are Fredholm.

\begin{definition}
Given a $0$-differential operator $D$ of order $r$, the $0$-symbol of $D$ in the direction $(p,q)$ is
\[
\sigma_0(D)(p,q) = \sum_{i+j=r} a_{i,j}(s,t) p^i q^j
\]
When this is an isomorphism for all $(p,q) \neq 0$, we say that $D$ is $0$-elliptic.
\end{definition}

When $D$ is $0$-elliptic its symbol defined in the \emph{ordinary} sense vanishes at $t=0$. This means that $0$-ellipticity alone is not enough to ensure Fredholm properties. We need to consider another model operator which is used to locally invert $D$ near the boundary. (This is the analogue of the constant coefficient differential operators used to locally invert $D$ at interior points.)

\begin{definition}\label{0-calculus-definitions}
Let $p\in \del \Sigma$ and $D$ be a $0$-differential operator. Choose coordinates $(s,t)$ centred on $p$ with $t \geq 0$ a boundary defining function. Suppose $D$ has the local expression~\eqref{D-in-coordinates}.
\begin{enumerate}
\item
The \emph{normal operator} of $D$ at $p$ is the $0$-differential operator acting on vector-valued functions on $\H^2$ given by
\[
N_p(D) = \sum_{i,j} a_{i,j}(0,0)(\tau \del_\tau)^i (\tau \del_\sigma)^j
\]
Here $(\sigma, \tau)$ are half-space coordinates on $\H^2$ with $\tau >0$ and the $a_{i,j}$ are the coefficient matrices in the expression~\eqref{D-in-coordinates}.
\item
The \emph{indicial polynomial} of $D$ at $p$ is the matrix-valued polynomial of the variable $\lambda \in \C$ given by
\[
I_p(D)(\lambda) = \sum_{i,j} a_{i,j}(0,0) \lambda^i
\]
\item
The \emph{indicial roots} of $D$ at $p$ are the values of $\lambda \in \C$ for which $I_p(D)(\lambda)$ has kernel, i.e.\ the solutions of the polynomial $\det \left(I_p(D)(\lambda)\right) = 0$.
\item
An indicial root is called \emph{simple}, if its multiplicity as a root of  $\det \left(I_p(D)(\lambda)\right) = 0$ is equal to the dimension of the corresponding space $\ker (I_p(D)(\lambda))$.
\item
When the indicial roots are constant (i.e.\ independent of $p$) and simple, the vector spaces $\ker(I_p(D)(\lambda))$ associated to the root $\lambda$ fit together to give a vector bundle which we denote $K(D,\lambda) \to \del \Sigma$.
\end{enumerate}
\end{definition} 

The main result we need is the following. We summarise just what we need, the proofs of which can be found in \cite{Mazzeo}; the ``edge calculus'' developed there can deal with much more complicated situations.

\begin{theorem}\label{0-Fredholm}
Let $D$ be a $0$-elliptic operator of order $r$. We assume that the roots of the indicial polynomial $I_p(D)$ are constant,  i.e.\ they don't depend on $p \in \del \Sigma$. Let $a>0$ be such that there are no indicial roots with real part contained in the interval $(1/2-a,1/2+a)$. Suppose, moreover, that for  some $\delta \in (1/2-a,1/2+a)$, some $k \in \N$ and $0<\alpha<1$, and for every $p\in \del \Sigma$, the normal operator of $D$ at $p$ is invertible as a map
\[
N_p(D) \colon t^\delta \Lambda^{k+r,\alpha}(\H^2) \to t^\delta \Lambda^{k,\alpha}(\H^2)
\]
Then 
\begin{enumerate}
\item
For \emph{any} $\delta \in(1/2-a,1/2+a)$, $k$ and $\alpha$, both $D$ and its formal $L^2$-adjoint $D^*$ are Fredholm as maps
\[
D,D^* \colon t^\delta \Lambda^{k+r,\alpha}(\Sigma) \to t^\delta \Lambda^{k,\alpha}(\Sigma)
\]
\item
Their kernels are independent of $\delta, k ,\alpha$. Projection $t^\delta\Lambda^{k,\alpha}(\Sigma) \to \coker (D)$ restricts to an isomorphism on  $\ker D^*$ and similarly for  $\coker D^* \cong \ker D$.
\item
Let $\phi \in t^\delta \Lambda^{k+r,\alpha}$ with $D(\phi)=0$ and $\delta > \Re \lambda$ for all indicial roots $\lambda$ of $D$. Then $\phi = o(t^c)$ for any $c >0$. 
\item
Suppose moreover that the indicial roots are simple. Let $\lambda$ denote the first indicial root with real part larger than $\delta \in (1/2-a,1/2+a)$. If $\phi \in t^\delta \Lambda^{k+r,\alpha}$ with $D(\phi)=0$, then $\phi \in t^{\re \lambda} \Lambda^{k+r,\alpha}$ and the boundary value
\[
\phi_0 = \left(t^{-\re \lambda}\phi\right)|_{\del \Sigma}
\]
is a section of $K(D,\lambda) \to \del \Sigma$. 
\end{enumerate}
\end{theorem}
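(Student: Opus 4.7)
The plan is to follow the Mazzeo--Melrose parametrix method: construct a right (and left) parametrix $Q_\delta$ for $D$ at weight $\delta$ such that $DQ_\delta - I$ and $Q_\delta D - I$ are compact operators on $t^\delta\Lambda^{k,\alpha}(\Sigma)$. Fredholmness of $D$ (and by duality, of $D^*$) then follows from standard functional analysis. The parametrix is assembled from three models dictated by Definition~\ref{0-calculus-definitions}. In the interior, where the $0$-tangent bundle agrees with $T\Sigma$, the $0$-symbol hypothesis gives ordinary ellipticity and one uses the classical pseudodifferential parametrix. Near the boundary, one inverts the normal operator $N_p(D)$ on $\H^2$ at weight $\delta$---this is precisely what we are given---and assembles these local inverses into a global operator using the Schwartz kernel calculus on the $0$-double space $\overline{\Sigma}\times_0 \overline{\Sigma}$ (obtained by blowing up the boundary diagonal). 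The condition that no indicial root has real part in $(1/2-a,\,1/2+a)$ is exactly the condition that the leading model on the front face be invertible at every weight in this interval; this is why one gets a parametrix of arbitrarily small error there.

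The main obstacle is showing compactness of the error terms $DQ_\delta - I$ and $Q_\delta D - I$ on weighted H\"older spaces. On weighted Sobolev spaces this is handled by the polyhomogeneous conormal calculus on the $0$-double space as developed in \cite{Mazzeo,Mazzeo-Melrose}; the required mapping properties on the $t^\delta\Lambda^{k,\alpha}$ scale are the technical core of Mazzeo's paper and follow from careful estimates on the behaviour of $0$-pseudodifferential operators acting between H\"older spaces. Granting this compactness, (1) is immediate and the kernel/cokernel identifications in (2) follow from the usual adjoint pairing, which extends to $t^\delta\Lambda^{k,\alpha}$ versus $t^{-\delta}\Lambda^{k,\alpha}$ in the allowed range.

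The $\delta,k,\alpha$-independence of $\ker D$ in (2) is a bootstrap argument. Interior ellipticity promotes $\phi \in t^\delta\Lambda^{k+r,\alpha}$ to $C^\infty$ on $\Sigma$. For the boundary behaviour one uses the normal operator: given a solution $D\phi = 0$ with $\phi = O(t^{\delta})$ and no indicial root in an interval $(\delta,\delta')$, a direct argument with $N_p(D)^{-1}$ (applied in a boundary collar to $D\phi = 0$, rewritten as $N_p(D)\phi = (\,N_p(D) - D\,)\phi$, whose right-hand side has improved decay) shows that $\phi$ actually lies in $t^{\delta'}\Lambda^{k+r,\alpha}$. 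Iterating this, if no indicial root has real part above $\delta$ (as in the hypothesis of (3)) the decay can be improved past every threshold, so $\phi = o(t^c)$ for every $c>0$, which is exactly (3).

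Finally, (4) is the boundary-regularity step in the presence of a blocking indicial root $\lambda$. The bootstrap from (3) improves the decay of $\phi$ up to order $\Re\lambda$ but no further. At the obstruction, the simplicity assumption means that the model equation $I_p(D)(\lambda)v=0$ has solutions $t^\lambda v(s)$ with $v(s) \in \ker I_p(D)(\lambda)$, fitting together into the bundle $K(D,\lambda) \to \del\Sigma$. Writing the boundary expansion of $\phi$ in terms of the normal operator and projecting the leading coefficient onto this kernel yields $\phi \in t^{\Re\lambda}\Lambda^{k+r,\alpha}$ with $(t^{-\Re\lambda}\phi)|_{\del\Sigma}$ a section of $K(D,\lambda)$; what remains to check is that this projection is well-defined and continuous in the weighted H\"older topology, which again comes from the kernel estimates for $N_p(D)^{-1}$. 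The hardest single input throughout is the $\Lambda^{k,\alpha}$-boundedness of the parametrix built on the $0$-double space---all other steps are bootstrap and symbol calculus arguments that are purely formal once that analytic foundation is in place.
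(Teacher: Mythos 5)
The paper does not prove this theorem: it is explicitly quoted as a known result, with the text immediately preceding the statement saying the proofs can be found in Mazzeo's edge-calculus paper \cite{Mazzeo}, and the remark after the statement only explains how to read the hypotheses (Sobolev vs.\ H\"older invertibility, and the role of the symmetric interval about $1/2$). Your sketch is a correct high-level outline of the Mazzeo--Melrose parametrix argument that underlies the cited result, so it agrees with the (indirect) approach of the paper; the one slip to note is that the $L^2$-dual weight of $\delta$ over a surface is $1-\delta$, not $-\delta$, which is exactly why the Fredholm interval is taken symmetric about $1/2$.
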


In fact, it is enough to check $N_p(D)$ is invertible on the corresponding weighted Sobolev spaces. We state the result in this form because this is how we will use it. The reason to focus on an interval of the form $(1/2-a,1/2+a)$, symmetric around $1/2$, is that the spaces with weights $\delta$ and $1-\delta$ are dual with respect to the $L^2$-inner-product. We use the fact that $\dim \Sigma =2$ here; over an $(n+1)$-manifold, the dual weights are  $\delta$ and $n-\delta$. This is the only change one needs to make in dealing with arbitrary dimension. One can also work in intervals which are not symmetric about $n/2$, although then the weights required to make $D$ and $D^*$ Fredholm may be different.

\subsection{The ambient space of maps}

With the analytic background in place, we return to our study of $J$-holomorphic curves in the twistor space $Z \to \H^4$. We will prove that the space of $J$-holomorphic curves is a Banach manifold via the implicit function theorem, realising it as a submanifold in an ambient space of maps. In this section we describe the ambient space. 

The naive first choice is just the space of all $C^{2,\alpha}$ maps, say, with \emph{no} restrictions near the boundary. This approach fails however: the $J$-holomorphic equation in this setting is not  Fredholm. This is because $J$ becomes singular at the boundary and, as we will see, this means the linearised Cauchy--Riemann operator is degenerate there: \emph{its symbol in directions normal to the curve vanishes at $t=0$}. As a consequence it is only Fredholm between the appropriately weighted $\Lambda$-Hölder spaces. This means that we need to also need to impose the asymptotic behaviour singled out by Proposition~\ref{asymptotic-expansion} to ensure that $\delb u$ decays quickly enough at the boundary to lie in the right weighted space. 

Let $(\overline{\Sigma},j)$ be a compact Riemann surface with boundary. To begin, assume $\del \Sigma$ is connected. We fix a biholomorphism between $S^1 \times [0,\delta)$ and a neighbourhood of $\del \Sigma$ and use the cylindrical holomorphic coordinates $(s,t)$ there, in which $j(\del_s) = \del_t$. We extend $t$ to a global boundary defining function $t \colon\overline{\Sigma} \to [0,\infty)$. When $\del \Sigma$ has more than one component, we choose such a biholomorphism for each component, with non-overlapping images, and combine all the different $t$-coordinates into a single boundary defining function. 

\begin{definition}\label{admissible-maps}
A map $u \colon \overline{\Sigma} \to \overline{Z}$ is called \emph{admissible} if the pull-back of the twistor coordinates functions $x,y_i,z_i \colon \overline{\Sigma} \to \R$ satisfy the following properties:
\begin{enumerate}
\item
$(x,y_i, tz) \in C^{2,\alpha}$. Note the scaling on $z_i$. It follows that $z_i$ is $C^{2,\alpha}$ on the interior but only $C^{1,\alpha}$ up to the boundary. 
\item
The Taylor expansions of $x,y_i, tz_i$ with respect to $t$ at $t=0$ are given by
\begin{equation}
\begin{aligned}
x & = a(s) t + O(t^{2+\alpha})\\
y & = \gamma(s) + \phi(s)t^2 + O(t^{2+\alpha})\\
tz &= \zeta(s)t + \eta(s)t^2 + O(t^{2+\alpha})
\end{aligned}\label{admissible-taylor}
\end{equation}
where $\gamma \colon \del \Sigma \to \R^3$ is a $C^{2,\alpha}$ embedding and the other coefficients are determined by $\gamma$ via the formulae~\ref{taylor-coefficients-determined} of Proposition~\ref{asymptotic-expansion}.
\item
$x>0$ on the interior.
\end{enumerate}
We write $\A$ for the set of all admissible maps. We call $\gamma$ the \emph{boundary of $u$} and we write $\A_\gamma$ for the set of all admissible maps with boundary~$\gamma$.
\end{definition}

In the following result, we use the natural edge metric on $\overline{Z}$ and a $0$-metric on $\overline{\Sigma}$ (e.g.~the canonical complete hyperbolic metric) to define the metric on $\z{T\Sigma}^{0,1}\otimes u^*\e{TZ}$. Recall also the notation $C^{1,\alpha}_1$ of Definition~\ref{vanishing-at-boundary}.

\begin{proposition}\label{admissible-decay}
Let $u \colon \overline{\Sigma} \to \overline{Z}$ be a map for which the pull-back of the twistor coordinates satsify $(x,y_i,tz_i) \in C^{2,\alpha}$. Assume moreover that $u(\overline{\Sigma}) \cap \del Z = \del \Sigma$ and that this intersection is transverse. Then $u$ is admissible if and only if 
\[
\delb u \in C^{1,\alpha}_1(\z{T^*\Sigma}^{0,1}\otimes u^*\e{TZ}).
\]
\end{proposition}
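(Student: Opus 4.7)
The plan is to reinterpret Proposition~\ref{asymptotic-expansion}, which phrases the decay condition in the coordinate basis $\del_x,\del_y,\del_z$, in the edge framework. The starting observation is that Lemma~\ref{normal-and-zero-holder} with $k=j=1$ gives continuous inclusions in both directions between $C^{1,\alpha}_1$ and $t^{1+\alpha}\Lambda^{1,\alpha}$, so the goal reduces to showing that $u$ is admissible if and only if $\delb u$, viewed as a section of $\z{T^*\Sigma}^{0,1}\otimes u^*\e{TZ}$, lies in $t^{1+\alpha}\Lambda^{1,\alpha}$.

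First I would express $\delb u$ in the trivialization adapted to the edge and zero structures. Using cylindrical coordinates $(s,t)$ near $\del\Sigma$, the formula of Lemma~\ref{J-in-local-coordinates}, and the pulled-back edge frame $e_x = X\del_x$, $e_y = X\del_{y_i}$, $e_z = \del_{z_i}$, the contraction of $\delb u$ with the unit edge vector $t\del_t$ gives a section of $u^*\e{TZ}$ whose components are
\begin{align*}
(\delb u(t\del_t))^{e_x} &= \frac{\del_t X + Z^T \del_s Y}{X/t}, \\
(\delb u(t\del_t))^{e_y} &= \frac{\del_t Y - (\del_s X) Z - R(Z)\del_s Y}{X/t}, \\
(\delb u(t\del_t))^{e_z} &= t\del_t Z - 2 (X/t)^{-1} P(Z) \del_s Y + R(Z)\del_s(tZ).
\end{align*}
The apparently singular factor $X^{-1}$ from Lemma~\ref{J-in-local-coordinates} is tamed by the $t$ in the edge basis, while the merely $C^{1,\alpha}$-regular function $Z$ enters only through the combinations $t\del_t Z = \del_t(tZ)-Z$ and $t\del_s Z = \del_s(tZ)$, which are $C^{1,\alpha}$ because $tZ\in C^{2,\alpha}$. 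The transversality condition forces $X = tG$ with $G\in C^{1,\alpha}$ and positive, so $(X/t)^{-1}$ is $C^{1,\alpha}$. Hence all three components above are $C^{1,\alpha}$ up to $\del\Sigma$.

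For the forward implication, a function in $C^{1,\alpha}$ lies in $C^{1,\alpha}_1$ iff its boundary value and first normal derivative both vanish. Plugging in the admissibility Taylor expansions and computing these boundary data for each edge component reduces to the algebraic identities already verified inside the proof of Proposition~\ref{asymptotic-expansion}: for the $x$-component, $a + \zeta^T\dot\gamma = 0$ handles the vanishing at $t=0$, and $2b + \eta^T\dot\gamma = 0$ (with $b=0$ and $\eta\perp\dot\gamma$, since $\eta = R(\zeta)\dot\zeta\perp\zeta\parallel\dot\gamma$) handles the normal derivative; the $y$- and $z$-components are analogous, using $\psi = 0$, $2\phi-\dot a\zeta-R(\eta)\dot\gamma=0$, and $\eta-R(\zeta)\dot\zeta=0$. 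Conversely, if $\delb u\in C^{1,\alpha}_1$, the inclusion $C^{1,\alpha}_1\hookrightarrow t^{1+\alpha}\Lambda^{1,\alpha}$ yields the edge decay $\delb u = O(t^{1+\alpha})$; multiplying the displayed formulas by the appropriate powers of $X/t\sim a$ and $t$ recovers precisely the pointwise bounds~\eqref{asymptotically-holomorphic}, and Proposition~\ref{asymptotic-expansion} forces the Taylor coefficients of $u$ to be those of an admissible map, while the higher-order remainders $O(t^{2+\alpha})$ in the definition of admissibility come directly from the hypothesis $(x,y_i,tz_i)\in C^{2,\alpha}$.

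The main obstacle is the regularity bookkeeping in the middle step: one must check that every apparently singular piece of $\delb u$ is controlled by the scaling factors built into the edge and zero structures, so that the resulting section is genuinely $C^{1,\alpha}$ up to the boundary rather than merely having the correct leading-order pointwise decay. Once this is clear, the rest of the argument is a direct unpacking of Proposition~\ref{asymptotic-expansion}.
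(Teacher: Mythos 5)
Your proof is correct and takes essentially the same route as the paper: both express $\delb u(t\del_t)$ in the edge frame $x\del_x, x\del_{y_i}, \del_{z_i}$, identify the resulting coefficients (up to the harmless bounded factor $t/x$) with the left-hand sides of equations~\eqref{x-dbar-u}--\eqref{z-dbar-u}, and invoke the algebra of Proposition~\ref{asymptotic-expansion}. The only cosmetic difference is that you re-derive the $C^{1,\alpha}$ regularity of the coefficients directly rather than simply citing Corollary~\ref{edge-dbar}, and you route through the weighted space $t^{1+\alpha}\Lambda^{1,\alpha}$ rather than working with $C^{1,\alpha}_1$ throughout.
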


\begin{proof}
The proof is essentially the same as that of Proposition~\ref{asymptotic-expansion}. By Lemma~\ref{edge-dbar} we know that $\delb u \in C^{1,\alpha}(\z{T^*\Sigma}^{0,1} \otimes u^*\e{TZ})$. The vector field $t\del_t$ (defined near $\del \Sigma$) is nowhere vanishing and of bounded length in $\z{T\Sigma}$. So it is enough to show that $u \in \A$ if and only if $\delb u(t \del_t) \in C^{1,\alpha}_1(u^*\e{TZ})$. The edge vector fields $x\del_x, x\del_{y_i}, \del_{z_i}$ give a bounded frame of $\e{TZ}$. Writing $\delb u(t\del_t)$ out with respect to this frame, it has coefficients given precisely by the left-hand side of the equations ~\eqref{x-dbar-u}, \eqref{y-dbar-u}, \eqref{z-dbar-u} in the proof of Proposition~\ref{asymptotic-expansion}. From here the proof proceeds identically. 
\end{proof}

\begin{lemma}\label{admissible-Banach-manifold}
Identifying $u \in \A$ with $(x,y_i,tz_i)$ makes the space $\A$ of admissible maps a Banach submanifold of $C^{2,\alpha}(\overline{\Sigma}, \R^7)$.
\end{lemma}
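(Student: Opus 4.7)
The plan is to exhibit $\A$ locally as a graph over a Banach space, by splitting each admissible map into boundary 2-jet data (parametrised by the curve $\gamma$) and an interior correction whose 2-jet at $\del\Sigma$ vanishes.

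First I recast admissibility in boundary-jet form. Since $(x, y_i, tz_i) \in C^{2,\alpha}$, Taylor's theorem with H\"older remainder implies that the $O(t^{2+\alpha})$ statements in~\eqref{admissible-taylor} are equivalent to prescriptions on the 2-jet $J_2(u) = (u, \del_t u, \del_t^2 u)|_{\del\Sigma}$. Writing the seven components of $u$ as $(X, Y, W)$, condition (ii) of Definition~\ref{admissible-maps} becomes the system
\[
X|_{\del \Sigma} = 0,\quad \del_t^2 X|_{\del \Sigma} = 0,\quad W|_{\del\Sigma} = 0,\quad \del_t Y|_{\del\Sigma} = 0,
\]
together with $\del_t X|_{\del \Sigma} = |\dot\gamma|$, $\del_t W|_{\del\Sigma} = -\dot\gamma/|\dot\gamma|$, $\del_t^2 Y|_{\del\Sigma} = \ddot\gamma + 2\dot a\,\zeta$, and $\del_t^2 W|_{\del\Sigma} = 2R(\zeta)\dot\zeta$, where $\gamma = Y|_{\del\Sigma}$, $a = |\dot\gamma|$, $\zeta = -|\dot\gamma|^{-1}\dot\gamma$; plus the open conditions that $\gamma$ is an embedding and $x>0$ on the interior.

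Let $\mathcal E \subset C^{2,\alpha}(\del\Sigma, \R^3)$ be the open set of $C^{2,\alpha}$ embeddings. On $\mathcal E$ the quantity $|\dot\gamma|$ is bounded away from zero, so the formulas above define a smooth map from $\mathcal E$ into the 2-jet space $C^{2,\alpha}(\del\Sigma, \R^7) \oplus C^{1,\alpha}(\del\Sigma, \R^7) \oplus C^{0,\alpha}(\del\Sigma, \R^7)$. Composing with a bounded right-inverse of $J_2$ (a trace extension operator built from a cutoff $\chi(t)$ supported near $\del\Sigma$) yields a smooth ``reference map''
\[
\Xi : \mathcal E \longrightarrow C^{2,\alpha}(\overline\Sigma, \R^7), \qquad \gamma \longmapsto u_\gamma,
\]
with $u_\gamma$ admissible and $Y|_{\del\Sigma} = \gamma$. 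Setting $C^{2,\alpha}_2 := \ker J_2$, a closed subspace of $C^{2,\alpha}(\overline\Sigma, \R^7)$, I then define the candidate chart
\[
\Phi : \mathcal E \times C^{2,\alpha}_2 \longrightarrow C^{2,\alpha}(\overline\Sigma, \R^7), \qquad \Phi(\gamma, v) = u_\gamma + v.
\]
It remains to verify that $\Phi$ is a smooth homeomorphism onto a neighbourhood of any base-point of $\A$. Injectivity: the boundary value of $Y$ picks out $\gamma$, after which $v$ is determined. For local surjectivity onto $\A$: given admissible $u$ close to $u_0 = \Phi(\gamma_0, 0)$, set $\gamma = Y|_{\del\Sigma}$; then $u - u_\gamma$ has zero 2-jet, since both $u$ and $u_\gamma$ realise the jet prescribed by $\gamma$, so $u \in \Phi(\mathcal E \times C^{2,\alpha}_2)$. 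The inverse $u \mapsto (Y|_{\del\Sigma},\, u - u_{Y|_{\del\Sigma}})$ is continuous by continuity of the trace map and of $\Xi$. The two open conditions persist under small perturbation, so $\A$ is a Banach submanifold of $C^{2,\alpha}(\overline\Sigma, \R^7)$.

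I expect the main technical obstacle to be the verification that $\Xi$ is genuinely smooth into $C^{2,\alpha}$. This reduces to two independent points: (a) smoothness of each jet coefficient as a function of $\gamma \in \mathcal E$, which is automatic because each is a smooth composition of $\gamma, \dot\gamma, \ddot\gamma$ with the nowhere-vanishing factor $|\dot\gamma|^{-1}$; and (b) the existence of a bounded linear extension operator from the 2-jet space into $C^{2,\alpha}(\overline\Sigma, \R^7)$, which is standard in collar coordinates near $\del\Sigma$ using a fixed cutoff.
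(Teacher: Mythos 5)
Your proposal misses a genuine constraint: nothing in your argument forces the map $u$ to take values in $\overline{Z}$, that is, to satisfy $z_1^2+z_2^2+z_3^2=1$ on the interior. Your chart $\Phi(\gamma,v)=u_\gamma+v$ parametrises all $C^{2,\alpha}$ maps $\overline{\Sigma}\to\R^7$ whose boundary 2-jet matches the data prescribed by $\gamma$; it does not restrict the values to the sphere bundle. Indeed, your surjectivity argument only establishes the inclusion $\A\subset\Phi(\mathcal E\times C^{2,\alpha}_2)$, which is enough to show $\A$ lies \emph{inside} a submanifold, but not that $\A$ itself is one. The conditions you list as ``open'' ($\gamma$ an embedding, $x>0$) are indeed open, but $|z|^2=1$ (equivalently $|W|^2=t^2$ in your $(X,Y,W)$ variables) is a closed, infinite-codimension equation that must be cut out separately and shown to be transverse.

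The paper's own route agrees with yours up to defining the 2-jet map $T$ and the image $\X$ of the Taylor-coefficient map $\iota$ — and showing $T^{-1}(\X)$ is a smooth submanifold. But it then adds a further step: the constraint $|z|^2=1$ is verified to be transverse (by the same mechanism that shows $S^2\subset\R^3$ is a submanifold), giving a closed submanifold $\mathscr S\subset T^{-1}(\X)$, and $\A$ is the open subset of $\mathscr S$ where $x>0$. Your graph-chart construction is a reasonable alternative to the paper's preimage argument for the first part, but you need to append this transversality step (or build it into your chart, e.g.\ by choosing the extension operator to land in maps valued in $\overline{Z}$ and restricting $v$ to lie in the tangent space of that constraint). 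As written, the proof is incomplete.
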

\begin{proof}
Taking the Taylor coefficients of a $C^{2,\alpha}$ function at $t=0$ defines a continuous linear surjection (as in \eqref{taylor-map} above):
\begin{align*}
T \colon C^{2,\alpha}(\overline{\Sigma}) 
	&\to 
		C^{2,\alpha}(\del \Sigma)\oplus 
		C^{1,\alpha}(\del \Sigma) \oplus 
		C^{0,\alpha}(\del \Sigma)\\
T (F) & =
		\left( F(s,0) , \del_t F(s,0), \del^2_t F(s,0) \right)
\end{align*}
Meanwhile, the map which sends $\gamma \in C^{2,\alpha}_{\mathrm{emb}}(\del \Sigma, \R^3)$ to the collection of Taylor coefficients given in~\eqref{admissible-taylor} is a smooth injection
\[
\iota \colon C^{2,\alpha}_{\mathrm{emb}}(\del \Sigma, \R^3) 
	\to 
		C^{2,\alpha}(\del \Sigma, \R^7)\oplus 
		C^{1,\alpha}(\del \Sigma, \R^7) \oplus 
		C^{0,\alpha}(\del \Sigma, \R^7)
\]
from the space of $C^{2,\alpha}$-embeddings $\del \Sigma \to \R^3$. If we write $\X$ for the image of $\iota$, it follows that $T^{-1}(\X)$ is a smooth submanifold of $C^{2,\alpha}(\overline{\Sigma}, \R^7)$.

Now $\A \subset T^{-1}(\X)$, determined by the requirements that $|z|^2=1$ (so that the map $u$ takes values in $\overline{Z}$) and $x>0$ on the interior. The first condition $|z|^2=1$ is transverse and so determines a submanifold $\mathscr{S} \subset T^{-1}(\X)$; the proof is the same as that which shows $S^2\subset \R^3$ is a submanifold. The second condition, $x>0$ on the interior, is open and so $\A$ is an open set of the submanifold $\mathscr{S}$ and so a submanifold of $C^{2,\alpha}(\overline{\Sigma}, \R^7)$ in its own right.
\end{proof}

\begin{lemma}\label{admissible-fixed-boundary-zero-holder}
Let $\gamma \colon \del \Sigma \to \R^3$ be a $C^{2,\alpha}$ embedding. Identifying $u \in \A_\gamma$ with $(x,y_i,tz_i)$ makes $\A_\gamma$ a submanifold of an affine Banach space modelled on $C^{2,\alpha}_2(\overline{\Sigma},\R^7)$, the subspace of functions which vanish at $\del \Sigma$ up to and including second order.
\end{lemma}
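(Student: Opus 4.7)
The plan is to extend the proof of Lemma~\ref{admissible-Banach-manifold} by restricting it to the fibre over $\gamma$.

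First I would observe that for any $u \in \A_\gamma$ the Taylor expansion of $(x,y_i,tz_i)$ at $\del\Sigma$ up to and including order 2 is completely determined by $\gamma$. This is exactly the content of the admissibility conditions~\eqref{admissible-taylor}, whose coefficients are fixed by the formulae~\eqref{taylor-coefficients-determined} of Proposition~\ref{asymptotic-expansion}. Consequently, for any two $u, u' \in \A_\gamma$, the difference $u - u'$ (viewed as an $\R^7$-valued function via the identification with $(x, y_i, tz_i)$) has vanishing $0$-, $1$- and $2$-jets along $\del\Sigma$, hence lies in $C^{2,\alpha}_2(\overline{\Sigma}, \R^7)$. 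Fixing any reference $u_0 \in \A_\gamma$ therefore gives the inclusion
\[
\A_\gamma \subset u_0 + C^{2,\alpha}_2(\overline{\Sigma}, \R^7),
\]
identifying the affine Banach space in the statement.

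In the notation of the proof of Lemma~\ref{admissible-Banach-manifold}, this affine space is precisely $T^{-1}(\iota(\gamma))$, since $\ker T = C^{2,\alpha}_2(\overline{\Sigma}, \R^7)$ and $T(u_0) = \iota(\gamma)$. So the remaining task is to check that, within this affine space, the conditions that cut $\A$ out of $T^{-1}(\X)$---namely $|z|^2 = 1$ and $x > 0$ on the interior---cut out a Banach submanifold. The condition $x > 0$ on the interior is open and poses no difficulty.

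For the condition $|z|^2 = 1$, transversality was already established in Lemma~\ref{admissible-Banach-manifold} by the same argument that shows $S^2 \subset \R^3$ is a submanifold. The point to verify, and the main potential obstacle, is that this transversality survives the restriction to variations in $C^{2,\alpha}_2$. This is straightforward: the transversality argument needs only perturbations of $tz$ in the direction parallel to $tz$ itself at interior points, and these can be taken compactly supported away from $\del\Sigma$, so they automatically lie in $C^{2,\alpha}_2$. Moreover, any element of the affine space $u_0 + C^{2,\alpha}_2$ automatically has $|\zeta|^2 = 1$ and $\zeta \cdot \eta = 0$ along $\del\Sigma$ (since the boundary 2-jet equals that of $u_0$), so the constraint $|z|^2 - 1$ already vanishes to the correct order at $\del\Sigma$ and the constraint function takes values in a space of functions decaying fast enough at the boundary that the interior transversality is the only non-trivial requirement. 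Combining these observations realises $\A_\gamma$ as the transverse intersection of the submanifold $\{|z|^2=1\}$ with the open subset $\{x>0\}$ inside the affine space $u_0 + C^{2,\alpha}_2(\overline{\Sigma}, \R^7)$, completing the proof.
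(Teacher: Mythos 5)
Your proof is correct and follows essentially the same route as the paper: you identify the affine ambient space as $T^{-1}(\iota(\gamma)) = u_0 + \ker T = u_0 + C^{2,\alpha}_2(\overline{\Sigma},\R^7)$ and observe that $\A_\gamma$ is cut out by the open condition $x>0$ and the transverse condition $|z|^2=1$. The extra detail you supply — that compactly supported interior perturbations of $tz$ lie in $C^{2,\alpha}_2$, and that the constraint $|w|^2-t^2$ automatically vanishes to the right order at $\del\Sigma$ because $|\zeta|=1$ and $\zeta\cdot\eta=0$ are built into the fixed Taylor coefficients — is exactly what is compressed in the paper's phrase "again one checks that this is transverse."
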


\begin{proof}
First note that, in the notation of the proof of Lemma~\ref{admissible-Banach-manifold},  $\A_\gamma = \A \cap T^{-1}(\iota (\gamma))$. The subset $\A_\gamma \subset T^{-1}(\iota(\gamma))$ is determined once again by $x>0$ on the interior and $|z|^2=1$ and again one checks that this is transverse. This means that $\A_\gamma$ is both a submanifold of $\A$ and also of $T^{-1}(\iota (\gamma))$. This second space is an affine space modelled on $\ker T = C^{2,\alpha}_2(\overline{\Sigma},\R^7)$. 
\end{proof}

\begin{remark}\label{tangent-space-A-gamma}
By Lemma~\ref{normal-and-zero-holder}, $C^{2,\alpha}_2(\overline{\Sigma}, \R^7) \cong t^{2+\alpha}\Lambda^{2,\alpha}(\overline{\Sigma}, \R^7)$. If we unscale the $z$ coordinates, $(x,y_i,tz_i) \mapsto (x,y_i,z_i)$, then we find that $\A_\gamma$ can also be seen as a submanifold of the following Banach space:
\[
\A_\gamma 
	\subset 
		t^{2+\alpha}\Lambda^{2,\alpha}(\overline{\Sigma},\R^4) \oplus 
		t^{1+\alpha}\Lambda^{2,\alpha}(\overline{\Sigma}, \R^3)
\]
Now recall that the edge vector fields $x\del_x, x\del_{y_i}, \del_{z_i}$ are a bounded frame for $\e{TZ}$.  When we write infinitesimal perturbations of $u \in \A_\gamma$ with respect to this frame, the different scaling of $x,y_i$ uses up a factor of $t$, putting all the directions on the same footing. So in an edge frame the coefficients are all of the \emph{same} weight, lying in $t^{1+\alpha}\Lambda^{2,\alpha}$. In other words,
\begin{equation}
T_u \A_\gamma = t^{1+\alpha} \Lambda^{2,\alpha}(u^*\e{TZ})
\label{tangent-space-A-gamma-is-weighted-edge}
\end{equation}
Meanwhile, again by Lemma~\ref{normal-and-zero-holder}, 
\[
C_1^{1,\alpha} (\z{T^*\Sigma}^{0,1}\otimes u^*\e{TZ})
\cong
t^{1+\alpha}\Lambda^{1,\alpha}(\z{T^*\Sigma}^{0,1}\otimes u^*\e{TZ})
\]
and so $\delb$ acting on admissible maps takes values in the weighted spaces to which the $0$-calculus applies. Suppose now that $u \in \A$ is $J$-holomorphic. If we vary $u$ in $\A_\gamma$ keeping the boundary fixed then, given the description~\eqref{tangent-space-A-gamma-is-weighted-edge} of $T_u\A_\gamma$, it follows from Proposition~\ref{admissible-decay} that the linearised Cauchy--Riemman operator of $u$ gives a map
\[
D_u \colon 
	t^{1+\alpha}\Lambda^{2,\alpha}(u^*\e{TZ}) 
		\to 
	t^{1+\alpha}\Lambda^{1,\alpha}(\z{T^*\Sigma}^{0,1}\otimes u^*\e{TZ})
\]
between the appropriate Hölder spaces of the same weight, exactly as is required to apply the $0$-calculus.  Ultimately, this is the technical justification for the definition of the space of admissible maps $\A$. 
\end{remark}

We now consider reparametrisations of an admissible map $u\in \A$. Given a vector field $v \in C^{2,\alpha}(T\overline{\Sigma})$ on $\overline{\Sigma}$ we ask when is $\diff u (v) \in T_u \A$? There are two things to consider here. Firstly, as always with maps of finite regularity, we must require that $u$ is \emph{more regular} than a generic point of $\A$. For example, if $u$ is only $C^{2,\alpha}$, then $\diff u(v)$ is $C^{1,\alpha}$ and so cannot possibly be in $T_u\A$. To avoid this, we assume that $u$ is $C^{3,\alpha}$. Then we must take care that $\diff u (v)$ infinitesimally preserves the constraints~\eqref{admissible-taylor} on the Taylor coefficients of $u$. This we do in the following Lemma. To state it, we first identify the tangent bundle $T\overline{\Sigma}$ with the holomorphic tangent bundle $T\overline{\Sigma}^{1,0}$. Then $\delb$ denotes the holomorphic structure on this line bundle. It takes values in sections of $T^*\overline{\Sigma}^{0,1}\otimes T\overline{\Sigma}^{1,0} \cong \End_a(T\overline{\Sigma})$ where the subscript $a$ denotes \emph{anti}-linear endomorphisms. 

\begin{lemma}\label{admissible-reparametrisations}
Let $u \in \A$ and of regularity $C^{3,\alpha}$ and let $v \in C^{2,\alpha}(T\overline{\Sigma})$. Then $\diff u(v) \in T_u \A$ if and only if $v$ is parallel to the boundary $\del \Sigma$ and $\delb v \in C^{1,\alpha}_1(\End_a(T\overline{\Sigma}))$.
\end{lemma}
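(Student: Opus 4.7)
The plan is to linearise the characterisation of admissibility given by Proposition~\ref{admissible-decay}: $u \in \A$ iff $u$ has the correct regularity, $u(\del\Sigma) \subset \del Z$ transversely, and $\delb u \in C^{1,\alpha}_1(\z{T^*\Sigma}^{0,1} \otimes u^*\e{TZ})$. Accordingly, $\diff u(v) \in T_u\A$ decomposes into two conditions: tangency to the boundary condition, and tangency to the asymptotic decay condition on $\delb u$. I expect the first to yield ``$v$ parallel to $\del\Sigma$'' and the second to yield the constraint on $\delb v$.

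For the boundary condition: since $u$ is transverse to $\del Z$ at $\del\Sigma$ (the $a = |\dot\gamma| > 0$ condition from Definition~\ref{admissible-maps}), the vector $\del_t u|_{\del\Sigma}$ is transverse to $\del Z$ while $\del_s u|_{\del\Sigma}$ is tangent. Hence $\diff u(v)|_{\del\Sigma} = v^s \del_s u + v^t \del_t u$ lies in $T(\del Z)$ iff $v^t|_{\del\Sigma} = 0$, i.e.\ iff $v$ is parallel to $\del\Sigma$. Assuming this, the flow $\phi_s$ of $v$ preserves $\del\Sigma$ setwise, so $u\circ\phi_s$ is still of the required boundary type. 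It remains to compute the linearisation of $\delb(u\circ\phi_s)$ at $s=0$. Decomposing $\diff\phi_s = \del\phi_s + \delb\phi_s$ into complex-linear and antilinear parts, one has the composition identity
\[
\delb(u\circ\phi)_p = \del u|_{\phi(p)}\circ \delb\phi_p + \delb u|_{\phi(p)}\circ \del\phi_p.
\]
Differentiating at $s=0$ and using $\phi_0 = \id$, $\del\phi_0 = \id$, $\delb\phi_0 = 0$, $\dot\phi_0 = v$ yields
\[
D_u(\diff u(v)) \;=\; \del u \circ \delb v \;+\; \delb u \circ \del v \;+\; \nabla_v(\delb u).
\]

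The term $\delb u \circ \del v$ lies in $C^{1,\alpha}_1$ because $\delb u$ does (by admissibility) and $\del v$ is a bounded $C^{1,\alpha}$ endomorphism. The term $\nabla_v(\delb u)$ also lies in $C^{1,\alpha}_1$: the tangential part $v^s\nabla_s(\delb u)$ preserves the first-order vanishing of $\delb u$, while $v^t|_{\del\Sigma}=0$ gives $v^t = O(t)$, which recovers the order lost by the normal derivative $\nabla_t(\delb u)$. So these two terms are automatic and the non-trivial content reduces to $\del u \circ \delb v \in C^{1,\alpha}_1(\z{T^*\Sigma}^{0,1}\otimes u^*\e{TZ})$.

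I then plan to show that this is equivalent to $\delb v \in C^{1,\alpha}_1(\End_a(T\overline\Sigma))$. Using Lemma~\ref{lift-d-to-edge}, lift $\del u$ to an edge bundle map $\del_e u \colon \z{T\Sigma}^{1,0} \to u^*\e{TZ}^{1,0}$ of class $C^{2,\alpha}$. The explicit leading-order computation of $\diff_e u$ at $\del\Sigma$ from Proposition~\ref{asymptotic-expansion} shows that its image there is spanned by $x\del_x$ and $(\dot\gamma/|\dot\gamma|)\,x\del_y$; in particular $\del_e u$ has constant rank $2$ and is injective at every boundary point, with a bounded left inverse. Composition with this inverse translates vanishing of $\del u \circ \delb v$ in the edge bundle into vanishing of $\delb v$ in the ordinary one; the reverse implication then reduces to multiplication by the bounded $\del_e u$ preserving first-order vanishing. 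The main obstacle will be tracking how the factor of $t$ appearing in the inclusion $T^*\overline\Sigma \hookrightarrow \z{T^*\Sigma}$ cancels against the factor of $t^{-1}$ converting $\del_x,\del_y \in T\overline Z$ into the edge frame $x\del_x, x\del_y$ of $\e{TZ}$, so that the two vanishing notions (ordinary for $\delb v$, edge/$0$ for $\del u \circ \delb v$) correspond exactly in order.
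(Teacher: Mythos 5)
Your proposal is correct in outline but takes a genuinely different route from the paper. The paper's proof is a direct Taylor-coefficient computation: it expands $v = A\del_s + B\del_t$, writes out $\delta x$, $\delta y$, $\delta z$ coming from $\diff u(v) = A\del_s u + B\del_t u$, and checks by hand that preservation of each Taylor coefficient of the admissibility conditions~\eqref{admissible-taylor} is equivalent to the set of algebraic relations~\eqref{taylor-admissible-vector-fields} on $A_j$, $B_j$ (which in turn encode ``$v$ tangent to $\del\Sigma$ plus $\delb v \in C^{1,\alpha}_1$''). You instead linearise the decay characterisation of Proposition~\ref{admissible-decay}, split the derivative of $\delb(u\circ\phi_s)$ into three terms via the composition identity $\delb(u\circ\phi) = \del u|_\phi\circ\delb\phi + \delb u|_\phi\circ\del\phi$, observe that two of the terms are automatically in $C^{1,\alpha}_1$ given $v$ tangent to $\del\Sigma$, and reduce the whole lemma to the claim that $\del_eu\circ\delb v \in C^{1,\alpha}_1$ iff $\delb v\in C^{1,\alpha}_1$, which follows because $\del_eu$ is a $C^{2,\alpha}$ bundle map with a bounded left inverse near $\del\Sigma$ and because of the canonical isomorphism $\End(T\overline\Sigma)\cong\End(\z{T\Sigma})$ (so the two ``orders of vanishing'' agree). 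That last observation resolves the concern you raise in your final sentence; the $t$ in $T^*\overline\Sigma\hookrightarrow\z{T^*\Sigma}$ cancels the $t^{-1}$ in $T\overline\Sigma\hookrightarrow\z{T\Sigma}$ exactly, so there is nothing further to track.

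Two points to make this rigorous rather than a plan. First, you need to be explicit about the sense in which $T_u\A$ is characterised by linearising the $\delb u \in C^{1,\alpha}_1$ condition: this condition defines $\A$ only in conjunction with transversality and regularity hypotheses, and the derivative $\tfrac{d}{ds}\big|_0\,\delb(u\circ\phi_s)$ requires a choice of $0$-connection on $u^*\e{TZ}$ to make sense (different admissible choices differ by a bounded term times the vanishing $\delb u$, so the $C^{1,\alpha}_1$ condition is unaffected, but this should be said). Second, the regularity bookkeeping in your second paragraph needs to be written out: with $u\in C^{3,\alpha}$ you get $\delb u\in C^{2,\alpha}_1$, and then the claims $\delb u\circ\del v\in C^{1,\alpha}_1$ and $B\nabla_t(\delb u)\in C^{1,\alpha}_1$ follow from applying $T_1$ term by term (Leibniz, and the vanishing of the factors at $\del\Sigma$), but the point of the paper's direct computation is precisely that it makes these vanishing orders explicit rather than relying on an abstract Leibniz argument. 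Your approach is cleaner conceptually and avoids recomputing $\delta\zeta$, $\delta\eta$, $\delta\phi$ one by one; its cost is needing the intermediate facts about $\del_eu$ near $\del\Sigma$ and about the admissible choice of connection, both of which are genuinely there but need to be stated.
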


\begin{proof}
This is essentially the linearisation of Proposition~\ref{admissible-decay}. That result shows the choice of Taylor coefficients~\eqref{admissible-taylor} is equivalent to $\delb u \in C^{1,\alpha}_1$. It then stands to reason that given such a map and a vector field $v$ which also satisfies $\delb v \in C^{1,\alpha}_1$, reparametrising by $v$ will preserve $\delb u \in C^{1,\alpha}_1$ and so the conditions~\eqref{admissible-taylor}. We now explain how to verify this by simple, albeit somewhat lengthy, calculation. 

Write $v = A \del_s + B \del_t$ where
\begin{align*}
A & = A_0(s) + A_1(s) t + A_2 (s) t^2 + O(t^{2+\alpha})\\
B & = B_0(s) + B_1(s) t + B_2(s) t^2 + O(t^{2+\alpha})
\end{align*}
Identifying $v$ with the section $\frac{1}{2}(A+iB)(\del_s - i \del_t)$ of the holomorphic tangent bundle $T\overline{\Sigma}^{1,0}$, one checks that $v$ is parallel to $\del \Sigma$ with $\delb v \in C^{1,\alpha}_1$ means simply that
\begin{equation}
\begin{gathered}
B_0 = 0,\quad B_1 = \dot{A}_0,\quad B_2 = 0 \\
A_1= 0,\quad A_2 = -\frac{1}{2}\ddot{A}_0
\end{gathered}
\label{taylor-admissible-vector-fields}
\end{equation}
(where the dots denote differentiation with respect to $s$).

Meanwhile, the infinitesimal changes in $x,y_i,z_i$ coming from $\diff u(v) = A \del_s u + B \del_t u$ are
\begin{align}
\delta x & = A \dot{a} t + B a + O(t^2)B + O(t^3)
	\label{delta-x}\\
\delta y & = A \dot{\gamma} + A \dot{\phi}t^2 + 2 B \phi t + O(t^2)B + O(t^3)
	\label{delta-y}\\
\delta z & = A \dot{\zeta} + A\dot{\eta} t  + B \eta + O(t^2)B + O(t^3)
	\label{delta-z}
\end{align}
where  $O(t^2)B$ means a term of the form $B$ times $O(t^2)$. Looking at~\eqref{delta-x} we see that $x = O(t)$ forces $B_0=0$. This means that $B = O(t)$ and so the $O(t^2)B$ terms above can all be absorbed into the $O(t^3)$ terms. 

Now from the formula~\eqref{delta-y} for $\delta y$, we see that the infinitesimal change of boundary value is $\delta \gamma = A_0 \dot \gamma$. Hence
\[
\delta a 
	= 
		a^{-1}\left\langle \dot{\gamma}, \delta \dot \gamma \right\rangle 
	=
		\dot{A}_0 a 
		+
		A_0 a^{-1}\left\langle \dot \gamma, \ddot \gamma \right\rangle
	=
		\dot{A}_0 a + A_0 \dot{a}
\]
Comparing this with the $O(t)$ term in $\delta x$ we see that the $O(t)$ Taylor coefficients of $x$ is preserved provided that
\[
B_1 a + A_0 \dot{a}
	= 
		\dot{A}_0 a
		+
		A_0 \dot{a}
\] 
which is equivalent to $B_1 = \dot{A}_0$. 

Next look at the $O(t)$ term in~\eqref{delta-y}, which is $A_1 \dot \gamma$. When~\eqref{admissible-taylor} is preserved, this must vanish, which forces $A_1=0$. Now the $O(t^2)$ term in~\eqref{delta-x} is $B_2 a$. Again this must vanish, which forces $B_2=0$. 

At this point we have shown that $B_0, B_2, A_1=0$ and $B_1 = \dot{A}_0$ is equivalent to preserving the conditions on all the Taylor coefficents of $x$ and on the $O(1)$ and $O(t)$ coefficients of $y$. It remains to show that also putting $A_2 = -\frac{1}{2}\ddot{A}_0$ is equivalent to preserving the condition on the $O(t^2)$ Taylor coefficient of $y$ and also the $O(1)$ and $O(t)$ terms in $z$. 

To this end, note that $\delta \gamma = A_0 \dot\gamma$ means
\[
\delta \zeta 
	= 
		\frac{ \delta a}{a^{2}} \dot\gamma - \frac{1}{a} \delta \dot \gamma
	=
	 	A_0\left(\frac{\dot{a}\dot \gamma}{a^{2}} - \frac{\ddot \gamma}{a}\right)
	=
		A_0 \dot \zeta
\]
Comparing with~\eqref{delta-z} we see that the $O(1)$ Taylor coefficient in $z$ is automatically preserved. For the $O(t)$ term, recall the coefficient is $\eta = R(\zeta)(\dot\zeta)$ and so
\[
\delta \eta = R(\delta \zeta)(\dot \zeta)+ R(\zeta)(\dot \delta \zeta)
= R(\zeta)(A_0 \ddot \zeta + \dot{A}_0 \dot \zeta) 
= \dot{A}_0 \eta + A_0 R(\zeta)(\ddot{\zeta})
\]
Meanwhile, the corresponding term in~\eqref{delta-z} is
\[
A_0\dot{\eta} + B_1 \eta
=
A_0 (R(\dot{\zeta})(\dot\zeta) + R(\zeta)(\ddot \zeta)) + \dot{A}_0 \eta
=
\delta \eta
\]
and so again the $O(t)$ Taylor coefficient in $z$ is automatically preserved.

It remains to investigate the $O(t^2)$ term in the Taylor expansion of $y$, whose coefficient is $\phi = \frac{1}{2} \ddot{\gamma} + \dot{a}\zeta$. This means that
\begin{align*}
\delta \phi
	&=
		\frac{1}{2} \delta \ddot\gamma + \delta(\dot{a})\zeta + \dot{a} \delta \zeta\\
	&=
		\frac{1}{2}\left(
			\ddot{A}_0 \dot \gamma 
			+ 
			2\dot{A}_0 \ddot{\gamma} 
			+ 
			A_0 \dddot{\gamma}
		\right)
		+
		(\ddot{A}_0 a + 2\dot{A}_0 \dot{a} + A_0 \ddot{a})\zeta
		+
		A_0\dot{a}\dot{\zeta}
\end{align*}
Meanwhile, the $O(t^2)$ term in~\eqref{delta-y} is
\[
A_2 \dot{\gamma} + A_0 \dot \phi + 2B_1 \phi
=
A_2 \dot{\gamma} + A_0 \left( \frac{1}{2}\dddot{\gamma}  + \ddot{a}\zeta + \dot{a}\dot\zeta \right)+\dot{A}_0 \left(  \ddot{\gamma} + 2\dot{a}\zeta \right)
\]
Comparing with $\delta \phi$ we see that the constraint on the $O(t^2)$ Taylor coefficient of $y$ is preserved precisely when $A_2 = -\frac{1}{2}\ddot{A}_0$ as claimed (where we have used $a\zeta = - \dot{\gamma})$.
\end{proof}

\subsection{Admissible complex structures on the domain}

In what follows we will want to vary the complex structure $j$ on the domain $\overline{\Sigma}$. The definition of the space $\A$ of admissible maps depends on this choice of $j$: the definition uses a biholomorphism between $\del \Sigma \times [0,\delta)$ and a neighbourhood of $\del \Sigma \subset \overline{\Sigma}$ in order to talk about the coordinates $s$ and $t$ and, in particular, the Taylor coefficients in the $t$~direction. It will be convenient to restrict ourselves to complex structures on $\overline{\Sigma}$ which all give rise to the same space of admissible maps. We call these complex structures \emph{admissible} (Definition~\ref{def-admissible-j} and Lemma~\ref{admissible-j-give-same-maps}). We show that up to diffeomorphisms, we do not lose any generality: any complex structure on $\overline{\Sigma}$ can be made admissible by pulling back by  a diffeomorphism (Lemma~\ref{every-j-is-admissible-mod-diffeos}).  So the space of all admissible complex structures on $\overline{\Sigma}$ covers the whole moduli space. In Lemma~\ref{admissible-Teichmuller-slices} we give an ``admissible Teichmuller slice'' which is a slice for the Teichmuller action made entirely of admissible complex structures. 

To begin, we fix a reference complex structure $j_0$ on $\overline{\Sigma}$. We also fix a biholomorphism of $\del \Sigma \times [0,\delta)$ with a neighbourhood of $\del \Sigma \subset (\overline{\Sigma},j_0)$. The coordinates $s \in \del \Sigma$ and $t \in [0,\delta)$, with $j_0(\del_s) = \del_t$, then determine our space of admissible maps $\A$ as in Definition~\ref{admissible-maps}.

\begin{definition}\label{def-admissible-j}
A $C^{1,\alpha}$ complex structure $j$ on $\overline{\Sigma}$ is called \emph{admissible} if $j - j_0 \in C^{1,\alpha}_1$. We write $\J$ for the set of all admissible complex structures on $\overline{\Sigma}$, equipped with the $C^{1,\alpha}$ topology.
\end{definition}

Note that since $j$ and $j_0$ are equal on the boundary, they induce the same orientation on the whole of $\overline{\Sigma}$. 

\begin{lemma}
$\J$ is a closed submanifold of the space of all $C^{1,\alpha}$ complex structures on $\overline{\Sigma}$. 
\end{lemma}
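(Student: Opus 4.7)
The plan is to cover $\J$ by Cayley-type charts for the ambient Banach manifold $\mathscr{J}^{1,\alpha}$ of all $C^{1,\alpha}$ orientation-compatible complex structures on $\overline{\Sigma}$, and show that under each such chart $\J$ corresponds to an open subset of a fixed closed linear subspace. Recall that $\mathscr{J}^{1,\alpha}$ sits inside $C^{1,\alpha}(\End T\overline{\Sigma})$ as the transverse zero set of $j \mapsto j^2 + \id$, with tangent space $T_j\mathscr{J}^{1,\alpha} = C^{1,\alpha}(\End_a(T\overline{\Sigma},j))$ of anti-linear endomorphisms.

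Fix $j \in \J$. For $A$ of small $C^{1,\alpha}$-norm, the element $1-A$ is invertible (since $C^{1,\alpha}$ is a Banach algebra), and we set
\[
\Phi_j(A) = (1-A)\,j\,(1-A)^{-1}.
\]
The anti-commutation $jA = -Aj$ immediately gives $\Phi_j(A)^2 = -\id$, and a direct computation yields the key identity
\[
\Phi_j(A) - j \;=\; -2\,A\,j\,(1-A)^{-1}.
\]
The differential $d\Phi_j|_0$ is multiplication by $2j$, an isomorphism of $\End_a(T\overline{\Sigma},j)$, so by the inverse function theorem $\Phi_j$ restricts to a diffeomorphism from a neighbourhood of $0$ in $C^{1,\alpha}(\End_a(T\overline{\Sigma},j))$ onto a neighbourhood of $j$ in $\mathscr{J}^{1,\alpha}$.

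Because $j \in \J$ we have $j - j_0 \in C^{1,\alpha}_1$, so $\Phi_j(A) - j_0 \in C^{1,\alpha}_1$ if and only if $\Phi_j(A) - j \in C^{1,\alpha}_1$. Now $C^{1,\alpha}_1 = \ker T_1$ is a two-sided ideal in the Banach algebra $C^{1,\alpha}$: if $f|_{\del\Sigma} = 0$ and $\nabla_t f|_{\del\Sigma} = 0$, the Leibniz rule gives the same for $fg$ for any $g \in C^{1,\alpha}$. Since $-2j(1-A)^{-1}$ is a $C^{1,\alpha}$ unit, the identity above shows $\Phi_j(A) - j \in C^{1,\alpha}_1$ if and only if $A \in C^{1,\alpha}_1$. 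Hence $\Phi_j$ identifies a neighbourhood of $j$ in $\J$ with an open subset of the closed subspace $C^{1,\alpha}_1 \cap C^{1,\alpha}(\End_a(T\overline{\Sigma},j))$ of $T_j\mathscr{J}^{1,\alpha}$, exhibiting $\J$ as a Banach submanifold.

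Closedness is immediate: if $j_n \in \J$ and $j_n \to j$ in $C^{1,\alpha}$, then $j_n - j_0 \to j - j_0$ in $C^{1,\alpha}(\End T\overline{\Sigma})$, and since $C^{1,\alpha}_1$ is the kernel of the continuous Taylor map $T_1$ from~\eqref{taylor-map}, it is closed, so $j - j_0 \in C^{1,\alpha}_1$. The main (and only non-formal) step is the identity $\Phi_j(A) - j = -2Aj(1-A)^{-1}$, which follows at once from $jA = -Aj$; the ideal property of $C^{1,\alpha}_1$ in $C^{1,\alpha}$ is the other key ingredient, and everything else is a direct application of the inverse function theorem.
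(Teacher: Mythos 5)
Your proof is correct and carries out in detail exactly what the paper's one-line proof ("identical to that for the space of all complex structures with no restrictions") has in mind: the usual Cayley chart argument, adapted by the observation that $C^{1,\alpha}_1$ is a closed two-sided ideal, so the chart sends $C^{1,\alpha}_1 \cap \End_a$ onto the admissible structures near $j$. The identity $\Phi_j(A) - j = -2Aj(1-A)^{-1}$, the ideal property, and the closedness argument are all right.
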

\begin{proof}
The subspace $C^{1,\alpha}_1(\End(T\overline{\Sigma})) \subset C^{1,\alpha}(\End(T\overline{\Sigma}))$ is closed. The proof that $\J \subset j_0 + C^{1,\alpha}_1(\End(T\overline{\Sigma}))$ is a submanifold is now identical to that for the space of all complex structures with no restrictions. 
\end{proof}

\begin{lemma}\label{admissible-j-give-same-maps}
Let $j$ be an admissible complex structure on $\overline{\Sigma}$ and $u \colon \overline{\Sigma} \to \overline{Z}$ be a map for which the pull-back of the twistor coordinates satisfy $(x,y_i,tz_i) \in C^{2,\alpha}$. Assume moreover that $u(\overline{\Sigma}) \cap \del Z = \del \Sigma$ and that this intersection is transverse. Then $u$ is admissible (with respect to the reference $j_0$) if and only if 
\[
\delb_j u \in C^{1,\alpha}_1 (\z{T^*\Sigma}^{0,1}_j\otimes u^*\e{TZ}).
\]
where $\delb_j u = \frac{1}{2}\diff u + \frac{1}{2}J(u) \circ \diff u \circ j$ is defined using $j$. 
\end{lemma}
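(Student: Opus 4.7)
The plan is to reduce to Proposition~\ref{admissible-decay}, which gives exactly the stated equivalence for the reference complex structure $j_0$, by showing that $\delb_j u$ and $\delb_{j_0} u$ differ by a section which already lies in $C^{1,\alpha}_1$. The basic algebraic identity, valid as a section of $T^*\Sigma \otimes u^*TZ$ on the interior, is
\[
\delb_j u - \delb_{j_0} u \;=\; \tfrac{1}{2}\, J(u) \circ \diff u \circ (j - j_0);
\]
my task is then to interpret the right-hand side as a section of $\z{T^*\Sigma} \otimes u^* \e{TZ}$ belonging to $C^{1,\alpha}_1$.

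For this interpretation I will use Lemma~\ref{lift-d-to-edge} to lift $\diff u$ to $\diff_e u \in C^{1,\alpha}(\z{T^*\Sigma} \otimes u^*\e{TZ})$, which is well defined because of the transversality assumption $u(\overline{\Sigma}) \cap \del Z = \del \Sigma$ and the hypothesis $(x,y_i,tz_i)\in C^{2,\alpha}$. By Lemma~\ref{J-in-local-coordinates}, $J$ extends as a smooth endomorphism of $\e{TZ}$. The admissibility hypothesis on $j$ gives $j - j_0 \in C^{1,\alpha}_1(\End(T\overline{\Sigma}))$ which, under the canonical identification $\End(\z{T\Sigma}) \cong \End(T\overline{\Sigma})$, is a section of $\End(\z{T\Sigma})$ vanishing to first order at $\del \Sigma$. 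The composition $J(u) \circ \diff_e u \circ (j - j_0)$ is then a genuine section of $\z{T^*\Sigma}\otimes u^*\e{TZ}$ which, being the product of a $C^{1,\alpha}$ section with one in $C^{1,\alpha}_1$, lies in $C^{1,\alpha}_1$ by the Leibniz rule (both $b$ and $\nabla_t b$ vanishing on $\del \Sigma$ force $(ab)|_{\del \Sigma}=0$ and $\nabla_t(ab)|_{\del \Sigma}=0$).

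Because $j = j_0$ on $\del \Sigma$, the subbundles $\z{T^*\Sigma}^{0,1}_j$ and $\z{T^*\Sigma}^{0,1}_{j_0}$ coincide along the boundary, so vanishing at $\del \Sigma$ of a section lying in one of these $(0,1)$-subbundles is the same as vanishing of the ambient section in $\z{T^*\Sigma}_\C \otimes u^*\e{TZ}$. Consequently the two conditions
\[
\delb_j u \in C^{1,\alpha}_1\bigl(\z{T^*\Sigma}^{0,1}_j \otimes u^*\e{TZ}\bigr)
\quad\text{and}\quad
\delb_{j_0} u \in C^{1,\alpha}_1\bigl(\z{T^*\Sigma}^{0,1}_{j_0} \otimes u^*\e{TZ}\bigr)
\]
differ only by the already controlled error term and are therefore equivalent. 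The second is equivalent to admissibility of $u$ by Proposition~\ref{admissible-decay}, which completes the argument.

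I expect the main obstacle to be bookkeeping rather than any genuine analytic difficulty: one must carefully justify that $\diff_e u$ really does extend to a $C^{1,\alpha}$ section of $\z{T^*\Sigma} \otimes u^*\e{TZ}$, given that $z_i$ itself is only $C^{1,\alpha}$ while $tz_i$ is $C^{2,\alpha}$, and one must check that the Leibniz-type product estimate for $C^{1,\alpha} \cdot C^{1,\alpha}_1 \subset C^{1,\alpha}_1$ behaves as expected in the weighted edge setting. Once these verifications are in place, the proof collapses to the one-line computation above.
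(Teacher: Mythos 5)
Your proof is correct and takes the same route as the paper's: the paper's entire argument is the two-sentence observation that admissibility of $j$ forces $\delb_j u - \delb_{j_0} u \in C^{1,\alpha}_1$, after which Proposition~\ref{admissible-decay} finishes the job. You have simply expanded the details the paper elides (the explicit formula $\tfrac{1}{2}J(u)\circ\diff_e u\circ(j-j_0)$, the product estimate $C^{1,\alpha}\cdot C^{1,\alpha}_1 \subset C^{1,\alpha}_1$, and the agreement of the two $(0,1)$-subbundles along $\del\Sigma$), all of which check out.
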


\begin{proof}
The definition of admissible complex structure implies that $\delb_j u - \delb_{j_0} u \in C^{1,\alpha}_1$. The result now follows from Proposition~\ref{admissible-decay}.
\end{proof}

\begin{lemma}\label{every-j-is-admissible-mod-diffeos}
Let $j$ be any $C^{1,\alpha}$ complex structure on $\overline{\Sigma}$, inducing the same orientation as $j_0$. There exists a $C^{2,\alpha}$ diffeomorphism $\phi$ of $\overline{\Sigma}$ for which $\phi^*j=j_0$ on a neighbourhood of the boundary. In particular, the admissible complex structures account for the whole moduli space of complex structures on $\overline{\Sigma}$.
\end{lemma}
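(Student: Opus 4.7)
The plan is to construct $\phi$ in two stages. First I would produce a ``straightening map'' on a collar of $\del \Sigma$ which actually satisfies $\phi^* j = j_0$ there, and then extend it to all of $\overline{\Sigma}$ as a $C^{2,\alpha}$ diffeomorphism without any further compatibility requirement.

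\textbf{Step 1 (holomorphic collar for $j$).} The first step is to produce a $C^{2,\alpha}$ biholomorphism
\[
\psi_j \colon \bigl(\del \Sigma \times [0,\epsilon), j_0^{\mathrm{std}}\bigr) \longrightarrow (V_j, j)
\]
onto a collar $V_j$ of $\del \Sigma$, where $j_0^{\mathrm{std}}$ is the standard complex structure on the half-cylinder. Concretely, I would fix a $C^{2,\alpha}$ Riemannian metric compatible with $j$, pass to geodesic normal coordinates off the smooth boundary $\del \Sigma$, and then solve the Beltrami equation for the residual Beltrami coefficient of $j$ in those coordinates with an appropriate boundary condition. Since $j$ is $C^{1,\alpha}$, the Beltrami coefficient is $C^{1,\alpha}$, and elliptic regularity for the Beltrami equation gives a $C^{2,\alpha}$ solution up to $\del \Sigma$.

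\textbf{Step 2 (straighten on the collar).} We already have the fixed biholomorphism $\psi_0 \colon (\del \Sigma \times [0,\delta), j_0^{\mathrm{std}}) \to (V_0, j_0)$. After shrinking both parametrisations to a common $\del \Sigma \times [0,\eta)$, set
\[
\phi_{\mathrm{col}} := \psi_j \circ \psi_0^{-1} \colon V_0' \longrightarrow V_j'.
\]
Since $\psi_j$ is $j$-holomorphic and $\psi_0$ is $j_0$-holomorphic,
\[
\phi_{\mathrm{col}}^{*} j = (\psi_0^{-1})^{*}\psi_j^{*}j = (\psi_0^{-1})^{*} j_0^{\mathrm{std}} = j_0
\]
on $V_0'$.

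\textbf{Step 3 (extension to $\overline{\Sigma}$).} It remains to extend $\phi_{\mathrm{col}}$ to a $C^{2,\alpha}$ diffeomorphism $\phi$ of all of $\overline{\Sigma}$, agreeing with $\phi_{\mathrm{col}}$ on a smaller subcollar (so $\phi^{*}j = j_0$ is preserved there). The boundary restriction $\bar{\phi} := \phi_{\mathrm{col}}|_{\del \Sigma}$ is an orientation-preserving $C^{2,\alpha}$ diffeomorphism of $\del \Sigma$; extend it to any $C^{2,\alpha}$ diffeomorphism $\Psi$ of $\overline{\Sigma}$ via the collar neighbourhood theorem and path-connectedness of $\Diff^{+}(S^1)$. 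Then $\Psi^{-1}\circ \phi_{\mathrm{col}}$ fixes $\del \Sigma$ pointwise, and in $\psi_0$-coordinates has the form $(s,t) \mapsto (s,t) + t R(s,t)$ for some $C^{1,\alpha}$ function $R$. After shrinking the collar so that $|tR|$ and its derivatives up to second order are small, a standard bump-function interpolation with the identity yields an extension to a diffeomorphism of $\overline{\Sigma}$; recomposing with $\Psi$ gives $\phi$.

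\textbf{Main obstacle.} The only nontrivial analytic content is Step~1: existence of a $C^{2,\alpha}$ holomorphic collar for a merely $C^{1,\alpha}$ complex structure on a surface with boundary. This rests on elliptic regularity for the Beltrami equation up to the boundary at this precise regularity level. The final assertion of the lemma follows immediately: for $\phi$ as above, $\phi^{*} j$ agrees with $j_0$ on an open neighbourhood of $\del \Sigma$, so in particular $\phi^{*} j - j_0 \in C^{1,\alpha}_1$ and $\phi^{*} j$ is admissible in the sense of Definition~\ref{def-admissible-j}.
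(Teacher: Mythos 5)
Your strategy is essentially the same as the paper's at the conceptual level: produce a holomorphic chart for $j$ near $\del\Sigma$ modeled on the cylinder, compose it with $\psi_0^{-1}$ to get a local diffeomorphism satisfying $\phi^*j = j_0$, then extend arbitrarily into the interior. The paper carries out the chart construction not via the Beltrami equation but by directly building a holomorphic coordinate $\sigma+i\tau$: choose a $C^{1,\alpha}$ metric $h$ conformal to $j$, solve the Dirichlet problem $\Delta_h\tau = 0$, $\tau|_{\del\Sigma}=0$ with $\del_t\tau>0$ (gaining $\tau\in C^{2,\alpha}$ from the $C^{0,\alpha}$ coefficients of $\Delta_h$), then integrate the closed 1-form $j\,\diff\tau$ after normalizing its periods to $2\pi$ to obtain the harmonic conjugate $\sigma$. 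This avoids invoking any existence/regularity theory for the boundary-value Beltrami problem, which is why it is preferable at this regularity level.

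There is one genuine slip in your Step 1. You posit a ``$C^{2,\alpha}$ Riemannian metric compatible with $j$,'' but if $j$ is only $C^{1,\alpha}$, any metric Hermitian with respect to $j$ can be at most $C^{1,\alpha}$; no $C^{2,\alpha}$ compatible metric exists. Moreover, geodesic normal coordinates for a $C^{1,\alpha}$ metric are delicate — the geodesic ODE has $C^{0,\alpha}$ coefficients, so the exponential map need not be $C^{2,\alpha}$, and you risk losing exactly the regularity you need. The fix is to drop the compatibility requirement altogether: work in the \emph{given} collar coordinates $(s,t)$ provided by $j_0$ (or any smooth background coordinates), express $j$ there as a $C^{1,\alpha}$ Beltrami coefficient, and then solve the Beltrami equation with the appropriate boundary condition to obtain a $C^{2,\alpha}$ solution. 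With that amendment your argument goes through and is a legitimate alternative route. Your Step 3 on the extension, while a bit more detailed than the paper's one-line ``extend $\phi$ arbitrarily,'' is fine; both rely on the same standard collar-extension fact, which uses that $j$ and $j_0$ induce the same orientation so the boundary restriction of $\phi_{\mathrm{col}}$ is an orientation-preserving diffeomorphism of $\del\Sigma$.
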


\begin{proof}
Pick a $C^{1,\alpha}$ Riemannian metric $h$ on $\overline{\Sigma}$ in the conformal class determined by $j$. The Laplacian $\Delta$ of $h$ has $C^{0,\alpha}$ coefficients. We take a real-valued function $\tau$, defined on an neighbourhood of the boundary, which is harmonic $\Delta \tau = 0$, satisfies the Dirichlet boundary condition $\tau =0$ on $\del \Sigma$ and also has $\del_t \tau > 0$ on $\del \Sigma$, where $\del_t$ is the $t$-derivative of the coordinates $(s,t)$ in which $j_0(\del_s) = \del_t$. (Note that since we only need $\tau$ to be defined near $\del \Sigma$, we are able to fix both Dirichlet \emph{and} Neumann boundary values.) Since the coefficients of $\Delta$ are $C^{0,\alpha}$, elliptic regularity means $\tau \in C^{2,\alpha}$. Since $\del_t \tau >0$ on the boundary, it is a genuine boundary defining function. 

By construction, the 1-form $j\diff \tau$ is closed. We first observe that it has positive integral along each component of $\del \Sigma$. To see this, write $j(\diff t) = a \diff s + b \diff t$ for functions $a,b$. Since $j$ induces the same orientation as $j_0$,  $a>0$. The period of $j\diff \tau$ on a given component of $\del \Sigma$ is $\int_0^{2\pi} a \del_t \tau\, \diff s>0$. We now multiply $\tau$ by a constant near each boundary component so that $j \diff \tau$ has period $2\pi$ around each boundary component. It then integrates to give an $S^1=\R/2\pi$ valued function $\sigma$, defined near the boundary, with $\diff \sigma = j \diff \tau$. Since $\Delta \sigma \,\dvol_h= \diff j \diff \sigma = 0$, $\sigma$ is also $\Delta$-harmonic, hence $C^{2,\alpha}$. Now $\sigma + i \tau$ is a holomorphic coordinate for $j$. Identifying $s+it$ with $\sigma +i \tau$ defines a $C^{2,\alpha}$ diffeomorphism $\phi$ near the boundary for which $\phi^* j = j_0$. We then extend $\phi$ arbitrarily to a diffeomorphism on the whole of $\overline{\Sigma}$.
\end{proof}

\begin{definition}\label{def-admissible-diffeo}
A  $C^{2,\alpha}$ diffeomorphism $\phi$ of $\overline{\Sigma}$ is called \emph{admissible} if $\phi^*j_0$ is an admissible complex structure, i.e.\ $\phi^*j_0 - j_0 \in C^{1,\alpha}_1$.
\end{definition}

\begin{lemma}\label{admissible-diffeos-act}
If $\phi$ is an admissible diffeomorphism and $j$ is an admissible complex structure, then $\phi^*j$ is also admissible. It follows that the set of all admissible diffeomorphisms $\G$ is a group which acts on the space of all admissible complex structures $\J$.
\end{lemma}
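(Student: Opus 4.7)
The plan is to begin with the decomposition
\[
\phi^* j - j_0 \;=\; \phi^*(j - j_0) \;+\; (\phi^* j_0 - j_0).
\]
The second summand lies in $C^{1,\alpha}_1$ by Definition~\ref{def-admissible-diffeo}. For the first summand, admissibility of $j$ gives $j-j_0 \in C^{1,\alpha}_1$, so it is enough to establish the following auxiliary fact: if $\psi$ is any $C^{2,\alpha}$ diffeomorphism of $\overline{\Sigma}$ and $T$ is a tensor field of class $C^{1,\alpha}$ lying in $C^{1,\alpha}_1$, then $\psi^*T \in C^{1,\alpha}_1$. Granting this, both summands are in $C^{1,\alpha}_1$, and therefore so is $\phi^*j - j_0$, which shows that $\phi^*j$ is admissible.

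The main technical step, and the only real obstacle in the proof, is verifying this auxiliary fact. The first-order vanishing condition at $\del \Sigma$ has two components (the value and the normal derivative), and one must check both survive pullback. For the value: any diffeomorphism of $\overline{\Sigma}$ sends $\del\Sigma$ to itself, so $(\psi^*T)|_{\del\Sigma}$ is a contraction of $T \circ \psi|_{\del\Sigma} = 0$ with $\diff \psi$, hence zero. For the normal derivative, work in a boundary collar $(s,t)$ and differentiate the pointwise formula for $\psi^*T$ in $t$; this yields a sum of terms, each of which is either a contraction involving $T \circ \psi$ at the boundary (which vanishes) or a contraction involving $\nabla T$ evaluated at the boundary point $\psi(p) \in \del\Sigma$. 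Since $T$ vanishes identically on $\del\Sigma$ its tangential derivatives there vanish automatically, so $\nabla T|_{\del\Sigma}$ is controlled by $\nabla_t T|_{\del\Sigma}$, which is zero by the $C^{1,\alpha}_1$ hypothesis. Hence $\nabla_t(\psi^*T)|_{\del\Sigma}=0$, so $\psi^*T \in C^{1,\alpha}_1$.

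For the group statement: the identity is clearly admissible. For composition, $(\psi\circ\phi)^*j_0 = \phi^*(\psi^*j_0)$, and $\psi^*j_0 \in \J$ by admissibility of $\psi$; the first part of the lemma, applied to the admissible diffeomorphism $\phi$ and the admissible complex structure $\psi^*j_0$, gives $(\psi\circ\phi)^*j_0 \in \J$, so $\psi\circ\phi$ is admissible. For the inverse, write $\phi^*j_0 = j_0 + \eta$ with $\eta \in C^{1,\alpha}_1$; pulling back by $\phi^{-1}$ yields
\[
(\phi^{-1})^*j_0 - j_0 \;=\; -(\phi^{-1})^*\eta,
\]
and this lies in $C^{1,\alpha}_1$ by the auxiliary fact applied to the $C^{2,\alpha}$ diffeomorphism $\phi^{-1}$ and the tensor $\eta$. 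Hence $\phi^{-1}$ is admissible, and $\G$ is a group. The action on $\J$ is precisely the content of the first part of the lemma.
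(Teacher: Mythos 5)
Your proof is correct and follows essentially the same strategy as the paper's: both rest on the auxiliary fact that pullback by a $C^{2,\alpha}$ diffeomorphism preserves $C^{1,\alpha}_1$, and both use a telescoping decomposition of $\phi^*j - j_0$. Two small differences are worth noting. First, your decomposition $\phi^*j - j_0 = \phi^*(j-j_0) + (\phi^*j_0 - j_0)$ is slightly more direct than the paper's: the paper writes the second term as $\phi^*(j_0 - (\phi^{-1})^*j_0)$, which requires first establishing that $\phi^{-1}$ is admissible, whereas your version reads off $\phi^*j_0 - j_0 \in C^{1,\alpha}_1$ straight from Definition~\ref{def-admissible-diffeo}. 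Second, you spell out a proof of the auxiliary pullback-invariance fact, which the paper simply asserts; your argument (boundary value vanishes because $\psi$ preserves $\del\Sigma$; the normal derivative of $\psi^*T$ decomposes into terms multiplying either $T\circ\psi|_{\del\Sigma}=0$ or $\nabla T|_{\del\Sigma}$, and the latter vanishes since both the tangential derivatives of the identically-zero boundary restriction and the normal derivative $\nabla_t T|_{\del\Sigma}$ vanish) is sound. The group-structure argument is then the same in both.
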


\begin{proof}
First note that for any $C^{2,\alpha}$ diffeomorphism $\phi$ and any tensor $T \in C^{1,\alpha}_1$, $\phi^*T \in C^{1,\alpha}_1$. Now, for an admissible diffeomorphism $\phi$, 
\[
(\phi^{-1})^*j_0 - j_0 = (\phi^{-1})^* \left( j_0 - \phi^*j_0\right) \in C^{1,\alpha}_1
\]
It follows that $\phi^{-1}$ is also admissible. If $j$ is also admissible then
\[
\phi^*j -j_0 = \phi^*\left( j - j_0\right) +\phi^* \left( j_0 - (\phi^{-1})^*j_0\right) \in C^{1,\alpha}_1
\]
It follows that $\phi^* j$ is also admissible. Finally to check that $\G$ is closed under composition, if $\phi, \psi \in \G$, then by the above $(\phi \circ \psi)^* j_0 = \psi^*(\phi^*j_0)$ is an admissible complex structure which shows that $\phi \circ \psi \in \G$. 
\end{proof}

We close our discussion of admissible complex structures with an ``admissible'' version of Teichmüller slices. Recall that the Teichmüller space $\T(\overline{\Sigma})$ of $\overline{\Sigma}$ is the set of complex structures on $\overline{\Sigma}$, moduli the identity component of the diffeomorphism group. An infinitesimal change of the complex structure $j$ on $\overline{\Sigma}$ is an anti-linear endomorphism of $T\overline{\Sigma}$, and so there is a surjection
\begin{equation}
C^{\infty}(T^*\overline{\Sigma}^{0,1} \otimes T\overline{\Sigma}^{1,0}) \to T_{[j_0]}\T(\overline{\Sigma})
\label{Teichmuller-projection}
\end{equation}

\begin{definition}\label{Teichmüller-slice}
A \emph{Teichmüller slice} $\mathscr{S}$ is a vector subspace \[\mathscr{S} \subset C^{\infty}(T^*\overline{\Sigma}^{0,1}\otimes T\overline{\Sigma}^{1,0})\] which projects isomorphically onto $T_{[j_0]}\T(\overline{\Sigma})$ under~\eqref{Teichmuller-projection}. Equivalently, if we write $\delb$ for the holomorphic structure on the tangent bundle of $(\overline{\Sigma},j_0)$, then a Teichmüller slice $\mathscr{S}$ is a complement for the image of $\delb$ acting on sections of $T\overline{\Sigma}$ which are tangent to~$\del \Sigma$. 
\end{definition}

We need a small Lemma regarding the freedom in choice of Teichmüller slices. In principle, it follows from Lemma~\ref{every-j-is-admissible-mod-diffeos} that we can choose the elements of a Tecihmüller slice to vanish to infinite order at the boundary. We actually only need to use a slice in $C^{\infty}_1$, and we give a direct proof for simplicity.  

\begin{lemma}\label{admissible-Teichmuller-slices}
There exist Teichmüller slices $\mathscr{S} \subset C^{\infty}_1(T^*\overline{\Sigma}^{0,1} \otimes T\overline{\Sigma}^{1,0})$, i.e.\ $\mathscr{S}$ is composed entirely of sections of $T^*\overline{\Sigma}^{0,1} \otimes T\overline{\Sigma}^{1,0}$ which vanish to first order along $\del \Sigma$. 
\end{lemma}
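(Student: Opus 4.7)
The plan is to start with an arbitrary (finite-dimensional) Teichmüller slice $\mathscr{S}_0 \subset C^\infty(T^*\overline{\Sigma}^{0,1}\otimes T\overline{\Sigma}^{1,0})$ with no boundary restrictions, which exists because $T_{[j_0]}\T(\overline\Sigma)$ is finite dimensional, and then to modify each basis element $\mu$ of $\mathscr{S}_0$ by a $\delb$-exact term so that the result vanishes to first order on $\del\Sigma$. Concretely, for each $\mu$ I will produce a smooth vector field $v_\mu$ on $\overline\Sigma$, tangent to $\del\Sigma$, whose 2-jet along $\del\Sigma$ is adjusted so that $\mu-\delb v_\mu \in C^\infty_1$. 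Because each $v_\mu$ is tangent to the boundary, the correction $\delb v_\mu$ lies in the subspace we quotient by in \eqref{Teichmuller-projection}, so the modified family $\mathscr{S}=\{\mu-\delb v_\mu\}$ still projects isomorphically onto $T_{[j_0]}\T(\overline\Sigma)$ and is therefore a Teichmüller slice.

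The content of the argument is a Taylor-coefficient calculation in the collar $\del\Sigma\times[0,\delta)$. In the coordinates $(s,t)$ with $j_0(\del_s)=\del_t$, identify sections of $T\overline\Sigma^{1,0}$ with complex-valued functions $w=A+iB$ via $v=\tfrac{1}{2}w(\del_s-i\del_t)$; the tangency condition $v|_{\del\Sigma}\in T(\del\Sigma)$ becomes $B(s,0)=0$. Writing $\mu = m(s,t)\,d\bar z\otimes \tfrac{1}{2}(\del_s-i\del_t)$, the equation $\delb v=\mu$ becomes the inhomogeneous Cauchy--Riemann system $(\del_s+i\del_t)w = 2m$. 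Restricting this equation and its $\del_t$-derivative to $t=0$ and using $B(s,0)=0$, one reads off in turn $\del_tA(s,0)$, $\del_tB(s,0)$, $\del_t^2A(s,0)$, $\del_t^2B(s,0)$ as explicit combinations of $m|_{t=0}$, $\del_tm|_{t=0}$ and their $s$-derivatives, with $A(s,0)$ remaining a free parameter (set it to zero). The four prescribed quantities are mutually consistent, so they are realised as the 2-jet at $t=0$ of a smooth function on the collar — take for instance the quadratic polynomial in $t$ with these Taylor coefficients.

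Extending by a cutoff gives a smooth vector field $v_\mu$ on $\overline\Sigma$ that is tangent to $\del\Sigma$ and satisfies $\mu-\delb v_\mu \in C^\infty_1$. Taking such a correction for each element of a basis of $\mathscr{S}_0$ and spanning linearly yields the desired slice $\mathscr{S}\subset C^\infty_1$. The only genuine point to verify is that the single constraint $B(s,0)=0$ leaves enough freedom in the 2-jet of $v$ to match an arbitrary 1-jet of $\mu$: this is the elementary count sketched above (four real Taylor coefficients of $v$ available, four real equations imposed by matching $m|_{t=0}$ and $\del_tm|_{t=0}$), and is the only step that uses anything beyond bookkeeping.
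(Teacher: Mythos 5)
Your proposal is correct and follows essentially the same strategy as the paper: start from an arbitrary Teichm\"uller slice and subtract $\delb$ of a boundary-tangent vector field from each basis element to make its $1$-jet vanish along $\del\Sigma$, justified by the same Taylor-coefficient bookkeeping in the collar. The only (harmless) difference is in which free parameter is fixed: you set $A(s,0)=0$ and solve for $\del_tB(s,0)$ pointwise, whereas the paper takes $\del_tB(s,0)$ to be the constant mean of $Q_0$ and then integrates the resulting ODE for $A(s,0)$; both close the system.
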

\begin{proof}
Let $\phi \in C^\infty(T^*\overline{\Sigma}^{0,1} \otimes T\overline{\Sigma}^{1,0})$. We will find a vector field $v$, tangent to $\del \Sigma$, such that $\delb(v) + \phi \in C^{
\infty}_1$. Doing this for a basis $\phi_1, \ldots , \phi_d$ of a Teichmüller slice will give a new Teichmüller slice, spanned by $\delb(v_i) + \phi_i$, and so which lies in $C^{\infty}_1$

To do this, recall the holomorphic collar neighbourhood $S^1\times [0,\delta) \to \Sigma$ at a boundary component, with corresponding holomorphic coordinate $s+it$ where $s \in S^1$, $t \in [0,\delta)$. We treat $\phi$ as a section of $\End_a(T\overline{\Sigma})$ and with respect to $s+it$, we write
\[
\phi
=
P(s,t) (\diff s \otimes \del_s - \diff t \otimes \del_t) + Q(s,t) (\diff s \otimes \del_t + \diff t \otimes \del_s)
\]
We write a vector field in these coordiantes as $v = A(s,t)\del_s + B(s,t) \del_t$. Solving $\delb(v) + \phi\in C^{\infty}_1$ amounts to finding $A, B$ for which
\begin{equation}
\begin{gathered}
\del_t A + \del_s B + P \in C^{\infty}_1\\
\del_s A - \del_t B + Q \in C^{\infty}_1
\end{gathered}
\label{ABPQ}
\end{equation}
We do this via Taylor expansions with respect to $t$:
\begin{align*}
P & = P_0(s) + P_1(s) t + \cdots\\
Q & = Q_0(s) + Q_1(s) t + \cdots\\
A & = A_0(s) + A_1(s) t + A_2(s)t^2 + \cdots\\
B & = B_1(s)t + B_2(s)t^2 + \cdots
\end{align*}
where we have imposed $B(s,0)=0$ to ensure that $v$ is tangent to $\del \Sigma$. Then~\eqref{ABPQ} is equivalent to
\begin{align*}
A_1 + P_0 &=0\\
2A_2 + \dot{B}_1 + P_1 &=0\\
\dot{A}_0 - B_1 + Q_0 &=0\\
\dot{A}_1 - 2B_2 + Q_1&=0
\end{align*}
We solve this system by first taking $A_1=-P_0$. We then fix $B_1$ to be a constant chosen so that $Q_0-B_1$ has zero integral over $S^1$; this means we can find $A_0$ with $\dot{A}_0 = Q_0 -B_1$. Finally, we take $A_2 = -\frac{1}{2}P_1$ and $B_2 = \frac{1}{2}(\dot{P}_0 + Q_1)$.
\end{proof}

\begin{definition}\label{def-admissible-Teichmuller-slice}
A Teichmüller slice $\mathscr{S}$ is called \emph{admissible} if $\mathscr{S} \subset C^{\infty}_1$. We will always use admissible Teichmüller slices.
\end{definition}

\subsection{Asymptotics of the Cauchy--Riemann operator}
\label{normal-operator-section}

Let $u \colon \overline{\Sigma} \to \overline{Z}$ be admissible and $J$-holomorphic on the interior. In this section we compute the asymptotics of the linearised Cauchy--Riemann operator $D_u$ of $u$. The main result is that the normal operator of $D_u$, in the sense of the $0$-calculus, is invertible \emph{in the directions perpendicular to the curve} (Proposition \ref{normal-invertible} below).

The normal operator of $D_u$ turns out to be precisely the linearised Cauchy--Riemann operator of $u \colon \H^2 \to Z$ given by taking the twistor lift of a totally geodesic copy of $\H^2 \subset \H^4$. Geometrically, the reason for this is that if a minimal surface $M \subset \H^4$ passes through the point $p \in \del \H^4$ then under rescalings by hyperbolic isometries centred at $p$, $M$ converges to the totally geodesic copy of $\H^2$ which passes through $p$ and is tangent to $\del M$ at $p$.

Rather than following this geometric approach we instead give a proof by calculation, since in any case we will need a formula for the resulting normal operator. We begin by recalling a coordinate expression for the linearised Cauchy--Riemann operator. Fix a $J$-holomorphic map $u \colon (\Sigma,j) \to (Z,J)$ to an almost complex manifold and take coordinates $\xi_a$ on $Z$ and a holomorphic coordinate $s+it$ on $\Sigma$. We choose $s,t$ centred at a point of $\del \Sigma$ and chosen so that $t$ is a local boundary defining function. The linearised Cauchy--Riemann operator is the first order operator $D_u \colon \Gamma(u^*TZ) \to \Gamma(T^*\Sigma^{0,1}\otimes u^*TZ)$ defined by linearising the $J$-holomorphic equation
\[
u \mapsto \delb u :=  \diff u + J(u)\circ \diff u \circ j
\]
In coordinates, a section of $u^*TZ$ has the form $v = v^a\del_a$, where we are using the summation convention, and the shorthand $\del_a = \del_{\xi_a}$. Now $D_u(\del_t)(v)$ has the coordinate expression
\begin{equation}
D_{u}(\del_t)(v)^a = \del_t v^a - J^a_b\del_s v^b - v^c(\del_c J^a_b) \del_s u^b
\label{CR-in-coordinates}
\end{equation}
where $u^b$ and $J^a_b$ are the coordinate expressions for $u$ and $J$. 

This formula is particularly useful for us, since we have \emph{global} coordinate vector fields on the hyperbolic twistor space. More precisely, we have global coordinates $(x,y_i,z_i)$ on a 7-dimensional space $\hat{Z}$ and $Z = \{ |z|=1\} \subset \hat{Z}$. The endomorphism $J$ is defined on the whole of $T\hat{Z}$ (see the expression for $J$ in Lemma~\ref{J-in-local-coordinates}) and so~\eqref{CR-in-coordinates} defines $D_u$ acting on sections of the trivial bundle $u^*T\hat{Z}$. It is more convenient to calculate in this setting and then restrict back to sections of $u^*TZ$. 

Working in the coordinate frame $\del_x, \del_{y_i}, \del_{z_i}$ we have the following formulae, in which $\epsilon_1,\epsilon_2,\epsilon_3$ denotes the standard basis of $\R^3$ in column vectors. They come directly from~\eqref{CR-in-coordinates} together with Lemma~\ref{J-in-local-coordinates}. 
\begin{align}
D_{u}(\del_t)(\del_x)
  &=
    -(\del_x J)(\del_s u)=\begin{pmatrix}
    0 & 0 & 0\\
    0 & 0 & 0\\
    0 & 2x^{-2}P(z)& 0  
    \end{pmatrix}
    \begin{pmatrix}
    \del_s x \\
    \del_s y \\
    \del_s z  
    \end{pmatrix} \label{CR-dx}
 \\
D_{u}(\del_t)(\del_{y_i})
  &=
      - (\del_{y_i}J)(\del_s u)
    =
      0 \label{CR-dy}\\
D_{u}(\del_t)(\del_{z_i})
  &=
      - (\del_{z_i} J)(\del_s u)
    =  
    \begin{pmatrix}
      0 & \epsilon_i^T & 0 \\
      -\epsilon_i & -R(\epsilon_i) & 0\\
      0 & -2x^{-1}(\epsilon_iz^T + z\epsilon_i^T) &  R(\epsilon_i)
    \end{pmatrix}
    \begin{pmatrix}
      \del_s x \\
      \del_s y \\
      \del_s z  
    \end{pmatrix} \label{CR-dz}
\end{align}

In both \eqref{CR-dx} and \eqref{CR-dz}, the coefficient in the middle of the bottom row blows up at $t=0$. To obtain a $0$-differential operator, we will switch to the  edge tangent bundle, with frame $t\del_x, t\del_y, \del_z$. Similarly, we work with $\z{T^*\Sigma}$, by letting $D_u$ act on $t\del_t$. These introduce the extra factors of $t$ that are needed to produce a $0$-differential operator:

\begin{lemma}
$D_u$ extends to give a $0$-differential operator 
\[
D_u \colon \Gamma(u^*\e{TZ}) \to \Gamma(\z{T^*\Sigma}^{0,1}\otimes u^*\e{TZ})
\] 
\end{lemma}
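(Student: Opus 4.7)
The plan is a direct verification, using the coordinate formula~\eqref{CR-in-coordinates} together with the explicit expression for $J$ in Lemma~\ref{J-in-local-coordinates}. A bounded local frame for $u^*\e{TZ}$ near $\del \Sigma$ is given by the triple $(t\del_x,\, t\del_{y_i},\, \del_{z_i})$, and $t\del_t$ is a bounded local frame for $\z{T\Sigma}$. So the task reduces to evaluating $t\,D_u(\del_t)$ on each edge frame vector, re-expressing the result in the edge frame of $u^*\e{TZ}$ on the target side, and checking that the coefficients obtained depend on the input through an expression of the form $\sum a_{ij}(s,t)(t\del_t)^i(t\del_s)^j$ with coefficient functions $a_{ij}$ that extend up to $t=0$ with at least H\"older regularity.

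Inspecting the matrix in Lemma~\ref{J-in-local-coordinates}, every entry of $J$ extends smoothly to $t=0$ except the block $2x^{-1}P(z)$ sitting in the $\del_z$-component of $J(\del_y)$. This single blowup would prevent $D_u$ from being a $0$-operator if one worked in the naive coordinate frames; the central claim is that the edge and $0$-scalings precisely absorb it. The key arithmetic input is the admissibility condition: by Proposition~\ref{asymptotic-expansion}, $x = a(s)t + O(t^{2+\alpha})$ with $a(s)=|\dot\gamma(s)|>0$, so $t/x$ and $t^2/x^2$ extend to positive H\"older-regular functions up to $t=0$.

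With this in hand, I would trace the two places where the singular block can show up. In the principal-symbol piece $-tJ(u)\,\del_s v$, the factor $-t\cdot 2x^{-1}P(z)$ appears only when $v$ has a $\del_y$ component; but such a component is represented in the edge frame as $\tilde v^i(t\del_{y_i})$, which contributes an additional $t$ and yields $-2(t/x)P(z)^k{}_i$, smooth up to the boundary. In the zeroth-order connection term $-t\,v^c(\del_c J)\,\del_s u$, the worst $c$-derivative is $\del_x(2x^{-1}P(z)) = -2x^{-2}P(z)$, which is paired with $v^x = t\tilde v^0$ from the edge source frame and the outer factor $t$, combining to $(t/x)^2$, again smooth. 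All remaining entries of $J$ and all their derivatives are already smooth, so no further singular contributions appear, and the $\del_z$ component on the target side absorbs its extra $t$ via the target edge decomposition (writing the coordinate component divided by $t$ into the $\del_{z_i}$ slot, and multiplying by the outer $t$, leaves a smooth expression).

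The final bookkeeping step is to confirm that all derivatives of the edge coefficients $\tilde v$ enter only via $t\del_t\tilde v$ and $t\del_s\tilde v$. This follows by a Leibniz expansion: applying $\del_t$ to $v^a = t\tilde v^a$ or $v^a = \tilde w^a$ produces $\tilde v^a + t\del_t\tilde v^a$ or $\del_t\tilde w^a$, which after the outer multiplication by $t$ becomes $t\tilde v^a + t(t\del_t\tilde v^a)$ or $t\del_t\tilde w^a$, all permissible in the $0$-calculus; the same for $\del_s$. The main obstacle is purely bookkeeping: one has to keep track of the $t$-powers contributed by each frame switch and invoke the positivity $a>0$ at every step where $t/x$ is used. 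Collecting everything gives the desired representation of $D_u(t\del_t)$ as a first-order $0$-differential operator between the edge-frame coefficient tuples, with coefficient regularity dictated by that of the Taylor data of $u$ (so $C^{1,\alpha}$ in our setting).
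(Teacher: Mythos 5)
Your proof is correct and takes essentially the same direct-verification approach as the paper, isolating the singular $2x^{-1}P(z)$ block of $J$ and checking that the edge and $0$-rescalings, together with the admissibility condition $x = a(s)t + O(t^{2+\alpha})$ with $a>0$, absorb it; making this last point explicit is exactly what the paper's ``easily verified'' glosses over. One small slip in the parenthetical of your final sentence: the edge frame on the target is $(t\del_x, t\del_{y_i}, \del_{z_j})$ with \emph{no} $t$-rescaling on $\del_{z_j}$, so there is nothing to ``divide by $t$'' in the $\del_{z_i}$ slot; the $z$-components of the output are bounded outright because the singular terms landing there already carry the $t/x$ or $(t/x)^2$ factors you traced, whereas it is the $\del_x$- and $\del_{y_i}$-components of the output that require division by $t$, and those never meet a singular entry of $J$.
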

\begin{proof}
The action of $D_{u}(t\del_t)$ on an arbitrary section of $u^*T\hat{Z}$ can be computed from the above expressions \eqref{CR-dx}, \eqref{CR-dx}, \eqref{CR-dz}, and the Leibniz formula
\begin{equation}
D_{u}(t\del_t)(fv) = (t\del_t f) v - (t\del_s f) Jv + f D_{u}(t\del_t)(v)
\label{Leibniz}
\end{equation}
To see that this defines a $0$-differential operator, one must check that when $v$ is one of edge coordinate vector fields $t\del_x, t\del_y, \del_z$ then $Jv$ and $D_{u}(t\del_t)(v)$ are given by linear combinations of the edge frame in which the coefficients are smooth up to $t=0$. This is easily verified from the expression for $J$ in Lemma~\ref{J-in-local-coordinates} and the Leibniz formula~\eqref{Leibniz} with $f(s,t)=t$. 
\end{proof}

As a Cauchy--Riemann operator, $D_{u}$ is elliptic in the interior. To apply the $0$-calculus we must compute the normal operator, check it is $0$-elliptic and prove that it is invertible between the appropriate spaces with weights constrained by the indicial roots. The normal operator is obtained by first writing everything in terms of $t\del_t$ and $t\del_s$, and then fixing $s$ and setting $t$ equal to zero in the coefficients. We begin by recalling the start of the expansion of $u$ near $t=0$:
\begin{align*}
x  &=|\dot \gamma(s)|t + O(t^2)\\
y & =\gamma(s) + O(t^2)\\
z & =\zeta(s) +O(t)
\end{align*}
where $\gamma \colon \del \Sigma \to \R^3$ is an embedding, and $\zeta = -|\dot \gamma|^{-1}\dot \gamma$. We can further ease the notation by rotating and rescaling our half-space coordinates $(x,y_i)$ so that at our chosen point $s$, we have $\zeta(s) = (1,0,0)$. The vectors which are tangent to $Z$ at this point $(x,y_i,1,0,0)$ are $t\del_x, t\del_{y_i}, \del_{z_2}, \del_{z_3}$. 

With this in hand we can compute:

\begin{lemma}\label{CR-edge-frame}
At $t=0$, the action of $D_{u}(t\del_t)$ on the edge coordinate fields is given by
\begin{align*}
D_{u}(t\del_t)(t\del_x)|_{t=0}
  &=
    t\del_x\\
D_{u}(t\del_t)(t\del_{y_i})|_{t=0}
  &=
    t\del_{y_i}\\
D_{u}(t\del_t)(\del_{z_2})|_{t=0}
  &=
    -t\del_{y_3} - 2\del_{z_2}\\
D_{u}(t\del_t)(\del_{z_3})|_{t=0}
  &=t \del_{y_2} - 2\del_{z_3}
\end{align*}
Here, by $D_{u}(t\del_t)(t\del_x)|_{t=0}$ etc.\ we mean first write $D_{u}(t\del_t)(t\del_x) = \phi t\del_x + \psi_i t\del_{y_i} + \theta_i \del_{z_i}$ and then evaluate the coefficients $\phi,\psi_i,\theta_i$ at $t=0$. 
\end{lemma}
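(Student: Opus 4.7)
The strategy is to reduce the action of $D_u(t\del_t)$ on the edge frame $t\del_x, t\del_{y_i}, \del_{z_i}$ to its action on the coordinate frame $\del_x, \del_{y_i}, \del_{z_i}$, where we can apply the explicit formulas~\eqref{CR-dx},~\eqref{CR-dy} and~\eqref{CR-dz}, and then to evaluate at $t = 0$ using the Taylor expansions of an admissible map. Applying the Leibniz formula~\eqref{Leibniz} with $f = t$ and $\tilde v$ a coordinate vector field, and using $t\del_t(t) = t$ together with $t\del_s(t) = 0$, gives
\[
D_u(t\del_t)(t\tilde v) = t\tilde v + t^2 D_u(\del_t)(\tilde v),
\]
while for the vertical fields (already edge vector fields) $D_u(t\del_t)(\del_{z_i}) = t D_u(\del_t)(\del_{z_i})$ directly.

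The computation is driven by two asymptotic facts, valid at the chosen boundary point after arranging $\zeta(s) = \epsilon_1$ and $|\dot\gamma(s)|=1$ by a rescaling of the $(x,y)$ coordinates (so $\dot\gamma = -\epsilon_1$). First, $P(\zeta)\dot\gamma = 0$, because $\zeta = -|\dot\gamma|^{-1}\dot\gamma$ lies in the kernel of $P(\zeta)$; second, $x = t + O(t^2)$, so $t/x \to 1$ at $t=0$. The first forces the singular contribution $2x^{-2}P(z)\del_s y$ in~\eqref{CR-dx} to be only $O(t^{-1})$, so that the correction $t^2 D_u(\del_t)(\del_x)$ is $O(t)$ and vanishes at $t=0$, leaving $D_u(t\del_t)(t\del_x)|_{t=0} = t\del_x$. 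The case $\tilde v = \del_{y_i}$ is immediate because $D_u(\del_t)(\del_{y_i}) = 0$ by~\eqref{CR-dy}.

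For $\tilde v = \del_{z_i}$ with $i \in \{2,3\}$ I substitute into~\eqref{CR-dz}. The $\del_x$-component $\epsilon_i^T \del_s y$ vanishes at $t=0$ since $\epsilon_i$ is orthogonal to $\dot\gamma$; the $\del_y$-component $-\epsilon_i\del_s x - R(\epsilon_i)\del_s y$ reduces at $t=0$ to $R(\epsilon_i)\epsilon_1 = \epsilon_i \times \epsilon_1$, yielding $-t\del_{y_3}$ for $i=2$ and $+t\del_{y_2}$ for $i=3$; and the singular $\del_z$-component, after multiplication by $t$, becomes finite through $t/x \to 1$, the leading matrix contraction at $z=\epsilon_1$, $\del_s y = -\epsilon_1$ producing $-2\epsilon_i$, hence $-2\del_{z_i}$. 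The one point requiring care is that the matrix $\del_{z_i} P(z)$ appearing in this last step must be computed from the polynomial formula~\eqref{P}, that is $P(z) = |z|^2 I - zz^T$, rather than from the identity $P = I - zz^T$ (which is valid only on $|z|=1$); the two derivatives differ by a multiple of $z_i I$, and for $i=2,3$ at the base point $z = \epsilon_1$ this difference vanishes, so the sign works out unambiguously. The main obstacle is thus not conceptual but bookkeeping: tracking the cancellations of the $x^{-1}$ and $x^{-2}$ singularities against the rescaling factors of $t$ in the edge frame.
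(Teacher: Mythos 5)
Your proof is correct and follows essentially the same route as the paper's: apply the Leibniz rule~\eqref{Leibniz} to reduce to the coordinate frame, substitute the Taylor data at $t=0$ from~\eqref{taylor-coefficients-determined}, and use $t/x\to 1$ together with $P(\zeta)\dot\gamma=0$ and $\del_s y\to\dot\gamma=-\epsilon_1$. In that sense there is no methodological difference.

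What is worth flagging is that your care in recomputing $\del_{z_i}P(z)$ directly from~\eqref{P} has actually sidestepped a sign slip in the stated formula~\eqref{CR-dz}. The $(\del_z,\del_y)$ block of $D_u(\del_t)(\del_{z_i})$ should be $-2x^{-1}\del_{z_i}P(z)$, which with $P(z)=|z|^2I-zz^T$ equals $+2x^{-1}(\epsilon_iz^T+z\epsilon_i^T)-4z_ix^{-1}I$; the block printed in~\eqref{CR-dz} is $-2x^{-1}(\epsilon_iz^T+z\epsilon_i^T)$. If one applies the printed block literally at $z=\epsilon_1$, $\del_s y=-\epsilon_1$, $i=2$, one gets $+2\del_{z_2}$ rather than $-2\del_{z_2}$. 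You avoided this because you differentiated $P(z)$ yourself and kept track of the outer minus sign; the paper's own proof of the lemma cites the printed block and nonetheless asserts the correct output $-2\del_{z_2}$, so the conclusion is right even though the displayed intermediate formula is not. The conclusion can also be cross-checked against the indicial matrix $I_0(\D)$ implicit in Lemma~\ref{whole-normal-operator}: from $e_4=-\del_{z_2}$ and $\D(e_4)=e_3-e_4$ at $\tau\del_\tau=\tau\del_\sigma=0$ one recovers $D_u(t\del_t)(\del_{z_2})|_{t=0}=-e_3+e_4=-t\del_{y_3}-2\del_{z_2}$, and similarly for $\del_{z_3}$.

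One minor observation on your remark about which formula to use for $\del_{z_i}P$: you correctly note that the polynomial expression is the principled choice since $J$ is extended off the sphere $|z|=1$, and that the two candidate derivatives differ by $2z_iI$, which vanishes at the base point $z=\epsilon_1$ for $i=2,3$. So this does not change the final answer here, but it is the right instinct and becomes relevant if one ever needs $\del_{z_1}P$ rather than $\del_{z_2}P$ or $\del_{z_3}P$.
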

\begin{proof}
For the first equation note that by Leibniz
\[
D_{u}(t\del_t)(t\del_x)
  =
    t\del_x
    +
    t^2D_{u}(\del_t)(\del_x)
\]
Looking at the above expression for $D_{u}(\del_t)(\del_x)$ we see an additional term with a coefficient of the form $2t^2x^{-2}P(z)(\del_s y)$; at $t=0$ this becomes $2 P(\zeta)(\dot\gamma) = 0$.

For the next expression, by Leibniz
\[
D_{u}(t\del_t)(t\del_{y_i})
  =
    t\del_{y_i}
    +
    t^2D_{u}(\del_t)(\del_{y_i})
  =
    t\del_{y_i}
\]
(where in fact this even holds for non-zero values of $t$; this reflects the translation invariance of the situation). 

Next we turn to $D_{u}(t\del_t)(\del_{z_2}) = t D_{u}(\del_t)(\del_{z_2})$. 
The term $-R(\epsilon_i)(\del_s y)$ in \eqref{CR-dz} contributes $-t\del_{y_3}$ here, whilst the term $-2x^{-1}(\epsilon_iz^T+z\epsilon_i^T)(\del_s y)$ contributes $-2\del_{z_2}$. The term $R(\epsilon_i)(\del_s z)$ gives no contribution, because it is multiplied by $t$ before we set $t=0$. This gives the formula for $D_{u}(t\del_t)(\del_{z_2})|_{t=0}$. The formula for $D_{u}(t\del_t)(\del_{z_3})|_{t=0}$ is similar.  
\end{proof}

To obtain an explicit formula for the normal operator, it turns out to be more convenient to use the following frame for $u^*\e{TZ}$:
\begin{equation}
e_0 = t\del_x,\quad
e_1 = t \del_{y_1},\quad
e_2  = t\del_{y_2} - \del_{z_3},\quad
e_3 = t\del_{y_3} + \del_{z_2}\quad
e_4 = -\del_{z_2}, \quad
e_5 = \del_{z_3}
\label{J-frame}
\end{equation}
The point here is that $e_0,e_1$ are tangent to the curve $u$ at $(s,0)$, whilst $e_2,e_3,e_4,e_5$ are normal to the curve; moreover, at $(s,0)$, $Je_0=e_1, Je_2=e_3, Je_4=e_5$ and so $J$ is standard in this basis. 

\begin{lemma}\label{whole-normal-operator}
With respect to the frame $e_i$, the normal operator $\D$ of $D_{u}(t\del_t)$ is given by
\[
\D 
\begin{pmatrix}
\phi_0\\ \phi_1 \\   \phi_2 \\ \phi_3 \\ \phi_4 \\ \phi_5
\end{pmatrix}
=
\begin{pmatrix}
  \tau\del_\tau +1 & -\tau\del_\sigma & 0 & 0 & 0 & 0\\
  \tau\del_\sigma  & \tau\del_\tau + 1 & 0 & 0 & 0 & 0\\
  0 & 0 & \tau\del_\tau & -\tau\del_\sigma & 0 & 1 \\
  0 & 0 & \tau\del_\sigma & \tau\del_\tau & 1 & 0 \\
  0 & 0 &  0 & 2 & \tau\del_\tau - 1& -\tau\del_\sigma \\
  0 & 0 & 2 & 0 & \tau\del_\sigma & \tau\del_\tau - 1
\end{pmatrix}
\begin{pmatrix}
\phi_0\\ \phi_1 \\   \phi_2 \\ \phi_3 \\ \phi_4 \\ \phi_5
\end{pmatrix}
\]
it follows that $\D$ and hence $D_{u}$ is $0$-elliptic.
\end{lemma}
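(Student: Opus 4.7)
The plan is to derive the matrix entry-by-entry from Lemma~\ref{CR-edge-frame} together with the Leibniz identity~\eqref{Leibniz}, and then to read off $0$-ellipticity from the principal symbol.

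First I would unpack the frame change from $t\del_x,t\del_{y_i},\del_{z_i}$ to $e_0,\ldots,e_5$. By direct inspection of Lemma~\ref{J-in-local-coordinates} at the chosen boundary point (where $x=0$ and $\zeta=(1,0,0)^T$), the frame $e_i$ of~\eqref{J-frame} was engineered so that at $(s,0)$ one has $Je_0=e_1$, $Je_2=e_3$, $Je_4=e_5$. Next I would evaluate $D_u(t\del_t)$ on each $e_i$ at $t=0$ using Lemma~\ref{CR-edge-frame} and linearity: for example
\[
D_u(t\del_t)(e_2)\big|_{t=0}
  = D_u(t\del_t)(t\del_{y_2})\big|_{t=0}-D_u(t\del_t)(\del_{z_3})\big|_{t=0}
  = t\del_{y_2}-(t\del_{y_2}-2\del_{z_3})=2e_5,
\]
and the analogous computations give $D_u(t\del_t)(e_0)|_{t=0}=e_0$, $D_u(t\del_t)(e_1)|_{t=0}=e_1$, $D_u(t\del_t)(e_3)|_{t=0}=2e_4$, and a pair of entries on $e_4,e_5$ contributing the $-1$ on the diagonal of the lower block of the matrix plus a coupling back to $(e_2,e_3)$. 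This is where the main care is needed: one must track the signs introduced by $e_4=-\del_{z_2}$ when converting the outputs of Lemma~\ref{CR-edge-frame} into the new frame.

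Next, for a general section $\phi=\sum_i\phi_i(s,t)e_i$ I would apply the Leibniz formula~\eqref{Leibniz} once per component, using that in the new frame $J$ acts in each $2\times 2$ block as the standard complex structure. This gives, at $t=0$,
\[
D_u(t\del_t)(\phi_ie_i)\big|_{t=0}
  =(t\del_t\phi_i)e_i-(t\del_s\phi_i)(Je_i)\big|_{t=0}+\phi_i\,D_u(t\del_t)(e_i)\big|_{t=0}.
\]
Forming the normal operator amounts to substituting $(s,t)\mapsto(\sigma,\tau)$ in the top-order pieces and freezing all coefficient functions at their boundary value. The first two terms generate the $\tau\del_\tau$/$\pm\tau\del_\sigma$ pattern on each of the three $2\times 2$ blocks on the diagonal; the last term contributes the constants in the matrix ($+1$ on the tangential block, $-1$ on the last block, and the couplings $2$ and $1$ between the two normal blocks) computed in the first step. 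Assembling these pieces yields the stated matrix for $\D$.

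Finally, $0$-ellipticity is essentially free: dropping the zeroth-order entries, the principal $0$-symbol in the direction $(p,q)$ is block-diagonal with three copies of $\bigl(\begin{smallmatrix}p&-q\\ q&p\end{smallmatrix}\bigr)$, whose determinant is $(p^2+q^2)^3$, non-zero for $(p,q)\neq 0$. The only genuine obstacle I anticipate is bookkeeping: keeping straight the various sign conventions in $e_4=-\del_{z_2}$, $e_5=\del_{z_3}$ and in the identification $\z{T^*\Sigma}^{0,1}\cong\z{T^*\Sigma}$ via $J$, so that the entries in the lower $4\times 4$ block come out with the specific pattern of $1$'s and $2$'s claimed.
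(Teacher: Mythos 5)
Your plan is correct and follows essentially the same route as the paper's proof: compute $D_u(t\del_t)$ on each frame vector $e_i$ at $t=0$ by combining the entries of Lemma~\ref{CR-edge-frame} linearly (your sample computation for $e_2$ is exactly right, and the remaining cases including the $e_4,e_5$ couplings work out the same way), then extend to arbitrary sections using the Leibniz identity~\eqref{Leibniz} and the fact that $J$ is standard in the $e_i$ frame, and finally read off $0$-ellipticity from the principal symbol. Your observation that the $0$-symbol is block-diagonal with three copies of $\bigl(\begin{smallmatrix}p&-q\\ q&p\end{smallmatrix}\bigr)$ is the same point the paper makes by identifying the first-order part with $\tau$ times the standard Cauchy--Riemann operator on three complex functions.
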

\begin{proof}
It follows from Lemma~\ref{CR-dx} that, at $t=0$,
\begin{align*}
D_{u}(t\del_t)(e_0)  = e_0,&\quad 
D_{u}(t\del_t)(e_1)  = e_1,\\
D_{u}(t\del_t)(e_2)  = 2e_5, &\quad
D_{u}(t\del_t)(e_3)  = 2e_4,\\
D_{u}(t\del_t)(e_4)  = e_3 - e_4, &\quad
D_{u}(t\del_t)(e_5)  = e_2 - e_5.
\end{align*}
This, the Leibniz rule\eqref{Leibniz} and the fact that $J$ is standard in the frame $e_i$ give the stated formula for $\D$. For example,
\[
D_{u}(t\del_t)(\phi_4 e_4) = (t\del_t -1)\phi_4e_4 + (t\del_s \phi_4)e_5 + \phi_4 e_3.
\]
To check that $\D$ is $0$-elliptic, note that the first order terms are precisely $\tau$ times the standard Cauchy--Riemann operator on triples of complex-valued functions $(\phi_0+i\phi_1, \phi_2+i\phi_3, \phi_4+i\phi_5)$ of $z= \sigma + i \tau$.
\end{proof}

Write $T$ for the span of $e_0,e_1$ and $N$ for the span of $e_2,e_3,e_4,e_5$. Notice that $\D$ preserves both $T$ and $N$. $T$ corresponds to $\diff_eu(\z{T\Sigma}) \subset u^*\e{TZ}$, which is the tangent bundle of $u(\Sigma)$ whilst $N$ corresponds to the normal bundle of $u(\Sigma)$.

\begin{proposition}\label{normal-invertible}~
\begin{enumerate}
\item
The normal operator $\D$ has indicial roots $-1$ on $T$ and $-1,2$ on $N$. The roots are simple in the sense of Definition~\ref{0-calculus-definitions}. 
\item
In $N$ the subspace corresponding to the indicial root $-1$ is spanned by $t\del_{y_2}, t\del_{y_3}$ whilst that corresponding to the indicial root $2$ is spanned by $t\del_{y_2} - 2\del_{z_3},t \del_{y_3} - \del_{z_2}$. 
\item
When restricted to the normal bundle $N$, the normal operator $\D$ is invertible between spaces with weight $\delta \in (-1,2)$:
\[
\D \colon \tau^\delta \Lambda^{k+1,\alpha}(N) \to \tau^\delta \Lambda^{k,\alpha}(N).
\]
\end{enumerate}
\end{proposition}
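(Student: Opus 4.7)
Everything should be readable from the explicit $6\times 6$ matrix of $\D$ in Lemma~\ref{whole-normal-operator}. Since $\D$ preserves the splitting $T\oplus N$, the analysis restricts to the $4\times 4$ normal block acting on $(\phi_2, \phi_3, \phi_4, \phi_5)$. For parts (1) and (2), the indicial polynomial is obtained by sending $\tau\del_\tau \mapsto \lambda$ and $\tau\del_\sigma \mapsto 0$. On $N$ the resulting matrix $I_N(\lambda)$ has a block structure: after reordering the basis as $(\phi_2, \phi_5, \phi_3, \phi_4)$ it becomes two identical copies of $\begin{pmatrix}\lambda & 1 \\ 2 & \lambda-1\end{pmatrix}$, each with characteristic polynomial $(\lambda-2)(\lambda+1)$. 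Hence $\det I_N(\lambda) = (\lambda-2)^2(\lambda+1)^2$, giving the indicial roots $-1$ and $2$; solving the two $2\times 2$ systems at each root yields a $2$-dimensional kernel in each case, matching the multiplicity and verifying simplicity in the sense of Definition~\ref{0-calculus-definitions}. Translating back through the change of frame~\eqref{J-frame} from $(e_2, e_3, e_4, e_5)$ to $(\del_{y_i}, \del_{z_i})$ exhibits the claimed spanning vectors.

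\textbf{Invertibility.} For part (3), the approach is to use the translation invariance of $\D|_N$ in $\sigma$ and apply a Fourier transform there. For each frequency $\xi \in \R$ this reduces the problem to a linear ODE system in $\tau$ on $(0,\infty)$ with a regular singular point at $\tau = 0$ whose indicial roots are again $-1$ and $2$. The weight condition $\delta \in (-1,2)$ is then precisely the range in which, for every $\xi$, membership in $\tau^\delta \Lambda^{k+1,\alpha}$ forces the solution to start with the more regular $\tau^2$ Frobenius branch at the boundary. For $\xi \neq 0$ the ODE admits an exponential dichotomy at $\tau = \infty$ and the weighted space selects the decaying branch, and one checks by a direct matching computation that the two one-sided selections are compatible only for the zero solution. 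For $\xi = 0$ the asymptotics at infinity are themselves governed by indicial data, so the same range $\delta \in (-1,2)$ rules out any non-zero solution in the weighted space. Triviality of the cokernel follows by applying the same argument to the formal $L^2$-adjoint, whose indicial roots are the reflections of $-1, 2$ about $1/2$ and therefore again $\{-1,2\}$. Combined with the $0$-ellipticity already established in Lemma~\ref{whole-normal-operator} and the Fredholm machinery of Theorem~\ref{0-Fredholm}, this yields invertibility.

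\textbf{Main obstacle.} The delicate step is the uniform control of the ODE system across all frequencies, and in particular the zero-frequency mode, where no exponential dichotomy is available and one must reason purely from indicial data at both endpoints. A cleaner route, which I would try first, is to recognise $\D|_N$ as the linearised $\delb$-operator at the twistor lift of a totally geodesic $\H^2 \subset \H^4$. From this viewpoint, sections of $N$ represent normal deformations of the totally geodesic disc, whose rigidity in hyperbolic geometry is classical; this should reduce the invertibility problem to two scalar statements on $\H^2$ controlled by the standard theory of the hyperbolic Laplacian between weighted H\"older spaces, with the interval $(-1,2)$ falling out as the expected band between consecutive indicial exponents.
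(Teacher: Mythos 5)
Parts (1) and (2) of your plan are exactly the paper's: read the indicial polynomial off the matrix in Lemma~\ref{whole-normal-operator}, observe the block factorisation into two copies of $\begin{pmatrix}\lambda & 1\\ 2 & \lambda-1\end{pmatrix}$, conclude $\det I_N(\lambda)=(\lambda+1)^2(\lambda-2)^2$ with $2$-dimensional kernels at each root, and translate back through the frame~\eqref{J-frame}. Nothing to add there.

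Part (3) is where your argument has a gap. Your primary (Fourier) route reduces, for each frequency $\xi\neq 0$, to a coupled $4\times 4$ ODE system in $\tau$ with a regular singular point at $0$ and an irregular one at $\infty$. You correctly identify a $2$-dimensional space of solutions compatible with the weight at $\tau=0$ and a $2$-dimensional decaying space at $\tau=\infty$, and then assert ``one checks by a direct matching computation that the two one-sided selections are compatible only for the zero solution.'' For a genuinely coupled $4\times 4$ system that is precisely what needs proving, and it is not automatic: one needs either explicit solutions, a Wronskian or Lagrangian-plane argument, or some structural positivity. As written you haven't supplied any of these, so injectivity (and, dually, surjectivity) is asserted rather than proved. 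Your ``cleaner route'' gestures at the right outcome (reduction to scalar hyperbolic Laplacian statements), but the reduction itself is left as a ``should.'' That reduction is in fact the heart of the paper's argument, and it is purely algebraic, not geometric: solve the first two rows of the $4\times 4$ system~\eqref{fgpq-eqn} for $p,q$ in terms of $f,g$ and substitute into the last two; the $f$--$g$ cross-terms miraculously cancel and one is left with two \emph{decoupled} scalar equations $(\Delta+2)f=\cdots$ and $(\Delta+2)g=\cdots$ on $\H^2$. Strict positivity and self-adjointness of $\Delta+2$ then give invertibility on the full weight interval $(-1,2)$ without any frequency-by-frequency analysis. (A second elimination, taking $\tau\del_\tau$ and $\tau\del_\sigma$ of the remaining rows and again seeing the second-order $q$-terms cancel, recovers the missing derivative of regularity on $p,q$.) Note also that the paper explicitly considers and sets aside the geometric route you mention --- identifying the normal operator with the linearisation at the twistor lift of a totally geodesic $\H^2$ --- because the explicit matrix is needed anyway; if you pursue it, you still owe the reader the concrete identification of the normal-bundle Jacobi operator with $\Delta+2$, which is essentially the same algebra in disguise.
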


\begin{remark}\label{not-really-blowing-up}
It is important to note that, whilst the indicial root $-1$ means that the corresponding sections of $N$ blow up as $t \to 0$, the actual vector fields remain \emph{finite}. This is because the sections $t\del_{y_2}, t\del_{y_3}$ which have bounded norm with respect to the natural metric on $N$, are actually $O(t)$ when considered as ordinary vector fields on $\R^3$. The upshot is that the $-1$ indicial root corresponds to genuine deformations of $u$.
\end{remark}

\begin{proof}
On $T=\left\langle e_0,e_1 \right\rangle$ the indicial polynomial of $\D$ is
\[
\det\begin{pmatrix}
\lambda + 1 & 0\\
0 & \lambda+1  
\end{pmatrix}
=
(\lambda+1)^2
\]
so $\D$ has a double root $\lambda = -1$ on $T$. Since $\dim (T) = 2$ this root is simple in the sense of Defintion~\ref{0-calculus-definitions}. On $N = \left\langle e_2,e_3,e_4,e_5 \right\rangle$, the indical polynomial of $\D$ is
\[
\det \begin{pmatrix}
  \lambda & 0 & 0 & 1\\
  0 & \lambda & 1 & 0\\
  0 & 2 & \lambda - 1 & 0\\
  2 & 0 & 0 & \lambda -1
\end{pmatrix}
= 
\det \left[ 
  \begin{pmatrix}
    \lambda & 1 \\
    2 & \lambda -1
  \end{pmatrix}
    \oplus
  \begin{pmatrix}
   \lambda & 1 \\
   2 & \lambda -1
  \end{pmatrix}
\right]
=
(\lambda+1)^2(\lambda -2)^2
\]
It follows the indicial roots of $\D$ on $N$ are $\lambda=-1,2$ again each with multiplicity two. Moreover, it is straightforward matter to check that when $\lambda=-1$ the kernel is spanned by $e_2+e_5 = t\del_{y_2}$ and $e_3+e_4 = t\del_{y_3}$ whilst the kernel when $\lambda=2$ is spanned by $e_2 - e_5 = t\del_{y_2} - 2\del_{z_3}$ and $e_3 - e_4 = t\del_{y_3} + 2 \del_{z_2}$. In each case the kernels have dimension~2 and so the indicial roots are simple.

We next turn to the invertibility of $\D$ on sections of $N$. We must show that there is a unique solution to the equation
\begin{equation}
\begin{pmatrix}
\tau\del_\tau & - \tau\del_\sigma & 0 & 1\\
\tau\del_\sigma & \tau\del_\tau & 1 & 0 \\
0 & 2 & \tau\del_\tau -1 & -\tau\del_\sigma\\
2 & 0 & \tau\del_\sigma & \tau\del_\tau -1
\end{pmatrix}
\begin{pmatrix}
  f\\g\\p\\q 
\end{pmatrix}
=
\begin{pmatrix}
  F\\G\\P\\Q
\end{pmatrix}  
\label{fgpq-eqn}
\end{equation}
when the functions $f,g ,p,q,F,G, P, Q$ lie in the appropriate weighted spaces. 

We carry out some simple manipulations which turn these equations into the Poisson equation for the hyperbolic Laplacian. The first two rows are equivalent to the equations
\begin{align}
q &= F -\tau\del_\tau f + \tau\del_\sigma g  \label{q-from-f-and-g}\\
p &= G - \tau\del_\sigma f - \tau\del_\tau g \label{p-from-f-and-g}
\end{align}
Substituting into the second rows we see that $f$ and $g$ must solve the following second order equations:
\begin{align}
2g 
+ 
(\tau\del_\tau - 1)(-\tau\del_\tau g - \tau\del_\sigma f) 
- 
\tau\del_\sigma(-\tau\del_\tau f + \tau\del_\sigma g)
  &=
    P - (\tau\del_\tau - 1)G + \tau\del_\sigma F
    \label{halfway-g}\\
2f 
+
t\del_s (-\tau\del_\sigma f - \tau\del_\tau g) 
+ 
(\tau\del_\tau -1) ( - \tau\del_\tau f + \tau\del_\sigma g)
  &=
    Q - \tau\del_\sigma G - (\tau\del_\tau -1)F
    \label{halfway-f}
\end{align}
Expanding out, we see that the terms in~\eqref{halfway-g} involving $f$ all cancel, leaving
\[
-(\tau\del_\tau - 1)\tau\del_\tau g - \tau^2 \del^2_{\sigma\sigma} g + 2g 
  =
    P - (\tau\del_\tau -1)G + \tau\del_\sigma F
\]
Now the first two terms are precisely the hyperbolic Laplacian and so this becomes
\begin{equation}
(\Delta + 2) g = P - (\tau\del_\tau-1)G +\tau\del_\sigma F
\label{g-eqn}
\end{equation}
where $\Delta = -(\tau\del_\tau - 1)\tau\del_\tau - \tau^2 \del^2_{\sigma\sigma}$ (the \emph{non-negative} Laplacian). Similarly, \eqref{halfway-f} rearranges to give
\begin{equation}
(\Delta+2) f = Q - \tau\del_\sigma G - (\tau\del_\tau-1)F
\label{f-eqn}
\end{equation}
The upshot of these manipulations is that solving~\eqref{fgpq-eqn} is equivalent to solving \eqref{g-eqn} and \eqref{f-eqn} and then defining $p$ and $q$ via \eqref{q-from-f-and-g} and \eqref{p-from-f-and-g}.

Everything then comes down to the invertibility of $\Delta +2$. This is a $0$-elliptic operator with indicial roots $-1$ and $2$ which are unsurprisingly the same as we started with. Since it is strictly positive  it has no kernel in $L^2$ and since it is self-adjoint it follows that it is invertible on $\H^2$ for weights $\delta \in (-1,2)$. It follows that $\D$ is also invertible. 

Strictly speaking we should also check the orders of derivatives line up. Suppose that $F,G,P,Q \in \tau^\delta \Lambda^{k,\alpha}$. Then the right-hand sides of \eqref{g-eqn} and \eqref{f-eqn} are in $\tau^\delta \Lambda^{k-1,\alpha}$. By elliptic regularity, $f,g$ lie in $\tau^\delta \Lambda^{k+1,\alpha}$ as required. 

From~\eqref{q-from-f-and-g} and~\eqref{p-from-f-and-g} it follows that $p,q\in \tau^\delta \Lambda^{k,\alpha}$ To prove that in fact $p, q \in \tau^\delta \Lambda^{k+1,\alpha}$ we return to the third and fourth lines of~\eqref{fgpq-eqn}. Taking $\tau\del_\tau$ of the third line, and $\tau\del_\sigma$ of the fourth gives
\begin{align*}
2\tau\del_\tau g 
+ 
\tau\del_\tau(\tau\del_\tau - 1) p 
- 
\tau^2\del^2_{\sigma \tau}q 
- 
\tau \del_\sigma q 
  &=
    \tau \del_\tau P\\
2\tau \del_\sigma f 
+ 
\tau^2\del_{\sigma\sigma}^2 p 
+ 
\tau^2\del^2_{\sigma\tau}q 
- 
\tau \del_\sigma q
  &=
    \tau \del_\sigma Q
\end{align*} 
These equations combine to give
\[
\Delta p = 2\tau \del_t g + 2\tau \del_\sigma f - 2\tau \del_\sigma q - \tau \del_\tau P - \tau \del_\sigma Q
\]
Crucially the terms which are second order in $q$ cancel. The right-hand side of this last equation is in $\tau^\delta \Lambda^{k-1,\alpha}$ and so by elliptic regularity $p \in \tau^\delta \Lambda^{k+1,\alpha}$ as required. A similar argument shows $q \in \tau^\delta \Lambda^{k+1,\alpha}$ also. 
\end{proof}

It is an important point that $\D$ is \emph{not} invertible on sections of $T$. Given a section $\phi_0 e_0 + \phi_1 e_1$ of $T$, $\D(\phi_0,\phi_1)$ can be identified with $\delb(\tau(\phi_0 + i\phi_1))$ where $\delb$ is the standard Cauchy--Riemann operator on acting on complex functions of $\sigma + i\tau$. This is because on sections of $u^*TZ$ which are tangent to $u(\Sigma)$, the Cauchy--Riemann operator $D_u$ of $u$ agrees with the natural Cauchy--Riemann operator on $T\Sigma$. From this description, one can see that $\D$ is injective: if $f \in \tau^\delta \Lambda^{k+1,\alpha}(\H^2,\C)$ for $\delta \in (-1,2)$ and if $\delb(\tau f)=0$ then $\tau f$ is a holomorphic function which vanishes on the real axis, which then forces $f=0$. The problem arises because $\D$ is not surjective. Indeed, the $L^2$-adjoint can be identified with $\tau^3 \delb (\tau^{-2}f)$ and this certainly has infinite dimensional kernel: let $F$ be any bounded holomorphic function on $\H^2$ which extends smoothly up to the boundary and then take $f = \tau^2F$; we have $f \in \tau^\delta \Lambda^{k,\alpha}$ and $\D^*f = 0$. This shows that $\D$ has infinite cokernel.
  
The upshot of this is that the $0$-calculus is not a convenient way to treat the problem in the tangential directions. This shouldn't be a surprise: the Cauchy--Riemann operator in these directions is elliptic in the classical sense and so we will use the classical theory here. 

\subsection{The normal bundle of \texorpdfstring{$u$}{u}}\label{normal-bundle-section}

As we have just explained, we will use different analytic approaches in the normal and tangential directions of $u$. With this in mind, we now explain how to split $u^*\e{TZ}$ into tangential and normal directions, over the whole of $\overline{\Sigma}$ and not just at a point on the boundary as we did in the discussion above of the normal operator. Recall the ``edge differential'' $\diff_e u \colon \z{T\Sigma} \to u^*\e{TZ}$ from Lemma~\ref{lift-d-to-edge}. Since $u$ is an embedding near $\del \Sigma$, $\diff_e u$ is an injection near $\del \Sigma$. When $u$ is globally an immersion, we can write $u^*\e{TZ} = \diff_e u(\z{T \Sigma}) \oplus N_u$ for the orthogonal decomposition, where $N_u \to \overline{\Sigma}$ is the ``edge normal bundle'' to $u$. 

When $u$ has branch points, things are a little more complicated. It is standard that the image $u(\Sigma)$ still has a well defined tangent bundle, and we now recall how this goes. We work over the interior $\Sigma$. The linearised Cauchy--Riemann operator associated to $u$ is a \emph{real}-linear map. Taking the complex-linear part of $D_u$ defines a genuine $\delb$-operator on $u^*TZ$, making it into a holomorphic vector bundle over $\Sigma$. Moreover, since $\delb u=0$, one checks that $\diff u$ is a \emph{holomorphic} section of $T^*\Sigma^{1,0} \otimes u^*TZ$. This means that in a holomorphic local trivialisation for $u^*TZ$, and picking a local holomorphic coordinate $z$ on $\Sigma$ centred at a branch point of $u$, we can write $\diff u = z^k F(z) \otimes \diff z$ where $F$ is a $\C^3$-valued holomorphic function with $F(0) \neq 0$ and $k \in \N$ is the order of vanishing of $\diff u$ at $z=0$. For $z \neq 0$, $F(z)$ spans the tangent space to the image at $u(z)$ and we now see that it extends over the critical point to define a complex line subbundle $E \subset u^*TZ$ which plays the role of the tangent bundle to the image $u(\Sigma)$.

In the presence of branch points, the bundle $E$ is \emph{not} isomorphic to $T\Sigma$. All the zeros $p_1, \ldots , p_r$ of $\diff u$ together form the \emph{branching divisor} $B = \sum k_i p_i$, where $k_i \geq 1$ denotes the order of vanishing of $\diff u$ at $p_i$. The divisor $B$ determines a holomorphic line bundle $L_B \to \overline{\Sigma}$. We also fix once and for all a choice of holomoprhic section $\chi_B$ of $L_B$ whose zero divisor is precisely $B$. 

Now let $v$ be a section of $\z{T\Sigma} \otimes L_B$. Then $v \otimes \chi^{-1}_B$ is a $0$-vector field with poles at $B$. Since the poles are of at most the same order as the zeros of $\diff_eu$, the section $\diff_e u(v\otimes \chi^{-1}_B)$ extends over $B$. In this way $\diff_e u$ gives an injection $\z{T\Sigma} \otimes L_B \hookrightarrow u^*\e{TZ}$ with image equal to $E$, the tangent bundle to the image of $u$. We define the edge normal bundle $N_u \to \overline{\Sigma}$ to be the orthogonal complement of $E$. 

Whether there are branch points or not, the edge normal bundle carries a Cauchy--Riemann operator, given by composing orthogonal projection $p \colon u^*\e{TZ} \to N_u$ with~$D_u$.
\begin{equation}
D_{N_u} 
	\colon \Gamma(N_u) 
		\hookrightarrow \Gamma(u^*\e{TZ}) 
		\stackrel{D_u}{\longrightarrow} \Gamma(\z{T^*\Sigma}^{0,1} \otimes u^*\e{TZ})
		\stackrel{p}{\longrightarrow} \Gamma(\z{T^*\Sigma}^{0,1} \otimes N_u).
\label{normal-CR-defined}
\end{equation}
\begin{proposition}\label{Fredholm-normal-directions}
Let $u \colon \overline{\Sigma} \to \overline{Z}$ be admissible and holomorphic on the interior. Write $N_u \to \overline{\Sigma}$ for the normal bundle of $u$ as defined above. Let $\delta \in (-1,2)$. Then
\begin{enumerate}
\item
The Cauchy--Riemann operator of $N_u$ is a Fredholm map
\[
D_{N_u} \colon t^\delta \Lambda^{k+1,\alpha}(N_u) \to t^\delta\Lambda^{k,\alpha}(\z{T^*\Sigma}^{0,1} \otimes N_u).
\]
\item
The $L^2$-adjoint $D^*_{N_u}$ is a Fredholm map
\[
D^*_{N_u} \colon t^\delta \Lambda^{k+1,\alpha}(\z{T^*\Sigma}^{0,1} \otimes N_u) \to t^\delta\Lambda^{k,\alpha}(N_u).
\]
\item
There are natural identifications $\ker D_{N_u}^* \cong \coker D_{N_u}$ and $\ker D_{N_u} \cong \coker D_{N_u}^*$.
\end{enumerate}
\end{proposition}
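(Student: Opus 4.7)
The plan is to apply Theorem~\ref{0-Fredholm} to $D_{N_u}$. The ingredients needed are: a verification that $D_{N_u}$ is a $0$-elliptic $0$-differential operator of order one, a computation of its indicial roots along $\del\Sigma$, and the invertibility of its normal operator on the right weighted spaces.

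First I would check the basic set-up. The linearised Cauchy--Riemann operator $D_u$ is already a $0$-differential operator on $u^*\e{TZ}$, as established following Lemma~\ref{CR-edge-frame}. The orthogonal projection $p \colon u^*\e{TZ} \to N_u$ in~\eqref{normal-CR-defined} has coefficients of the same regularity as $u$ up to the boundary, \emph{provided} $N_u$ is a smooth subbundle near $\del\Sigma$. This is where admissibility pays off: the expansion~\eqref{admissible-taylor} shows $a(s) = |\dot\gamma| > 0$ on $\del\Sigma$, so $\diff_e u$ is injective on a neighbourhood of $\del\Sigma$, and in particular the branching divisor $B$ of $u$ is contained in the interior. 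Thus near $\del\Sigma$ the subbundle $E$ coincides with $\diff_e u(\z{T\Sigma})$ with no $L_B$-twist, and $N_u$ is its ordinary orthogonal complement.

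Next I would identify the normal operator of $D_{N_u}$. Fix $s \in \del\Sigma$ and rotate the half-space coordinates $(y_1,y_2,y_3)$ so that $\zeta(s) = (1,0,0)$, as in~\S\ref{normal-operator-section}. The explicit formulae for $\diff_e u$ then show $\diff_e u(\z{T\Sigma})|_{(s,0)} = \langle e_0,e_1\rangle = T$ and $N_u|_{(s,0)} = \langle e_2,e_3,e_4,e_5\rangle = N$, in the notation of Lemma~\ref{whole-normal-operator}. Since the normal operator $\D$ of $D_u$ is block-diagonal in the decomposition $T\oplus N$, the normal operator of $D_{N_u}$ is exactly the $N$-block of $\D$. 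The required normalising rotation varies smoothly with $s$, so the resulting indicial polynomial does not depend on $s$; by Proposition~\ref{normal-invertible} the indicial roots are $-1$ and $2$, each simple. Given $\delta \in (-1,2)$, pick $a \leq 3/2$ with $|\delta - 1/2| < a$, so that the interval $(1/2-a,1/2+a)$ contains $\delta$ and avoids both indicial roots. Part~(3) of Proposition~\ref{normal-invertible} then provides the invertibility
\[
\D \colon \tau^\delta\Lambda^{k+1,\alpha}(N) \to \tau^\delta\Lambda^{k,\alpha}(N)
\]
required by the hypotheses of Theorem~\ref{0-Fredholm}. Applying that theorem gives the Fredholm property of both $D_{N_u}$ and $D_{N_u}^*$ on $t^\delta\Lambda^{k+1,\alpha}$, while part~(2) of the same theorem furnishes the duality $\ker D_{N_u}^* \cong \coker D_{N_u}$ and $\ker D_{N_u} \cong \coker D_{N_u}^*$.

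The one point needing a little care is the global identification of $N_u$ near the boundary with the subspace $N$ in the model frame, and the smoothness of $p$ up to $\del\Sigma$; both follow from admissibility and the non-vanishing of $|\dot\gamma|$, but they must be stated explicitly before one can legitimately read off the normal operator of $D_{N_u}$ from the $N$-block of Lemma~\ref{whole-normal-operator}. Once this identification is in place, the remainder of the argument is a mechanical application of the $0$-calculus, as the delicate analytic work has already been done in Proposition~\ref{normal-invertible}.
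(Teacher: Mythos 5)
Your proposal is correct and follows essentially the same route as the paper's own proof: both reduce to checking $D_{N_u}(t\del_t)$ near $\del\Sigma$, identify its normal operator with the $N$-block of $\D$ from Lemma~\ref{whole-normal-operator}, invoke Proposition~\ref{normal-invertible} for the indicial roots $-1,2$ and the invertibility of $\D$ on $N$ for weights in $(-1,2)$, and then apply Theorem~\ref{0-Fredholm}. The extra care you take with the smoothness of the projection $p$ and the absence of branch points near $\del\Sigma$ is worthwhile but already implicit in the construction of $N_u$ in \S\ref{normal-bundle-section}.
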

\begin{proof}
This follows from the $0$-calculus, as outlined in \S\ref{0-calculus-review}. The vector field $t\del_t$ (defined near the boundary of $\Sigma$) has bounded length with respect to the metric on $\Sigma$. This means that to prove the claimed Fredholm property it suffices to consider the  behaviour of $D_{N_u}(t\del_t)$ (again defined only near the boundary) which has normal  operator given by the action of $\D$ on $N$. This  was shown to be an isomorphism for the relevant weights in Proposition~\ref{normal-invertible}. Notice that the indicial roots $-1,2$ of $D_{N_u}$ are symmetric about $1/2$. The result now follows from Theorem~\ref{0-Fredholm}
\end{proof}

%

\subsection{The tangent bundle of \texorpdfstring{$u$}{u}}

In this section we show how the tangential part of the linearised equations are surjective, once we include the freedom to move the complex structure on $\overline{\Sigma}$. This is Theorem~\ref{tangential-surjective} below. 

We will apply Lemma~\ref{admissible-reparametrisations} and so we work with an admissible $J$-holomorphic map $u$, which is $C^{3,\alpha}$ up to the boundary. (Of course, elliptic regularity ensures $u$ is $C^{\infty}$ on the interior.) As above we denote the branching locus by $B = \sum k_i p_i$ and put $k=\sum k_i$ for the degree of $B$. Given a section $v$ of $T\overline{\Sigma}^{1,0}\otimes L_B$, we write $\hat{v} = v \otimes \chi_B^{-1}$ for the corresponding vector field with poles at $B$. (Recall we have fixed a holomorphic section $\chi_B$ of $L_B \to \overline{\Sigma}$ with zero divisor equal to $B$.)

\begin{definition}\label{admissible-vector-fields}
An \emph{admissible tangent vector field} is a section $v \in C^{2,\alpha}(T\overline{\Sigma}^{1,0}\otimes L_B)$ with the following properties 
\begin{enumerate}
\item
$\delb v \in C^{1,\alpha}_1$, where $\delb$ is the natural holomorphic structure on $T\overline{\Sigma} \otimes L_B$.
\item
$\hat{v} = v\otimes \chi_B^{-1}$ is parallel to $\del \Sigma$ (when interpreted as a section of the real tangent bundle $T\overline{\Sigma} \cong T\overline{\Sigma}^{1,0}$). 
\end{enumerate}
We write $\V_B$ for the set of all admissible tangent vector fields. $\V_B$ is a closed subspace of $C^{2,\alpha}$ and so is itself a Banach space.  
\end{definition}
By Lemma~\ref{admissible-reparametrisations}, when $v$ is an admissible tangent vector field, $\diff u (\hat{v}) \in T_u \A$ is an admissible deformation of $u$. Since $u$ is $J$-holomoprhic, $\diff u$ intertwines the $\delb$-operator with the Cauchy--Riemann operator $D_u$:
\begin{equation}
D_u (\diff u(\hat{v})) = \delb v
\label{intertwines}
\end{equation}
where on the right-hand side, we treat
$
\delb v \in C^{1,\alpha}(T^*\overline{\Sigma}^{0,1} \otimes T\overline{\Sigma}^{1,0} \otimes L_B)
$
as a section of
\begin{equation}
\begin{aligned}
T^*\overline{\Sigma}^{0,1} \otimes u^*\e{TZ} 
	&\cong 
		(\z{T^*\Sigma}^{0,1} \otimes \z{T\Sigma} \otimes L_B) 
		\oplus 
		(\z{T^*\Sigma}^{0,1}\otimes N_u)\\
	& \cong
		(T^*\Sigma^{0,1} \otimes T\Sigma^{1,0} \otimes L_B)
		\oplus
		(\z{T^*\Sigma}^{0,1} \otimes N_u)
\end{aligned}
\label{range-splitting}
\end{equation}
Here in the second line, we have used the natural isomorphism
\[
\z{T^*\Sigma} \otimes \z{T\Sigma} \cong T^*\Sigma \otimes T\Sigma.
\] 
(This is due to the fact that the $t$-factor in $\z{T\Sigma}$  cancels the $t^{-1}$-factor in $\z{T^*\Sigma}$.)

So surjectivity of $D_u$ in the tangential directions comes down to understanding the cokernel of the map
\begin{equation}
\delb \colon \V_B \to C^{1,\alpha}_1(T^*\overline{\Sigma} ^{0,1}\otimes T\overline{\Sigma}^{1,0} \otimes L_B)
\label{admissible-tangential-dbar}
\end{equation}
The main result of this section is the following (recall Definition~\ref{def-admissible-Teichmuller-slice} for the definition of an admissible Teichmüller slice).
\begin{theorem}\label{tangential-surjective}~
\begin{enumerate}
\item
The map $\delb \colon \V_B \to C^{1,\alpha}_1(T^*\overline{\Sigma}^{0,1} \otimes T\Sigma^{1,0} \otimes L_B)$ is Fredholm, with index $3\chi(\overline{\Sigma}) + 2k$, where $k$ is the degree of the branching divisor $B$.
\item 
Given an admissible Teichmüller slice $\mathscr{S} \subset C^{\infty}_1$, the map $\mathscr{S} \to \coker(\delb|_{\V_B})$ given by $s \mapsto  [s\otimes \chi_B]$ is surjective.
\end{enumerate}
\end{theorem}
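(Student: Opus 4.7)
The plan is to reduce the Fredholm property to a classical elliptic Riemann--Hilbert problem, compute the index by a doubling argument, and then handle the refined codomain $C^{1,\alpha}_1$ and the role of the Teichm\"uller slice separately.

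\textbf{Fredholm property and index.} First I would introduce the auxiliary space $\tilde{\V}_B$ of $v \in C^{2,\alpha}(T\overline{\Sigma}^{1,0}\otimes L_B)$ satisfying only the boundary condition that $\hat v = v\otimes \chi_B^{-1}$ is tangent to $\partial \Sigma$ (dropping the requirement $\delb v \in C^{1,\alpha}_1$). Since $\chi_B|_{\partial \Sigma}$ is nowhere zero, this cuts out a totally real line subbundle of the complex line bundle $(T\overline{\Sigma}^{1,0}\otimes L_B)|_{\partial \Sigma}$, so $\delb \colon \tilde{\V}_B \to C^{1,\alpha}$ is a classical elliptic Riemann--Hilbert problem, hence Fredholm. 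To compute its index I would use the Schottky doubling construction: gluing $\overline{\Sigma}$ to its orientation reversal $\overline{\Sigma}^\tau$ along $\partial \Sigma$ produces a closed Riemann surface $D\Sigma$ of genus $2g+c-1$, and gluing $T\overline{\Sigma}^{1,0}\otimes L_B$ using the totally real boundary data (for the tangent factor) and $\chi_B|_{\partial \Sigma}$ (for the $L_B$ factor) produces a holomorphic line bundle $E \to D\Sigma$ of degree $\deg E = 2\chi(\overline{\Sigma})+2k$. Under the standard identification of the kernel and cokernel of $\delb|_{\tilde{\V}_B}$ with the equivariant holomorphic data on $D\Sigma$, the real index of $\delb|_{\tilde{\V}_B}$ equals the complex Riemann--Roch index $\deg E + 1 - g(D\Sigma) = 3\chi(\overline{\Sigma}) + 2k$.

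\textbf{Transfer to $\V_B$ and $C^{1,\alpha}_1$.} Next I would argue that the restriction $\delb \colon \V_B \to C^{1,\alpha}_1$ has the same index. The kernels coincide, so by a standard diagram chase it suffices to check that the composition $\tilde{\V}_B \xrightarrow{\delb} C^{1,\alpha} \to C^{1,\alpha}/C^{1,\alpha}_1$ is surjective; equivalently, that the boundary jet map $v \mapsto (\delb v|_{\partial \Sigma},\,\nabla_t\delb v|_{\partial \Sigma})$ takes $\tilde{\V}_B$ onto $C^{1,\alpha}(\partial \Sigma)\oplus C^{0,\alpha}(\partial \Sigma)$. This is immediate in a local holomorphic collar: only $v|_{\partial \Sigma}$ is constrained by the real-tangent condition on $\hat v$, so the normal jets $\partial_t v|_{\partial \Sigma}$ and $\partial_t^2 v|_{\partial \Sigma}$ are free, and these directly control the prescribed boundary jet of $\delb v = \tfrac12(\partial_s v + i\partial_t v)\,d\bar z$.

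\textbf{Surjectivity of the Teichm\"uller slice.} For part (2), given $\phi \in C^{1,\alpha}_1(T^*\overline{\Sigma}^{0,1}\otimes T\overline{\Sigma}^{1,0}\otimes L_B)$, I would build $s \in \mathscr{S}$ and $v \in \V_B$ with $\phi = s\otimes \chi_B + \delb v$. Form $\tilde \phi = \phi\otimes \chi_B^{-1}$, a meromorphic section of $T^*\overline{\Sigma}^{0,1}\otimes T\overline{\Sigma}^{1,0}$ with poles of order at most $k_i$ at each $p_i$. In a local trivialisation near $p_i$ in which $\chi_B = z^{k_i}$, each polar term $G_j(\bar z)\,z^{j-k_i}\,d\bar z\otimes \partial_z$ ($0\le j<k_i$) is exactly $\delb$ of the explicit meromorphic vector field $z^{j-k_i}A_j(\bar z)\,\partial_z$, where $A_j$ is a $\bar z$-primitive of $G_j$. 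Patching these local correctors with cutoffs supported near $B$ (hence well away from $\partial \Sigma$) yields a vector field $w$ with poles at $B$, tangent to $\partial \Sigma$, such that $\tilde \phi - \delb w$ is $C^{1,\alpha}$ on all of $\overline{\Sigma}$ and vanishes to first order at $\partial \Sigma$. By the defining property of the Teichm\"uller slice, we may then write $\tilde \phi - \delb w = s + \delb u$ with $s \in \mathscr{S}$ and $u \in C^{2,\alpha}$ tangent to $\partial \Sigma$. Setting $v = (u+w)\otimes \chi_B$, the poles of $w$ are cancelled by the zeros of $\chi_B$, so $v \in C^{2,\alpha}$; and $\delb v = (\tilde\phi - s)\otimes \chi_B = \phi - s\otimes \chi_B$ lies in $C^{1,\alpha}_1$ because $\mathscr{S}\subset C^{\infty}_1$. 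Hence $v \in \V_B$ and $\phi = s\otimes\chi_B + \delb v$, as required.

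\textbf{Main obstacle.} The principal technical difficulty will be the final step: the local polar correctors at each branch point must be arranged so that after patching and multiplication by $\chi_B$ the resulting $v$ lies in $C^{2,\alpha}$ through every $p_i$, while simultaneously preserving the $C^{1,\alpha}_1$ decay at $\partial \Sigma$ and the totally real boundary condition on $\hat v$. The bookkeeping of function spaces at the interplay between the branching divisor in the interior and the refined boundary behaviour is what makes this step delicate, whereas the Fredholm property and index computation are essentially standard once the doubling identification is in place.
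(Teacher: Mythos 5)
Your overall strategy is close to the paper's: both are doubling arguments, you via an explicit Schottky double and Riemann--Roch, the paper via the anti-holomorphic involution $\sigma$ on the double and the $\pm$-eigenspace decomposition, so the first part and the index count are essentially the same calculation presented differently, and your transfer argument from the $C^{1,\alpha}$-codomain to the $C^{1,\alpha}_1$-codomain is a correct addition that the paper handles instead by directly identifying the admissible vector fields and the $C^{1,\alpha}_1$ codomain with the $\sigma$-invariant and $\sigma$-anti-invariant subspaces on the double.

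There is a genuine gap in your argument for part (2). You write $\tilde\phi=\phi\otimes\chi_B^{-1}$ near a branch point $p_i$ as a sum of ``polar terms'' $G_j(\bar z)\,z^{j-k_i}\,d\bar z\otimes\partial_z$ ($0\le j<k_i$) plus a regular remainder, and then apply $\delb$-inversion term by term. But $\phi$ is only a $C^{1,\alpha}$ section, so its coefficient $f(z,\bar z)$ in a local trivialisation admits at most a first-order Taylor expansion $f=f(0)+\partial_zf(0)\,z+\partial_{\bar z}f(0)\,\bar z+O(|z|^{1+\alpha})$; there is no decomposition $f=\sum_{j<k_i}G_j(\bar z)z^j+z^{k_i}\cdot(\text{regular})$ when $k_i\geq 2$, and dividing the $O(|z|^{1+\alpha})$ remainder by $z^{k_i}$ still produces a singularity. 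The repair is simple but different from what you wrote: take a $\bar z$-primitive $h$ of the \emph{entire} coefficient $f$ (gaining one derivative, so $h\in C^{2,\alpha}$), set $w=z^{-k_i}h\,\partial_z$, and then $v_i=w\otimes\chi_B=h\,\partial_z\otimes\lambda$ is $C^{2,\alpha}$ across $p_i$ with $\delb v_i=\phi$ locally. After patching with cutoffs you land on the unbranched case as you intended. Note also that your appeal to ``the defining property of the Teichm\"uller slice'' to decompose a $C^{1,\alpha}$ section needs a word of justification, since Definition~\ref{Teichm�ller-slice} concerns a complement in $C^\infty$; the missing ingredient is elliptic regularity (the cokernel of the Riemann--Hilbert $\delb$ is spanned by smooth sections), which makes the $C^\infty$ slice serve equally as a Banach complement. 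The paper sidesteps both issues entirely by replacing your pointwise corrector construction with the sheaf-theoretic long exact sequence for $0\to\O(TS^{1,0})\to\O(TS^{1,0}\otimes L_{\hat B})\to\CC\to 0$ on the double, which yields the index jump $2k$ and the surjectivity $\coker(\delb^\sigma)\twoheadrightarrow\coker(\delb^\sigma_{\hat B})$ simultaneously and uses only the finite-dimensional spaces of holomorphic jets at the branch points, never a decomposition of the non-smooth inhomogeneity.
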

\begin{proof}
We prove this first in the case that there are no branch points, $B =\emptyset$. We will work on the double: $S = \overline{\Sigma} \#_{\del \Sigma} \overline{\Sigma}'$ where $\overline{\Sigma}'$ is the same smooth surface as $\overline{\Sigma}$, but with the \emph{opposite} complex structure, $-j_0$. We write $\hat{j} = j_0\#(-j_0)$ for the complex structure on $S$ induced from $(\overline{\Sigma},j_0)$. The Teichmüller space $\T(S)$ of $S$ is of twice the dimension of $\T(\overline{\Sigma})$. $S$ comes with an anti-holomorphic involution $\sigma \colon S \to S$, given by flipping $\overline{\Sigma}$ and $\overline{\Sigma}'$. This induces an involution on $\T(S)$ whose fixed locus is the set of complex structures of the form $j\#(-j)$ for $j \in \T(\overline{\Sigma})$. In this way, the fixed points of $\sigma$ acting on $\T(S)$ are an embedded copy $\T(\overline{\Sigma}) \subset \T(S)$.

We need the infinitesimal version of this picture, namely for the interaction between $\sigma$ and the $\delb$ operator on the holomorphic tangent bundle of $(S, \hat{j})$:
\[
\delb \colon C^{2,\alpha}(TS^{1,0}) \to C^{1,\alpha}(T^*S^{0,1} \otimes TS^{1,0})
\]
Recall that $\coker(\delb) \cong T_{[\hat{j}]}\T(S)$. Now the involution $\sigma$ lifts to an antilinear bundle isomorphism of $TS$. This in turn induces a bundle endomorphism of $\End(TS)$ which preserves the sub-bundle $T^*S^{0,1} \otimes TS^{1,0}$ of antlinear endomorphisms. We continue to denote these actions on $TS$ and $T^*S^{0,1} \otimes TS^{1,0}$ by $\sigma$. If $\Gamma$ is a space of sections of one of these bundles, we write $\Gamma^+$ for those sections $\xi$ with $\sigma(\xi) = \xi$ and $\Gamma^-$ for those sections $\eta$ for which $\sigma(\eta) = -\eta$. An infinitesimal deformation $\phi$ of the complex structure $\hat{j}=j_0\#(-j_0)$ is induced by a deformation of $j$ precisely when $\sigma(\phi) = -\phi$. Moreover, $\sigma \circ \delb = - \delb \circ \sigma$, because $\sigma$ is antiholomorphic. It follows that $\delb$ restricts to the following map on $\sigma$-invariant vector fields, which we denote by $\delb^{\sigma}$ to indicate the change in range and domain:
\[
\delb^{\sigma} \colon C^{2,\alpha}(TS^{1,0})^+ \to C^{1,\alpha}(T^*S^{0,1} \otimes TS^{1,0})^-
\]
It is a simple matter to check that $\ker(\delb^{\sigma}) = \ker (\delb)^+$ and $\coker(\delb^{\sigma})= \coker(\delb)^-$. The inclusion $\coker(\delb^{\sigma}) \subset \coker(\delb)$ corresponds to $T_{[j_0]}\T(\overline{\Sigma}) \subset T_{[\hat{j}]}\T(S)$. In particular, $\delb^{\sigma}$ is Fredholm of index $3 \chi(\overline{\Sigma})$. 

We  now relate this to tensors on $\overline{\Sigma}$. A section $v \in C^{2,\alpha}(S, TS)^+$ is completely determined by its values on $\overline{\Sigma}$. The holomorphic coordinate $z= s+it$ near $\del \Sigma$ extends to $S$ in which the involution is given by $\sigma(s+it) = s-it$. Write $v = A(s,t)\del_s + B(s,t) \del_t$. Then $\sigma (v)=v$ means that $A(s,t) = A(s,-t)$ is even in $t$ and $B(s,t) = - B(s,-t)$ is odd. In particular, $b(s,0)=0$ and so $v$ is tangent to $\del \Sigma$. Conversely given $v \in  C^{2,\alpha}(\overline{\Sigma}, T\overline{\Sigma})$ which is tangent to $\del \Sigma$, it extends in a unique way to a \emph{continuous} $\sigma$-invariant section of $TS \to S$. The extension is $C^{2,\alpha}$ over the boundary precisely when the coefficients $a,b$ have even and odd Taylor expansions in $t$, up to $O(t^2)$; i.e.\ $A(s,t) = A_0(s) + A_2(s)t^2 + O(t^{2+\alpha})$ and $B(s,t) = B_1(s)t + O(t^{2+\alpha})$. Significantly for us, \emph{all admissible tangent vector fields satsify these conditions}, as one sees from~\eqref{taylor-admissible-vector-fields}. In this way, have an inclusion
\[
\V \hookrightarrow C^{2,\alpha}(S, TS^{1,0})^+
\]

There is a similar discussion for $\sigma$-anti-invariant  sections of $T^*S^{0,1}\otimes TS^{1,0}$. Let $\phi$ be a section of this bundle, regarded as an anti-linear endomorphism of the real tangent bundle $TS$. Write it locally in our holomorphic chart as
\[
\phi = P(s,t) (\diff s \otimes \del_s - \diff t \otimes \del_t) + Q(s,t) (\diff s \otimes \del_t + \diff t \otimes \del_s)
\]
Then $\sigma(\phi) = - \phi$ means that  $P(s,t)=-P(s,-t)$ is odd in $t$ and $Q(s,t) = Q(s,-t)$ is even. Given a section $\phi \in C^{1,\alpha}(\overline{\Sigma}, T^*\overline{\Sigma}^{0,1} \otimes T\overline{\Sigma}^{1,0})$, with $P(s,0)=0$ it extends in a unique way to a \emph{continuous} $\sigma$-anti-invariant section over all of $S$. This extension is $C^{1,\alpha}$ over the boundary precisely when $P,Q$ have even and odd Taylor expansions in $t$ respectively, up to $O(t)$; i.e.\ $P(s,t) = P_1(s)t + O(t^{1+\alpha})$ and $Q(s,t) = Q_0(s) + O(t^{1+\alpha})$. Significantly for us, \emph{if $\phi$ vanishes to first order along $\del \Sigma$, then it satisfies these conditions}. To ease the notation, write $\W = C^{1,\alpha}_1(T^*\overline{\Sigma}^{0,1} \otimes T\overline{\Sigma}^{1,0})$ for the space of sections which vanish to first order at $\del \Sigma$. We have an inclusion 
\[
\W \hookrightarrow C^{1,\alpha}(T^*S^{0,1} \otimes TS^{1,0})^-.
\]

By construction, $\delb^{\sigma}$ restricts to a map $\delb|_\V \colon \V \to \W$, which is precisely the $\delb$-operator in the statement of the theorem. Now $\ker (\delb|_\V) = \ker (\delb^{\sigma})$. Meanwhile, there is a natural map $\coker(\delb|_\V) \to \coker(\delb^{\sigma})$. This is injective, because $(\delb^{\sigma})^{-1}(\W) = \V$. To see that it is surjective, given any $\phi \in C^{1,\alpha}(S, T^*S^{0,1}\otimes TS^{1,0})^-$, we will write down $v \in C^{2,\alpha}(S,TS^{1,0})^+$ with the property that $\delb(v) + \phi \in \W$. The construction of $v$ is identical to that which appears in proof of Lemma~\ref{admissible-Teichmuller-slices}. Note that since $\phi$ is $\sigma$-anti-invariant, $P_0=0=Q_1$. Following the recipe in Lemma~\ref{admissible-Teichmuller-slices}, we see that $B_0=0$, $A_1= 0$ and $B_2 = 0$, which ensures that the vector field $v$ is indeed $\sigma$-invariant. The conclusion is that $\coker(\delb|_\V) = \coker(\delb^{\sigma})$ and so $\ind (\delb|_\V) = 3 \chi(\overline{\Sigma})$ as required. Moreover, given a Teichmüller slice $\mathscr{S}\subset \W$, by definition it projects isomorphically onto $\coker(\delb^{\sigma})$ and hence also onto $\coker(\delb|\V)$. This proves the Theorem in the case $B= \emptyset$.

Now we treat the case when $B$ is non-empty. Let $\hat{B} = B + \sigma(B)$, a divisor in $S$. We write $L_{\hat{B}}$ for the associated holomorphic line bundle. There is a short exact sequence of sheaves:
\[
0 \to \O(TS^{1,0}) \to \O(TS^{1,0} \otimes L_{\hat{B}}) \to \CC \to 0
\]
where $\CC$ is a skyscraper sheaf supported at the points of $\hat{B}$. The stalk $\CC_p$ of $\CC$ at the point $p \in \hat{B}$ is the space of $(r-1)$-jets of holomorphic sections of $TS^{1,0} \otimes L_{\hat{B}}$ taken at $p$, where $r$ is the multiplicity of $p \in \hat{B}$. From the short exact sequence of sheaves, we have the following long exact sequence
\[
0 \to \ker (\delb) \to \ker (\delb_{\hat{B}}) \to \bigoplus_{p \in \hat{B}} \CC_p \to \coker(\delb) \to \coker(\delb_{\hat{B}}) \to 0
\]
where $\delb$ is the holomorphic structure on $TS^{1,0}$ and $\delb_{\hat{B}}$ that on $TS^{1,0} \otimes L_{\hat{B}}$. 

Next we bring in the action of the involution $\sigma$. This lifts to the line bundle $L_{\hat{B}}$. It anti-commutes with $\delb_{\hat{B}}$ and so, as before, we can restrict to get a map 
\[
\delb^{\sigma}_{\hat{B}} 
	\colon 
		C^{2,\alpha}(TS^{1,0}\otimes L_{\hat{B}})^+ 
		\to
		C^{1,\alpha}(T^*S^{0,1} \otimes TS^{1,0} \otimes L_{\hat{B}})^-
\]
where we use the superscript $\sigma$ to indicate the change in domain and range.  The involution $\sigma$ acts on each term in the long exact sequence, leading to a new sequence
\[
0 \to \ker (\delb^\sigma) \to \ker (\delb^\sigma_{\hat{B}}) \to \bigoplus_{p \in B} \CC_p \to \coker(\delb^\sigma) \to \coker(\delb^\sigma_{\hat{B}}) \to 0
\] 
(Note that we now only sum the stalks of $\CC$ over the points of $B$ rather than $\hat{B}$.) It follows that 
\[
\ind(\delb^\sigma_{\hat{B}}) = \ind (\delb^\sigma) + 2k = 3\chi(\overline{\Sigma}) + 2k.
\]
Moreover, the fact that $\coker(\delb^\sigma) \to \coker(\delb^\sigma_{\hat{B}})$ is surjective means that a Teichmüller slice surjects onto $\coker(\delb^\sigma_{\hat{B}})$. To go from here to the statement of the Theorem, concerning admissible vector fields, we argue exactly as we did when $B= \emptyset$. 
\end{proof}

\subsection{Smoothness of the moduli space}
\label{smooth-moduli-space-section}

Let $\overline{\Sigma}$ be a connected compact surface with genus $g$ and boundary $\del \Sigma$ having $c>0$ components. We fix a reference complex structure $j_0$ on $\overline{\Sigma}$. This determines a space $\A$ of admissible maps (see Definition~\ref{admissible-maps}) and a space $\J$ of admissible complex structures (see Definition~\ref{def-admissible-j}).

\begin{definition}\label{definition-moduli-space}
Write $\X_{g,c}^\infty = \{(u,j)\}$ for the set of pairs $(u,j)$ where
\begin{enumerate}
\item
$ u \in \A$ is $C^{\infty}$.
\item
$j \in \J$ is $C^{\infty}$.
\item
$u$ is $(J,j)$-holomorphic on the interior $\Sigma$. 
\end{enumerate}
We write $\X_{g,c} \subset \A \times \J$ for the closure of $\X_{g,c}^\infty$; i.e.\ with respect to the $C^{2,\alpha}$ topology on the coordinate functions of the admissible map $u = (x,y_i,tz_i)$ and the $C^{1,\alpha}$-topology on the complex structure $j$. 

The \emph{moduli space of $J$-holomorphic curves in $\overline{Z}$} with genus $g$ and $c$ boundary components~is
\[
\M_{g,c} = \X_{g,c}/\sim
\]
where $(u,j) \sim (u',j')$ if they are related by a $C^{2,\alpha}$ admissible diffeomorphism of $\overline{\Sigma}$ (see Definition~\ref{def-admissible-diffeo} and Lemma~\ref{admissible-diffeos-act}).
\end{definition}

\begin{remark}
An a~priori larger collection of $J$-holomorphic curves comes from considering \emph{all} pairs $(u,j) \in \A \times \J$ for which the admissible map $u$ is $J$-holomorphic on the interior. Such a pair $(u,j)$ will fail to lie in $\X_{g,c}$ if it cannot be approximated by $J$-holomorphic curves which are smooth up to the boundary.  The reason we pass via the closure of $\X^{\infty}_{g,c}$ is to avoid this pathology, and work in a setting in which smooth curves are dense. It seems unlikely that such a pathological map $u$ exists, although we have been unable to rule it out. 
\end{remark}

We are now finally in position to prove the first major result of this article. 

\begin{theorem}\label{smooth-moduli-space}
$\M_{g,c}$ is a smooth Banach manifold.
\end{theorem}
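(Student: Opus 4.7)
The plan is to realize $\X_{g,c}$ as the zero set of a smooth section of a Banach bundle over $\A \times \J$, establish transversality via the implicit function theorem, and then quotient by the admissible diffeomorphism group $\G$. Concretely, construct a smooth Banach bundle $\E \to \A \times \J$ whose fiber at $(u,j)$ is $C^{1,\alpha}_1(\z{T^*\Sigma}^{0,1}_j \otimes u^*\e{TZ})$; by Proposition~\ref{admissible-decay} and Lemma~\ref{admissible-j-give-same-maps}, the assignment $F(u,j) = \delb_j u$ defines a smooth section, and $\X_{g,c}^\infty \subset F^{-1}(0)$. The main task is to show $F$ is transverse to the zero section at every smooth zero; combined with elliptic regularity on the interior and $0$-elliptic regularity at $\del \Sigma$, this will identify $\X_{g,c}$ as a smooth Banach submanifold of $\A \times \J$.

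The core step is to verify surjectivity of the vertical derivative $L$ of $F$ at any zero $(u,j)$. I would split the target via the decomposition~\eqref{range-splitting} into tangential and normal parts and handle each separately. Variations $\eta \in T_j \J$ contribute only tangential terms of the form $\tfrac12 J \circ du \circ \eta$, while variations $v \in T_u \A$ contribute via $D_u v$ to both. In the tangential direction, combining admissible tangent vector fields (viewed as reparametrisations of $u$ via Lemma~\ref{admissible-reparametrisations}) with an admissible Teichm\"uller slice $\mathscr{S}$ (Lemma~\ref{admissible-Teichmuller-slices}) yields surjectivity by Theorem~\ref{tangential-surjective}. In the normal direction, $L$ restricts on $T_u \A_\gamma$ to the operator $D_{N_u}$ between weighted spaces of weight $1+\alpha$, which is Fredholm by Proposition~\ref{Fredholm-normal-directions}. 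To eliminate its finite-dimensional cokernel, I would use the additional freedom from varying the boundary embedding $\gamma$: such variations correspond precisely to normal directions at the indicial root $\lambda = -1$ of Proposition~\ref{normal-invertible} (cf.~Remark~\ref{not-really-blowing-up}) and so lie just below the weight threshold of the Fredholm setup. A boundary unique continuation argument for the $0$-elliptic adjoint $D_{N_u}^*$ would then show that no nonzero element of $\ker D_{N_u}^*$ can pair to zero against all admissible variations of $\gamma$, giving surjectivity.

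With transversality established, the implicit function theorem produces smooth local charts in $F^{-1}(0)$. To descend to $\M_{g,c}$, I would apply the standard slice method: the admissible Teichm\"uller slice $\mathscr{S}$ combined with a gauge-fixing slice for reparametrisations of $u$ provides local cross sections for the action of $\G$. Freeness of the action on $\X_{g,c}$ reduces, via $u \circ \phi = u$ and $\phi^* j = j$, to showing that a holomorphic self-map of $(\overline{\Sigma}, j)$ commuting with a nonconstant $J$-holomorphic curve in $\overline{Z}$ must be the identity, which follows from Schwarz reflection on the double surface (treating low-genus exceptional cases such as the disc separately by exploiting automorphisms of $(\overline{\Sigma}, j_0)$ explicitly).

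The principal obstacle is the boundary unique continuation step in the normal direction: while the $0$-calculus gives Fredholmness of $D_{N_u}$ cleanly, showing that boundary variations of $\gamma$ span a subspace pairing non-degenerately with $\ker D_{N_u}^*$ requires a careful analysis of the boundary asymptotics of elements of $\ker D_{N_u}^*$ together with a unique continuation principle adapted to the degenerate geometry of $\overline{Z}$ at infinity. This is where the singular behaviour of $J$ at $\del Z$ interacts most delicately with the Fredholm argument.
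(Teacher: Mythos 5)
Your proposal follows essentially the same route as the paper: split the linearised Cauchy--Riemann operator via the decomposition~\eqref{range-splitting} into tangential and normal parts, use an admissible Teichm\"uller slice together with Theorem~\ref{tangential-surjective} for tangential surjectivity, use Proposition~\ref{Fredholm-normal-directions} for the normal Fredholm property, eliminate $\coker D_{N_u}$ by allowing boundary variations of $\gamma$ (exploiting the indicial root $-1$), and close the argument with a boundary unique continuation theorem for $D_{N_u}^*$ before quotienting by $\G$. The only mild divergence is in the descent to the quotient: the paper does not need a separate gauge-fixing slice for reparametrisations of $u$, since freeness of the $\G$-action is immediate from the fact that admissible maps are embeddings on $\del\Sigma$, and properness comes from Teichm\"uller theory---this is a bit lighter than the Schwarz-reflection argument you suggest, though both work. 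You correctly flag the unique continuation step as the crux; the paper carries it out via an explicit integration-by-parts identity~\eqref{integrate-by-parts} identifying the boundary pairing, followed by Mazzeo's unique continuation theorem.
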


\begin{proof}
We give the proof first in the case that $\chi(\overline{\Sigma}) <0$. We will treat the other two cases, when $\overline{\Sigma}$ is the disc or an annulus, afterwards.

It suffices to prove that $\X_{g,c} \subset \A\times \J$ is a submanifold. This is because the action of admissible diffeomorphisms on $\X_{g,c}$ is free and  proper. To see this, note that it is almost true for $\J$ alone. It is a standard result of Teichmüller theory that action of diffeomorphisms on $\J$ is proper and the stabiliser of a point $j\in \J$ is the finite group of automorphisms of $(\overline{\Sigma},j)$. But these automorphisms act freely on $\A$, because the maps in $\A$ are \emph{embeddings} on $\del \Sigma$. So once we establish that $\X_{g,c}$ is a submanifold, the manifold structure will descend to the quotient $\M_{g,c}$. 

$\X_{g,c} \subset \A\times \J$ is cut-out by the $J$-holomoprhic curve equation $\delb_ju = 0$. 
By Lemma~\ref{admissible-j-give-same-maps}, for $(u,j) \in \A \times \J$, 
\[
\delb_j u \in C^{1,\alpha}_1(\z{T^*\Sigma}^{0,1}_j \otimes u^*\e{TZ})
\]
To prove that $\X_{g,c}$ is a submanifold, it suffices to show that if $\delb_ju=0$ then the linearised Cauchy--Riemann operator is surjective as a map
\[
 T_u \A \oplus T_{j} \J \to C^{1,\alpha}_1(\z{T^*\Sigma}^{0,1} \otimes u^*\e{TZ})
\]
The result will then follow from the implicit function theorem. 

A first remark: since $\X^{\infty}_{g,c}$ is dense in $\X_{g,c}$, we can also assume that both $u$ and $j$ are smooth. Secondly, we might as well have used $j$ as our ``reference'' complex structure on $\overline{\Sigma}$ (denoted $j_0$ above) since, by Proposition~\ref{admissible-decay} and Lemma~\ref{admissible-j-give-same-maps} $j$ and $j_0$ determine the same admissible maps and so are interchangable. From here on, we assume then that $j$ is ``the'' reference point in $\J$.

We now pick an admissible Teichmüller slice $\mathscr{S} \subset T_{j} \J$ (see Definitions~\ref{Teichmüller-slice} and~\ref{def-admissible-Teichmuller-slice}). To show that $\X_{g,c} \subset \A \times \J$ is a smooth Banach manifold, we will prove that the linearised Cauchy--Riemann operator with the following domain is surjective:
\[
D_{(u,j)} \colon T_u\A \oplus \mathscr{S} \to C_1^{1,\alpha}(\z{T^*\Sigma}^{0,1}\otimes u^*\e{TZ})
\]
(The point of the Teichmüller slice is to work transverse to the orbits of the diffeomorphism group.)

Recall the splitting~\eqref{range-splitting}:
\[
\z{T^*\Sigma}^{0,1} \otimes u^*\e{TZ} 
	\cong 
		(T^*\overline{\Sigma}^{0,1}\otimes T\overline{\Sigma}^{1,0} \otimes L_{B}) 
		\oplus 
		\z{T^*\Sigma}^{0,1} \otimes N_u
\]
With respect to this we decompose $D_{(u,j)} = D' + D''$ where $D'$ takes values in the first summand above, and $D''$ takes values in the second. We know from~\eqref{intertwines} that when $v \in \V_B$ is an admissible tangent vector field, 
\begin{equation}
D_{(u,j)}(\diff u(v), 0) = \delb_{B}( v) 
\label{intertwines-2}
\end{equation}
(Note that here is it essential that we are working with $u$ which is $C^{\infty}$, or at least $C^{3,\alpha}$. If $u$ were merely $C^{2,\alpha}$, then $\diff u(v)$ would be $C^{1,\alpha}$ which is not of sufficient regularity to lie in $T_u \A$.) Meanwhile a direct calculation shows that if we vary $j$ in the direction of $s \in \mathscr{S}$, then 
\begin{equation}
D_{(u,j)}(0,s) = s \otimes \chi_B
\label{vary-j}
\end{equation}
where we tensor with $\chi_B$ to take care of the fact that we are identifying $s$ with a section of $T^*\overline{\Sigma}^{0,1} \otimes T\overline{\Sigma}^{1,0}\otimes L_B$. Now Theorem~\ref{tangential-surjective} tells us that 
\[
D' = D_{(u,j)} \colon \V_B \oplus \mathscr{S} \to C^{1,\alpha}_1(T^*\overline{\Sigma}^{0,1}\otimes T\overline{\Sigma}^{1,0} \otimes L_{B})
\]
is surjective. Note that neither~\eqref{intertwines-2} nor~\eqref{vary-j} have a normal component and so on deformations of the form $(\diff u(v),s)$, we have that $D' = D_{(u,j)}$, with no need to project tangentially.

The crux of the proof is to show that $D'' \colon T_u \A \to C^{1,\alpha}_1(\z{T^*\Sigma}^{0,1} \otimes N_u)$ is  surjective. Assuming this momentarily, the proof is finished as follows. Let 
\[
\phi = \phi' + \phi'' \in C^{1,\alpha}_1(\z{T^*\Sigma}^{0,1} \otimes u^*\e{TZ})
\] 
where $\phi'$ is a section of $T^*\overline{\Sigma}^{0,1} \otimes T\overline{\Sigma}^{1,0} \otimes L_{B}$ and $\phi''$ is a section of $\z{T^*\Sigma}^{0,1}\otimes N_u$. We first solve $D''(\xi) = \phi''$ (assuming surjectivity of $D''$). This means that $D_{(u,j)}(\xi) = A(\xi) + \phi''$ where $A(\xi)$ is a section of $T^*\overline{\Sigma}^{0,1} \otimes T\overline{\Sigma}^{1,0} \otimes L_{B}$. Now we solve $D'(\diff u(v),s) = \phi' - A(\xi)$. Then
\[
D_{(u,j)}(\diff u(v) + \xi, s) = \phi' - A(\xi) + \phi'' + A(\xi) = \phi
\]
which proves surjectivity of $D_{(u,j)}$.

We use the $0$-calculus to prove that $D'' \colon T_u \A \to C^{1,\alpha}_1(\z{T^*\Sigma}^{0,1} \otimes N_u)$ is surjective. First, note that (by Remark~\ref{tangent-space-A-gamma})
\[
t^{1+\alpha}\Lambda^{2,\alpha}(u^*N_u) \subset T_u\A
\]
On this subspace, $D''=D_{N_u}$ and we have already shown in Proposition~\ref{Fredholm-normal-directions} that $D_{N_u}$ is Fredholm between the following spaces:
\begin{equation}
D_{N_u} 
	\colon 
		t^{1+\alpha}\Lambda^{2,\alpha}(N_u) 
		\to 
		t^{1+\alpha}\Lambda^{1,\alpha}(\z{T^*\Sigma}^{0,1} \otimes N_u)
\label{DNu}
\end{equation}
The domain of~\eqref{DNu} is not the whole of $T_u\A$; the perturbations do not move the boundary $\gamma$ of $u$. We now use the additional freedom to move the boundary to eliminate the cokernel of~\eqref{DNu}. 

To do this, we will use an integration-by-parts formula. To set the notation up, let $\psi \in t^2 \Lambda^{1}(\z{T^*\Sigma}^{0,1} \otimes N_u)$. Near the boundary $\del \Sigma$ we write
\[
\psi = t^2 \left( \hat\psi \otimes \frac{\diff s^{0,1}}{t}\right)
\]
where $\hat\psi$ is a $\Lambda^{1}$ section of $N_u$. (The point here is that $t^{-1}\diff s$ has bounded norm with respect to the $0$-metric on $\overline{\Sigma}$.) We write $\psi_0 = \hat\psi|_{\del \Sigma}$ for the boundary value of $\psi$. Similarly, given $\phi \in t^{-1}\Lambda^{1}(N_u)$ we write $\phi = t^{-1}\hat{\phi}$ near $\del \Sigma$ for $\phi$ a $\Lambda^{1}$ section of $N_u$ and call $\phi_0 = \hat{\phi}|_{\del \Sigma}$ the boundary value of $\phi$. 

With this notation in hand, we claim the following is true. Given
\begin{itemize}
\item
$\phi \in t^{-1} \Lambda^{1}(N_u) \cap t^{-3/2}W^{2,1}(N_u)$
\item
$\psi \in t^2\Lambda^{1}(\z{T^*\Sigma}^{0,1} \otimes N_u) \cap t^{3/2}W^{2,1}(\z{T^*\Sigma}^{0,1} \otimes N_u)$
\end{itemize}
(where the Sobolev norms $W^{2,1}$ are defined using the $0$-metric $h$ on $\overline{\Sigma}$) we have:
\begin{equation}
\int_\Sigma (D_{N_u} \phi ,\psi) \dvol_h
	=
		\int_\Sigma (\phi, D_{N_u}^*\psi) \dvol_h
		+
		\int_{\del \Sigma} (\phi_0,\psi_0) \diff s
\label{integrate-by-parts}
\end{equation}
To see that this is true, first note that the choice of the weighted Hölder spaces for $\phi,\psi$ ensures that their boundary values make sense as continuous sections over $\del \Sigma$, whilst the choice of weighted Sobolev spaces ensures that the $L^2$-innerproducts over $\Sigma$ are defined. We now apply Stokes's theorem over the region $t \geq \epsilon$ for $\epsilon >0$. This gives
\[
\int_{t \geq \epsilon} (D_{N_u} \phi, \psi) \dvol_h
	=
		\int_{t\geq \epsilon} (\phi, D_{N_u}^* \psi) \dvol_h
		+
		\int_{t=\epsilon} \left\langle \phi,\psi \right\rangle
\]
Here $\left\langle \phi,\psi \right\rangle$ denotes the 1-form on the curve $t=\epsilon$ obtained from contracting $\phi$ and $\psi$ via the Hermitian metric on $N_u$ and then restricting to $t=\epsilon$. Given the expansions of $\phi$ and $\psi$, this is equal to $(\hat{\phi},\hat{\psi})\diff s$. Equation~\ref{integrate-by-parts} then follows by taking $\epsilon$ to zero. 

We now apply~\eqref{integrate-by-parts} to prove the surjectivity of $D''$. Let $\psi \in t^{1+\alpha}\Lambda^{1,\alpha}(\z{T^*\Sigma}^{0,1} \otimes N_u)$, and assume that $\psi$ is $L^2$-orthogonal to the image of $D''$. We will show that $\psi=0$. To begin note that $\psi$ is orthogonal to the image of $D_{N_u}$ acting as in~\eqref{DNu}. The $0$-calculus assures us that $\psi \in \ker D_{N_u}^*$ and, since the indicial roots of $D_{N_u}^*$ are $-1,2$, that $\psi \in t^2\Lambda^{1,\alpha} \cap t^{3/2}W^{2,1}$. Moreover, by Theorem~\ref{0-Fredholm}, the boundary value $\psi_0$ is a section of the sub-bundle of $\z{T^*\Sigma}^{0,1}\otimes N_u \to \del \Sigma$ which corresponds to the indicial root $2$. By Proposition~\ref{normal-invertible} this is precisely the normal bundle to boundary link $K\defeq \pi( u (\del \Sigma))$

We will now vary $K \subset S^3$ in the space of $C^{2,\alpha}$ submanifolds via a $C^{2,\alpha}$ section $\xi \in C^{2,\alpha}(E)$ of its normal bundle to $E \to K$. By definition of $\A$, we can construct a tangent vector $\phi \in T_u\A$ of the form $\phi \in t^{-1}\xi +C^{\infty}(N_u)$ in a collar neighbourhood of $\del \Sigma$. (See Remark~\ref{not-really-blowing-up} for an explanation of how $\phi$ appears to blow up with respect to the natural metric on $N_u$, and yet corresponds to a genuine deformation of $u$.) In particular, $\phi_0 =\xi$. Now for \emph{any} choice of $\phi \in t^{-1}\Lambda^{1,\alpha}(N_u)$, we have $D_{N_u}(\phi) \in t^{-1} \Lambda^{1,\alpha}(N_u)$, but since $\xi$ corresponds to this indicial root of $D_{N_u}$ it follows that for our particular choice of $\phi$ we actually have $D_{N_u}(\phi) \in \Lambda^{1,\alpha}(N_u)$ and so, in particular, $\phi \in t^{-1} L^2_1$. 

We can now apply equation~\eqref{integrate-by-parts} to obtain
\[
\int_\Sigma (D_{N_u}\phi, \psi) \dvol_h = \int_{\del \Sigma} (\xi, \psi_0) \diff s
\]
Since we assume that $\psi$ is orthogonal to the image of $D_{N_u}$ we can conclude that $\psi_0$ is orthogonal to all sections $\xi$ of $E \to K$. It follows that $\psi_0 = 0$. This means that in fact $\psi$ has a higher order of vanishing that the indicial roots alone impose. It follows that $\psi$ vanishes to infinite order at the boundary (Theorem~\ref{0-Fredholm}). Next note that $\psi$ solves the Laplace-type equation $D_{N_u}D_{N_u}^*\psi = 0$. This means that Mazzeo's unique continuation theorem applies to $\psi$, showing that in fact $\psi=0$ (Corollary~11 of \cite{Mazzeo2}). It follows that $D''$ is surjective and hence that $D_{(u,j)}$ is surjective. This proves the theorem when $\chi(\overline{\Sigma}) <0$. 

When $\overline{\Sigma}$ is the disc or an annulus, we need to be a little careful to deal with automorphisms of the domain. Consider the disc first. In this case there is a unique complex structure modulo diffeomorphisms; at the same time the $\delb$-operator on tangent vectors is surjective and so there is no need to involve a Teichmüller slice. Let $j_0$ denote the standard complex structure on the disc $\overline{D}$ and let
\begin{equation}
\hat{\M}^{\infty}_{0,1} = \{ u \colon (\overline{D},j_0) \to \overline{Z}: u\in \A \text{ is } C^{\infty} \text{ and }J\text{-holomorphic}\}
\label{hat-moduli}
\end{equation}
We then let $\hat{\M}_{0,1}$ be the closure of $\hat{\M}^{\infty}_{0,1}$ in the space of admissible maps. The group $\PSL(2,\R)$ of biholomorphisms of $\overline{D}$ acts on $\hat{\M}_{0,1}$ and the quotient is the moduli space we're really interested in:
\[
\M_{0,1} = \frac{\hat{\M}_{0,1}}{\PSL(2,\R)}
\] 
The above proof, in the case $\chi(\overline{\Sigma})<0$, goes through verbatim to show that $\hat{\M}_{0,1}$ is a smooth Banach manifold. Now the action of $\PSL(2,\R)$ is free, because all maps are embeddings on the boundary. To prove the quotient $\M_{0,1}$ is smooth we must show the action is proper. In other words, given a sequence $(u_i) \subset \hat{\M}_{0,1}$ that converges and a second sequence $(g_i) \subset \PSL(2,\R)$ for which $(u_i \circ g_i)$ converges, we must extract a convergent subsequence of the $(g_i)$. To show this, write $u_i \to u$ and $u_i \circ g_i \to v$. We consider the sequence $(g_i(0)) \subset \overline{D}$ where $0 \in D$ is the origin. By compactness of $\overline{D}$ a subsequence of $g_i(0)$ converges to a point $p \in \overline{D}$. If we can show that $p \in D$ lies in the interior then it follows that a further subsequence of the $g_i$ converges in $\PSL(2,\R)$. Now convergence of admissible maps implies $C^0$ convergence and so, along this same subsequence, $u_i(g_i(0)) \to u(p)$. At the same time, $u_i(g_i(0))\to v(0)$ and so $u(p) = v(0)$. Since $v$ is admissible, $v(0)$ is in the interior $Z$ of the twistor space. Since $u(p)$ is in the interior, $p$ cannot lie on the boundary of the disc and so must lie in the interior, $p \in D$ as required. 

Finally we deal with the case $\chi(\overline{\Sigma}) = 0$, i.e.\ $\overline{\Sigma}$ is an annulus. Here a given complex structure $j$ has both a 1-dimensional moduli space \emph{and} a 1-dimensional space of biholomoprhisms. We fix a path $j_t$ of complex structures on $\overline{\Sigma}$, with $t \in (0,1)$ for which $t \mapsto [j_t]$ gives a diffeomorphism to the moduli space of complex annuli. We choose the $j_t$ so that for all $t$ the group of $j_t$-biholomorphisms is the same circle subgroup $S^1 \hookrightarrow \Diff(\overline{\Sigma})$.  We moreover require that the $j_t$ all have the same 1-jet at the boundary (so that they all determine the same space of admissible maps). Next let 
\[
\hat{\M}^{\infty}_{0,2}
=
\{ (u,t) : u \in \A \text{ is } C^{\infty} \text{ and }(j_t,J)\text{-holomorphic} \}
\]
and let $\hat{\M}_{0,2}$ be the closure of $\hat{\M}^\infty_{0,2}$. The distinguished circle $S^1 \subset \Diff (\overline{\Sigma})$ acts on $\hat{\M}_{0,2}$ and the moduli space we're interested in is the quotient:
\[
\M_{0,2} = \frac{\hat{M}_{0,2}}{S^1}
\]
The above proof goes through verbatim to show that $\hat{\M}_{0,2}$ is a smooth Banach manifold. The $\delb$ operator on $T\overline{\Sigma}$ has a 1-dimensional cokernel, which is accounted for by varying $t$, which corresponds to the Teichmüller slice. Finally, the circle action on $\hat{\M}_{0,2}$ is free (since the maps are embeddings on the boundary of $\overline{\Sigma}$ and since the circle is compact the quotient is again a manifold. 
\end{proof}

\subsection{Transversality of evaluation maps}\label{ev-is-transverse-section}

The moduli space of admissible $J$-holomorphic curves with $n$ marked points is defined as
\[
\M_{g,c}^n = \frac{\{ (u,j,p_1, \ldots, p_n) \}}{\sim}
\]
where $(u,j)$ corresponds to a point in the moduli space $\M_{g,c}$ of admissible $J$-holo\-morphic curves, the points $p_1, \ldots, p_n \in \Sigma$  are distinct points of the interior and $(u,j,p_i) \sim (u',j',p_i')$ if they are related by a diffeomorphism of $\Sigma$ in the obvious way. 

Just as $\M_{g,c}$ is a smooth Banach manifold, so is $\M_{g,c}^n$. In fact, this follows directly from what we have done in~\S\ref{smooth-moduli-space-section}. Assume first that $\chi(\overline{\Sigma})<0$. In this case, we showed that $\X_{g,c}$ was a smooth Banach manifold, with a free and proper action of the group $\G$ of admissible diffeomorphisms of $\overline{\Sigma}$. So $\X_{g,c} \to \M_{g,c}$ is a principal bundle. Now let $U \subset \Sigma^n$ denote the open set where components are pairwise distinct. $\G$ acts on $U$ and the associated bundle is the moduli space of pointed maps:
\[
\M_{g,c}^n = \X_{g,c} \times_{\G} U
\]
In particular it is the quotient of a smooth Banach manifold by a free and proper action, so is itself a smooth Banach manifold. 

When $\overline{\Sigma}$ is a disc or an annulus, we can again use the associated bundle construction. This time, we use $\PSL(2,\R)$ or $S^1$ in place of $\G$ and the principal bundle $\PSL(2,\R) \hookrightarrow \hat{\M}_{0,1} \to \M_{0,1}$ or $S^1 \hookrightarrow \hat{\M}_{0,2} \to \M_{0,2}$ in the role of $\X_{g,c} \to \M_{g,c}$.  

Evaluation of $u$ at the points $p_i$ defines a map $\ev \colon \M_{g,c}^n \to Z^n$: 
\[
\ev[u,j,p_i] = (u(p_1),\ldots, u(p_n)).
\]

\begin{theorem}\label{ev-is-transverse}
The evalutaion map $\ev  \colon \M_{g,c}^n \to Z^n$ is a submersion.
\end{theorem}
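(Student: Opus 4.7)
The plan is to argue at the infinitesimal level, reducing the submersion property to a duality statement in the spirit of the last step of the proof of Theorem~\ref{smooth-moduli-space}. Fix $[u,j,p_1,\ldots,p_n]\in \M^n_{g,c}$; for concreteness I would treat the case $\chi(\overline{\Sigma})<0$, the disc and annulus cases being analogous modulo the additional quotient by $\PSL(2,\R)$ or $S^1$. A tangent vector at this point is represented by a tuple $(\xi,s,v_1,\ldots,v_n)$ with $(\xi,s)\in T_u\A\oplus \mathscr{S}$ lying in $\ker D_{(u,j)}$ and $v_i\in T_{p_i}\Sigma$, and its image under $d\ev$ is the $n$-tuple $\bigl(\xi(p_i)+\diff u(p_i)(v_i)\bigr)_i$. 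To establish surjectivity of $d\ev$ it suffices to prove that the enlarged operator
\[
L'(\xi,s,v_i)\;=\;\Bigl(D_{(u,j)}(\xi,s),\;\xi(p_1)+\diff u(p_1)(v_1),\ldots,\xi(p_n)+\diff u(p_n)(v_n)\Bigr)
\]
from $T_u\A\oplus \mathscr{S}\oplus\bigoplus_i T_{p_i}\Sigma$ to $C^{1,\alpha}_1\bigl(\z{T^*\Sigma}^{0,1}\otimes u^*\e{TZ}\bigr)\oplus\bigoplus_i T_{u(p_i)}Z$ is surjective, since any target of the form $(0,w_1,\ldots,w_n)$ then lifts to a genuine tangent vector to $\M^n_{g,c}$ realizing the prescribed evaluation.

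To show $L'$ is surjective I would argue by duality. Suppose $(\psi, w_1^*,\ldots,w_n^*)$ annihilates the image of $L'$, so that
\[
\int_\Sigma \bigl(\psi,D_{(u,j)}(\xi,s)\bigr)\,\dvol \;+\; \sum_i\bigl\langle w_i^*,\,\xi(p_i)+\diff u(p_i)(v_i)\bigr\rangle \;=\; 0
\]
for every admissible test tuple. Varying $v_i$ alone forces $w_i^*$ to annihilate $\diff u(p_i)(T_{p_i}\Sigma)$ (an empty condition at branch points). Varying $(\xi,s)$ compactly supported in $\Sigma\setminus\{p_1,\ldots,p_n\}$ yields $D_u^*\psi=0$ on the punctured surface and shows $\psi$ is $L^2$-orthogonal to the image of $s\mapsto D_{(u,j)}(0,s)$. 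The key next step is to show that the boundary trace $\psi_0$ of $\psi$ along $\del\Sigma$ vanishes: I would choose $\xi\in T_u\A$ supported in a collar of $\del\Sigma$ disjoint from the $p_i$, so that the point-mass terms $\langle w_i^*,\xi(p_i)\rangle$ drop out, and then run the integration-by-parts identity~\eqref{integrate-by-parts} verbatim as in the surjectivity proof for the normal Cauchy--Riemann operator. Invertibility of the normal operator on the $(-1,2)$ weight range, via the $0$-calculus, then constrains $\psi$ to lie in the expected weighted space and the argument forces $\psi_0=0$.

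Having obtained $D_u^*\psi=0$ on $\Sigma\setminus\{p_1,\ldots,p_n\}$ and $\psi_0=0$ along $\del\Sigma$, the final step is to conclude $\psi\equiv 0$ on $\Sigma\setminus\{p_i\}$. Since $\psi$ satisfies the Laplace-type equation $D_u D_u^*\psi=0$ away from the $p_i$ and vanishes to infinite order at $\del\Sigma$ by the indicial-root analysis in Theorem~\ref{0-Fredholm}, Mazzeo's unique continuation theorem (Corollary~11 of~\cite{Mazzeo2}) gives $\psi\equiv 0$ on the complement of the $p_i$. The full pairing identity then reads $L^*\psi = -\sum_i \delta_{p_i}\otimes w_i^*$ as distributions on $\Sigma$, and since $\psi$ is zero off the $p_i$ this distributional equation forces $w_i^*=0$ for each $i$. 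This proves $L'$ has trivial cokernel and hence is surjective, so $d\ev$ is surjective.

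The main obstacle I anticipate is the last step: applying Mazzeo's unique continuation on the punctured surface $\Sigma\setminus\{p_i\}$ and checking that no part of $\psi$ can hide as a distribution concentrated at the $p_i$. This should reduce to a straightforward interior elliptic unique continuation argument (propagating the vanishing from a boundary neighborhood of $\del\Sigma$ across the interior, avoiding the finitely many marked points), combined with the observation that any distributional solution of an elliptic operator supported on a finite set of interior points must be a finite combination of derivatives of $\delta$-functions, which is incompatible with $\psi$ lying in the weighted H\"older space singled out by the $0$-calculus unless all the $w_i^*$ vanish.
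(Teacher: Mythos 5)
Your reduction of the statement to surjectivity of the enlarged operator $L'$ is correct, and your idea of arguing by duality is reasonable and genuinely different from the paper's approach. The paper proceeds by a direct construction: given $[u,j,p]$ and $\xi\in T_{u(p)}Z$, it splits $\xi$ into tangential and normal parts and in each case explicitly builds a tangent vector to $\M^n_{g,c}$ hitting it. The tangential case (and a separate sub-case at branch points) is handled via Theorem~\ref{tangential-surjective}; the normal case is handled by a duality argument, but one formulated purely on $N_u$ for the operator $D_{N_u}$ restricted to sections vanishing at $p$.

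The gap in your proposal is that the global duality argument, as stated, implicitly applies the $0$-calculus to the \emph{whole} adjoint $D_{(u,j)}^*$ on $u^*\e{TZ}$, rather than to $D_{N_u}^*$ on the normal subbundle. This is not justified: the normal operator $\D$ (in the sense of Definition~\ref{0-calculus-definitions}) of $D_u$ is invertible between the weighted spaces only on the $N$-factor, by Proposition~\ref{normal-invertible}. On the tangential factor $T$ the paper explicitly shows that $\D$ has infinite-dimensional cokernel; equivalently, the tangential restriction of $\D^*$ has infinite-dimensional kernel. So the step ``invertibility of the normal operator on the $(-1,2)$ weight range, via the $0$-calculus, then constrains $\psi$ to lie in the expected weighted space'' is false for the tangential component of $\psi$. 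Without that a priori weight control one cannot run the integration-by-parts identity~\eqref{integrate-by-parts} verbatim (it requires $\psi$ in $t^2\Lambda^1\cap t^{3/2}W^{2,1}$), nor invoke Theorem~\ref{0-Fredholm}(3) to conclude infinite-order vanishing at $\del\Sigma$, which is needed before Mazzeo's unique continuation theorem applies. The paper gets around exactly this problem by using classical, not $0$-, elliptic theory in the tangential directions (the Riemann--Roch/doubling argument of Theorem~\ref{tangential-surjective}), and confining the $0$-calculus and the integration-by-parts argument to the normal bundle $N_u$. To repair your proof you would need to split $\psi$ into its tangential and normal components under the decomposition~\eqref{range-splitting}, dispose of the tangential component using the surjectivity of $\delb_B$ on $\V_B\oplus\mathscr{S}$ (so that the tangential annihilator vanishes for purely classical reasons), and then run your integration-by-parts and unique-continuation argument on the normal component alone. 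After that correction your final step (vanishing of $\psi$ forces $w_i^*=0$ since $\xi(p_i)$ can be prescribed arbitrarily) is fine, and does not even require the branch-point case distinction the paper makes, because you work with the full $T_u\A$ rather than reparametrisations.
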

\begin{proof}
We give the proof first in the case $n=1$, which contains the main ideas but for which the notation is easier. 

It suffices to prove that $\ev$ is a submersion at the points of $\M_{g,c}^1$ for which $u$ is $C^{\infty}$ up to the boundary (since they are dense). Let $[u,j,p] \in \M_{g,c}^1$ be such a point and let $\xi \in T_{u(p)}Z$. We will show that $\xi \in \im \diff (\mathrm{ev})$. There are 3 cases to consider.
\vspace{0.4\baselineskip}

\noindent{\bf Case 1.} 
The point $p$ is not a branch point of $u$ and $\xi \in \im \diff u_p$. In this case $\xi$ is in the image of $\diff(\mathrm{ev})$ simply by moving the point $p$ on $\Sigma$ in the direction corresponding to $\xi$.
\vspace{0.4\baselineskip}

\noindent{\bf Case 2.} 
The point $p$ is a branch point of $u$ and $\xi$ lies in the image of $T_p\Sigma \otimes L_B \hookrightarrow T_{u(p)}Z$ under $\diff u$. We take a section $\zeta$ of $T\Sigma\otimes L_B$ with the property that $\diff u_p(\zeta \otimes \chi_B^{-1})= \xi$.  We choose $\zeta$ so that it is holomorphic near $p$ and compactly supported away from the other branch points and the boundary. We will now adjust $\zeta$ and move $j$ to give a holomorphic reparametrisation of $u$, but without affecting the value of $\diff u(\zeta \otimes \chi^{-1}_B)$ at $p$. 

By construction, $\delb_{T\overline{\Sigma}\otimes L_B}( \zeta)$ is a section of $T^*\overline{\Sigma}^{0,1} \otimes T\overline\Sigma\otimes L_B$ which is zero on a neighbourhood of $B$. Set $\eta = \delb_{T\overline{\Sigma}\otimes L_B}( \zeta) \otimes \chi_B^{-1}$. This section of $T^*\overline{\Sigma}^{0,1} \otimes T\overline{\Sigma}$ is only defined a~priori away from $B$ but since $\zeta$ is holomorphic on a neighbourhood of $B$, $\eta$ extends to the whole of $\Sigma$. By Theorem~\ref{tangential-surjective} we can find an admissible vector field $v$ such that $\delb_{T\overline{\Sigma}}(v)+\eta =s$ lies in a Teichmüller slice $\mathscr{S}$. It follows that 
\[
D_{u,j}\left(\diff u\left(v  + \zeta\otimes \chi_B^{-1}\right),-s\right) = 0
\] 
and so $(\diff u\left(v  + \zeta\otimes \chi_B^{-1}\right),-s)$ corresponds to a tangent vector to $\M_{g,c}$ at $[u,j]$. Notice that 
\[
\diff u_p\left(v  + \zeta\otimes \chi_B^{-1}\right) = \diff u_p(\zeta \otimes \chi_B^{-1}) = \xi
\]
since $\diff u_p(v)=0$  because $p$ is a branch point.  
\vspace{0.4\baselineskip}

\noindent{\bf Case 3.} It remains to treat the case when $\xi \in (N_u)_p$ is normal to $u(\Sigma)$. Again we extend $\xi$ to a section $\zeta$ of $N_u$ which is compactly supported away from the boundary. Write $\eta = D_{N_u}(\zeta)$. We will find a section $v$ of $N_u$ with $v(p) = 0$ and $D_{N_u}(v)=-\eta$. Assume for a moment that we have such a $v$. Then $D_{N_u}(\zeta +v)=0$ and so $\zeta + v$ satisfies the normal projection of the linearised Cauchy--Riemann equations. By Theorem~\ref{tangential-surjective}, there is a tangential vector-field $\sigma \in \V_B$ and adjustment $s \in \mathscr{S}$ of $j$, such that the whole triple $(\sigma + \zeta + v,s)$ is tangent to $[u,j] \in \M_{g,c}$. Moreover, the normal component of $(\sigma +\zeta + v)(p)$ is precisely $\xi$. The tangential component need not be zero, but this doesn't matter since we correct for this via the method of one of the two previous cases (depending on whether $p$ is a branch point or not).

The proof of the existence of the section $v$ of $N_u$ that we seek is essentially identical to that of the surjectivity of $D_{N_u}$ in the proof of Theorem~\ref{smooth-moduli-space}. Write $\mathscr{N}_p \leq T_u\A$ for the subspace of admissible perturbations of $u$ which vanish at $p$ and lie in $N_u$. This contains the subspace of sections of $N_u$ in $t^{1+\alpha}\Lambda^{2,\alpha}$ which also vanish at $p$. Since this is of finite codimension in the space of all sections $t^{1+\alpha}\Lambda^{2,\alpha}(N_u)$ and since $D_{N_u}$ is Fredholm on $t^{1+\alpha} \Lambda^{2,\alpha}(N_u)$ it follows that
\[
D_{N_u} \colon \mathscr{N}_p \to t^{1+\alpha} \Lambda^{1,\alpha}(\z{T^*\Sigma}^{0,1} \otimes N_u) 
\] 
has closed image with finite dimensional cokernel. Suppose now that $\psi$ is $L^2$-orthogonal to the image. Then $\psi$ is smooth on $\Sigma \setminus p$ and  $D_{N_u}^*(\psi)=0$ there. Now the same integration by parts argument, via equation~\ref{integrate-by-parts} shows that in fact $\psi$ vanishes to infinite order at the boundary and once again Mazzeo's Theorem on unique continuation at infinity forces $\psi=0$ on $\Sigma\setminus p$ and hence all of $\Sigma$ by continuity. This means that $D_{N_u}$ is surjective and so we can find the required solution $v$ to $D_{N_u}(v) =-\eta$ with $v(p)=0$.

We now do the same thing with multiple marked points. Given $[u,j,p_i] \in \M_{g,c}^n$ with $u$ a $C^\infty$ map, we pick $\xi = (\xi_1, \ldots, \xi_n) \in T_{u(p_1)}Z \oplus \cdots \oplus T_{u(p_n)}Z$. Write $\xi_i = \xi_i^T + \xi_i^N$ for the tangential-normal decomposition of each $\xi_i$. We first use the method of Case~3 above to eliminate the $\xi^N_i$. We take a section $\eta$ of $N_u$ which equals $\xi^N_i$ at $p_i$ and is compactly supported away from the boundary. Then we use the same argument as before to find a section $v$ of $N_u$ with $v(p_i)=0$ for all $i$, such that $D_{N_u}(\zeta+ v)=0$. (The space of sections of $N_u$ in $t^{1+\alpha}\Lambda^{2,\alpha}$ which vanish at all of the $p_i$ has finite codimension. From here the argument proceeds verbatim.) 

Now Theorem~\ref{tangential-surjective} gives $\sigma \in \V_B$ and $s \in \mathcal{S}$ such that the triple $(\sigma + \zeta + v, s)$ is tangent to $[u,j] \in \M_{g,c}$ and $\sigma + \zeta + v$ has the right normal component at each point $p_i$. It remains to deal with the tangential parts which is done just as before, via Case~1 for each $p_i$ which is not a branch point, and a single application of Case~2 to deal simultaneously with the $p_i$ which are branch points.
\end{proof}

\begin{remark}
When $n=1$, i.e.\ a single marked point, there is a very simple proof that the evaluation map is a submersion, exploiting the symmetries of $Z$. The isometry group $\SO(4,1)$ of $\H^4$ acts transitively on $Z$ preserving $J$. This means that it also acts on $\M^1_{g,c}$, by composition. The infinitesimal action at a point $z \in Z$ is a surjection $ \so(4,1) \to T_z Z$ and so the infinitesimal action of $\SO(4,1)$ on $\M^1_{g,c}$ can be used to hit any $\xi \in T_{u(p)}Z$.
\end{remark}

\subsection{Maps with branch points}\label{branch-points-codim2-section}

In this section we show that the $J$-holomorphic curves with branch points make up a codimension~4 subset of $\M_{g,c}$ and that those whose projections to $\H^4$ have branch points are a codimension~2 subset. See Theorem~\ref{branch-points-codim2} below for a precise statement.

The $J$-holomorphic curve $(u,j)$ has a branch point at $p \in \Sigma$ if $ \diff u_p = 0$. This differential lives in the 3-dimensional complex vector space $T_p^*\Sigma^{1,0}_j \otimes T_{u(p)}Z$. These spaces fit together to give a rank-3 complex vector bundle $\E \to \M^1_{g,c}$ over the moduli space of pointed curves. The differential $\diff u_p$ determines a section of $\E$ and the branch points are precisely zeros of this section:
\[
\B_{g,c} = \{ [u,j,p] \in \M^{1}_{g,c} : \diff u_p = 0\}
\]

Similarly, we can consider those $J$-holomorphic curves in $Z$ whose projections to $\H^4$ have branch points. The differential $\diff (\pi \circ u)_p$ lies in $T_p^*\Sigma^{1,0}_j \otimes H_{u(p)}$ (where $H$ denotes the horizontal distribution). These spaces fit together to give a rank-2 complex vector bundle $\F \to \M^1_{g,c}$. The differential $\diff (\pi \circ u)_p$ determines a section of $\F$ and the projected branch points are the zeros of this section:
\[
\B^{\pi}_{g,c} = \{ [u,j,p] \in \M^1_{g,c} : \diff (\pi \circ u)_p =0 \}
\]

\begin{theorem}\label{branch-points-codim2}~
\begin{enumerate}
\item
$\B_{g,c}$ is a codimension~6 submanifold of $\M^1_{g,c}$. The natural projection $\B_{g,c} \to \M_{g,c}$, whose image is the set of all $J$-holomorphic curves with branch points, is Fredholm with index $-4$.
\item
$\B^{\pi}_{g,c}$ is a codimension~4 submanifold of $\M^1_{g,c}$. The natural projection $\B_{g,c} \to \M_{g,c}$, whose image is the set of all $J$-holomorphic curves which project to minimal surfaces with branch points, is Fredholm with index $-2$.
\end{enumerate}
\end{theorem}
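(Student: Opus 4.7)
The plan is to exhibit $\B_{g,c}$ and $\B^{\pi}_{g,c}$ as transverse zero-sets of smooth sections of complex vector bundles on $\M^1_{g,c}$, so that both statements follow at once. First I would assemble the target bundles. Let $\E \to \M^1_{g,c}$ be the complex rank~$3$ bundle whose fibre at $[u,j,p]$ is $\Hom_\C(T_p\Sigma^{1,0}, T_{u(p)}Z)$, and let $\F \to \M^1_{g,c}$ be the complex rank~$2$ bundle whose fibre is $\Hom_\C(T_p\Sigma^{1,0}, H_{u(p)})$, where $H \subset TZ$ is the horizontal distribution. Because $u$ is $(J,j)$-holomorphic, $\diff u_p$ and $\diff(\pi\circ u)_p$ are complex linear, so they define smooth sections $s_\B \in \Gamma(\E)$ and $s_\F \in \Gamma(\F)$ with $\B_{g,c} = s_\B^{-1}(0)$ and $\B^{\pi}_{g,c} = s_\F^{-1}(0)$.

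The heart of the proof is transversality of $s_\B$ and $s_\F$, which is a $1$-jet refinement of Theorem~\ref{ev-is-transverse}. Given $[u,j,p] \in \B_{g,c}$ and any complex-linear $A\colon T_p\Sigma^{1,0} \to T_{u(p)}Z$, one must find a tangent vector $(v,s) \in T_{[u,j]}\M_{g,c}$ with $v(p) = 0$ and $\nabla v|_p = A$, where $\nabla$ is any connection on $TZ$ and $\nabla v|_p$ is unambiguous because $v(p)=0$. Following Case~3 of Theorem~\ref{ev-is-transverse}, I would pick a smooth, compactly supported $\tilde v \in \Gamma(u^*\e{TZ})$ with support in a small disc around $p$ disjoint from the other branch points and from $\del\Sigma$, prescribing $\tilde v(p)=0$ and $\nabla\tilde v|_p = A$, and then correct the error $\eta := D_{u,j}(\tilde v, 0)$ by solving $D_{u,j}(w, s) = -\eta$ subject to the additional constraint $w(p)=0$, $\nabla w|_p = 0$. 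Transversality of $s_\F$ follows from the analogous procedure applied to horizontal $1$-jets.

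The main obstacle is solving this constrained equation. I would restrict $D_{N_u}$ to the closed, finite-codimensional subspace $\mathscr{N}_{p,1} \subset t^{1+\alpha}\Lambda^{2,\alpha}(N_u)$ of normal perturbations whose $1$-jet vanishes at $p$. Since $D_{N_u}$ is Fredholm on the full weighted H\"older space (Proposition~\ref{Fredholm-normal-directions}), its restriction to $\mathscr{N}_{p,1}$ has closed range and finite-dimensional cokernel. Any $\psi$ in this cokernel satisfies $D^*_{N_u}\psi = 0$ on $\Sigma \setminus \{p\}$ distributionally, with a possible contribution at $p$ involving Dirac masses and their first derivatives; these are removable because $\psi$ lies locally in $L^2$ near the interior point $p$ while distributions supported at $p$ do not. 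The integration-by-parts identity~\eqref{integrate-by-parts}, combined with the freedom to perturb the boundary link $K$, then forces the boundary value $\psi_0 \equiv 0$, so $\psi$ vanishes to infinite order along $\del\Sigma$; Mazzeo's unique continuation at infinity (Corollary~11 of~\cite{Mazzeo2}) yields $\psi \equiv 0$. Surjectivity in the tangential direction follows from the same modification of Theorem~\ref{tangential-surjective}, with admissible tangent vector fields in $\V_B$ and an admissible Teichm\"uller slice both adjusted by a finite-codimensional $1$-jet constraint at $p$.

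With transversality in hand, the rest is automatic. $\B_{g,c}$ and $\B^{\pi}_{g,c}$ are closed Banach submanifolds of $\M^1_{g,c}$ of real codimension $2\rank_\C \E = 6$ and $2\rank_\C \F = 4$ respectively. The forgetful submersion $\M^1_{g,c} \to \M_{g,c}$ has $2$-dimensional fibres and Fredholm index $+2$, so composing with the inclusions $\B_{g,c} \hookrightarrow \M^1_{g,c}$ (index $-6$) and $\B^{\pi}_{g,c} \hookrightarrow \M^1_{g,c}$ (index $-4$) gives the projections to $\M_{g,c}$ Fredholm indices $2-6 = -4$ and $2-4 = -2$, as claimed.
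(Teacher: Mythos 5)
Your proposal is correct and follows essentially the same route as the paper: realize $\B_{g,c}$ and $\B^\pi_{g,c}$ as zero sets of sections of the rank-$3$ and rank-$2$ bundles $\E$ and $\F$ over $\M^1_{g,c}$, prove transversality by a $1$-jet refinement of the evaluation-map argument of Theorem~\ref{ev-is-transverse} (restrict $D_{N_u}$ to the finite-codimension subspace of perturbations with vanishing $1$-jet at $p$, then kill the cokernel via the boundary-perturbation/integration-by-parts argument and unique continuation at infinity), and then compute the index through the forgetful submersion $\M^1_{g,c}\to\M_{g,c}$. The short detour about ruling out Dirac masses at $p$ is unnecessary --- the paper avoids it by applying unique continuation on $\Sigma\setminus\{p\}$ and concluding by continuity at $p$ --- but it is not incorrect.
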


\begin{proof}
We prove the statement for $\B_{g,c}$; the proof for $\B^\pi_{g,c}$ is similar. The vector bundle $\E \to \M^1_{g,c}$ has complex rank~3. We will show that the section of $\E$ determined by $\diff u_p$ is transverse to the zero section; it then follows that $\B_{g,c}$ is a submanifold of codimension~6. 

As we now explain, the proof of transversality is very similar to the proof of Theorem~\ref{ev-is-transverse} above, that the evaluation map is a submersion. Let $u \colon (\overline{\Sigma},j)\to \overline{Z}$ be a $J$-holomorphic curve. In twistor half-space coordinates  $x,y_i,z_j$ on $Z$, for any vector $t$ tangent to $\overline{\Sigma}$, 
\begin{equation}
\diff u (t) = (t \cdot x) \del_x + (t \cdot y_i) \del _{y_i} + (t \cdot z_j) \del_{z_j}
\label{diff-u-t}
\end{equation}
Now consider a section $w = a\del_x + b_i \del_{y_i} + c_j \del_{z_j}$ of $u^*TZ$ where $a,b_i,c_j$ are functions on $\Sigma$. Moving $u$ infinitesimally in the direction $w$ we see from~\eqref{diff-u-t} that the corresponding infinitesimal change in $\diff u(t)$ is 
\[
(t\cdot a)\del_x + (t \cdot b_i)\del_{y_i}+ (t \cdot c_j)\del_{z_j}
\]
(where we treat $x,y_i,z_j$ as functions on $\overline{\Sigma}$ by pulling back with $u$). So proving that the section of $\E$ vanishes transversely amounts to showing that for any $\xi = \alpha \del_x + \beta_i \del_{y_i} + \gamma_j \del_{z_j}$ we can find $w = a \del_x + b_i \del_{y_i} + c_j \del_{z_j}$ and an adjustment $s \in \mathcal{S}$ of $j$ such that $(w,s)$ is tangent to $\M_{g,c}$ at $[u,j]$ with $(t \cdot a)(p) = \alpha$,  $(t\cdot b_i) = \beta_i$ and $(t\cdot c_j) = \gamma_j$. In other words we need to have enough freedom in choosing $(w,s)$ so as to prescribe $w$ to first order at $p$. This should be compared with the proof that $\ev \colon \M^1_{g,c} \to Z$ is a submersion, which required us to have enough freedom to prescribe $w$ to zeroth order at $p$. 

The proof is now almost identical: we write down a compactly supported $w$ with the correct $1$-jet at $p$ and then use the Fredholm theory developed previously to correct it so that it yields a tangent vector to $\M^1_{g,c}$, whilst not affecting the 1-jet at $p$. We omit the details, which are the same as those in the proof of Theorem~\ref{ev-is-transverse}.

Finally, the submersion $\M^{1}_{g,c} \to \M_{g,c}$ has 2-dimensional fibres. The submanifold $\B_{g,c}$ meets the fibre over $[u,j]$ in the branch points of $u$, a finite set. It follows that the restriction of the forgetful map to $\B_{g,c} \to \M_{g,c}$ is Fredholm of index $2-6=-4$ as claimed.
\end{proof}

\subsection{The index of the boundary map}\label{boundary-map-Fredholm-section}

Let $\L_c$ denote the Banach manifold of all oriented $C^{2,\alpha}$ links in $S^3$ with $c$ components. 
Given an admissible map $u \in \A$, the boundary curve $\pi \circ u \colon \del \Sigma \to S^3$ has image which is a oriented $C^{2,\alpha}$ link. We define a map $\beta \colon \M_{g,c} \to \L_c$ which sends $[u,j]$ to the oriented link $\beta[u,j] \defeq \pi( u (\del \Sigma) )$. The goal of this section is to prove the following.

\begin{theorem}\label{boundary-map-Fredholm}
The boundary value map $\beta \colon \M_{g,c} \to \L_c$ is Fredholm of index zero. \end{theorem}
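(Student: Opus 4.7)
The plan is to reduce both the Fredholm property and the vanishing of the index to the analysis of the linearised Cauchy--Riemann operator $D_\gamma$ with fixed boundary $\gamma=\pi\circ u|_{\del\Sigma}$, via a snake-lemma argument; then to compute $\ind(D_\gamma)$ by splitting tangential and normal parts and invoking the ``almost Calabi--Yau'' structure of $Z$.

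\emph{Step 1 (snake-lemma reduction).} Fix $[u,j]\in\M_{g,c}$, set $K=\pi(u(\del\Sigma))\in\L_c$, and choose an admissible Teichm\"uller slice $\mathscr S$. By the proof of Theorem~\ref{smooth-moduli-space}, the linearised operator $D=D_{(u,j)}\colon T_u\A\oplus\mathscr S\to C^{1,\alpha}_1(\z{T^*\Sigma}^{0,1}\otimes u^*\e{TZ})$ is surjective. Combining the short exact sequence
\[
0\to T_u\A_\gamma\oplus\mathscr S\to T_u\A\oplus\mathscr S\to T_K\L_c\to 0,
\]
in which the right-hand map records the infinitesimal motion of the unparametrised link $K\subset S^3$ (surjective because $\gamma$ is a free parameter in the definition of $\A$), with the snake lemma applied to the vertical maps $D_\gamma$, $D$, $0$, and using the surjectivity of $D$, we obtain
\[
0\to \ker D_\gamma \to \ker D \xrightarrow{d\beta} T_K\L_c \to \coker D_\gamma \to 0.
\]
Passing to the moduli quotient by admissible reparametrisations (whose action on $K$ is trivial) identifies $d\beta$ as Fredholm with $\ind(d\beta)=\ind(D_\gamma)$.

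\emph{Step 2 (tangential index).} Decompose $u^*\e{TZ}=E\oplus N_u$ with $E\cong\z{T\Sigma}\otimes L_B$ the tangent line to the image under $d_eu$, so that $D_\gamma$ splits as a tangential operator together with $D_{N_u}$. The normal part is Fredholm by Proposition~\ref{Fredholm-normal-directions}; the tangential part combined with $\mathscr S$ is Fredholm by Theorem~\ref{tangential-surjective}, with index $\ind(\delb|_{\V_B})+\dim_\R\mathscr S = (3\chi(\overline\Sigma)+2k)+(-3\chi(\overline\Sigma))=2k$ whenever $\chi<0$ (and similarly in the disc and annulus cases).

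\emph{Step 3 (normal index via almost Calabi--Yau).} The indicial roots of $D_{N_u}$ are the simple values $-1,2$ (Proposition~\ref{normal-invertible}), symmetric about $1/2$. A doubling argument across $\del\Sigma$, analogous to the one in the proof of Theorem~\ref{tangential-surjective}, together with a relative Riemann--Roch computation, gives
\[
\ind(D_{N_u}) = 2\chi(\overline\Sigma) + 2\bigl\langle c_1(N_u),[\overline\Sigma]\bigr\rangle,
\]
where $c_1(N_u)$ is taken with the natural boundary framing determined by the leading indicial root. Here the almost Calabi--Yau property of $Z$ enters: the determinant line $\det_\C \e{TZ}$ carries a natural global trivialisation coming from the hyperbolic structure, so that $c_1(u^*\e{TZ})=0$ with the prescribed framing at $\del\Sigma$. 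Since $c_1(u^*\e{TZ})=c_1(E)+c_1(N_u)=\chi(\overline\Sigma)+k+c_1(N_u)$, this forces $c_1(N_u)=-\chi(\overline\Sigma)-k$ and thus $\ind(D_{N_u})=-2k$. Adding the tangential contribution $+2k$ and normal contribution $-2k$ yields $\ind(D_\gamma)=0$, whence $\ind(d\beta)=0$.

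\emph{Principal obstacle.} The hardest step is the Chern-class computation in Step~3: setting up a Riemann--Roch formula that correctly incorporates the indicial-root data at $\del\Sigma$, and pinning down the almost Calabi--Yau trivialisation of $\det_\C\e{TZ}$ so that the two boundary framings match. The symmetry of the indicial roots $-1$ and $2$ about $1/2$ is essential: it is exactly what forces the boundary corrections in the two Chern-class terms to cancel, yielding $\ind=0$ rather than some non-trivial topological quantity.
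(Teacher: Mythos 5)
Your Steps~1 and~2 are a valid repackaging of the paper's Proposition~\ref{index-dbeta}: the snake-lemma presentation identifies $\ind(d\beta)$ with $\ind(D_{N_u}) + 2k$ (tangential kernel contributing $2k$, normal contribution $\ind(D_{N_u})$), which is exactly what the paper establishes by a direct computation of $\ker d\beta$ and $\coker d\beta$ together with the identification $\coker d\beta \cong \coker D_{N_u}$. So far, so good.

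Step~3 has a genuine gap, and it is the crux of the whole theorem. You assert an index formula of Riemann--Roch type
\[
\ind(D_{N_u}) = 2\chi(\overline\Sigma) + 2\langle c_1(N_u),[\overline\Sigma]\rangle
\]
obtained by ``a doubling argument across $\del\Sigma$, analogous to the one in the proof of Theorem~\ref{tangential-surjective}''. This analogy breaks down. In the tangential case the operator acts between sections of $\z{T^*\Sigma}^{0,1}\otimes\z{T\Sigma}\otimes L_B$ and the boundary singularity cancels: $\z{T^*\Sigma}^{0,1}\otimes\z{T\Sigma}\cong T^*\Sigma^{0,1}\otimes T\Sigma^{1,0}$ extends smoothly to $\del\Sigma$, making the operator classically elliptic and allowing a genuine double $\overline\Sigma\#_{\del\Sigma}\overline\Sigma'$. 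For $D_{N_u}$ no such cancellation occurs: the range is $\z{T^*\Sigma}^{0,1}\otimes N_u$, which keeps a genuine $t^{-1}$ weight, the operator is $0$-elliptic (not classically elliptic) with non-trivial indicial roots $-1$ and $2$, and it simply does not extend across $\del\Sigma$ to a CR operator on a closed surface. There is no Riemann--Roch formula that this doubling could deliver, and indeed the paper flags explicitly (\S\ref{outline-proof}) that no general index theorem in the $0$-calculus is available here --- precisely why an ad hoc argument is needed. You also leave the boundary-framing compatibility (between the almost Calabi--Yau trivialisation of $\det_\C\e{TZ}$ and the indicial-root framing of $N_u$) as the ``principal obstacle'', but this is exactly the step that has to be done, not postponed.

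The paper avoids a topological index formula altogether. It uses the complex 3-form $\theta$ to build an anti-linear bundle isomorphism $\Theta$ that conjugates $D_{N_u}$ to an operator with the same symbol and the same normal operator as $\Theta\circ D_{N_u}^*\circ\Theta$ (the key identity is $M\circ\D^*\circ M=\D$ at the level of normal operators); deforming along the resulting path of Fredholm operators gives $\ind(D_{N_u}) = \ind(D_{N_u}^*) = -\ind(D_{N_u})$ when there are no branch points. With branch points one twists by $L_B$, obtaining $\ind(D_{N_u}) = -\ind(D_{N_u\otimes L_B})$, and couples this with the standard twisting long exact sequence $\ind(D_{N_u\otimes L_B}) = \ind(D_{N_u}) + 4k$ to conclude $\ind(D_{N_u}) = -2k$. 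The symmetry $-1, 2$ of the indicial roots about $1/2$ enters in making $D_{N_u}$ and $D_{N_u}^*$ Fredholm at the same weight, not as a ``boundary correction'' in a Chern--Weil integral. If you wish to keep your snake-lemma framing for Steps~1--2, you still need this self-duality argument to finish Step~3; as written the proof does not close.
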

One could anticipate this via the Eells--Salamon correspondence: $J$-holomorphic curves  in $Z$ correspond to minimal surfaces in $\H^4$. At least when a minimal surface is embedded, deformations correspond to sections of the normal bundle which solve a Laplace-type equation and so one expects the index to be zero (given appropriate boundary conditions, Dirichlet in this case). This is exactly what is proved by Alexakis--Mazzeo for embedded minimal surfaces in $\H^3$ \cite{Alexakis-Mazzeo}. 

We start with the following Proposition, which relates the index of $\diff \beta$ to that of  the normal part of the Cauchy--Riemann operator:
\[
D_{N_u}  \colon t^{1+\alpha}\Lambda^{2,\alpha}(N_u) \to t^{1+\alpha}\Lambda^{1,\alpha}(\z{T^*\Sigma}^{0,1}\otimes N_u)
\]
\begin{proposition}\label{index-dbeta}~
\begin{itemize}
\item
The map $\diff \beta \colon T_{[u,j]}\M_{g,c} \to T_{\beta[u,j]}\L_c$ is Fredholm. 
\item
Write the branch locus of $u$ as $B = \sum k_i p_i$ and let $k = \sum k_i$ be the total degree of $B$. Then $ \ind(\diff \beta) = \ind(D_{N_u}) + 2k$.
\end{itemize}
\end{proposition}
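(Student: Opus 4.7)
Plan: I would relate $\diff\beta$ to the restriction of the linearised Cauchy--Riemann operator $D_{(u,j)}$ to deformations that fix the link $K=\pi(u(\del\Sigma))$ as a set, via a Snake Lemma argument, and then compute the restricted index by exploiting the normal/tangential block decomposition of $D_{(u,j)}$.

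Write $T_u\A^K \subset T_u\A$ for the subspace of admissible deformations whose boundary value in $S^3$ is everywhere tangent to $K$; equivalently, the variations that at most reparametrise $\gamma$. The quotient is canonically $T_K\L_c$, so we have the short exact sequence
\[
0 \to T_u\A^K \oplus \mathscr{S} \to T_u\A \oplus \mathscr{S} \to T_K\L_c \to 0.
\]
Applying $D_{(u,j)}$ vertically to the middle and left columns (both mapping to $R=t^{1+\alpha}\Lambda^{1,\alpha}(\z{T^*\Sigma}^{0,1}\otimes u^*\e{TZ})$), and using the surjectivity established in the proof of Theorem~\ref{smooth-moduli-space}, the Snake Lemma produces the four-term exact sequence
\[
0 \to \ker D|_{\mathrm{left}} \to \ker D|_{\mathrm{mid}} \xrightarrow{\mathrm{ev}} T_K\L_c \to \coker D|_{\mathrm{left}} \to 0.
\]
Because admissible reparametrisations of $\overline{\Sigma}$ preserve $K$ as a set, $\mathrm{ev}$ descends modulo the $\Aut(\overline{\Sigma},j)$-orbit to $\diff\beta$ on $T_{[u,j]}\M_{g,c}$. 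Thus $\diff\beta$ is Fredholm iff $D|_{T_u\A^K\oplus\mathscr{S}}$ is, with
\[
\ind(\diff\beta) = \ind\bigl(D|_{T_u\A^K \oplus \mathscr{S}}\bigr) - \dim\Aut(\overline{\Sigma},j).
\]

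Next I would compute the right-hand index using the normal/tangential splitting $u^*\e{TZ}=\diff_eu(\z{T\Sigma}\otimes L_B)\oplus N_u$ from~\S\ref{normal-bundle-section}. The normal summand of $T_u\A^K$ is $t^{1+\alpha}\Lambda^{2,\alpha}(N_u)$, corresponding to the $+2$ indicial root of $D_{N_u}$ (Proposition~\ref{normal-invertible}); the tangential summand together with $\mathscr{S}$ is $\{\diff u(\hat v):v\in\V_B\}\oplus\mathscr{S}$, where $\V_B$ consists of admissible tangent vector fields in the sense of Definition~\ref{admissible-vector-fields} including those with non-trivial tangential boundary value. The identities~\eqref{intertwines-2} and~\eqref{vary-j}, namely $D_{(u,j)}(\diff u(\hat v),s)=\delb v + s\otimes\chi_B$, show that tangential input goes to purely tangential output, so $D|_{T_u\A^K\oplus\mathscr{S}}$ is block lower-triangular. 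Its index is therefore the sum of the diagonal block indices: $\ind(D_{N_u})$ from Proposition~\ref{Fredholm-normal-directions} for the normal block, and $\ind(\delb|_{\V_B}) + \dim\mathscr{S} = 3\chi(\overline{\Sigma}) + 2k + \dim\mathscr{S}$ for the tangential block (by Theorem~\ref{tangential-surjective}, $\mathscr{S}$ surjects onto $\coker(\delb|_{\V_B})$, killing the cokernel of the coupled operator).

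Assembling,
\[
\ind(\diff\beta) = \ind(D_{N_u}) + 2k + \bigl(3\chi(\overline{\Sigma}) + \dim\mathscr{S} - \dim\Aut(\overline{\Sigma},j)\bigr),
\]
and the bracketed term vanishes in all three topological regimes: for $\chi<0$, $\dim\Aut=0$ and $\dim\mathscr{S}=-3\chi$; for the annulus, $\dim\Aut=\dim\mathscr{S}=1$ against $\chi=0$; for the disc, $\dim\mathscr{S}=0$ and $\dim\Aut=3$ against $\chi=1$. This yields $\ind(\diff\beta)=\ind(D_{N_u})+2k$ as claimed. The main delicate point will be the clean identification of $T_u\A^K$ with its two summands, in particular separating the $-1$ indicial root of $D_{N_u}$ (which corresponds to genuine motion of $K$ and so produces the quotient $T_K\L_c$, not a subspace of $T_u\A^K$) from the $+2$ root (which sits inside $T_u\A^K$ and is precisely the Fredholm domain used in Proposition~\ref{Fredholm-normal-directions}); matching this dichotomy with the parallel accounting of boundary reparametrisations absorbed into $\V_B$ is where the bookkeeping requires the most care.
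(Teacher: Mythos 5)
Your proposal is correct, and it reaches the same formula by a genuinely different packaging of the argument. The paper's proof computes $\ker\diff\beta$ and $\coker\diff\beta$ directly: it shows that if the boundary value of $\eta$ vanishes then $\eta$ must lie in $\ker D_{N_u}$ (using the indicial-root dichotomy), counts the $\dim\ker\delta_B$-worth of freedom in choosing $(\xi,s)$ given $\eta$, and then constructs an explicit map $\Phi \colon T_K\L_c \to \coker D_{N_u}$ and proves $\ker\Phi = \im\diff\beta$ and $\Phi$ surjective. You instead reorganise the whole thing around a short exact sequence of domains and a Snake Lemma (using the surjectivity on the middle term established in Theorem~\ref{smooth-moduli-space}), and compute the index of the restricted operator $D|_{T_u\A^K\oplus\mathscr{S}}$ by the block-triangularity of $D_{(u,j)}$ in the tangential/normal splitting. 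What your route buys is a uniform treatment of all three Euler characteristic regimes: the identity $3\chi(\overline\Sigma)+\dim\mathscr{S}-\dim\Aut(\overline\Sigma,j)=0$ is the whole story, rather than a case-by-case adjustment $\dim\ker(\delta_B) = 2k, 2k+1, 2k+3$ as in the paper. It also makes manifest why $\ind(\diff\beta)$ and $\ind(D_{N_u})$ differ exactly by $2k$: it is the index contribution of the twist by $L_B$. The one price you pay is that the Snake Lemma gives you the identification of cokernels rather abstractly; the paper's explicit $\Phi$ construction is what you would need if you later wanted to trace through orientations or compare signs. Two small slips worth noting: the operator in the (tangential, normal) ordering is block \emph{upper}-triangular (the $N_u$-input $\eta$ contributes $A(\eta)$ to the tangential output, not the other way round), though this does not change the index-additivity conclusion; and the identification $\ker D|_{\mathrm{mid}} \cong T_{[u,j]}\M_{g,c}$ followed by the subtraction of $\dim\Aut$ double-counts the automorphism quotienting in the case $\chi<0$ unless one is careful that $\Aut$ is then finite, so $\dim\Aut=0$ --- you state this correctly, but it deserves a remark that the Teichm\"uller slice handles the identity component of $\Diff$ while $\Aut$ accounts for the residual stabiliser, and these are complementary rather than overlapping.
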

\begin{proof}
We give the proof first in the case $\chi(\overline{\Sigma})<0$. Write a tangent vector to $\M_{g,c}$ at $[u,j]$ as triple: $(\diff u(\xi) + \eta, s)$ where $\xi \in \V_B$ is an admissible section of $T\overline{\Sigma}^{1,0}\otimes L_B$ (see Definition~\ref{admissible-vector-fields}), $\eta$ is a section of $N_u$ and $s \in \mathscr{S}$ is an element of an admissible Teichmüller slice. The fact that $(\diff u(\xi)+\eta, s)$ preserves the $J$-holomorphic equation means that
\begin{align*}
\delb_{B}(\xi) + s\otimes \chi_B + A(\eta) & =0\\
D_{N_u}(\eta)	&= 0
\end{align*}
Here $A(\eta)$ is, by definition, the component of $D_{(u,j)}(\eta)$ which lies in the summand $T^*\overline{\Sigma}^{0,1} \otimes T\overline{\Sigma}^{1,0}\otimes L_B$. Given a solution to this system, $\diff \beta(\diff u(\xi)) + \eta, s)$ is the component of $\pi_*(\eta|_{\del \Sigma})$ which is normal to the boundary link $\beta(u)$ of the initial map~$u$. 

Suppose that $\diff \beta (\diff u(\xi) + \eta, s)=0$. This tells us that $\pi_*\eta$ has no component normal to $\beta(u)$. This component of the boundary value of $\eta$ is precisely  that which corresponds to the indicial root $-1$. The only other indicial root is $2$. This, together with the fact that $D_{N_u}(\eta)=0$, means that $\eta$ must vanish on the boundary. Lemma~\ref{normal-and-zero-holder} then gives $\eta \in  t^{\alpha}\Lambda^{1,\alpha}(N_u)$. We have already shown that $D_{N_u}$ is Fredholm at this weight (Theorem~\ref{Fredholm-normal-directions}) and so $ \eta \in \ker D_{N_u}$ lies in a finite dimensional space. Meanwhile, by Theorem~\ref{tangential-surjective}, the map 
\begin{equation}
\begin{gathered}
\delta_B \colon \V_B \oplus \mathscr{S} \to C^{1,\alpha}_1(T^*\overline{\Sigma}^{0,1} \otimes T\overline{\Sigma}^{1,0} \otimes L_B)\\
\delta_B \colon (\xi,s)\mapsto \delb_{B}(\xi)+s\otimes \chi_B
\end{gathered}
\label{surjective-tangent-plus-slice}
\end{equation}
is surjective and $\dim \ker(\delta_B) = 2k$. So for any $\eta \in \ker (D_{N_u})$ there is a $\dim \ker \delta_B$-dimensional space of solutions to 
$\delta_B(\xi,s) + A(\eta)=0$. It follows that $\dim \ker (\diff \beta) = \dim \ker (D_{N_u})+ 2k$.

To complete the proof we show that $\dim\coker \diff \beta = \dim \coker D_{N_u}$. To do this, let $v \in T_{\beta(u)}\L_c$. We write $K=\beta(u)$ for the boundary link in $S^3$ and interpret $v$ as a section of the normal bundle  $E \to K$. In a collar neighbourhood we can extend $v$ to a section $\eta_v$ of $N_u \to \Sigma$, which is of weight $t^{-1}$ (because of the scaling of the metric on $Z$ at the boundary, see Remark~\ref{not-really-blowing-up}).  Since the boundary value $v$ lies in the bundle corresponding to the indicial root $-1$ and since the only other indicial root is $2$, we can choose this extension to have $D_{N_u}(\eta_v) \in t^{1+\alpha}\Lambda^{1,\alpha}(\z{T^*\Sigma}^{0,1}\otimes N_u)$. 

Suppose momentarily that $D_{N_u}$ is surjective between spaces of weight $t^{1+\alpha}$. We will show that $v \in  \im \diff \beta$. By our assumption, we can find $\psi \in t^{1+\alpha}\Lambda^{2,\alpha}(N_u)$ such that $D_{N_u}( \psi) = - D_{N_u}(\eta_v)$ and so $\eta = \eta_v + \psi$ is in $\ker D_{N_u}$. Moreover, since $\psi$ is weight $t^{1+\alpha}$, the boundary value of $\eta$ is equal to $v$ as well. We then use the surjectivity of~\eqref{surjective-tangent-plus-slice} to find $(\xi,s)$ so that $(\xi + \eta, s)$ solves the linearised Cauchy--Riemann equations. With this done, we have a tangent vector to $\M_{g,c}$ on which $\diff \beta(\xi + \eta, s) = v$. 

In general, $D_{N_u}$ need not be surjective. In this case, we have a map
\begin{equation}\label{coker-dbeta-is-coker-DN}
\Phi \colon T_{\beta(u) }\L_c \to \coker D_{N_u}
\end{equation}
given by sending $v$ to the class $[D_{N_u}(\eta_v)]$ where $\eta_v$ is any extension of $v$ to a section of $N_u$ of weight $t^{-1}$, with $D_{N_u}(\eta_v)$ of weight $t^{1+\alpha}$. Note that $\eta_v$ has weight $t^{-1}$ whereas the cokernel is defined via images of sections with weight $t^{1+\alpha}$ and so the class $[D_{N_u}(\eta_v)]$ is not automatically zero. Moreover, any other extension $\eta'_v$ differs by an element of $t^{1+\alpha}\Lambda^{2,\alpha}$ and so defines the same class in the cokernel, meaning $\Phi$ is well defined.  The argument in the previous paragraph shows that $\ker \Phi \subset \im \diff \beta$. Equally if $v \in \im \diff \beta$ then there is a tangent vector $(\xi+\eta, s)$ to $\M_{g,c}$ where $\eta$ has boundary value $v$. The fact that $D_{N_u}(\eta) = 0$ means that we can take $\eta = \eta_v$ and this shows that $v \in \ker \Phi$. The upshot is $\im \diff \beta = \ker \Phi$ and  $\dim \coker \diff \beta \leq \dim \coker D_{N_u}$. It follows that $\diff \beta$ is Fredholm. 

To complete the proof of the formula for the index we must show that $\Phi$ is surjective, and so gives an isomorphism $\coker \diff \beta \cong \coker D_{N_u}$. This is done in an identical fashion to the proof of Theorem~\ref{smooth-moduli-space}. The cokernel $\coker D_{N_u}$ is spanned by the classes of elements of $\ker D_{N_u}^*$. If $\Phi$ were not surjective, there would be an element $\psi \in \ker D_{N_u}^*$ which is orthogonal to all sections of the form $D_{N_u}(\eta_v)$ as $v$ varies over $T_{\beta(u)}\L_c$. Now an identical argument to the proof of Theorem~\ref{smooth-moduli-space} shows that 
\[
0 = \int_\Sigma (D_{N_u}(\eta_v), \psi) = \int_{\del \Sigma}(v, \psi_0)\diff s
\]
and so $\psi_0=0$ which in turn forces $\psi = 0$.

The cases of $\chi(\overline{\Sigma}) = 0, 1$ are almost identical. The only difference is that $\dim \ker (\delta_B) = 2k + 1$ when $\chi(\overline{\Sigma})=0$ and $\dim \ker (\delta_B) = 2k + 3$ when $\chi(\overline{\Sigma})=1$. But in these cases we must also quotient by the group of biholomorphisms which exactly cancels these extra terms. For example, in the notation of~\eqref{hat-moduli} in the proof Theorem~\ref{smooth-moduli-space}, the above argument proves directly that as a map from $\hat{\M}_{0,1}$ the boundary map has index~$\ind(D_{N_u}) + 2k + 3$. When passing to the genuine moduli space $\M_{0,1}$ by dividing by the $\PSL(2,\R)$ action we recover $\ind(\diff \beta) = \ind(D_{N_u}) + 2k$ as claimed. 
\end{proof}

It remains to compute the index of $D_{N_u}$. Unfortunately there is no ``off the shelf'' index theorem we can appeal to in this setting. Instead we will compute the index by a trick which exploits the fact that $Z$ is an ``almost Calabi--Yau threefold''. 

We first explain how the trick works for a compact Kähler Ricci-flat Calabi--Yau threefold $X$. The fact that the metric is Kähler and Ricci-flat means that there is a unit-length holomorphic volume form $\theta$. Let $C \subset X$ be an embedded compact $J$-holomorphic curve with normal bundle $N \to C$.  We will show directly (without appeal to the index theorem) that the Cauchy--Riemann operator $D_N$ of $C$ in the normal directions has $\ind(D_N)=0$. At points of $C$, we have $\overline{\theta} \in \Lambda^{0,3}T^*X = T^*C^{0,1} \otimes \Lambda^{0,2}N^*$. Using the metric, we interpret $\overline{\theta}$ as an anti-linear bundle isomorphism $N \to T^*C^{0,1} \otimes N$,  that we denote $\Theta$. Now one can check that $D_{N}$ and its adjoint $D^*_N$ are related by $D_N = \Theta \circ D^*_N \circ \Theta$. Since composing with isomorphisms doesn't affect the index, $\ind(D_N) = \ind (\Theta \circ D^*_N \circ \Theta) = \ind(D_N^*)$ from which it follows that $\ind(D_N)=0$. 

There are three things which are different in our situation: firstly, whilst we can find a unit-length complex volume form, it is not holomorphic. This means that the equation $D_{N} = \Theta \circ D^*_N \circ \Theta$ no longer holds. Secondly, we are not working on a closed Riemann surface, but on one with boundary, along which the operators involved are $0$-elliptic. Thirdly, our curve may have branch points. To ease the explanation we deal first with the case when $u$ has no branch points.

\begin{proposition}\label{index-DNu-no-branch-points}
If $u$ has no branch points then $\ind(D_{N_u}) = 0$.
\end{proposition}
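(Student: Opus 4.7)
The plan is to imitate the Kähler Ricci--flat Calabi--Yau trick sketched immediately before the proposition. I will construct a nowhere-vanishing, unit-length section $\theta$ of $\Lambda^{3,0}\e{T^*Z}$, smooth up to $\del Z$, use it to build an antilinear bundle isomorphism
\[
\Theta : N_u \to \z{T^*\Sigma}^{0,1}\otimes N_u,
\]
and then derive a conjugation identity
\[
D_{N_u} = \Theta \circ D_{N_u}^* \circ \Theta + K,
\]
in which $K$ is a zeroth-order operator encoding the failure of $\theta$ to be $\nabla$-parallel and $\delb$-closed. Since $\Theta$ is a bundle isomorphism (Fredholm index $0$) and a compact perturbation leaves the index unchanged, this identity will give $\ind D_{N_u} = \ind D_{N_u}^*$. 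Combined with part (3) of Proposition~\ref{Fredholm-normal-directions}, which identifies $\ker D_{N_u}^* \cong \coker D_{N_u}$ and $\ker D_{N_u} \cong \coker D_{N_u}^*$, so that $\ind D_{N_u}^* = -\ind D_{N_u}$, this forces $\ind D_{N_u} = 0$.

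The construction of $\theta$ is explicit. Using the global edge frame $t\del_x, t\del_{y_i}, \del_{z_i}$ of $\e{TZ}$ and the formula for $J$ from Lemma~\ref{J-in-local-coordinates}, one writes down a global nowhere-vanishing section of $\Lambda^{3,0}\e{T^*Z}$ and normalises it to unit length in the edge Hermitian structure coming from the twistor metric. The conjugate $(0,3)$-form pulls back along $u$ and, in the absence of branch points, restricts via the splitting $u^*\e{TZ} = \diff_eu(\z{T\Sigma})\oplus N_u$ to a section of
\[
\z{T^*\Sigma}^{0,1}\otimes \Lambda^{0,2}N_u^*.
\]
Because $N_u$ has complex rank two, the Hermitian metric identifies $\Lambda^{0,2}N_u^*$ antilinearly with $N_u$, and composing these operations yields the required antilinear isomorphism $\Theta$.

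Deriving the conjugation identity is a local symbol computation. In the K\"ahler Ricci--flat model case, where $\theta$ is covariantly constant and holomorphic, the identity $D_N = \Theta\circ D_N^* \circ \Theta$ holds on the nose: one writes $D_N$ and $D_N^*$ in a holomorphic frame and checks directly that $\Theta$ intertwines them. In our setting the symbolic part of the same calculation goes through unchanged, but an extra zeroth-order residue $K$ appears, built tensorially from the Nijenhuis tensor of $J$, the covariant derivative $\nabla\theta$, and the second fundamental form of $u$.

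The main obstacle is to verify that $K$ is genuinely compact between the weighted Hölder spaces in which $D_{N_u}$ is Fredholm, i.e.
\[
K : t^{1+\alpha}\Lambda^{2,\alpha}(N_u) \to t^{1+\alpha}\Lambda^{1,\alpha}(\z{T^*\Sigma}^{0,1}\otimes N_u).
\]
On a surface without boundary this would be immediate, since $K$ is of order zero and gains a derivative; but in the 0-calculus a bounded zeroth-order operator is not automatically compact between weighted spaces of the \emph{same} weight. What is needed is that the coefficients of $K$ carry some extra decay at $\del\Sigma$, so that $K$ factors through a compact embedding $t^{1+\alpha}\Lambda^{2,\alpha}\hookrightarrow t^{1+\alpha+\eta}\Lambda^{1,\alpha}$ for some $\eta>0$. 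Verifying this reduces to an explicit asymptotic computation, in half-space twistor coordinates at $\del Z$, of the Nijenhuis tensor of $J$ and of $\nabla\theta$, using Lemma~\ref{J-in-local-coordinates} together with Proposition~\ref{asymptotic-expansion} to control the second fundamental form of $u$.
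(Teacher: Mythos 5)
Your construction of the unit-length complex 3-form $\theta$, the antilinear isomorphism $\Theta$, and the conjugated operator $\hat{D} = \Theta\circ D_{N_u}^*\circ\Theta$ is exactly what the paper does, and the identification of the key difficulty---that the residue $K = D_{N_u} - \hat{D}$ is not automatically compact between weighted spaces of the same weight, so one must exhibit extra decay of its coefficients at $\del\Sigma$---is precisely the right insight. But the final step you propose differs from the paper's, and I think the paper's version is worth contrasting. You want to argue: show $K$'s coefficients decay, deduce $K$ is compact, conclude $\ind D_{N_u} = \ind\hat{D}$ by stability under compact perturbation. The paper instead checks directly that $D_{N_u}$ and $\hat{D}$ have the same $0$-symbol \emph{and} the same normal operator, and then runs the straight-line homotopy $D_r = rD_{N_u} + (1-r)\hat{D}$. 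Because the $0$-symbol and normal operator are linear in the operator, $D_r$ has the same $0$-symbol and normal operator for all $r$; by Proposition~\ref{normal-invertible} and Theorem~\ref{0-Fredholm} it is therefore Fredholm for all $r$, and index invariance along the path finishes the argument. The content is equivalent (vanishing normal operator of $K$ is exactly the decay you need), but the homotopy route sidesteps any explicit appeal to a compactness lemma for residual $0$-operators, which is tidier and stays entirely inside the machinery already set up in \S\ref{0-calculus-review}.

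Two points to tighten if you pursue your version. First, the compact embedding you wrote down,
\[
t^{1+\alpha}\Lambda^{2,\alpha} \hookrightarrow t^{1+\alpha+\eta}\Lambda^{1,\alpha},
\]
does not exist: with the weight conventions of the paper (larger $\delta$ means \emph{faster} decay, hence a \emph{smaller} space), this inclusion goes the wrong way. What you mean is that the extra decay of $K$'s coefficients sends $t^{1+\alpha}\Lambda^{2,\alpha}$ into $t^{1+\alpha+\eta}\Lambda^{2,\alpha}$, and then the inclusion $t^{1+\alpha+\eta}\Lambda^{2,\alpha}\hookrightarrow t^{1+\alpha}\Lambda^{1,\alpha}$---which loses a derivative \emph{and} a bit of weight---is the compact one. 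Second, you should be explicit that in the $0$-calculus the correct formulation of ``extra decay'' is exactly ``vanishing normal operator'': that is what you need to verify, and it is verified by the matrix computation $M\circ\D^*\circ M = \D$ rather than a more unwieldy asymptotic expansion of the Nijenhuis tensor. Once you phrase it that way, your argument and the paper's become almost indistinguishable.
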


\begin{proof}
We begin by describing the complex volume form on $Z$. Recall the horizontal and vertical 1-forms $\alpha_i, \beta_i$ on $Z$ from \eqref{vertical-1-forms}. We write
\begin{align*}
\omega_1 &= \alpha_0\wedge \alpha_1 + \alpha_2 \wedge \alpha_3\\
\omega_2 &= \alpha_0\wedge \alpha_2 + \alpha_3 \wedge \alpha_1\\
\omega_3 &= \alpha_0\wedge \alpha_3 + \alpha_1 \wedge \alpha_2
\end{align*}
and then define a pair of 3-forms on $Z$ by
\begin{align*}
\theta_\mathrm{Re} &= \beta_1 \wedge \omega_1 + \beta_2 \wedge \omega_2 + \beta_3 \wedge \omega_3\\
\theta_\mathrm{Im} 
	& = 
		z_1(\beta_2 \wedge\omega_3 - \beta_3\wedge \omega_2)
		+
		z_2(\beta_3 \wedge\omega_1 - \beta_1 \wedge \omega_3)
		+
		z_3 (\beta_1 \wedge \omega_2 - \beta_2 \wedge \omega_1)
\end{align*}
One can check that $\theta = \theta_\mathrm{Re} + i\theta_\mathrm{Im}$ is  a unit-length complex 3-form on $Z$. It is in this sense that $Z$ is ``almost Calabi--Yau''. (The 3-form $\theta$ is not closed---that would imply $J$ were integrable, which it is not---although  $\diff \theta_{\mathrm{Re}} =0$, as one can check directly.) 

Given a $J$-holomorphic curve $u$, we use the $(0,3)$-form $\overline{\theta} = \theta_{\mathrm{Re}} - i \theta_{\mathrm{Im}}$ to define an anti-linear bundle isomorphism $\Theta \colon N_u \to \z{T^*\Sigma}^{0,1}\otimes {N}_u$. Explicitly, given sections $\xi$ of $\z{T\Sigma}$ and $\nu$ of $N_u$, we set $\Theta(\xi)(\nu)$ to be the section of $N_u$ determined by
\begin{equation}
g(\Theta(\xi)(\nu),\mu) = \overline{\theta}(\diff u(\xi),\nu,\mu)
\label{Theta-from-theta}
\end{equation}
for all $\mu \in N_u$. Here $g$ is the Hermitian metric on $N_u \subset u^*\e{TZ}$ induced from the edge metric on $\overline{Z}$. Our convention is that $g$ is complex-linear in the left-hand argument. Since $\overline{\theta}$ is anti-linear in all arguments and $\diff u$ is complex-linear, it follows that $\Theta(\xi)(\nu)$ is anti-linear in both $\xi$ and $\nu$. Moreover, because $u$ has no branch-points $\Theta$ is an isomorphism of complex vector bundles. 

$\Theta$ has unit length when measured with respect to metrics induced entirely from the edge metric on $\overline{Z}$ (so using the $0$-metric on $\overline{\Sigma}$ given by restricting the edge metric via $\diff_eu \colon \z T \Sigma \hookrightarrow u^*\e{TZ}$). Moreover, derivatives of $\Theta$ of all order are uniformly bounded, $|\nabla^r \Theta| \leq C$, when measured using the metric and Levi-Civita connection of the edge metric on $\overline{Z}$. This follows from the fact that the original complex 3-form $\theta$ is invariant under the transitive isometric action of $\Isom(\H^4)$ on $Z$. It follows that $\Theta$ gives an isomorphism between the weighted spaces $t^{\delta}\Lambda^{k,\alpha}(N_u) \to t ^\delta\Lambda^{k,\alpha}(\z{T^*\Sigma}^{0,1}\otimes N_u)$. (An aside: when we carried out the Fredholm analysis, we used a fixed $0$-metric on $\overline{\Sigma}$, not the metric induced from $\overline{Z}$, which depends on $u$. However,  $u$ is asymptotically an isometry for this induced metric, and so $\|\Theta\|_{C^r}$ is bounded when measured with respect to this fixed choice of  background metric too.) 

We now consider the following operator
\[
\begin{gathered}
\hat{D} \colon t^{1+\alpha}\Lambda^{2,\alpha}(N_u) \to t^{1+\alpha}\Lambda^{1,\alpha}(\z{T^*\Sigma}^{0,1} \otimes N_u)\\
\hat{D} = \Theta \circ D^*_{N_u} \circ \Theta
\end{gathered}
\]
Since we have composed a Fredholm operator with isomorphisms, $\hat{D}$ is also Fredholm, with $\ind (\hat{D})   = \ind (D^*_{N_u}) = - \ind(D_{N_u})$.

To prove that $\ind (D_{N_u})=0$ we will show also that $\ind(\hat{D}) = \ind (D_{N_u})$. We claim that:
\begin{enumerate}
\item\label{normal-operators-agree} 
$D_{N_u}$ and $\hat{D}$ have the same normal operator (in the sense of the $0$-calculus).
\item\label{symbols-agree} 
The $0$-symbols of $D_{N_u}$ and $\hat{D}$ agree.
\end{enumerate}
Momentarily assuming these two facts, we set $D_r = r D_{N_u} + (1-r)\hat{D}$. By \ref{symbols-agree}, $D_r$ is $0$-elliptic for all $r$; now by \ref{normal-operators-agree} and Proposition~\ref{normal-invertible} the normal operator of $D_r$ is invertible and so $D_r$ is Fredholm for all $r$. It follows by invariance of the index that $\ind(\hat{D})= \ind(D_0)=\ind(D_1) = \ind (D_{N_u})$ and so $\ind(D_{N_u})=0$ as claimed. 

We first prove point~\ref{normal-operators-agree}. To see this, we recall the calculation in Lemma~\ref{whole-normal-operator} of the normal operator $\D$ associated to $D_{N_u}$. We fix an orthonormal frame for $N$:
\[
e_2 = t\del_{y_2}-\del_{z_3}, \quad
e_3 = Je_2 = t\del_{y_3} + \del_{z_2},\quad
e_4 = -\del_{z_2}, \quad
e_5 = Je_4 = \del_{z_3}
\]
and then in this frame the normal operator of $D_{N_u}(t\del_t)$ is given by
\[
\D = 
	\begin{pmatrix}
	\tau \del_\tau & -\tau \del_\sigma & 0 & 1\\
	\tau \del_\sigma & \tau \del_\tau & 1 & 0 \\
	0 & 2 & \tau \del_\tau - 1 & -\tau \del_\sigma \\
	2 & 0 & \tau \del_\sigma & \tau \del_\tau - 1
	\end{pmatrix}
\]
It follows that if we use $(t^{-1}\diff t)^{0,1}$ to trivialise $\z{T^*\Sigma}^{0,1}$ near $\del \Sigma$ then the normal operator of $D_{N_u}^*$ is identified with
\[
\D^* =
	\begin{pmatrix}
	1 - \tau \del_\tau & -\tau \del_\sigma & 0 & 2\\
	\tau \del_\sigma & 1 - \tau\del_\tau & 2 & 0\\
	0 & 1 & -\tau \del_\tau & - \tau \del_\sigma \\
	1 & 0 & \tau \del_\sigma & -\tau \del_\tau
	\end{pmatrix}
\]
(We have used here that $(\tau\del_\sigma)^* = - \tau \del \sigma$ whilst $(\tau \del_\tau)^* = 1 - \tau \del_\tau$ because of the dependence of the half-space volume form on $\tau$, namely $\dvol = \tau^{-2}\diff \sigma \wedge \diff \tau$.)

We now need the matrix representing $\Theta$. Note that $\Theta$  corresponds to a section of $\z{T^*\Sigma}^{0,1}\otimes \Lambda^{0,2}N^*$. If we contract with $t \del t$ then $\Theta(t \del_t)$ is a section of $\Lambda^{0,2}N^*$ with $|\Theta(t\del_t)| \to 1$ at the boundary. Now a unit length element of $\Lambda^{0,2}N^*$ is of the form $\frac{1}{2}(\chi_2 - i \chi_3)$ where $\chi_2$ and $\chi_3$ are metric-dual to antilinear endomorphisms $J_2, J_3$ which give a quaternionic structure $J_1=i, J_2, J_3$ on $N$. Then $\frac{1}{2}(\chi_2 - i \chi_3)$ is metric-dual to $\frac{1}{2}(J_2 - J_1J_3) = J_2$. This means that, after making a rotation in the $y_2,y_3$ coordinates (which leaves the form of $\D$ invariant), at a point on the boundary $\Theta(t\del_t)$ is represented by
\[
M =
	\begin{pmatrix}
	0 & 0 & 0& -1\\
	0 & 0 & -1 & 0\\
	0& 1 & 0 & 0\\
	1 & 0 & 0 & 0
	\end{pmatrix}
\]
It is now a simple mater to check that $M \circ \D^* \circ M = \D$. This proves that $D_{N_u}$ and $\hat{D}$ have the same normal operators.

We must also check part~\ref{symbols-agree}, that the symbols agree. At a point $p \in \Sigma$, let $\alpha, \nu_1, \nu_2$ be an orthonormal frame for $
\e{T^*Z}^{0,1}$ with $\alpha$ spanning $\z{T_p^*\Sigma}^{0,1}$ and $\nu_1, \nu_2$ spanning $\Lambda^{0,1}N^*_u$ at $p$. Then, up to a factor (which we can assume to be 1 by rotating our frame), $\Theta$ has the following form: for $\phi = \phi_1 \nu_1 + \phi_2 \nu_2$ we have $\Theta(\phi) = \alpha\otimes (\overline{\phi}_1 \nu_2 - \overline{\phi}_2 \nu_1)$. The symbol of $D_{N_u}^*$ in the direction of $\alpha$ is minus the metric contraction against $\alpha$. We can now compute directly that
\[
\Theta \circ \sigma(D_{N_u}^*) \circ \Theta(\phi) 
= 
\alpha \otimes \phi
=
\sigma(D_{N_u}) (\phi)
\qedhere
\]
\end{proof}

\begin{proposition}\label{index-DNu}
If $u$ has branch locus $B = \sum k_i p_i$ of total degree $ k = \sum k_i$, then $\ind(D_{N_u}) = -2k$. 
\end{proposition}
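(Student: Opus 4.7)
My plan is to extend the Calabi--Yau argument of Proposition~\ref{index-DNu-no-branch-points} by tracking how the bundle map $\Theta$ degenerates at the branch locus. Since $\Theta$ is built from $\diff u$ via~\eqref{Theta-from-theta} and $\diff u$ vanishes to order $k_i$ at each $p_i$, the antilinear bundle map $\Theta$ vanishes to the same order. It therefore factorizes as $\Theta = \chi_B \cdot \tilde\Theta$, where
$$
\tilde\Theta \colon N_u \to \z{T^*\Sigma}^{0,1}\otimes N_u \otimes L_B^{-1}
$$
is a genuine (nowhere-vanishing) antilinear bundle isomorphism. Equivalently, the almost-Calabi--Yau structure on $Z$ identifies $\det N_u \cong T^*\overline\Sigma \otimes L_B^{-1}$, reflecting a drop of $k$ in the complex degree of the normal bundle relative to the unbranched case.

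The first step is to translate this degree drop into an index shift through a sheaf exact sequence. Multiplication by $\chi_B$ gives
$$
0 \to \mathcal{O}(N_u \otimes L_B^{-1}) \xrightarrow{\chi_B} \mathcal{O}(N_u) \to \mathcal S \to 0,
$$
with $\mathcal S$ a skyscraper sheaf on $B$ of total complex dimension $\rk(N_u)\cdot \deg B = 2k$. Because $B \subset \Sigma$ is interior, the boundary indicial structure is unchanged by the twist, so Proposition~\ref{Fredholm-normal-directions} applies to $D_{N_u \otimes L_B^{-1}}$ at the same weight $t^{1+\alpha}$, and a standard long-exact-sequence argument on the complex-linear parts of the two Cauchy--Riemann operators yields
$$
\ind_{\mathbb R} D_{N_u} - \ind_{\mathbb R} D_{N_u \otimes L_B^{-1}} = 4k.
$$

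The second step is to compute $\ind_{\mathbb R} D_{N_u \otimes L_B^{-1}}$ by a modified Calabi--Yau trick, in which $\tilde\Theta$ plays the role that $\Theta$ played in the unbranched case. After carefully matching the line-bundle twists, the symbol-homotopy argument from Proposition~\ref{index-DNu-no-branch-points} should extend to give a symmetric identification of $D_{N_u \otimes L_B^{-1}}$ with its formal adjoint, which together with the Riemann--Roch-type count $\deg(N_u\otimes L_B^{-1}) = 2g-2-3k$ pins the index at $-6k$. Combining with the sheaf-sequence formula then gives $\ind_{\mathbb R} D_{N_u} = -2k$.

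The main obstacle is the bookkeeping in the modified CY step: $\tilde\Theta$ maps $N_u$ to $\z{T^*\Sigma}^{0,1}\otimes N_u \otimes L_B^{-1}$, not directly to the target $\z{T^*\Sigma}^{0,1}\otimes N_u \otimes L_B^{-1}$ of $D_{N_u\otimes L_B^{-1}}$ in a way that slots into a clean ``$\Theta\circ D^*\circ \Theta$'' identity, so a naive tensor twist does not close the argument. The cleanest resolution is likely a local excision analysis around each $p_i$, isolating its contribution to the index as $-2k_i$, after which gluing back via the $0$-calculus (whose boundary data is insensitive to interior branch points) delivers the total $-2k$.
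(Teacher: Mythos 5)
Your overall scheme---combine an almost-Calabi--Yau symmetry with a Riemann--Roch-type twist by the branching line bundle---is the right one, and it is essentially what the paper does. However the execution has several genuine gaps that stop the argument from closing.

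First, the sign of the twist is wrong. Since the tangent bundle to the image of $u$ is $E \cong \z{T\Sigma}\otimes L_B$, we have $\Lambda^{0,3}(u^*\e{TZ}) \cong \z{T^*\Sigma}^{0,1}\otimes L_B \otimes \Lambda^{0,2}N^*_u$ with a \emph{positive} power of $L_B$. (Your factorization $\Theta = \chi_B\cdot\tilde\Theta$ is also off by a conjugation: $\Theta$ is antilinear, so its vanishing at $B$ is of the form $\overline{\chi}_B$, and dividing out by a section of $\overline{L_B}\cong L_B^*$ leaves $L_B$, not $L_B^{-1}$, in the target.) Consequently the two nowhere-vanishing antilinear isomorphisms one obtains from the complex volume form are $\Theta_1 \colon N_u \to \z{T^*\Sigma}^{0,1}\otimes N_u \otimes L_B$ and $\Theta_2 \colon N_u\otimes L_B \to \z{T^*\Sigma}^{0,1}\otimes N_u$, both with the $+L_B$ twist.

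Second, and more importantly, the step you flag as the ``main obstacle'' is indeed where your argument breaks. The Calabi--Yau trick does \emph{not} identify a single twisted operator with its own formal adjoint (which, if it did, would give index zero, not $-6k$; your stated conclusion is internally inconsistent). Rather, one should form the composite $\hat D = \Theta_2\circ D^*_{N_u\otimes L_B}\circ\Theta_1$, which has the \emph{same} domain and range as $D_{N_u}$ precisely because $\Theta_1$ feeds the $L_B$ twist in and $\Theta_2$ takes it out. The normal operator and $0$-symbol computations then show $\ind(D_{N_u}) = \ind(\hat D) = -\ind(D_{N_u\otimes L_B})$. Combining this with the short-exact-sequence count $\ind(D_{N_u\otimes L_B}) = \ind(D_{N_u}) + 4k$ (note: twist \emph{up} by $L_B$, giving $+4k$) yields a $2\times 2$ linear system whose unique solution is $\ind(D_{N_u}) = -2k$. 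You need both equations; neither one alone determines the index.

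Third, the asserted equality $\ind(D_{N_u\otimes L_B^{-1}}) = -6k$ and the degree count $\deg(N_u\otimes L_B^{-1}) = 2g-2-3k$ are unjustified---there is no index theorem available here (this is exactly why the almost-CY trick is needed in the first place), so a Riemann--Roch degree count does not directly give the index of $D_{N_u\otimes L_B^{-1}}$ on the weighted H\"older spaces. Your proposed fallback of local excision around each $p_i$ could in principle be made to work, but it is a substantially different strategy from the one above and you have not carried it out; in particular, the excision step would require a model computation near a branch point that is at least as involved as the pair-$\Theta$ argument.
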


\begin{proof}
We run the same argument as in the proof of Proposition~\ref{index-DNu-no-branch-points}. There is one important difference: $\Theta$ as defined in \eqref{Theta-from-theta} vanishes at the branch points of $u$. To deal with this, write $L_B \to  \overline{\Sigma}$ for the holomorphic line bundle associated to the branching divisor $B$ and $\chi_B$ for a holomorphic section of $L_B$ whose zero divisor is $B$. We choose $\chi_B$ to be smooth up to the boundary. Fix a Hermitian metric in $L_B$, for which $\chi_B$ and its derivatives are bounded. The metric gives an anti-linear isomorphism $L^*_B \to L_B$. It follows that
\[
\Lambda^{0,3}(u^*\e{TZ}) \cong \z{T^*\Sigma}^{0,1}\otimes L_B \otimes \Lambda^{0,2}N^*_u
\]
In this way, \eqref{Theta-from-theta} determines two anti-linear bundle isomorphisms
\begin{align*}
\Theta_1 &\colon N_u \to \z{T^*\Sigma}^{0,1} \otimes N_u \otimes L_B\\
\Theta_2 & \colon N_u \otimes L_B \to \z{T^*\Sigma}^{0,1} \otimes N_u
\end{align*}
 
The holomorphic structure on $L_B$, together with the Cauchy--Riemann operator $D_{N_u}$ combine to define a Cauchy--Riemann operator $D_{N_u\otimes L_B}$ on $N_u \otimes L_B$. We now consider the following operator 
\begin{equation}
\hat{D} \defeq \Theta_2 \circ D^*_{N_u\otimes L_B} \circ \Theta_1 \colon  t^{1+\alpha} \Lambda^{2,\alpha}(N_u) \to t^{1+\alpha} \Lambda^{1,\alpha}(\z{T^*\Sigma}^{0,1}\otimes N_u)
\label{twisted-DN*}
\end{equation}
There are no branch points near $\del \Sigma$ and so just as before, $\hat{D}$ is a $0$-elliptic operator whose normal operator is $\D$ in an appropriate frame. In particular, $\hat{D}$ is Fredholm. Moreover, the same computation as before shows that it has the same symbol as $D_{N_u}$. It then follows, just as when $B = \emptyset$, that 
\[
\ind(D_{N_u}) = \ind (\hat{D}) = \ind(D^*_{N_u\otimes L_B}) = - \ind (D_{N_u \otimes L_B})
\]

To complete the proof we now show that $\ind(D_{N_u\otimes L_B}) = \ind(D_{N_u}) + 4k$. The method is essentially identical to that used to compute the effect  on the Riemann--Roch formula of twisting by a holomorphic line-bundle;  accordingly we only sketch the argument. 

We begin by ``turning off'' the anti-linear part of $D_{N_u}$ on a neighbourhood of each branch point, so it determines a genuine holomorphic structure on $N_u$ near the branch points. This can be done continuously and away from $\del \Sigma$, and so it does not alter the fact that $D_{N_u}$ and $D_{N_u\otimes L_B}$ are Fredholm nor does it change their indices. Meanwhile, given the divisor $B= \sum k_ip_i$, we write $V_i$ for the space of $(k_i-1)$-order jets at $p_i$ of holomorphic sections of $N_u \otimes L_B$ and $V = \bigoplus V_i$. Since $D_{N_u\otimes L_B}$ is now a genuine holomorphic structure near $p_i$, each $V_i$ is a complex vector space; $\dim_\C V_i = 2k_i$ and hence $\dim_\R V = 4k$. 

Just as in the standard situation (holomorphic bundles over a compact Riemann surface) there is an exact sequence
\begin{equation}
0 
	\to \ker D_{N_u} 
	\stackrel{T_1}{\longrightarrow} \ker D_{N_u\otimes L_B} 
	\stackrel{T_2}{\longrightarrow} V 
	\stackrel{T_3}{\longrightarrow} \coker D_{N_u}
	\stackrel{T_4}{\longrightarrow} \coker D_{N_u\otimes L_B}
	\to
0
\label{exact-sequence-cohomology}
\end{equation}
The map $T_1$ is given by tensoring with the holomorphic section $\chi_B$ of $L_B$ whose zero divisor is $B$. The map $T_2$ sends a section to its jets of appropriate order at the points $p_i$. The map $T_3$ is the obstruction to extending a given jet to a global solution of $D_{N_u\otimes L_B}(\phi)=0$ with weight $t^{1+\alpha}$. Finally the map $T_4$ is induced by the map $t^{1+\alpha}\Lambda^{1,\alpha}(\z{T^*\Sigma}^{0,1}\otimes N_u)\to t^{1+\alpha} \Lambda^{1,\alpha}(\z{T^*\Sigma}^{0,1} \otimes N_u \otimes L_B)$ given by tensoring with~$\chi_B$. One can check that the fact we are working with $0$-elliptic Cauchy--Riemann operators is not a problem, the usual proofs go through verbatim since all that really matters is that the Fredholm theory applies.

Now taking the alternating sum of the dimensions in~\eqref{exact-sequence-cohomology} we get
\[
\ind(D_{N_u}) + 4k = \ind(D_{N_u\otimes L_B})
\]
Since we also have $\ind (D_{N_u\otimes L_B}) = - \ind(D_{N_u})$ it follows that $\ind(D_{N_u}) = -2k$ as claimed.
%
\end{proof}

Taken together, Propositions~\ref{index-dbeta}, \ref{index-DNu-no-branch-points} and~\ref{index-DNu}  imply Theorem~\ref{boundary-map-Fredholm}. We close this section with the following observation.

\begin{lemma}\label{corank-1-immersed}
If $\dim(\coker \diff \beta )\leq 1$ at $u \in \M_{g,c}$ then $u$ is an immersion. In particular, if the link $K \subset S^3$ is a regular value of $\beta$ then all $J$-holomorphic curves $u \in \M_{g,c}$ with $\beta(u) =K$ are immersed.
\end{lemma}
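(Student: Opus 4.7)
The plan is to deduce the statement directly from the index computations already established in \S\ref{boundary-map-Fredholm-section}, specifically Proposition~\ref{index-dbeta} combined with the fact that $\ind(\diff\beta) = 0$ proved in Theorem~\ref{boundary-map-Fredholm}.

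First I would write down, at a given $u \in \M_{g,c}$ with branching divisor $B = \sum k_i p_i$ of total degree $k = \sum k_i$, the two identities that came out of the proof of Proposition~\ref{index-dbeta}:
\begin{align*}
\dim \ker(\diff \beta) &= \dim \ker(D_{N_u}) + 2k,\\
\dim \coker(\diff \beta) &= \dim \coker(D_{N_u}).
\end{align*}
The first identity reflects the fact that, even with $\eta$ fixed, the map $(\xi,s) \mapsto \delb_B(\xi) + s\otimes \chi_B$ has a $2k$-dimensional kernel. The second identity expresses the isomorphism $\Phi \colon \coker(\diff \beta) \to \coker(D_{N_u})$ constructed at the end of that proof.

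Next I would combine these with the index formula $\ind(\diff \beta) = 0$. Subtracting the first identity from the second gives
\[
\dim \coker(\diff \beta) - \dim \ker(\diff \beta) = -2k + \bigl(\dim \coker(D_{N_u}) - \dim \ker(D_{N_u})\bigr) = 0,
\]
hence $\dim \coker(\diff \beta) = \dim \ker(\diff \beta) \geq 2k$. Therefore, if $u$ has at least one branch point, $k \geq 1$ and $\dim \coker(\diff \beta) \geq 2$, contradicting the hypothesis $\dim \coker(\diff \beta) \leq 1$. By contraposition, $k = 0$ and $u$ is an immersion.

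For the ``in particular'' clause, if $K \in \L_c$ is a regular value of $\beta$, then by definition $\diff \beta$ is surjective at every $u \in \beta^{-1}(K)$, so $\dim \coker(\diff \beta) = 0$ at every such $u$, and the first part of the lemma applies. There is no real obstacle here; the content of the lemma is already packaged inside the proof of Proposition~\ref{index-dbeta}, and the argument is purely bookkeeping with indices.
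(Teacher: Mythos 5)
Your proof is correct and relies on the same ingredients as the paper's. The paper is a touch more direct: it uses the isomorphism $\coker \diff \beta \cong \coker D_{N_u}$ together with $\ind(D_{N_u}) = -2k$ (Proposition~\ref{index-DNu}) to conclude $\dim \coker(D_{N_u}) = \dim\ker(D_{N_u}) + 2k \geq 2k$ immediately, without routing through $\ind(\diff\beta)=0$. Your detour via the kernel identity and Theorem~\ref{boundary-map-Fredholm} is logically equivalent, since that theorem is itself obtained by combining Propositions~\ref{index-dbeta} and~\ref{index-DNu}; the arithmetic checks out and the ``in particular'' step is handled correctly.
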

\begin{proof}
In the course of the proof of Proposition~\ref{index-dbeta} we showed that $\coker \diff \beta \cong \coker D_{N_u}$. Proposition~\ref{index-DNu} shows that $\dim (\coker D_{N_u}) \geq 2k$ where $k$ is the total branching multiplicity of $u$. So if $\dim (\coker \diff \beta) \leq 1$, then $k=0$.
\end{proof}

\begin{remark}
Even when $u$ is an immersed $J$-holomorphic curve, it doesn't necessarily follow that the minimal surface $\pi \circ u$ is immersed: branch points can still occur if the $J$-holomorphic curve is tangent at some point to a vertical twistor fibre.
\end{remark}

\subsection{Orientability of the moduli spaces}

When $K \in \L_c$ is a regular value of $\beta \colon \M_{g,c} \to \L_c$, the preimage $\beta^{-1}(K)$ is a submanifold. In this section we prove that these submanifolds come with a coherent choice of orientation. We use the standard approach, of trivialising the determinant line bundle associated to $\diff \beta$. Recall that $\det (\diff \beta) \to \M_{g,c}$ is the real line bundle given by
\[
\det(\diff \beta)=\det(\ker \diff \beta ) \otimes  \det (\coker\diff \beta)^*
\]
where $\det (V)$ denotes the top exterior power of the vector space $V$. 

\begin{proposition}\label{orientations}
Let $\M_{g,c}^{\mathrm{imm}} \subset \M_{g,c}$ denote the open subset of $J$-holomorphic immersions. The determinant bundle $\det(\diff \beta) \to \M^{\mathrm{imm}}_{g,c}$ has a distinguished choice of trivialisation. 
\end{proposition}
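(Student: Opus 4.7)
The plan is to split the construction into two stages: first, identify $\det(\diff\beta)$ canonically with $\det(D_{N_u})$, the determinant line bundle of the normal Cauchy--Riemann operator; second, trivialise the latter by homotoping $D_{N_u}$ through Fredholm operators to a complex-linear model and inheriting the canonical complex orientation in the limit.

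For the first stage, on the immersed locus the branching divisor $B$ is empty, so by Theorem~\ref{tangential-surjective} the tangential map $\delta_B$ from the proof of Proposition~\ref{index-dbeta} is surjective with zero kernel. Running that proof verbatim provides canonical continuous isomorphisms $\ker(\diff\beta) \cong \ker(D_{N_u})$ and $\coker(\diff\beta) \cong \coker(D_{N_u})$, the latter via the map $\Phi$ of~\eqref{coker-dbeta-is-coker-DN}. In the exceptional cases $\chi(\overline{\Sigma}) \in \{0,1\}$, the biholomorphism Lie algebras $\so(2,1)$ or $\u(1)$ are canonically oriented and contribute matching summands on both sides, so they cancel. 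Tensoring determinants gives a continuous isomorphism of real line bundles $\det(\diff\beta) \cong \det(D_{N_u})$ over $\M^{\mathrm{imm}}_{g,c}$, reducing the problem to trivialising $\det(D_{N_u})$.

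For the second stage, decompose $D_{N_u} = \delb_{N_u} + A$ into its complex-linear part (a genuine holomorphic structure on the normal bundle $N_u$) and its complex-antilinear zero-order part $A$. If the complex-linear operator $\delb_{N_u}$ is Fredholm in an appropriate weighted setting, its kernel and cokernel are complex vector spaces, and the real determinant is then canonically oriented (as is any complex vector space, via $\det_\R = \det_\C \otimes \overline{\det_\C}$). The Gromov--Witten-style strategy is to propagate this canonical orientation along the continuous Fredholm homotopy $D_s \defeq \delb_{N_u} + sA$, $s \in [0,1]$, from $s = 0$ to $s=1$, obtaining a distinguished trivialisation of $\det(D_{N_u})$ at every immersed $u$. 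Continuity in $u$ follows because the construction is natural in the underlying geometric data.

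The principal obstacle will be preserving the Fredholm property throughout the homotopy: a direct computation from Lemma~\ref{whole-normal-operator} shows that deforming $A$ to zero shifts the indicial roots of the normal operator from $\{-1,2\}$ continuously into $\{0,1\}$, so our fixed weight $\delta = 1+\alpha$ leaves the Fredholm range. The plan is to couple the homotopy of $A$ with a continuous shift $\delta(s)$ of the weight through the moving indicial gap, which is possible because the indicial roots remain symmetric about $\lambda = 1/2$ at every $s$ (a reflection of the $L^2$-duality underlying the almost Calabi--Yau structure of Proposition~\ref{index-DNu-no-branch-points}), so the gap always contains $1/2$ and never closes. A final $\Z/2$ check that the trivialisation is independent of the path $(\delta(s),s)$ can be reduced to an explicit verification at the twistor lift of a totally geodesic $\H^2 \subset \H^4$ bounded by a great circle, using $\Theta$ to compare the two possible orientations; continuity then propagates the result across $\M^{\mathrm{imm}}_{g,c}$, and the trivialisation is transported to $\det(\diff\beta)$ via the first stage.
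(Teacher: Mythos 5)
Your Stage~1 reduction $\det(\diff\beta)\cong\det(D_{N_u})$ matches the paper, so the question is Stage~2, where you take a genuinely different route: homotope $D_{N_u}$ through $D_s=\delb_{N_u}+sA$ to the complex-linear part $\delb_{N_u}$ and inherit the complex orientation at $s=0$. This is the standard Gromov--Witten recipe, but it fails here, and in fact it fails at precisely the point your paragraph on the ``principal obstacle'' tries to fix. Moving the weight $\delta(s)$ with the indicial gap is not enough: Fredholmness in the $0$-calculus requires \emph{invertibility of the normal operator}, not merely the right weight, and the normal operator of $\delb_{N_u}$ has infinite-dimensional cokernel for every admissible weight. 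Concretely, subtracting the anti-linear matrix $Z_{\mathrm{anti}}$ from the formula in Lemma~\ref{whole-normal-operator} leaves a block-diagonal normal operator, whose first $2\times2$ block is $\tau\del_\tau+J\tau\del_\sigma$, i.e.\ essentially $\tau\del_z$ acting on complex-valued functions on $\H^2$. Its $L^2$-adjoint annihilates $\tau h(z)$ for every bounded anti-holomorphic $h$, all of which lie in $\tau^{1-\delta}\Lambda^{k,\alpha}$ when $\delta>0$. This is the same infinite cokernel the paper meets in the \emph{tangential} discussion after Proposition~\ref{normal-invertible}, and it cannot be removed by any choice of weight in $(0,1)$. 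The coupling between the two complex lines of $N_u$ provided by $Z_{\mathrm{anti}}$ is exactly what lets the paper reduce $\mathscr{D}$ to the strictly positive operator $\Delta+2$; decoupling them destroys the Fredholm property. In other words, in this degenerate setting the zero-order anti-linear term of $D_{N_u}$ is \emph{not} a compact perturbation, so the endpoint of your homotopy is simply not a Fredholm operator and carries no determinant line.

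This is why the paper goes via $\Theta$ rather than via $\delb_{N_u}$. The path $\tilde{D}_r=r\,\Theta^*\circ D_{N_u}+(1-r)D_{N_u}^*\circ\Theta$ keeps both the $0$-symbol and the normal operator fixed for all $r$, so the whole path is Fredholm on a single fixed weighted space, and the self-adjointness at $r=1/2$ supplies the canonical trivialisation. If you want to rescue the complex-linear-deformation philosophy, you would have to replace $\delb_{N_u}$ by a complex-linear operator with the \emph{same normal operator} as $D_{N_u}$ (i.e.\ change only the anti-linear part away from the boundary), so that the whole path stays Fredholm; but that already requires knowing that $D_{N_u}$ has Fredholm index zero, which was the purpose of $\Theta$ in the first place. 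As stated, your Stage~2 has a genuine gap: the path never arrives at a Fredholm endpoint.

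A minor secondary point: even granting Fredholmness, ``independence of the path $(\delta(s),s)$'' is not a $\Z/2$ check to be performed at one example, but follows (when true) from connectedness of the parameter region of valid $(\delta,s)$; checking at the totally geodesic $\H^2$ would only pin down a global sign, not establish well-definedness.
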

\begin{proof}
We use the notation as in the previous section. In Proposition~\ref{index-dbeta} we showed that $\ker \diff \beta$ is given by triples $(\xi, \eta,s)$ with 
\begin{align*}
\delb(\xi) + s + A(\eta) &= 0\\
D_{N_u}(\eta) & =0
\end{align*}
where $\eta \in t^{1+\alpha}(\Lambda^{2,\alpha}(N_u)$, $\xi \in \V$ is an admissible section of $T\overline{\Sigma}^{1,0}$ and $s \in \mathscr{S}$ lies in an admissible Teichmüller slice.  The map  $\ker \diff \beta \to \ker D_{N_{u}}$ from $(\xi,s,\eta) \mapsto \eta$  makes $\ker \diff \beta$ into an affine bundle over $\ker D_{N_u}$ with fibres modelled on the complex vector space of solutions $(\xi,s)$ to $\delb(\xi) + s=0$. Since complex vector spaces are oriented, it follows that there is a canonical isomorphism $\det(\ker \diff \beta) \cong \det (\ker D_{N_u})$ and so $\det (\diff \beta) \cong \det (D_{N_u})$.

To trivialise $\det(D_{N_u})$ recall the proof of Proposition~\ref{index-DNu-no-branch-points}. There we showed that $rD_{N_u} +(1-r) \Theta \circ D_{N_u}^*\circ \Theta$ was a path of Fredholm operators. Now, since $\Theta$ is an isometry, it follows that $\Theta^* = \Theta^{-1}$ and so 
\[
\tilde{D}_r = r\Theta^* \circ D_{N_u} + (1-r)D^*_{N_u} \circ \Theta
\] 
is also a path of Fredholm operators. (Here we define the adjoints using the induced $0$-metric on $\overline{\Sigma}$ given by pulling back the edge metric on $\overline{Z}$ via the immersion $\overline{\Sigma} \to \overline{Z}$.) Now $\tilde{D}_{1/2}$ is self-adjoint and so there is a natural isomorphism $\ker \tilde{D}_{1/2} \cong \coker \tilde{D}_{1/2}$ giving a canonical trivialisation of $\det (\tilde{D}_{1/2})$. This in turn gives a trivialisation of $\det(\tilde{D}_1)$ and so of $\det (D_{N_u})$. 
\end{proof}

Proposition~\ref{orientations} allows us to attach a sign to each point $[u,j] \in \M_{g,c}$ at which $\diff \beta$ is an isomorphism. 

\begin{definition}\label{sign-of-regular-point}
Let $[u,j] \in \M_{g,c}$. If $\diff \beta$ is an isomorphism at $[u,j]$ then $\ker \diff \beta = 0 = \coker \diff \beta$ and so there is a canonical trivialisation of the line $\det (\diff \beta_{[u,j]})$ at $[u,j]$. By Lemma~\ref{corank-1-immersed}, $[u,j] \in \M^{\mathrm{imm}}_{g,c}$. Over this subset we have a global trivialisation of the whole line bundle $\det (\diff \beta)$, coming from Proposition~\ref{orientations}. We say $[u,j]$ is \emph{positive} if these two trivialisations of $\det(\diff \beta_{[u,j]})$ agree and \emph{negative} otherwise.  
\end{definition}


\section{Convergence of minimal surfaces near infinity}\label{compactness-at-infinity}

Our focus now is the properness (or failure thereof) of the boundary map $\beta \colon \M_{g,c} \to \L_c$. Let $u_n \colon (\overline{\Sigma},j_n) \to \overline{Z}$ be a sequence of admissible $J$-holomorphic curves whose boundary links $K_n=\pi(u_n(\del \Sigma))$ converge in $C^{2,\alpha}$. This section is dedicated to showing that, up to diffeomorphisms, the images converge on a definite neighbourhood of the boundary of $\overline{\H}^4$.

\subsection{Statement of main result of \S\ref{compactness-at-infinity}}\label{statement-of-compactness-near-infinity}

To state our precise result on convergence near infinity, we need to introduce some notation. We write $f_n = \pi \circ u_n$ for the corresponding sequence of conformal harmonic maps. We fix half-space coordinates $(x,y_i)$ so that we can treat the boundary links $K_n \subset \R^3$ as lying in Euclidean space. Given $\epsilon>0$ we write 
\begin{equation}
\begin{aligned}
\overline{M}_n &= f_n(\overline{\Sigma}),\\
\overline{\Sigma}_{n,\epsilon} &= (x\circ f_n)^{-1}[0,\epsilon],\\ 
\overline{M}_{n,\epsilon} &= f_n(\overline{\Sigma}_{n,\epsilon})
\end{aligned}\label{n-epsilon-notation}
\end{equation}
Let $K_\infty = \lim K_n$. We will write $K_n$ as the graph over $K_\infty$ of a section $s_n$ of the normal bundle of $K_\infty$. In fact, there is a small subtlety here: $K_\infty$ is in principle only $C^{2,\alpha}$ and so its normal bundle is only $C^{1,\alpha}$, yet we need to talk about the higher regularity of $s_n$. In order to do so, we choose a smooth link $K \subset \R^3$ which is very close to $K_\infty$ in $C^{2,\alpha}$; then for all large $n$, $K_n$ can be written as a graph over $K$ of a section $s_n$ of the normal bundle of $K$. By disregarding a finite number of the $K_n$, we will assume henceforth that every $K_n$ is the graph of a $C^{2,\alpha}$-section $s_n$ of the normal bundle of $K$.

The main result of \S\ref{compactness-at-infinity} is about the extension of this graphical description into the interior. We write $[0,\infty) \times K \subset [0,\infty) \times \R^3 = \overline{\H}^4\setminus\{\infty\}$  for the ``half-space cylinder'' over $K$. We write $E \to [0,\infty) \times K$ for the normal bundle of the cylinder (which is the pull-back of the normal bundle of $K \subset \R^3$). Given a section $s$ of $E$, by ``the graph of $s$'' we mean the subset of $[0,\infty) \times \R^3 = \overline{\H}^4\setminus\{\infty\}$ given by
\[
\{ (\xi,\eta + s(\xi,\eta)): \xi \in [0,\infty),\ \eta \in K\}
\]
With all of this in hand, the main result of \S\ref{compactness-at-infinity} is the following. 

\begin{theorem}\label{convergence-near-infinity}
Let $u_n \colon (\overline{\Sigma},j_n) \to \overline{Z}$ be a sequence of admissible $J$-holomorphic curves whose boundary links $K_n=\pi(u_n(\del \Sigma))$ converge in $C^{2,\alpha}$ to an embedded link $K_\infty$ and let $K$ be a smooth link very close to $K_\infty$ in the $C^{2,\alpha}$ sense. There exists $\epsilon>0$ such that, after passing to a subsequence,
\begin{enumerate}
\item
For all $n$, the map $f_n = \pi \circ u_n$ is an embedding on $\overline{\Sigma}_{\epsilon,n}$.
\item
For all $n$, the minimal surface $\overline{M}_{n,\epsilon}$ is the graph of a section $s_n$ of the normal bundle $E \to [0,\epsilon]\times K$. 
\item
The sections $s_n$ converge in $C^{2,\alpha}$ on $[0,\epsilon]\times K$, and in $C^{\infty}$ on compact subsets of $(0,\epsilon]\times K$.
\end{enumerate}
\end{theorem}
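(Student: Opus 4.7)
The plan is to follow the three-stage strategy outlined in the overview, working exclusively with the projected minimal surfaces $\overline{M}_n$ in $\overline{\H}^4$ (rather than with $u_n$ in $\overline{Z}$) since the asymptotic behaviour of $u_n$ is determined by its projection via Proposition~\ref{asymptotic-expansion}, and the degeneracy of $J$ at $\del Z$ prevents a direct elliptic approach upstairs. Throughout, $K$ is a fixed smooth link close to $K_\infty$ in $C^{2,\alpha}$, and the aim is to produce $\epsilon>0$ and (after passing to a subsequence) normal sections $s_n$ over $[0,\epsilon]\times K$ whose graphs are the $\overline{M}_{n,\epsilon}$.

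First, I would establish uniform $C^0$ control near infinity using Anderson-style barrier arguments. Because $K_n \to K_\infty$ in $C^{2,\alpha}$, for any $\delta>0$ a family of totally geodesic copies of $\H^3 \subset \H^4$, positioned so that their ideal boundaries are $2$-spheres through points of $K_\infty$, provides barriers which (together with the maximum principle applied to the signed distance in $\H^4$) force every $\overline{M}_{n,\epsilon}$ to lie in a prescribed tubular neighbourhood of $[0,\epsilon]\times K$ for $\epsilon$ sufficiently small and $n$ sufficiently large. This is where one needs $K_\infty$ to be an embedded link, so that distinct components of $K_\infty$ can be separated by such barriers. The output of this step is the $C^0$ statement that $\overline{M}_{n,\epsilon}$ lies in a uniformly small tubular neighbourhood of $[0,\epsilon]\times K$.

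The second step is to upgrade to $C^1$ control, which is the main obstacle. The issue is that $C^0$ closeness does not yet rule out the $\overline{M}_n$ having tangent planes far from those of $[0,\epsilon]\times K$, nor branch points or self-intersections accumulating on $\del \H^4$. Here I would apply White's regularity theorem from \cite{White}, which constrains the subset on which area can blow up to be at most 1-dimensional in the right sense. Combined with the $C^{2,\alpha}$ convergence of the boundary links and the normalisation coming from minimality (each $\overline{M}_n$ meets $\del \H^4$ orthogonally, by the second remark after Proposition~\ref{asymptotic-expansion}), this allows one to exclude area blowup in a definite neighbourhood of the boundary, and hence to show that for small enough $\epsilon$ the restriction $\overline{M}_{n,\epsilon}$ is, after restricting to a subsequence, a $C^1$ graph of a section $s_n$ of $E \to [0,\epsilon]\times K$ with $\|s_n\|_{C^1}$ uniformly bounded. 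In particular $f_n$ is an embedding on $\overline{\Sigma}_{n,\epsilon}$.

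Finally, I would promote this to $C^{2,\alpha}$ convergence using the Willmore equation. Once each $\overline{M}_{n,\epsilon}$ is the graph of $s_n$, we can reinterpret the minimal surface equation for $s_n$ in the compactified (half-space) picture. Minimal surfaces in $\H^4$ are critical points of area in a conformally invariant sense and thus satisfy the fourth-order Willmore equation, which is genuinely elliptic up to the boundary in the Euclidean frame. The $C^1$ bounds obtained in the previous step provide the regularity needed to freeze coefficients and apply standard elliptic Schauder estimates for a fourth-order elliptic system with $C^{2,\alpha}$ Dirichlet-type boundary data (namely $s_n|_{\{0\}\times K}$ converging in $C^{2,\alpha}$, together with the orthogonality condition pinning the first derivative at $t=0$). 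This yields uniform $C^{2,\alpha}$ bounds on $s_n$ on $[0,\epsilon]\times K$ and, after passing to a further subsequence, $C^{2,\alpha}$-convergence. Interior smoothness and $C^\infty$ convergence on compact subsets of $(0,\epsilon]\times K$ then follow from standard interior regularity for the harmonic map / minimal surface system (as in Proposition~\ref{interior-estimate}) applied on a cover where $x$ is bounded below. The hard part throughout is the second step: White's theorem is the essential input because it is exactly what blocks the bubble/branch-point pathologies that the degeneracy of the equation at $\del \H^4$ would otherwise permit.
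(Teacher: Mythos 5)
The barrier/$C^0$ step and the use of White's theorem in a rescaling argument for $C^1$ control match the paper, though your description of White's result as constraining ``the subset on which area can blow up'' is not quite accurate: what is used is that $C^0$ set-convergence plus multiplicity-one weak convergence \emph{somewhere} (the graphical region near $\del \H^4$) forces smooth convergence \emph{everywhere}, which is precisely how the rescaling contradictions are closed.

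The genuine gap is in the jump from $C^1$ control directly to $C^{2,\alpha}$ convergence. You write that the $C^1$ bounds provide the regularity needed to freeze coefficients and apply fourth-order Schauder estimates, but this cannot work. Whether one treats the Willmore equation as a fourth-order equation for the graphing section $s_n$ or as a second-order equation $\Delta_n \tilde{H}_n = F_n$ for the Euclidean mean curvature, the coefficients of $\Delta_n$ and the source term $F_n$ depend on the second fundamental form $\tilde{B}_n$, i.e.\ on second derivatives of $s_n$. With only $C^1$ control these are not even bounded, so there is no elliptic theory to invoke. The paper inserts a separate and nontrivial intermediate result (Theorem~\ref{C2-bound-at-infinity}), proved by a second point-picking/rescaling argument, which produces a uniform bound on $|\tilde{B}_n|$ and hence $C^{1,\alpha}$ convergence of the graphs. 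Only then are the coefficients of the Laplacian $C^{0,\alpha}$. Moreover, even at that stage the connection one-form $a_n$ on the normal bundle in a Coulomb frame is only controlled in $C^0$ (not $C^{0,\alpha}$), which is exactly why the paper uses Calderon--Zygmund $L^p$ gradient estimates rather than Schauder estimates to control $\tilde{H}_n$; Schauder is only applied at the very end to the second-order quasilinear equation $L_n(s_n)=\tilde{H}_n$ once $\tilde{H}_n$ is known in $C^{0,\alpha}$. Your proposal omits the entire $C^2$ bootstrap and replaces the $L^p$ argument by an unjustified Schauder step, so as written it does not go through.
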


The proof of Theorem~\ref{convergence-near-infinity} takes up the following four subsections.

\subsection{The convex hull}\label{convex-hull-section}

We begin by recalling the convex hull, which goes back to Anderson~\cite{Anderson}.  The proof of the lemma below is a simple application of barriers, but the conclusion is absolutely crucial in what will follow. 

\begin{lemma}[\cite{Anderson}]\label{barrier}
Let $H \subset \H^4$ be a totally geodesic copy of $\H^3$ which meets the boundary in a copy $S \subset S^3$ of $S^2$. Suppose that a link $K$ lies entirely in one component of $S^3 \setminus S$. Then any complete minimal surface with idea boundary equal to $K$ lies entirely in the corresponding component of $\H^4 \setminus H$. 
\end{lemma}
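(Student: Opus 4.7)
The approach will be to exhibit a subharmonic function on $M$ that forces $M$ to avoid the wrong side of $H$. Let $d \colon \H^4 \to \R$ denote the signed distance to $H$, taken positive on the component of $\H^4 \setminus H$ whose ideal boundary contains $K$. The core computation is the identity
\[
\nabla^2 \sinh(d) = \sinh(d)\cdot g
\]
on all of $\H^4$, which is cleanest in the hyperboloid model: embedding $\H^4 \subset \R^{4,1}$ so that $H = \H^4 \cap \{\xi = 0\}$ for a linear coordinate $\xi$, one has $\sinh(d) = \xi|_{\H^4}$, and the Hessian on a pseudosphere of any linear function is a multiple of the induced metric. Tracing over the two-dimensional tangent space of a minimal surface $M$ then gives
\[
\Delta_M (\sinh d) = 2\sinh(d),
\]
so $u \defeq \sinh(d)|_M$ solves the linear elliptic equation $\Delta_M u - 2u = 0$.

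With this identity in hand, the plan is a contradiction argument. Suppose $M$ is a complete minimal surface with ideal boundary $K$ that meets the closed half $\{d \leq 0\}$, so that $c_0 \defeq \inf_M u \leq 0$. Since $K$ is disjoint from $S$, the hyperbolic distance to $H$ diverges along every sequence in $\H^4$ approaching $K$, and so $u \to +\infty$ along every sequence in $M$ converging to the ideal boundary. Combined with the properness of $\overline{M}$ in $\overline{\H}^4$, this makes the sublevel sets $\{u \leq c\}$ compact for $c$ below $\min_K u$, so the infimum $c_0$ is attained at some interior point $p_0 \in M$. If $c_0 < 0$, this contradicts the inequality $\Delta_M u(p_0) \geq 0$ at an interior minimum, since the equation forces $\Delta_M u(p_0) = 2c_0 < 0$. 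If instead $c_0 = 0$, the Hopf strong maximum principle for the operator $\Delta_M - 2$ (whose zeroth-order coefficient is negative) forces $u \equiv 0$ near $p_0$; real-analyticity of minimal surfaces then gives $u \equiv 0$ on all of $M$, so $M \subset H$ and hence the ideal boundary of $M$ is contained in $S$, contradicting $K \subset S^3 \setminus S$.

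The main technical point to nail down is the attainment of the infimum $c_0$. This rests on a precise interpretation of ``complete minimal surface with ideal boundary $K$''---presumably that $\overline{M}$ is a compact subset of $\overline{\H}^4$ whose intersection with the ideal sphere is exactly $K$---so that any minimising sequence $p_n \in M$ with $u(p_n) \to c_0$ subconverges in $\overline{\H}^4$; since $u \to +\infty$ along $K$, the limit point lies in the interior $\H^4$ and supplies the required $p_0$. Once this is granted the remaining ingredients---the Hessian identity, the Hopf principle, and unique continuation for the minimal surface equation---are all standard.
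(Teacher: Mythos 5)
Your proof is correct. The paper does not prove this lemma itself---it cites Anderson \cite{Anderson} and describes it only as ``a simple application of barriers''---and your argument via the Hessian identity $\nabla^2\sinh(d)=\sinh(d)\,g$, the resulting equation $\Delta_M\sinh(d)=2\sinh(d)$ on a minimal surface, and the maximum principle is a clean realization of exactly that barrier idea. (One minor redundancy: once you are in the case $c_0=0$, so that $\sinh(d)\ge 0$ holds on all of $M$, the strong maximum principle for $\Delta_M-2$ already gives $\sinh(d)\equiv 0$ on the entire connected component containing $p_0$, not just near $p_0$; the additional appeal to real-analyticity of minimal surfaces is therefore unnecessary.)
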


The convex hull $C(K)$ of $K$ is $C(K) = \H^4 \setminus F$ where $F$ is the union of all the totally geodesic copies of $\H^3$ whose boundaries do not meet $K$. The point is that, by Lemma~\ref{barrier}, any minimal surface with ideal boundary $K$ must lie entirely in $C(K)$

We will use this to get $C^0$ control near infinity of a minimal surface purely in terms of its ideal boundary $K$. The idea is to place a small Euclidean sphere of radius $r$ in $\R^3$ tangent to $K$ at $\eta \in K$, and to do this at all points $\eta \in K$ and for all 2-planes which contain $T_\eta K$. We will then apply Lemma~\ref{barrier} to trap the minimal surface in the exterior of all the corresponding copies of $\H^3 \subset \H^4$. The next definition determines the largest size of spheres we can use.

\begin{definition}
Let $K \subset \R^3$ be a $C^2$ knot or link. Let $\eta \in K$ and $v \in \R^3 \setminus\{0\}$ be normal to $K$ at $\eta$. Write $P(v) = \left\langle v \right\rangle \oplus T_\eta K$, an oriented 2-plane. Given $\rho \in (0,\infty)$, there is a unique closed Euclidean 3-ball of radius $\rho$, lying on the positive side of $P(v)$ and whose boundary sphere passes through $\eta$, where it is tangent to $P(v)$. We denote this 3-ball by $\overline{B}(\eta,v,\rho)$. We now define
\begin{align*}
r_K(\eta,v) &=\sup \{ \rho : \overline{B}(\eta,v,\rho) \cap K = \{\eta\}\} \\
r_K &= \inf \{ r_K(\eta,v) : \eta \in K, v\in T_\eta K^{\perp}\setminus\{0\} \}
\end{align*}
The fact that $K$ is $C^2$ ensures that $r_K>0$. 
\end{definition}

\begin{remark}
The defintion of $r_K$ is similar to the radius of curvature of $K$ (defined via radii of spheres which are tangent to $K$ to second order) but the definition is \emph{global} in the sense that when considering the point $\eta \in K$, we take into account intersections with points of $K$ which may lie a long way from $\eta$.  
\end{remark}

\begin{lemma}\label{convex-hull}
Let $\overline{M} \subset \overline{\H}^4$ be a complete minimal surface with ideal boundary a $C^2$ knot or link $K$. Use half-space coordinates for which $\overline{M} \subset [0,\infty) \times \R^3$. Let $(\xi,p) \in \overline{M}$, where $\xi \in [0,\infty)$ and $p \in \R^3$. If $\xi < r_K$, 
\[
d(p,K)   \leq r_K - \sqrt{r_K^2 - \xi^2}
\] 
where $d(p,K) \defeq \inf_{\eta \in K} d(p,\eta)$ denotes the Euclidean distance from $p$ to $K$ in $\R^3$. 
\end{lemma}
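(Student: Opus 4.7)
The plan is to argue by contradiction. Suppose there is a point $(\xi, p) \in \overline{M}$ with $\xi < r_K$ and $d := d(p, K) > r_K - \sqrt{r_K^2 - \xi^2}$. I will exhibit a closed Euclidean $3$-ball $\overline{B'} \subset \R^3$ disjoint from $K$ whose associated upper Euclidean hemisphere, a totally geodesic $H \cong \H^3 \subset \H^4$, contains $(\xi, p)$ in its interior. Lemma~\ref{barrier} then forces $(\xi, p)$ into the opposite component of $\H^4 \setminus H$ from $M$, which is the desired contradiction.

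Let $\eta \in K$ realise the distance $d = |p - \eta|$, and set $v_0 = (p - \eta)/d$, which lies in $T_\eta K^\perp$ by the minimum-distance condition. The obvious candidate is the tangent ball $\overline{B}(\eta, v_0, r_K)$, but this touches $K$ at $\eta$ and is only useful for a limited range of $d$, so I split into two cases.

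In the case $r_K - \sqrt{r_K^2 - \xi^2} < d < r_K + \sqrt{r_K^2 - \xi^2}$, the hypothesis rewrites as $(d - r_K)^2 + \xi^2 < r_K^2$. For $\rho < r_K$ and $\epsilon > 0$ I take $B' = B(\eta + (\rho + \epsilon) v_0, \rho)$, a radius-$\rho$ ball shifted slightly off $\eta$ along $v_0$. Picking any $\rho' \in (\rho + \epsilon, r_K)$, two applications of the triangle inequality show that $\overline{B'}$ is strictly contained in the open ball $B(\eta + \rho' v_0, \rho')$ and in particular misses $\eta$; combined with the fact that $\rho' < r_K \leq r_K(\eta, v_0)$ gives $\overline{B}(\eta, v_0, \rho') \cap K = \{\eta\}$, this yields $\overline{B'} \cap K = \emptyset$. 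Continuity of $(d - \rho - \epsilon)^2 + \xi^2 - \rho^2$ as $(\rho, \epsilon) \to (r_K, 0)$, together with the Case~1 inequality, lets me choose $\rho$ slightly less than $r_K$ and $\epsilon$ slightly greater than $0$ so that $(d - \rho - \epsilon)^2 + \xi^2 < \rho^2$; this is exactly the statement that $(\xi, p)$ lies inside the hemisphere over $B'$.

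In the remaining case $d \geq r_K + \sqrt{r_K^2 - \xi^2}$ one has in particular $d > r_K > \xi$, and a completely different family of barriers is available: for any $R \in (\xi, d)$ the ball $B(p, R) \subset \R^3$ is disjoint from $K$ (since $d > R$), while $(\xi, p)$ sits inside the corresponding hemisphere (since $\xi < R$). The main technical point is really the perturbation in the first case, where one must simultaneously push the candidate sphere strictly off $K$ and keep $(\xi, p)$ inside; the nesting of the tangent family $\{\overline{B}(\eta, v_0, \rho)\}_{\rho}$, all of which meet $K$ only at $\eta$ and shrink down towards $\eta$, is what makes a single shift of size $\epsilon v_0$ achieve both at once.
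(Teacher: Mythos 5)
Your proof is correct, and it organizes the argument differently from the paper. The paper's proof establishes the strict bound $d(p,K) < r_K - \sqrt{(r_K-\epsilon)^2 - \xi^2}$ for each $\epsilon>0$, by a continuity argument that locates a point of \emph{equality} on $\overline{M}$ should the bound fail, and then sends $\epsilon\to 0$; you instead run a direct contradiction, building a barrier around any hypothetical bad point, with the two-case split handling the fact that tangent-sphere barriers anchored near $\eta$ can only trap points with $d(p,K)$ lying in a bounded window, while very distant points are dealt with by the cheaper barrier $B(p,R)$. Both routes rest on the same ingredient, Lemma~\ref{barrier}, and are comparably elementary; yours has the small advantage of sidestepping the mildly informal ``if the inequality fails somewhere there is a point of equality'' step. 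One remark on notation: you write $\overline{B}(\eta,v_0,\rho')$ and $r_K(\eta,v_0)$ for a ball of radius $\rho'$ centred at $\eta+\rho' v_0$, but in the paper's definition $\overline{B}(\eta,v,\rho)$ is tangent at $\eta$ to the $2$-plane $P(v)=\left\langle v\right\rangle\oplus T_\eta K$, so its centre lies along the \emph{normal} to $P(v)$, which is perpendicular to $v$; the ball you mean is $\overline{B}(\eta,v,\rho')$ for $v$ a unit normal to $K$ at $\eta$ perpendicular to $v_0$. This has no effect on the argument, since you only use $\rho'<r_K\leq r_K(\eta,w)$, which holds for every normal direction $w$.
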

\begin{proof}
Pick a small $\epsilon>0$. We claim that if $\xi < r_K-\epsilon$ then for all $(\xi,p) \in M$, 
\begin{equation}
d(p,K) < r_K - \sqrt{(r_K-\epsilon)^2 -\xi^2}
\label{distance-claim}
\end{equation}
From here the result follows by sending $\epsilon$ to zero. To prove~\eqref{distance-claim}, suppose it is false. Since it certainly holds for $\xi=0$ (when $p \in K$), if it fails to hold there will be a value of $\xi < r_K-\epsilon$ and a point $(\xi,p) \in  M$ for which we have equality in~\eqref{distance-claim}. Write $\eta \in K$ for a point with $d(p,\eta) = r_K - \sqrt{(r_K-\epsilon)^2 -\xi^2}$. Consider the point $q$ on the line in $\R^3$ from $\eta$ to $p$ which lies past $p$ at a distance $r_K$ from $\eta$. The Euclidean 4-dimensional distance from $(\xi,p)$ to $(0,q)$ is $r_K-\epsilon$. On the other hand, the 3-ball $\overline{B}$ centred at $q$ of radius $r_K-\epsilon/2$ is contained inside those balls which are used in the definition of $r_K$. Hence $\overline{B}$ contains no points of $K$ and so, by Lemma~\ref{barrier}, $f(\overline{\Sigma})$ must lie outside the totally geodesic copy of $\H^3$ given by those points whose Euclidean 4-dimensional distance to $q$ is $r_K-\epsilon/2$. Since $(\xi,p)$ is nearer to $(0,q)$ than that we have a contradiction.
\end{proof}

We close this subsection with a lemma on the robustness of this $C^0$ control under $C^2$ convergence of the boundaries.

\begin{lemma}\label{rK-convergence}
Let $K_n \subset \R^3$ be a sequence of links which converge in $C^2$ to a link $K \subset \R^3$. Then $r_{K_n} \to r_K$.
\end{lemma}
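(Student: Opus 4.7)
The plan is to establish the two semicontinuity bounds $\limsup_n r_{K_n} \leq r_K$ and $\liminf_n r_{K_n} \geq r_K$ separately. Two ingredients will be used throughout: first, $C^2$ convergence gives that for each $\eta \in K$ and unit normal $v$ at $\eta$ there are $\eta_n \in K_n$ with $\eta_n \to \eta$, unit normals $v_n$ to $K_n$ at $\eta_n$ with $v_n \to v$, and curvature vectors $\vec\kappa_n(\eta_n) \to \vec\kappa(\eta)$; second, the \emph{osculation inequality} $r_K(\eta,v) \leq 1/(\vec\kappa(\eta)\cdot v)$ whenever $\vec\kappa(\eta)\cdot v > 0$, which follows from the quadratic expansion $|\gamma(s) - (\eta + \rho v)|^2 = \rho^2 + s^2(1 - \rho\,\vec\kappa\cdot v) + o(s^2)$ along any arclength parametrization $\gamma$ of $K$ through $\eta$.

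For the upper bound, fix $\epsilon > 0$ and choose $(\eta, v)$ with $r_K(\eta,v) < r_K + \epsilon$. For any $\rho$ slightly larger than $r_K(\eta,v)$ the ball $\overline{B}(\eta,v,\rho)$ contains some $\eta' \in K \setminus \{\eta\}$. Either such an $\eta'$ can be chosen at positive distance from $\eta$ -- in which case I transfer the data to $K_n$, obtaining distinct $\eta_n, \eta'_n \in K_n$ with $\eta'_n \in \overline{B}(\eta_n,v_n,\rho+\epsilon)$ for large $n$ by continuity of balls in their parameters -- or every such $\eta'$ must cluster at $\eta$, which forces the osculation inequality to be the effective constraint and, by $C^2$ convergence, gives $r_{K_n}(\eta_n,v_n) \leq 1/(\vec\kappa_n\cdot v_n) \to 1/(\vec\kappa\cdot v) \leq r_K + \epsilon$. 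Either way $r_{K_n} \leq r_K + O(\epsilon)$.

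The lower bound is the delicate half and is where the main work lies. Assume for contradiction that $r_{K_n} \leq r_K - \epsilon$ along a subsequence, and choose $(\eta_n,v_n)$ on $K_n$ realizing $r_{K_n}(\eta_n,v_n) \leq r_K - \epsilon/2$, so that there is $\eta'_n \in K_n \setminus \{\eta_n\}$ inside $\overline{B}(\eta_n, v_n, r_K - \epsilon/2)$. Compactness of $K$ extracts a further subsequence with $\eta_n \to \eta$, $v_n \to v$ and $\eta'_n \to \eta' \in K$. If $\eta' \neq \eta$, the containment survives the limit and directly contradicts the definition of $r_K$. The main obstacle is the case $\eta' = \eta$, in which two distinct nearby points of $K_n$ collapse to the tangent point and $C^0$ information is useless; here I invoke $C^2$ convergence. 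Parametrizing $K_n$ by arclength at $\eta_n$ gives $\gamma_n(s) - \eta_n = sT_n + \tfrac{1}{2} s^2 \vec\kappa_n + o(s^2)$ uniformly in $n$, whence
\[
|\gamma_n(s) - (\eta_n + \rho_n v_n)|^2 = \rho_n^2 + s^2 \bigl(1 - \rho_n(\vec\kappa_n \cdot v_n)\bigr) + o(s^2),
\]
with $\rho_n = r_K - \epsilon/2$. Writing $\eta'_n = \gamma_n(s_n)$ with $s_n \to 0$ and $s_n \neq 0$, the requirement $\eta'_n \in \overline{B}$ forces $\rho_n(\vec\kappa_n \cdot v_n) \geq 1 + o(1)$; passing to the limit and then applying the osculation inequality yields $r_K \leq r_K(\eta,v) \leq 1/(\vec\kappa(\eta) \cdot v) \leq r_K - \epsilon/2$, the desired contradiction.
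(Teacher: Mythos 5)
Your proof is correct and in the end rests on the same two ingredients as the paper's: $C^0$ closeness of $K_n$ to $K$ controls tangent balls away from the point of tangency, while $C^2$ closeness controls them nearby via the second-order (osculating-circle) constraint. The paper's version is compressed into a single symmetric move-and-shrink argument, whereas you separate the $\limsup$ and $\liminf$ bounds and make the near/far dichotomy and the Taylor expansion fully explicit; this is a fair expansion of the same idea and actually fills in the step the paper only gestures at with ``shrink its radius slightly so that it is marginally smaller than the radius of curvature.'' One notational slip worth fixing: in Definition 4.3 the ball $\overline{B}(\eta,v,\rho)$ is \emph{tangent} to the plane $P(v)=\langle v\rangle\oplus T_\eta K$, so its centre is $\eta + \rho\,n$ with $n$ the unit normal to $P(v)$ (so $n\perp v$ and $n\perp T_\eta K$), not $\eta + \rho v$; accordingly the osculation bound should read $r_K(\eta,v)\leq 1/(\vec\kappa(\eta)\cdot n(v))$. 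Since $v\mapsto n(v)$ is a bijection on the unit normal circle, the infimum over $(\eta,v)$ is unaffected and your argument goes through unchanged once the labels are aligned.
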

\begin{proof}
Let $\overline{B}$ be one of the closed 3-balls which appears in the definition of $r_K$, with boundary tangent to $\eta$. We move it so that it becomes tangent to $K_n$ at the point of $K_n$ nearest to $\eta$. We shrink its radius slightly so that it is marginally smaller than the radius of curvature of $K_n$ at this point, and moreover so that it doesn't meet any other point of $K_n$. Thanks to the $C^2$ convergence, the larger $n$ is the less we need to shrink the radius to ensure these two things happen. The end result is a ball which appears in the definition of $r_{K_n}$. In this way, for any $\epsilon>0$ we have $r_{K_n} \geq r_K - \epsilon$ for all sufficiently large $n$. Reversing the roles of $K_n$ and $K$ in the discussion shows that $r_K \geq r_{K_n}-\epsilon$ also which completes the proof. 
\end{proof}

\subsection{A uniform graphical region}\label{C1-control-section}

We now prove a substantial part of Theorem~\ref{convergence-near-infinity}. A very important tool is the following result of Brian White. To state it, we first need a definition.

\begin{definition}\label{converge-as-sets}
Let $N$ be a Riemannian manifold. We say that \emph{a sequence $M_n \subset N$ of subsets of $N$ converge as sets to $M \subset N$} if
\[
M = \Big\{ p \in N : \limsup_{n} d(p,M_n) = 0 \Big\} = \Big\{ p \in N : \liminf_n d(p,M_n) = 0\Big\}
\]
where $d(p,A)=\inf\{d(p,q) : q \in A\}$. 
\end{definition}

\begin{theorem}[White, \cite{White}]\label{white}
Suppose $X_n$ is a sequence of proper $m$-dimensional stationary integral varifolds without boundary in a Riemannian manifold $N$. Suppose that the $X_n$ converge as sets to a subset of an $m$-dimensional connected $C^1$ properly embedded submanifold $X$ of $N$. If the $X_n$ converge weakly to $X$ with multiplicity~1 anywhere, then they converge smoothly to $X$ everywhere. 
\end{theorem}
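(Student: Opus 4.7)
The plan is to combine compactness of stationary integral varifolds, the constancy theorem, and Allard's regularity theorem. First I would establish locally uniform mass bounds on the $X_n$: since $X_n\to X$ as sets, every point $p\in N$ has a neighborhood meeting at most finitely many $X_n$ outside a tubular neighborhood of $X$, and inside such a tubular neighborhood the monotonicity formula for stationary varifolds together with comparison to small geodesic balls centered on $X$ gives $\mathcal{H}^m(X_n\cap B_r(p)) \leq C r^m$ uniformly. By Allard's compactness theorem for stationary integral varifolds, a subsequence (still denoted $X_n$) converges weakly as varifolds to a stationary integral $m$-varifold $V$ on $N$.

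Next I would identify $V$ using the set-convergence hypothesis and the constancy theorem. Since $X_n\to X$ as sets, $\operatorname{supp} V \subset X$. A stationary integral $m$-varifold supported in a $C^1$ embedded $m$-submanifold must, by the constancy theorem, be of the form $V = \theta\,\mathcal{H}^m\llcorner X$ for some locally constant integer-valued multiplicity $\theta$. Connectedness of $X$ promotes $\theta$ to a global constant. The hypothesis that some point $p_0\in X$ is a multiplicity-one point for the weak convergence forces $\theta=1$, hence $V = \mathcal{H}^m\llcorner X$. Because every subsequential varifold limit must equal this same $V$, the whole sequence $X_n$ converges weakly to $X$ with multiplicity one at every point of $X$.

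Finally I would upgrade weak to smooth convergence via Allard's regularity theorem. Fix $p\in X$. For $r$ small enough, $X\cap B_r(p)$ is a Lipschitz graph close to the tangent plane $T_p X$, and weak multiplicity-one convergence gives $\mathcal{H}^m(X_n\cap B_r(p))/\omega_m r^m \to 1$ together with density $\theta_{X_n}(p)\to 1$ in the measure-theoretic sense. Allard's $\varepsilon$-regularity then yields, for all large $n$, a $C^{1,\alpha}$ graph description of $X_n\cap B_{r/2}(p)$ over $T_p X$ with norm tending to $0$. Higher-order smooth convergence follows by bootstrapping the (elliptic) minimal surface system satisfied by such graphs, e.g.\ Schauder estimates applied to the minimal surface equation in normal coordinates over $X$.

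The main obstacle will be justifying the hypotheses of Allard at every $p\in X$: one needs genuine mass-ratio control and not just set-convergence. This is where the constancy theorem does the crucial work, by converting ``support in $X$'' plus ``integral and stationary'' into a single global multiplicity, which the hypothesis then pins to $1$. A secondary subtlety is ruling out the limit $\theta=0$ on some component, but since $X$ is assumed connected and at least one point has $\theta=1$, this cannot occur.
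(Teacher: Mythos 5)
The paper itself does not prove this theorem: it is a cited result of White, and the paper's only commentary is the two-sentence remark immediately after the statement, which says (i) the word ``anywhere'' is the important one---White's content is that multiplicity-one weak convergence near a single point propagates to multiplicity-one convergence everywhere---and (ii) the passage from multiplicity-one weak convergence to smooth convergence is then Allard regularity. Your proposal must therefore be judged on its own merits rather than against a paper proof.

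There is a genuine gap in the first paragraph, and it sits exactly where the theorem's real content lies. You assert that set convergence of $X_n$ to (a subset of) $X$ together with monotonicity yields a uniform local mass bound $\mathcal{H}^m(X_n \cap B_r(p)) \leq C r^m$, which you then feed into Allard's compactness theorem. This does not follow. Set convergence says nothing about mass: a sequence of stationary integral varifolds confined to a shrinking tubular neighbourhood of $X$ can still have unbounded local mass (picture more and more parallel sheets accumulating onto $X$, or higher and higher multiplicity). Monotonicity gives a \emph{lower} density bound at points of the support, not an upper mass bound, and ``comparison to small geodesic balls centered on $X$'' does not bound the number of sheets. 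Without the mass bound, Allard compactness cannot be invoked and the argument cannot start. This is precisely the difficulty White's theorem is designed to overcome---the hypothesis supplies mass control only \emph{near one point} (this is what ``anywhere'' means, as the paper emphasizes), and the theorem's nontrivial content is that this control cannot fail elsewhere. The standard way to make this rigorous is a continuation argument over the connected manifold $X$: the set of $p\in X$ near which $X_n$ converges in $C^1$ with multiplicity one is nonempty by hypothesis, open by Allard regularity, and closed because the mass ratio at a boundary point is controlled by monotonicity applied at interior points of the good set whose balls engulf it. Your constancy-theorem route, by contrast, requires the global mass bound up front in order to extract a limit varifold.

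Two secondary points. First, the constancy theorem in its classical form (Simon's lecture notes, say) is stated for $C^2$ ambient submanifolds; the statement here only assumes $X$ is $C^1$, so you would need to cite or prove a $C^1$ version, or avoid the constancy theorem altogether as White does. Second, once the mass bound and the multiplicity-one identification of the weak limit are genuinely in hand, your final paragraph (Allard $\varepsilon$-regularity followed by Schauder bootstrapping) is the correct and standard route to smooth convergence, and it agrees with the paper's sketch.
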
 

As White points out, the important point here is the word ``anywhere''. He shows this local statement implies that the $X_n$ actually converge with multiplicity~1 to $X$ \emph{everywhere}. From here the smooth convergence follows by Allard regularity. 

We now turn to the main result of this section. See~\eqref{n-epsilon-notation} above  for a reminder of the notation. At this stage it is convenient to try to write the minimal surface $\overline{M}_n$ as a graph over the half-space cylinder $[0,\infty) \times K_n$ generated by its own boundary (rather than passing to a common base link $K$). In this section we are only concerned with $C^1$ control of the graphing functions and so the fact that normal bundle of $K_n$ is only $C^{1,\alpha}$ is not a problem in this regard. We write $E_n \to [0,\infty) \times K_n$ for the normal bundle. In the following statement, given a section $\sigma_n$ of $E_n$ we define $\nabla \sigma_n$ by first parametrising $K_n$ by arc length and then treating $\sigma_n$ as an $\R^3$-valued function of two real variables.  (We write these sections as $\sigma_n$ to distinguish them from the $s_n$ of Theorem~\ref{convergence-near-infinity}, which take values in the \emph{fixed} normal bundle $E \to [0,\epsilon)\times K$.)

\begin{theorem}\label{C1-bound-at-infinity}
Let $u_n \colon (\overline{\Sigma},j_n) \to \overline{Z}$ be a sequence of admissible $J$-holomorphic curves whose boundary links $K_n=\pi(u_n(\del \Sigma))$ converge in $C^{2}$ to an embedded link. There exists $\epsilon>0$ such that for all $n$:
\begin{enumerate}
\item
The conformal harmonic map $f_n = \pi \circ u_n$ is an embedding on  $\overline{\Sigma}_{n,\epsilon}$.
\item
The minimal surface $\overline{M}_{n,\epsilon}$ is the graph of a section $\sigma_n$ of $E_n \to [0,\epsilon] \times K_n$.
\item
$\|\nabla \sigma_n\|_{C^0} < 1$.
\end{enumerate}

\end{theorem}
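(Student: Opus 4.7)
The plan is to argue by contradiction via a blow-up at the boundary, combining the $C^0$ barrier control coming from the convex hull of Lemma~\ref{convex-hull} with the smooth convergence result of White (Theorem~\ref{white}). Suppose the theorem fails. Then, after passing to a subsequence and relabelling, for each $k\geq 1$ I may find a point $p_k\in\overline{\Sigma}$ with $\xi_k:=x(f_{n_k}(p_k))<1/k$ at which one of the following holds: $f_{n_k}$ is not an immersion at $p_k$; $f_{n_k}$ is not locally injective on $\overline{\Sigma}_{n_k,1/k}$ near $p_k$; or the tangent plane $\diff f_{n_k}(T_{p_k}\Sigma)$ makes too large an angle with the cylinder $[0,\infty)\times K_{n_k}$ so that the graphical bound $|\nabla\sigma_{n_k}|<1$ is violated.

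Next I would rescale. Write $q_k=f_{n_k}(p_k)=(\xi_k,y_k)$ in half-space coordinates and pick $\eta_k\in K_{n_k}$ closest to $y_k$ in $\R^3$. The hyperbolic isometry $\Phi_k\colon(x,y)\mapsto(x/\xi_k,(y-\eta_k)/\xi_k)\in\Isom(\H^4)$ moves $q_k$ to a point of height $1$ and sends $\overline{M}_{n_k}$ to a complete minimal surface $\tilde M_k$ with ideal boundary $\tilde K_k=\Phi_k(K_{n_k})$. Since $K_{n_k}\to K_\infty$ in $C^2$, the curves $K_{n_k}$ have uniformly bounded curvature; Euclidean dilation by $1/\xi_k\to\infty$ therefore flattens them, and $\tilde K_k$ converges in $C^1_{\mathrm{loc}}$ to the straight line $L\subset\R^3$ tangent to $K_\infty$ at $\lim\eta_k$. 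By Lemma~\ref{rK-convergence}, $r_{K_{n_k}}\geq r_0>0$, so $r_{\tilde K_k}\to\infty$, and applying Lemma~\ref{convex-hull} to the rescaled surfaces shows that on any compact $K'\subset\overline{\H}^4\setminus\{\infty\}$, the sets $\tilde M_k\cap K'$ lie in arbitrarily small tubular neighbourhoods of the totally geodesic $\H^2\subset\H^4$ with ideal boundary $L$. Hence $\tilde M_k$ converges as sets, in the sense of Definition~\ref{converge-as-sets}, to a subset of this fixed smoothly embedded $\H^2$.

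Now I would invoke White's Theorem~\ref{white}. The $\tilde M_k$ are proper stationary integral $2$-varifolds in the open Riemannian $4$-manifold $\H^4$ with no interior boundary; the limit $\H^2$ is smooth, connected, and properly embedded; and the set-theoretic convergence has just been established. The remaining hypothesis is multiplicity-$1$ convergence somewhere, which I would verify using the asymptotic expansion of Proposition~\ref{asymptotic-expansion} near $\del\H^4$: for a fixed small $\xi_0>0$, the slice $\overline{M}_{n_k}\cap\{x=\xi_0\xi_k\}$ is, for $k$ large, a single near-horizontal closed curve that is $C^{2,\alpha}$-close to $K_{n_k}$ translated to that height, and after applying $\Phi_k$ it converges smoothly to the horocyclic slice $\H^2\cap\{x=\xi_0\}$ with multiplicity~$1$. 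White's theorem then upgrades the set convergence to smooth convergence of $\tilde M_k$ to $\H^2$ on every compact subset of $\H^4$. In particular, near the height-$1$ point $\Phi_k(q_k)$ the surface $\tilde M_k$ is a $C^\infty$-small normal graph over $\H^2$, which, after undoing $\Phi_k$, contradicts the failure of (1), (2), or (3) at $p_k$.

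The main obstacle will be the verification of multiplicity-$1$ convergence somewhere, the one hypothesis of White's theorem that is not delivered by the convex-hull barrier alone. This is where the admissibility of the $u_n$ is genuinely used: it furnishes the uniform asymptotic control at $\del\H^4$ needed to pin down the slice-by-slice behaviour of $\tilde M_k$ near infinity, and thereby to exclude the pathological scenario in which several sheets of $\tilde M_k$ collapse simultaneously onto the limiting $\H^2$ with higher multiplicity. Without such input, the convex hull on its own cannot distinguish a genuine multiplicity-$1$ limit from a folded one, and the blow-up argument would stall.
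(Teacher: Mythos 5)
Your overall strategy---contradiction via rescaling, $C^{0}$ control from the convex hull, then White's theorem to upgrade to smooth convergence and derive a contradiction---matches the paper's. The gap is in the crucial verification of the multiplicity-one hypothesis of Theorem~\ref{white}. You propose to establish it by using Proposition~\ref{asymptotic-expansion} to show that the slice $\overline{M}_{n_k}\cap\{x=\xi_0\xi_k\}$ is $C^{2,\alpha}$-close to a translate of $K_{n_k}$, so that the rescaled slices converge nicely. But Proposition~\ref{asymptotic-expansion} only gives, for each fixed $u_n$, a Taylor expansion with remainder $O(t^{2+\alpha})$ whose constant depends on $u_n$. You have no uniform control of this remainder across the sequence: a uniform $C^{2,\alpha}$ bound on the $u_n$ near $\del\Sigma$ is precisely what is being sought, not what is available. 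Since $\xi_k\to 0$, the scale $\xi_0\xi_k$ at which you wish to read off the slice behaviour shrinks, while the constant in the $n_k$-th remainder may simultaneously blow up; there is no obstruction a~priori to the slice at height $\xi_0\xi_k$ wandering far from $K_{n_k}$ or wrapping multiple times. The convex hull only controls distance to $K_{n_k}$, not multiplicity, exactly as you yourself note, so this step is not supplied.

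The paper closes this gap by a different choice of blow-up points. Rather than an arbitrary sequence of bad points of height tending to zero, it first observes that there is always \emph{some} $\epsilon_n>0$ for which (1)--(3) hold, and then picks $\epsilon_n$ to be the \emph{threshold} scale: for $x<\epsilon_n$ everything is fine, with something going wrong at $x=\epsilon_n$. After centering and rescaling by $\epsilon_n^{-1}$, the rescaled surface is, by construction, a graph over a converging cylinder with $|\nabla\tilde\sigma_n|<1$ in the whole region $x<1$. A graph with a derivative bound cannot fold or have extra sheets, so this immediately gives multiplicity-one convergence there, with no appeal to the asymptotic expansion. Without this maximality in the choice of $\epsilon_n$, your argument has no region below the bad point where the surface is already known to be a controlled graph, and the multiplicity-one verification cannot be made rigorous as you have framed it.
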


\begin{proof}
The proof is by contradiction. Suppose the result is not true. Since $f_n$ is an embedding on $\del \Sigma$ and since $\overline{M}_n$ meets infinity at right angles, there is certainly an $\epsilon_n>0$ that works for $f_n$. Assuming the result is false (and after passing to a subsequence which we don't bother to notate) we have a sequence $\epsilon_n>0$ with $\epsilon_n \to 0$ such that everything is fine for $f_n$ in the region $x < \epsilon_n$ but that something goes wrong at $x=\epsilon_n$. The two most important potential problems are as follows:
\begin{enumerate}
\item\label{derivative-bound-breaks}
There is a point $P_n = (\epsilon_n, p_n) \in \overline{M}_n$ at which $|\nabla \sigma_n|=1$. (Here $p_n \in \R^3$ and we're using our fixed choice of half space coordinates.)
\item\label{branch-point-happens}
There is a branch point or self-intersection point $P_n = (\epsilon_n,p_n) \in \overline{M}_{n}$. 
\end{enumerate}
We will show how these problems lead to a contradiction via a rescaling argument, but first there are four other things which might in principle stop the graphical region extending, which we can rule out directly. 

Firstly, one might worry that the surface is about to turn back on itself at $ \epsilon_n$. This corresponds to $|\nabla\sigma_n| \to \infty$ and, since we have $|\nabla \sigma_n|< 1$ for $x< \epsilon_n$, we do not need to worry about this. 

The second thing which could go wrong is that $f_n$ has a critical point which is \emph{not} a branch point. Since $f$ is conformal, the only other option for a critical point is a ramification point, at which the local degree of $f$ is strictly greater than 1. This means that $f_n$ has degree greater than one onto its image, but this contradicts the fact that $f_n$ restricts to an embedding $\del \Sigma \to K_n$. 

The third potential problem is the appearance of a new connected component of $\overline{M}_n$ at $x=\epsilon_n$, which is disjoint from the surface in the region $x<\epsilon_n$. This is also easy to rule out. It corresponds to $x\circ f_n$ having a local minimum. Geometrically this is a tangency of $\overline{M}_n$ with the horosphere $x=\epsilon_n$. The horosphere has positive mean cuvature and the new component of $\overline{M}_n$ lies entirely on the ``inside'' $x\geq \epsilon_n$, in contradiction of the maximum principle. Alternatively, one can use the harmonic map equation to show that $\Delta(x \circ f) \leq 0$ at any critical point of $x$, so again local minima are ruled out by the maximum principle. 

The fourth possible problem is that when we project the curve $M_n \cap \{x = \epsilon_n\}$ to $\R^3$ (by ignoring the $x$ coordinate) it reaches the normal injectivity radius of $K_n$ and so there is no way to write it as a graph of a section of the normal bundle (at least not in a consistent way). This is ruled out by Lemmas~\ref{convex-hull} and~\ref{rK-convergence}. Since $K_n \to K_\infty$ in $C^2$ we have that both $r_{K_n}$ and the normal injectivity radius of $K_n$ are uniformly bounded away from zero. It follows that there exists $c>0$ such that for all $\xi\leq c$ and all large $n$ the curve $M_n\cap \{x = \xi\}$ lies strictly closer to $K_n$ than the normal injectivity radius. 

Having limited the potential problems to the two cases listed above (i.e.\ $|\nabla s_n|=1$, or a branch or self-intersection point) we will now carry out the rescaling argument to derive a contradiction. We first line up the centres of the rescaling. Let $\eta_n \in K_n$ be the point of $K_n$ nearest to $p_n \in \R^3$ (where $(\epsilon_n, p_n)$ is the troublesome point on the minimal surface $M_n$). By passing to a subsequence we can assume that $\eta_n$ converges to a limit in $\R^3$. By choice of coordinates we can assume that $\eta_n \to 0$. We now translate our surfaces by setting 
\[
\overline{Y}_n = \overline{M}_n - (0,\eta_n)
\] 
The ideal boundary of $\overline{Y}_n$ is $\hat{K}_n = K_n - \eta_n$. Since $\eta_n \to 0$, we have $\hat{K}_n \to K_\infty$ in $C^2$. The translated surfaces are also graphs, now over the cylinders $[0,\epsilon_n) \times \hat{K}_n$: we put $\hat{\sigma}_n(\xi, \eta) = \sigma_n(\xi, \eta+ \eta_n) - \eta_n$. The point of the translation is that the boundary links now all pass through the origin, which is also the point nearest to the projection $p_n$ of the ``problem point''. 

We now rescale, with a dilation centred on the origin: $(x,y_i) \mapsto \epsilon_n^{-1}(x,y_i)$. This is a hyperbolic isometry and so the rescaled surfaces $X_n \defeq \epsilon_n^{-1}Y_n$ are still minimal. 
The dilated surfaces are again graphs, this time over cylinders of uniform length: $[0,1)\times \epsilon^{-1}_n\hat{K}_n$. The section whose graph is $X_n$ is simply $\tilde{\sigma}_n(\xi, \eta) = \epsilon^{-1}_n \hat{\sigma}_n(\epsilon_n\xi, \epsilon_n \eta)$.

In each case the difficulty occurred at $P_n=(\epsilon_n, p_n) \in M_{n}$ which now corresponds to the point $Q_n = (1, \epsilon^{-1}_n(p_n-\eta_n)) \in X_n$. Since $\sigma_{n}(0,\eta_n)=\eta_n$ and $\sigma_n(\epsilon_n, \eta_n) = p_n$, and since $|\nabla \sigma_n|\leq 1$ in the region $0\leq x \leq \epsilon_n$, we have
\[
|p_n - \eta_n| \leq \int_{0}^{\epsilon_n} \left|\del_\xi\sigma_n(t,\eta_n) \right|\diff t \leq \epsilon_n
\]
It follows that the troublesome points $Q_n = (1,q_n)$ in the rescaled surfaces have $|q_n|\leq 1$ and so lie in a compact region. We pass to a subsequence so that the $Q_n$ converge to a limit $Q$. 

We now explain how to deduce a contradiction via White's Theorem~\ref{white}. Denote the ideal boundary of $X_n$ by $\tilde{K}_n = \epsilon^{-1}\hat{K}_n$. Each $\hat{K}_n$ passes through the origin and, since they converge in $C^2$ to $K_\infty$, it follows that $\tilde{K}_n$ converges in compact sets to the straight line $L$ which is tangent to $K_\infty$ at the origin. Moreover, by Lemma~\ref{rK-convergence}, $r_{\hat{K}_n} \to r_{K_\infty}$. Now the behaviour of the quantity $r_K$ under rescaling means that $r_{\tilde{K}_n} = \epsilon^{-1}_nr_{\hat{K}_n} \to \infty$. This means we can fit larger and larger barriers around $\tilde{K}_n$ at the origin. Lemma~\ref{convex-hull} now implies that $X_n$ converges in $C^0$ on sets of the form $\{x, |y| \leq R\}$ (in the rescaled coordinates), to the totally geodesic $H \cong \H^2 \subset \H^4$ which meets infinity in the line $L$. It follows that $X_n$ converges to $H$ as sets (in the sense of Definition~\ref{converge-as-sets}) on each of these regions.  Moreover, we know that for $x<1$ (in the rescaled coordinates) this convergence is actually multiplicity~1 because here the $X_n$ are graphs over a sequence of cylinders which converge smoothly to $H$. We are now in a position to apply White's Theorem to conclude that $X_n$ converges in $C^{\infty}$ to $H$, on any region of the form $\{x,|y| \leq R\}$, with multiplicity one everywhere. 

We are now able to reach a contradiction. By passing to a subsequence we can assume that either all the bad points were in case~\ref{derivative-bound-breaks} or all were in case~\ref{branch-point-happens}. Suppose first that at all the points $P_n$ we had $|\nabla \sigma_n(\epsilon_n, \eta_n)| = 1$. Then the same is true for the dilated surfaces at $Q_n$: we have dilated the domain and range of the graphs equally and so  $|\nabla \tilde{\sigma}_n(1,0)| = |\nabla \sigma_n(\epsilon_n,\eta_n))| =1$. Now let $H_n$ denote the totally geodesic copy of $\H^2$ which passes through $Q_n$ and meets infinity in the straight line which is parallel to $T_{0} \hat{K}_n$. We compare the tangent plane $T_{Q_n}X_n$ to $T_{Q_n}H_n$. The fact that $|\nabla \tilde{\sigma}_n|=1$ at this point means that these two 2-planes are different by a definite amount, independent of $n$. For example, the distance between them in the natural metric on the Grassmannian is bounded uniformly away from zero. But both $X_n$ and $H_n$ converge smoothly to $H$ and the points $Q_n$ converge to $Q \in H$, so the tangent planes at $T_{Q_n}X_n$ and $T_{Q_n}H_n$ must become arbitrarily close, which is a contradiction. 

The other possibility is that the points $(\epsilon_n,p_n)$ were all branch points or points of self-intersection. Then the same is true for the points $Q_n$ in the rescaled surfaces. This means that the local denisty of $X_n$ at $Q_n$ is bounded strictly away from 1. But density is lower semicontinuous and so the density of $H$ at the limit $Q$ is also strictly greater than $1$, contradicting the fact that $H$ is smooth. 
\end{proof}

\subsection{\texorpdfstring{$C^2$}{C2} control up to the boundary}\label{C2-control-section}

The next step is to obtain $C^2$ control of the surfaces $\overline{M}_n$, up to the boundary and uniformly in $n$. We do this in Theorem~\ref{C2-bound-at-infinity} below, which gives uniform control over the \emph{Euclidean} second fundamental forms of the surfaces. 

To begin we need some information on the hyperbolic second fundamental forms. Write $A_n$ for the second fundamental form of ${M}_n \subset \H^4$ and $|A_n|$ for its norm (all quantities at this point are defined via the hyperbolic metric). Any minimal surface in $\H^4$ with $C^2$ ideal boundary is asymptotically totally geodesic at infinity. (This follows from the expansion~\eqref{asymptotic-expansion} and also from Lemma~\ref{hyperbolic-to-euclidean} below.) The next lemma shows that this decay of $A_n$ is uniform in $n$. 

\begin{lemma}\label{uniform-decay-hyperbolic-SSF}
 Under the hypotheses of Theorem~\ref{C1-bound-at-infinity}, $A_n \to 0$ as $x \to 0$ uniformly in $n$. I.e.\ for all $e>0$ there exists $\delta>0$ such that if $(x,p) \in {M}_n$ with $x < \delta$ then $|A_n(x,p)| < e$. 
\end{lemma}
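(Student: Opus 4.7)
The proof will proceed by contradiction via a rescaling argument that closely parallels the one in Theorem~\ref{C1-bound-at-infinity}, the key observation being that the hyperbolic second fundamental form $|A|$ is invariant under hyperbolic isometries. If the lemma fails, we can extract a subsequence (relabelled) and find points $P_n = (x_n, p_n) \in M_n$ with $x_n \to 0$ and $|A_n(P_n)| \geq e$ for some fixed $e>0$. Let $\eta_n \in K_n$ be a point of $K_n$ nearest to $p_n$; by passing to a subsequence and changing coordinates we may arrange $\eta_n \to 0$.

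I would then apply the hyperbolic isometry consisting of translation sending $\eta_n$ to the origin followed by dilation $(x,y) \mapsto x_n^{-1}(x,y)$, producing rescaled minimal surfaces $X_n = x_n^{-1}(M_n - (0,\eta_n))$ whose troublesome points $Q_n$ all lie at height $x = 1$. By the $C^1$ control coming from Theorem~\ref{C1-bound-at-infinity}, the original $M_n$ are graphs over $[0,\epsilon]\times K_n$ with $\|\nabla \sigma_n\|_{C^0}<1$; this implies $|p_n - \eta_n|\leq x_n$, so the projections of the $Q_n$ to $\R^3$ lie in a bounded region, and we may pass to a further subsequence so that $Q_n \to Q \in \H^4$ at height $x=1$.

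Next, since $K_n \to K_\infty$ in $C^2$, the rescaled links $\tilde K_n$ converge on compact subsets of $\R^3$ to the straight line $L$ tangent to $K_\infty$ at $0$, and the barrier radii satisfy $r_{\tilde K_n} = x_n^{-1} r_{\hat K_n} \to \infty$ by Lemma~\ref{rK-convergence}. Lemma~\ref{convex-hull} then forces $X_n$ to converge as sets, on every bounded region of $\overline{\H}^4$, to the totally geodesic copy $H \cong \H^2$ with ideal boundary $L$. Moreover, the graphical description of $M_n$ rescales to express $X_n$ as a graph over $[0,\epsilon/x_n]\times \tilde K_n$ with gradient still bounded by $1$; since $\epsilon/x_n \to \infty$ and $\tilde K_n \to L$ locally in $C^{2,\alpha}$, these graphs converge to $H$ with multiplicity $1$ on any compact set of $\H^4$ on which they are defined.

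At this point White's Theorem~\ref{white} applies: the $X_n$ are stationary integral $2$-varifolds with empty boundary in the (bounded) interior region, converging as sets to $H$ and agreeing with $H$ with multiplicity~$1$ somewhere, hence they converge smoothly to $H$ on compact sets in the interior. But $H$ is totally geodesic, so $|A_H| \equiv 0$, while smooth convergence at $Q$ gives $|A_{X_n}(Q_n)| \to |A_H(Q)|=0$. Since the hyperbolic second fundamental form is preserved by the hyperbolic isometries used in the rescaling, $|A_{X_n}(Q_n)|=|A_n(P_n)|\geq e$ for all $n$, a contradiction. The main technical point—ensuring the rescaled surfaces really do converge smoothly to $H$—is entirely handled by White's theorem; the rest of the argument is the same rescaling template already deployed for Theorem~\ref{C1-bound-at-infinity}.
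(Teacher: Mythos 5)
Your proof is correct and is essentially identical to the one given in the paper: both argue by contradiction, rescale via hyperbolic isometries centred at the nearest boundary point, invoke White's theorem to get smooth convergence to a totally geodesic $\H^2$, and then use the isometry-invariance of the hyperbolic second fundamental form to derive the contradiction.
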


\begin{proof}
Assume for a contradiction that the result is false. Then (at least after passing to a subsequence of the $M_n$) there is a value of $e>0$ and a sequence of points $(\delta_n,p_n) \in M_n$ with $\delta_n \to 0$ and $|A_n(\delta_n,p_n)| \geq e$. We now carry out a rescaling exactly as in the proof of Theorem~\ref{C1-bound-at-infinity}. Let $\eta_n \in K_n$ be the point nearest to $p_n$. Setting $X_n = \delta_n^{-1}(M_n-(0,\eta_n))$ we obtain a sequence of minimal surfaces which converges, by White's Theorem,  in $C^{\infty}$ on compact sets to a totally geodesic copy $H \subset \H^4$ of $\H^2$. At the same time a subsequence of the points $Q_n=(1, \delta_n^{-1}(p_n - \eta_n)) \in X_n$ converge to a point $Q$ on $H$. Write $B_n$ for the second fundamental form of $X_n$, and $B$ for the second fundamental form of $H$. On the one hand $B=0$, on the other hand $|B(Q)| = \lim |B_n(Q_n)| = \lim |A_n(\delta_n,p_n)| \geq e>0$. (We have used here that $|B_n(Q_n)| = |A_n(\delta_n,p_n)|$, since the translations and rescalings are hyperbolic isometries).  
\end{proof}

Let $v_n$ be the orthogonal projection of $x\del_x$ to the normal bundle of $M_n$. The quantity $|v_n(x_0,p_0)|$ measures the extent to which $M_n$ fails to be perpendicular to the horosphere $x= x_0$. Any minimal surface in $\H^4$ meets infinity at right angles and so $|v_n| \to 0$ as $x \to 0$. We now show that this happens uniformly in $n$ for our surfaces $M_n$. 

\begin{lemma}\label{uniform-decay-vn}
 Under the hypotheses of Theorem~\ref{C1-bound-at-infinity}, $|v_n| \to 0$ as $x \to 0$ uniformly in $n$. I.e.\ for all $e>0$ there exists $\delta>0$ such that if $(x,p) \in {M}_n$ with $x < \delta$ then $|v_n(x,p)| < e$. 
\end{lemma}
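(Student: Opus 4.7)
The proof will follow essentially the same rescaling/contradiction strategy used for Lemma~\ref{uniform-decay-hyperbolic-SSF}, exploiting once again White's regularity theorem. The key additional ingredient is that the vector field $x\del_x$ is invariant under the hyperbolic isometries we use to rescale (translations in $y$ and dilations centred on the origin), so that its normal projection behaves well under the limit.

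The plan is as follows. Assume for a contradiction that the result fails. Then, after passing to a subsequence, there is $e>0$ and a sequence of points $(\delta_n,p_n) \in M_n$ with $\delta_n \to 0$ and $|v_n(\delta_n,p_n)| \geq e$. Let $\eta_n \in K_n$ be a nearest point to $p_n$, and set
\[
X_n \;=\; \delta_n^{-1}\bigl(M_n - (0,\eta_n)\bigr).
\]
By exactly the argument in the proof of Theorem~\ref{C1-bound-at-infinity}, $|\delta_n^{-1}(p_n-\eta_n)|$ is bounded (because $|\nabla \sigma_n|\le 1$ in a uniform collar by Theorem~\ref{C1-bound-at-infinity}), and White's Theorem~\ref{white} applies to $X_n$ just as before, giving smooth convergence on compact sets of $X_n$ to the totally geodesic copy $H \subset \H^4$ of $\H^2$ meeting infinity in the line $L$ tangent to $K_\infty$ at the origin. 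After passing to a further subsequence, the points $Q_n = (1,\delta_n^{-1}(p_n-\eta_n)) \in X_n$ converge to a point $Q \in H$.

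Next I would observe that the maps used to produce $X_n$ are hyperbolic isometries and that the vector field $x\del_x$ is preserved by them: it is manifestly invariant under $y$-translations, while the pushforward of $x\del_x$ under the dilation $(x,y) \mapsto (\lambda^{-1}x, \lambda^{-1}y)$ is again $x\del_x$ (the factor of $\lambda^{-1}$ from the differential cancels the factor of $\lambda$ from $x = \lambda x'$). Consequently, if $\tilde v_n$ denotes the normal projection of $x\del_x$ to $X_n$, then at the corresponding point its hyperbolic norm is preserved: $|\tilde v_n(Q_n)| = |v_n(\delta_n,p_n)| \geq e$. Since $X_n \to H$ smoothly in a neighbourhood of $Q$, the projection operators onto the normal bundles converge, and hence $|\tilde v_n(Q_n)|$ converges to the hyperbolic norm of the normal projection of $x\del_x$ to $H$ at the point $Q$.

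The contradiction then comes from the geometric fact that $x\del_x$ is \emph{tangent} to $H$ everywhere. Indeed, in half-space coordinates the totally geodesic $\H^2$ meeting infinity in a straight line $L$ through the origin is precisely the set $\{(x,y) : y \in \mathrm{span}(L)\}$, and on this set $\del_x$ is obviously tangential. So the normal projection of $x\del_x$ vanishes identically on $H$, forcing $0 \geq e>0$, a contradiction. The main (minor) obstacle is just to verify carefully the invariance of $x\del_x$ under the rescaling isometries and the continuity of the normal projection under smooth convergence of submanifolds; everything else is a direct transcription of the argument of Lemma~\ref{uniform-decay-hyperbolic-SSF}.
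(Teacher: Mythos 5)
Your proof is correct and follows essentially the same approach as the paper's: argue by contradiction, rescale by the same hyperbolic isometries used in Theorem~\ref{C1-bound-at-infinity}, invoke White's theorem for smooth convergence to a totally geodesic $\H^2$, and obtain a contradiction from the invariance of $x\del_x$ under translations and dilations together with the fact that $x\del_x$ is tangent to the limiting vertical half-plane. The paper states the proof more tersely but the substance is identical, and your careful verification of the invariance of $x\del_x$ under the rescaling isometries is exactly the point the paper flags parenthetically.
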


\begin{proof}
The proof is the same as that of Lemma~\ref{uniform-decay-hyperbolic-SSF}. Working by contradiction, the rescaling argument gives a sequence of minimal surfaces which converge in $C^{\infty}$ on compact sets  to a totally geodesic half-plane which is tangent to $x\del_x$. On the other hand the limit has a point at which $x\del_x$ has non-zero normal component. (We use here that the vector field $x\del_x$ is invariant under dilations centred on the origin.)
\end{proof}

We will now work with the \emph{Euclidean} metric $\tilde{g} = x^2 g = \diff x^2 + \diff y^2$. We write $\tilde{A}_n$ for the second fundamental form of $\overline{M}_n$ with respect to $\tilde{g}$. If $v$ is a tangent vector to $\H^4$ we write $\tilde{v} = x^{-1}v$, the point being that $|\tilde{v}|_{\tilde{g}} = |v|_{g}$. 

\begin{lemma}\label{hyperbolic-to-euclidean}
Let $\overline{M} \subset \overline{\H}^4$ be an embedded surface. Let $\mu \in T_p\H^4$ be normal to $M$ at $p$ and $u,v \in T_pM$ be tangent to $M$ there. The hyperbolic and Euclidean second fundamental forms in the direction $\mu$ and $\tilde{\mu}$ are related by:
\[
\tilde{A}(\tilde{u},\tilde{w};\tilde{\mu}) = \frac{1}{x}A(u,w;\mu) - \frac{1}{x}\tilde{g}(\tilde{u},\tilde{w}) g (x\del_x, \mu)
\]
It follows that when $M$ is minimal with respect to the hyperbolic metric then its Euclidean mean curvature in the direction $\tilde{\mu}$ is
\[
\tilde{H}(\tilde{\mu}) = -\frac{2}{x} g(x\del_x ,\mu)
\]
and the norms of the second fundamental forms are related by
\[
x^2|\tilde{A}|^2_{\tilde{g}} = |A|^2_{g} + 4|v|^2_g
\]
where $v$ is the orthogonal projection of $x\del_x$ to the normal bundle of $M$. 
\end{lemma}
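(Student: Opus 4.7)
The plan is to apply the standard conformal change formula for the second fundamental form to $\tilde g = e^{2\phi} g$ with $\phi = \log x$, and then carefully track the powers of $x$ generated by the rescaling $v \mapsto \tilde v = x^{-1}v$ on both tangent and normal vectors. Recall that under $\tilde g = e^{2\phi}g$ the Levi--Civita connections satisfy
$$\tilde\nabla_X Y = \nabla_X Y + X(\phi)\,Y + Y(\phi)\,X - g(X,Y)\,\nabla^g\phi.$$
Taking the normal component for $X, Y$ tangent to $M$ gives $\tilde\Pi(X,Y) = \Pi(X,Y) - g(X,Y)(\nabla^g\phi)^{\perp}$. A direct calculation using $g = x^{-2}(\diff x^2 + |\diff y|^2)$ shows that $\nabla^g\log x = x\partial_x$, so $g(\nabla^g\log x, \mu) = g(x\partial_x, \mu)$ for any normal $\mu$.

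Next, I pair $\tilde\Pi$ with the $\tilde g$-unit normal $\tilde\mu = e^{-\phi}\mu = x^{-1}\mu$ in the $\tilde g$-inner product. Using $\tilde g = e^{2\phi}g$ this yields
$$\tilde A(u,w;\tilde\mu) = x\bigl[A(u,w;\mu) - g(u,w)\, g(x\partial_x,\mu)\bigr].$$
Substituting $u = x\tilde u$, $w = x\tilde w$ and exploiting bilinearity of $\tilde A$ in its first two slots contributes a factor $x^{-2}$, which combines with the leading $x$ to produce the coefficient $1/x$; writing $g(u,w) = \tilde g(\tilde u,\tilde w)$ converts the tangential inner product into the form appearing in the statement, establishing the first displayed identity.

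For the mean curvature formula, pick a $g$-orthonormal tangent frame $\{e_1, e_2\}$ of $M$; one checks immediately that $\{\tilde e_1, \tilde e_2\}$ is automatically $\tilde g$-orthonormal. Tracing the first identity,
$$\tilde H(\tilde\mu) = \sum_i \tilde A(\tilde e_i, \tilde e_i; \tilde\mu) = \frac{1}{x}\bigl[H(\mu) - 2\,g(x\partial_x,\mu)\bigr],$$
and minimality $H(\mu)=0$ leaves the claimed expression. For the norm identity, expand $\tilde A(\tilde e_i, \tilde e_j; \tilde\mu_\alpha)^2$ with respect to a $\tilde g$-orthonormal normal frame $\{\tilde\mu_\alpha\}$ and sum over $i, j, \alpha$. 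The cross term reduces to $-2\sum_\alpha H(\mu_\alpha)\,g(x\partial_x,\mu_\alpha)$ and vanishes by minimality, while the surviving $A^2$-terms assemble into $|A|^2_g$ and the remaining $\delta_{ij}\,g(x\partial_x,\mu_\alpha)^2$-terms into a multiple of $|v|^2_g = \sum_\alpha g(x\partial_x,\mu_\alpha)^2$ (the multiplicity coming from $\sum_i \delta_{ii} = \dim M$). This yields the stated identity.

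There is no substantive obstacle: the content is purely the conformal transformation rule for $\Pi$ combined with the rescaling $v \mapsto x^{-1}v$. The only point requiring care is the bookkeeping of $x$-factors arising from simultaneously rescaling both tangential inputs and the normal argument, and the systematic use of minimality to kill the trace-like cross terms.
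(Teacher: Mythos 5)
Your approach---the conformal change formula for the Levi-Civita connection applied to $\tilde g = e^{2\phi}g$ with $\phi = \log x$, followed by careful tracking of the $x$-factors under the rescaling $v \mapsto \tilde v = x^{-1}v$---is exactly what the paper indicates (it says the proof is ``a direct calculation\ldots involving the change in the Levi-Civita connection under a conformal change of metric''). Your derivations of the first two displayed identities are correct: $\nabla^g\log x = x\del_x$, the normal projection kills the tangential terms $X(\phi)Y + Y(\phi)X$, and the bookkeeping of the three factors of $x^{-1}$ against the $x^2$ from $\tilde g = x^2 g$ works out precisely.

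However, your account of the norm identity has an unaddressed factor of $2$. Following your own recipe: writing $a_\alpha = g(x\del_x, \mu_\alpha)$ and using $g$-orthonormal frames, the first identity gives $\tilde A(\tilde e_i,\tilde e_j;\tilde\mu_\alpha) = x^{-1}\bigl(A_{ij\alpha} - \delta_{ij}a_\alpha\bigr)$, and squaring and summing over $i,j,\alpha$ yields
\[
x^2|\tilde A|^2_{\tilde g} = |A|^2_g - 2\sum_\alpha a_\alpha H(\mu_\alpha) + \Bigl(\sum_i \delta_{ii}\Bigr)\sum_\alpha a_\alpha^2 = |A|^2_g + (\dim M)\,|v|^2_g,
\]
using minimality for the cross term. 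With $\dim M = 2$ this is $|A|^2_g + 2|v|^2_g$, \emph{not} $|A|^2_g + 4|v|^2_g$ as the lemma asserts. Your parenthetical ``the multiplicity coming from $\sum_i \delta_{ii} = \dim M$'' correctly computes this multiplicity as $2$, yet you conclude ``this yields the stated identity,'' which it does not---you should have flagged the discrepancy rather than waving it through. A concrete sanity check: for the geodesic hemisphere $\{x^2 + y_1^2 + y_2^2 = 1,\ y_3 = 0\}$ at the pole $x = 1$, one has $|A|^2_g = 0$, $|v|^2_g = 1$, and the Euclidean $|\tilde A|^2_{\tilde g} = 2$ (a unit sphere has both principal curvatures $1$), so $x^2|\tilde A|^2 = 2 = 2|v|^2$, confirming the coefficient is $2$. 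The factor of $4$ in the paper appears to be a typo; fortunately the only downstream use (Corollary~\ref{running-to-the-origin}) only needs $x|\tilde A_n|_{\tilde g}\to 0$ given $|A_n|,|v_n|\to 0$, which holds with either coefficient. But a careful blind proof should have caught the mismatch.
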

\begin{proof}
This follows from a direct calculation which we omit, involving the change in the Levi-Civita connection under a conformal change of metric. 
\end{proof}

Putting Lemmas~\ref{uniform-decay-hyperbolic-SSF},~\ref{uniform-decay-vn} and~\ref{hyperbolic-to-euclidean} together we obtain:

\begin{corollary}\label{running-to-the-origin}
Under the hypotheses of Theorem~\ref{C1-bound-at-infinity}, the Euclidean second fundamental forms $\tilde{A}_n$ of the surfaces $\overline{M}_n$ satisfy $x |\tilde{A}_n|_{\tilde{g}} \to 0$ as $x \to 0$, uniformly in~$n$. 
\end{corollary}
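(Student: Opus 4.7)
The proof should be essentially immediate by combining the three preceding lemmas, so my plan is short. The key identity is the last formula of Lemma~\ref{hyperbolic-to-euclidean}:
\[
x^{2}\,|\tilde{A}_n|^{2}_{\tilde{g}} \;=\; |A_n|^{2}_{g} \;+\; 4\,|v_n|^{2}_{g},
\]
which rewrites the Euclidean curvature, scaled by $x$, as a sum of two non-negative quantities that we already know how to control.

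First I would take square roots and bound by the sum:
\[
x\,|\tilde{A}_n|_{\tilde{g}} \;\leq\; |A_n|_{g} \;+\; 2\,|v_n|_{g}.
\]
Next I invoke Lemma~\ref{uniform-decay-hyperbolic-SSF}, which says $|A_n|_g\to 0$ as $x\to 0$ uniformly in $n$, and Lemma~\ref{uniform-decay-vn}, which says the same for $|v_n|_g$. Given $\varepsilon>0$, each provides a $\delta>0$ (uniform in $n$) bounding the respective term by $\varepsilon/2$ whenever $x<\delta$; taking the minimum of the two $\delta$'s we conclude that $x\,|\tilde{A}_n|_{\tilde{g}}<\varepsilon$ on $\{x<\delta\}\cap \overline{M}_n$ for every $n$.

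There is no real obstacle here: all the work has been done in establishing the uniform decay of $|A_n|_g$ and $|v_n|_g$ (which were the true content, both proved by a blow-up argument invoking White's Theorem~\ref{white}) and in the conformal comparison formula of Lemma~\ref{hyperbolic-to-euclidean}. The only thing to remark on is that the factor of $x$ in $x\,|\tilde{A}_n|_{\tilde{g}}$ is exactly what is needed to cancel the conformal blow-up between $g$ and $\tilde{g}$, so that hyperbolic smallness translates cleanly into Euclidean smallness with one factor of the defining function to spare.
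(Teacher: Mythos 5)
Your proof is correct and is exactly what the paper intends — the corollary is stated immediately after Lemmas~\ref{uniform-decay-hyperbolic-SSF}, \ref{uniform-decay-vn} and \ref{hyperbolic-to-euclidean} with the preamble that it follows by combining them, and your argument fills in that one-line deduction accurately, including the elementary bound $\sqrt{a^2+b^2}\leq|a|+|b|$ used to pass from the identity for squares to the sum of the two uniformly decaying quantities.
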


We will also need to know the boundary value of the Euclidean mean curvature in terms of the boundary curve.

\begin{lemma}\label{boundary-value-Euclidean-H}
Let $\overline{M} \subset \H^4$ be an embedded surface, which is minimal in $\H^4$ and which meets the boundary at infinity in a $C^2$-curve $K$. Given a normal $u$ to $K \subset \R^3$, the Euclidean mean curvature of $\overline{M}$ in the direction $u$, $\tilde{H}(u)$, defined with respect to the rescaled metric $\tilde{g} = x^2g$ is equal on the boundary $x=0$ to $2\kappa(u)$, where $\kappa(u)$ is the curvature of $K$ in the direction ${u}$. 
\end{lemma}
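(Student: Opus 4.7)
The plan is to exploit the asymptotic expansion of minimal surfaces meeting $\del\H^4$ transversely in a $C^2$ curve. A local conformal parametrisation of $\overline{M}$ lifts via Eells--Salamon (Theorem~\ref{Eells-Salamon}) to a $J$-holomorphic curve in $Z$, so Proposition~\ref{asymptotic-expansion} applies and gives, near a chosen boundary point $(0,\gamma(s_0))\in K$, an expansion of the form
\begin{align*}
x(s,t) & = |\dot\gamma(s)|\,t + O(t^{2+\alpha}),\\
y(s,t) & = \gamma(s) + \phi(s)\,t^2 + O(t^{2+\alpha}),
\end{align*}
with $\phi=\tfrac{1}{2}\ddot\gamma + \dot{a}\,\zeta$ and $\zeta=-\dot\gamma/|\dot\gamma|$.

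Next I would compute the Euclidean second fundamental form of $\overline{M}$ at $t=0$ directly from this parametrisation. The tangent vectors $F_s(s,0)=(0,\dot\gamma(s))$ and $F_t(s,0)=(|\dot\gamma(s)|,0)$ are $\tilde g$-orthogonal of common length $|\dot\gamma|$, so $F_s/|\dot\gamma|, F_t/|\dot\gamma|$ is an orthonormal frame for $T\overline{M}$ at the boundary. Any Euclidean unit normal to $\overline{M}$ there has the form $u=(0,u')$ with $u'$ a Euclidean unit normal to $K$ at $\gamma(s)$. Reading $F_{ss}, F_{tt}, F_{st}$ off the expansion and pairing with $u$ yields the diagonal entries $F_{ss}\cdot u = \ddot\gamma\cdot u'$ and $F_{tt}\cdot u = 2\phi\cdot u'$, and the vanishing off-diagonal entry $F_{st}\cdot u = 0$.

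The key observation is that $\zeta$ is tangent to $K$, so $\zeta\cdot u'=0$ and consequently $\phi\cdot u' = \tfrac12\ddot\gamma\cdot u'$. Tracing against the orthonormal frame then gives
\[
\tilde H(u) = \frac{F_{ss}\cdot u + F_{tt}\cdot u}{|\dot\gamma|^2} = \frac{2\,\ddot\gamma\cdot u'}{|\dot\gamma|^2},
\]
and the standard identity $\kappa(u')= \ddot\gamma\cdot u'/|\dot\gamma|^2$ for the component along $u'$ of the curvature of a non-arclength parametrised curve finishes the proof.

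The calculation itself is entirely mechanical once one has the expansion, so the substantive input is really Proposition~\ref{asymptotic-expansion}. The mechanism behind the clean $2\kappa(u)$ answer is the cancellation $\phi\cdot u' = \tfrac12\ddot\gamma\cdot u'$: the vertical correction $\dot a\,\zeta$ inside $\phi$ lies along $K$ and so is invisible to normal pairings, which is what forces the two diagonal entries of the Euclidean second fundamental form to combine into exactly twice the curvature of $K$, with no stray contributions from $\dot a$ or derivatives of $|\dot\gamma|$.
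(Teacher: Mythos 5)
Your proof is correct and is essentially the argument the paper uses: both rest entirely on the asymptotic expansion of Proposition~\ref{asymptotic-expansion}, and the paper's identification of the two principal curvatures (each equal to $\kappa(u)$, along and orthogonal to $K$) is precisely your computation of the diagonal entries $F_{ss}\cdot u$ and $F_{tt}\cdot u$ together with the vanishing off-diagonal $F_{st}\cdot u=0$. The cancellation you isolate, namely $\phi\cdot u' = \tfrac12\ddot\gamma\cdot u'$ because $\zeta\parallel\dot\gamma$, is the same mechanism the paper appeals to when it asserts the curvature of $t\mapsto(x(t),y(t))$ in direction $u$ equals $\kappa(u)$.
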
 

\begin{proof}
The surface meets the boundary at right angles. This means that at any point $p\in K$ (i.e. with $x=0$) and normal direction $u$ at $p$, the  principal curvatures of $\overline{M}$ at $p$ corresponding to $u$ occur in the directions tangent to $K$ and orthogonal to it. The $K$ direction gives a principal curvature of $\kappa(u)$. To compute the curvature in the orthogonal direction, we use the asymptotic expansion of Proposition~\ref{asymptotic-expansion}.  Using isothermal coordinates $(s,t)$ on $\overline{M}$ with $t=0$ the boundary, the curve given by setting $s$ to be constant is of the form $t \mapsto (x(t),y(t))$ where
\[
x(t) = |\dot\gamma| t + O(t^3),\quad y(t)= \gamma + \left(\frac{1}{2}\ddot{\gamma} -\frac{\dot{a}}{a}
\dot{\gamma}\right)t^2 + O(t^3)
\]
Here $\gamma$ is the corresponding parametrisation of the boundary $K$. The curvature of this curve in the direction $u$ is readily checked to be $\kappa(u)$ also.
\end{proof}

Each surface $\overline{M}_n$ in Theorem~\ref{C1-bound-at-infinity} is $C^{2,\alpha}$ up to the boundary and so $|\tilde{A}_n|_{\tilde{g}}$ is bounded as $x\to 0$. Our main result in this section is that this bound can be made uniformly in $n$. Before we give the proof, we state the analytic estimate from Calderon--Zygmund $L^p$-theory which will be an important tool. 

\begin{theorem}[The $L^p$ gradient estimate]\label{CZ}
Let $\Omega \subset \R^n$ be a bounded domain with smooth boundary. Let $a^{ij}_{rs} \in C^0(\overline{\Omega})$ for $i,j =1,\ldots ,n$ and $r,s = 1, \ldots , k$ satisfy the uniform ellipticity condition:
\[
a^{ij}_{rs} P^r_iP^s_j \geq c |P|^2,\quad c>0,\quad \forall P \in \R^{nk}
\]
Let $F \in L^p(\Omega, \R^k)$ and $G \in L^p(\Omega, \R^{nk})$ where $p \in (n,\infty)$.

If $f \in W^{1,p}_0(\Omega)$ is a weak solution of the equation
\[
\del_i \left( a_{rs}^{ij} \del_j f^r\right) = F_s + \del_i G^i_s
\]
then 
\[
\| f \|_{W^{1,p}} \leq C \left( \| F\|_{L^p} + \| G\|_{L^p}\right)
\]
where $C$ depends only on $\Omega$, $\|a^{ij}_{rs}\|_{C^0}$ and $c$. 
\end{theorem}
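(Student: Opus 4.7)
The plan is to follow the classical Calder\'on--Zygmund $L^p$ theory for second order elliptic systems in divergence form. The strategy proceeds in three steps: first establish the estimate for a constant coefficient model operator on the whole space via singular integral theory, then extend it to continuous variable coefficients on $\Omega$ by a freezing-the-coefficients / perturbation argument, and finally handle the boundary by straightening.

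First I would treat the model case $L_0 = -a^{ij}_{rs}(x_0)\del_i\del_j$ with coefficients frozen at a point $x_0 \in \overline{\Omega}$. For the Laplacian $-\Delta$ acting componentwise on $\R^k$-valued functions on $\R^n$, the estimate $\|\nabla u\|_{L^p(\R^n)} \leq C(\|F\|_{L^p} + \|G\|_{L^p})$ for distributional solutions of $-\Delta u = F + \del_i G^i$ follows from the $L^p$-boundedness of the Riesz transforms $R_j = \del_j(-\Delta)^{-1/2}$, the model case of a Calder\'on--Zygmund singular integral. For a strongly elliptic constant coefficient system, the Fourier symbol $a^{ij}_{rs}(x_0)\xi_i\xi_j$ is invertible on $\R^k$ by the Legendre ellipticity hypothesis, and the associated matrix-valued Fourier multiplier defines a Calder\'on--Zygmund operator to which the Calder\'on--Zygmund theorem applies, giving the same $L^p$ estimate.

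Next I would handle variable continuous coefficients by freezing. Since $a^{ij}_{rs} \in C^0(\overline{\Omega})$ is uniformly continuous on the compact set $\overline{\Omega}$, for any $\varepsilon > 0$ there exists $r = r(\varepsilon) > 0$ such that $|a^{ij}_{rs}(x) - a^{ij}_{rs}(x_0)| < \varepsilon$ on every ball $B_r(x_0)$ meeting $\overline{\Omega}$. Rewriting the equation on such a ball as
\[
L_{x_0} f = F_s + \del_i\left( G^i_s + (a^{ij}_{rs}(x_0) - a^{ij}_{rs}(x))\del_j f^r\right),
\]
the constant coefficient estimate yields $\|\nabla f\|_{L^p(B_r)} \leq C(\|F\|_{L^p} + \|G\|_{L^p}) + C\varepsilon\|\nabla f\|_{L^p(B_r)}$; choosing $\varepsilon < 1/(2C)$ lets us absorb the last term. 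The boundary estimate is obtained by flattening $\del\Omega$ near each boundary point through a diffeomorphism (smooth because $\del\Omega$ is), converting the problem to a divergence-form system on a half-ball with zero Dirichlet data, to which the same freezing strategy applies, using the $L^p$ theory for the constant coefficient system on the half-space with Dirichlet boundary conditions. A partition of unity patches interior and boundary estimates into a global bound on $\|\nabla f\|_{L^p(\Omega)}$, and the Poincar\'e inequality for $W^{1,p}_0(\Omega)$ upgrades this to the full $\|f\|_{W^{1,p}}$ bound.

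The main delicacy lies in the systems aspect. For scalar equations the $L^p$ estimate is classical and robust, but for systems it crucially uses the strong Legendre ellipticity $a^{ij}_{rs}P^r_iP^s_j \geq c|P|^2$, which guarantees invertibility of the matrix symbol on all of $\R^k$; the weaker Legendre--Hadamard condition (testing only against rank-one $P$) would not suffice to make the Fourier multiplier a bounded operator of Calder\'on--Zygmund type. The $C^0$ hypothesis on the coefficients is also essential: for merely bounded measurable coefficients the $L^p$ theory can fail outright for systems (De Giorgi's counterexamples), and even in the scalar case one would need a VMO assumption together with a more delicate maximal function argument in the style of Caffarelli--Peral.
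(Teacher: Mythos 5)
The paper does not prove this theorem: it is stated as a black box with citations to Calder\'on--Zygmund for constant coefficients, to Campanato, Morrey and Simader for continuous coefficients, and to Dolzmann--M\"uller (ultimately resting on Morrey's Theorems 6.4.8 and 6.5.5) for the precise form used. Your sketch --- constant-coefficient model via Mikhlin/Calder\'on--Zygmund multiplier theory, freezing of coefficients with absorption, boundary flattening, partition of unity --- is precisely the route those references take, so you have correctly reconstructed the standard proof rather than offering an alternative. Two small imprecisions in your write-up are worth flagging, though both are routine. First, for the $F$-term the constant-coefficient operator does not directly give $\|\nabla u\|_{L^p(\R^n)} \leq C\|F\|_{L^p}$ on the whole space; the kernel of $\nabla(-\Delta)^{-1}$ has order $1-n$, not $-n$, so this is a fractional integral and one only gets an $L^{p^*}$ bound. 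What rescues it is precisely the bounded domain together with the Dirichlet condition $f \in W^{1,p}_0$: on a bounded set H\"older plus Poincar\'e converts the $L^{p^*}$ gain to the claimed $L^p$ bound, and you should say so rather than phrasing the model estimate globally. Second, when you cut off to a ball $B_r$ in the freezing step, $\chi f$ solves a modified equation with extra source terms of the form $a^{ij}\,\del_i\chi\,\del_j f$ and $\del_i(a^{ij}\,\del_j\chi\, f)$; these contribute $\|f\|_{L^p}$ and $\|\nabla f\|_{L^p(\mathrm{supp}\,\nabla\chi)}$ terms that must be tracked through the patching, typically by an interpolation inequality together with Poincar\'e for $W^{1,p}_0$ to close the estimate. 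Your closing remarks about strong Legendre ellipticity versus Legendre--Hadamard, and the necessity of continuity of the coefficients for systems, are accurate and well placed.
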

This is due to Calderon-Zygmund \cite{Calderon-Zygmund} in the case of constant coefficients and Campanato \cite{Campanato}, Morrey \cite{Morrey} and Simader \cite{Simader} for continuously varying coefficients. The version stated here is essentially that given in \cite{Dolzmann-Muller} (which itself refers back to Morrey, Theorems~6.4.8 and 6.5.5 for the proof).

We are now ready to prove the main result of this section. Note that we need to use convergence of the boundaries in $C^{2,\alpha}$; for Theorem~\ref{C1-bound-at-infinity} and the other results of \S\ref{compactness-at-infinity} to this point $C^2$ convergence was enough. 

\begin{theorem}\label{C2-bound-at-infinity}
Let $u_n \colon (\overline{\Sigma},j_n) \to \overline{Z}$ be a sequence of admissible $J$-holomorphic curves whose boundary links $K_n=\pi(u_n(\del \Sigma))$ converge in $C^{2,\alpha}$ to an embedded link, for some $0<\alpha <1$. Let $\epsilon>0$ satisfy the conclusions of Theorem~\ref{C1-bound-at-infinity}. There are constants $0<\delta< \epsilon$ and $C$ such that for all large $n$ and all $(x,p) \in \overline{M}_{n}$ with $x \leq \delta$, we have $|\tilde{A}_n(x,p)|_{\tilde{g}} \leq C$.
\end{theorem}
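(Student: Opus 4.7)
The strategy is to exploit the conformal invariance of the Willmore equation to overcome the degeneracy of the hyperbolic minimal-surface equation at $x=0$, and then apply the $L^p$ gradient estimate of Theorem~\ref{CZ}. By Theorem~\ref{C1-bound-at-infinity} we may assume that each $\overline{M}_n$ is the graph of a section $\sigma_n$ of the normal bundle of $[0,\epsilon]\times K$ with $\|\nabla\sigma_n\|_{C^0}\leq 1$. The boundary values of $\sigma_n$ and $\del_x\sigma_n$ at $x=0$ are determined by the $C^{2,\alpha}$-convergent links $K_n$ via Proposition~\ref{asymptotic-expansion}, with $\del_x\sigma_n|_{x=0}=0$ expressing the orthogonality of $M_n$ to $\del\H^4$.

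Away from $x=0$ the hyperbolic minimal-surface equation is a uniformly elliptic quasilinear PDE for $\sigma_n$ whose coefficients are controlled by $\|\sigma_n\|_{C^1}$, so classical elliptic theory immediately gives uniform $C^2$ bounds on any compact subset of $(0,\epsilon]\times K$. The delicate region is the boundary layer $[0,\delta]\times K$, where the hyperbolic equation degenerates at $x=0$ (the symbol in the normal directions vanishes there, cf.\ Proposition~\ref{normal-invertible}). Here we instead exploit the fact that the Willmore functional $\int|\mathbf{H}|^2\,\dvol$ on $2$-surfaces in codimension~$2$ is conformally invariant modulo topological terms, so a minimal surface in $(\H^4,g)$ is a Willmore surface for the conformally flat metric $\tilde g=x^2g$. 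Written out for the graphing section $\sigma_n$, the associated fourth-order Euler--Lagrange equation has leading operator $\Delta^2$, is genuinely elliptic up to and including $x=0$, and its nonlinear perturbations depend polynomially on $\sigma_n$ and $\nabla\sigma_n$, both of which are $C^0$-bounded.

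Recasting this fourth-order equation in divergence form and applying Theorem~\ref{CZ} to the auxiliary variable $w_n=\nabla\sigma_n$ yields, in the boundary layer, uniform $W^{2,p}$ bounds on $\sigma_n$ for any $p\in(3,\infty)$, in terms of $\|\sigma_n\|_{C^1}$ and the $C^{2,\alpha}$-norms of the boundary data. Sobolev embedding then gives a uniform $C^{1,\beta}$ bound on $\sigma_n$, and iterating Theorem~\ref{CZ} (or invoking Schauder theory with the now H\"older-continuous coefficients) produces the desired uniform $C^2$ estimate. By Lemma~\ref{hyperbolic-to-euclidean} this is equivalent to the uniform bound on $|\tilde A_n|_{\tilde g}$ claimed in the theorem. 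The principal technical difficulty is verifying that the rather cumbersome Willmore equation admits a divergence-form reformulation with uniformly elliptic principal part under only the $C^1$-control furnished by Theorem~\ref{C1-bound-at-infinity}, and that the conformal factor $x^{-2}$ appearing in the hyperbolic metric cancels cleanly, so that the resulting fourth-order operator is genuinely non-degenerate as $x\to 0$. Once this is in place the rest is a standard elliptic bootstrap.
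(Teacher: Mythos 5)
Your proposal bypasses the paper's blow-up argument and tries to obtain the uniform $C^2$ bound directly by treating the fourth-order Willmore equation as a uniformly elliptic quasilinear PDE for the graphing sections $\sigma_n$ and applying the $L^p$ gradient estimate. There are two gaps that I do not see how to close without reintroducing the paper's rescaling.

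First, the assertion that the nonlinear perturbations of the fourth-order Willmore operator ``depend polynomially on $\sigma_n$ and $\nabla\sigma_n$'' is not correct. The Euler--Lagrange operator for the Willmore functional has the schematic form $\Delta H + Q(A) = 0$, where $Q(A)$ is cubic in the second fundamental form; written for a graph, both the quasilinear coefficients of the principal part and the source terms therefore involve $\nabla^2\sigma_n$. From Theorem~\ref{C1-bound-at-infinity} you control only $\|\sigma_n\|_{C^1}$, so these terms are \emph{a priori} unbounded --- indeed their boundedness is exactly what the theorem is trying to prove. The paper avoids this circularity by rescaling: the doubling/point-picking argument produces a sequence $\overline{X}_n$ with the explicit a~priori bound $|\tilde B_n|\le 2$, which makes the source term $F_n$ in $\Delta_n\tilde H_n=F_n$ uniformly bounded and makes the rescaled graphing functions converge in $C^{1,\alpha}$.

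Second, the $L^p$ gradient estimate of Theorem~\ref{CZ} (Calderon--Zygmund / Campanato--Morrey--Simader) requires the coefficients of the divergence-form operator to be continuous, and the constant in the estimate depends on their modulus of continuity, not merely on their $C^0$-norm. With only a uniform $C^1$ bound on $\sigma_n$ the coefficients are uniformly $C^0$-bounded but their modulus of continuity is uncontrolled, so the constants in your application of Theorem~\ref{CZ} need not be uniform in $n$. In the paper this is precisely what the rescaling step is for: after rescaling, the surfaces converge in $C^{1,\alpha}$ (via Arzel\`a--Ascoli from the $|\tilde B_n|\le 2$ bound and White's theorem), so the coefficients of the relevant divergence-form operators converge in $C^{0,\alpha}$, and the CZ constants can be taken independent of $n$. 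Your scheme discards exactly the piece of the argument that supplies this compactness, and I do not see a substitute.
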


\begin{proof}
We argue by contradiction. If the result is false then (after passing to a subsequence which we don't bother to notate) there is a sequence $(\delta_n,p_n)\in \overline{M}_n$ of points with $\delta_n \to 0$ for which $|\tilde{A}_n(\delta_n,p_n)|_{\tilde{g}} \to \infty$. We will dilate the surfaces to produce a sequence of surfaces with bounded Euclidean second fundamental forms. To do this we need to pick our points carefully. We start with the above sequence $(\delta_n,p_n)$ and now for each $n$ we claim there is a better point $(\delta^*_n, p^*_n) \in \overline{M}_n$ which has the following two properties: 
\begin{enumerate}
\item $|\tilde{A}_n(\delta_n^*,p_n^*)|_{\tilde{g}} \geq |\tilde{A}_n(\delta_n,p_n)|_{\tilde{g}}$.
\item\label{doubling-bound} For all $(x,p) \in \overline{M}_n$, with $x  \leq \frac{1}{|\tilde{A}_n(\delta_n^*,p_n^*)|_{\tilde{g}}}$, we have $|\tilde{A}_n(x,p)|_{\tilde{g}} < 2 |\tilde{A}_n(\delta_n^*,p_n^*)|_{\tilde{g}}$.
\end{enumerate} 
To prove the claim, consider first the point $(\delta_n,p_n)$. If this satisfies the doubling bound~\ref{doubling-bound} then we're done. If not, there is a point $(\delta_n^1,p_n^1) \in \overline{M}_n$ with $\delta_n^1 \leq |\tilde{A}_n(\delta_n,p_n)|_{\tilde{g}}^{-1}$ and with $|\tilde{A}_n(\delta_n^1,p_n^1)|_{\tilde{g}} = 2 |\tilde{A}_n(\delta_n,p_n)|_{\tilde{g}}$. Now if this point satisfies the doubling bound we're finished. If not, there is a point $(\delta_n^2,p_n^2)$ with $\delta_n^2 \leq (2|\tilde{A}_n(\delta_n,p_n)|_{\tilde{g}})^{-1}$ and $|\tilde{A}_n(\delta^2_n,p^2_n)|_{\tilde{g}} = 4|\tilde{A}_n(\delta_n,p_n)|_{\tilde{g}}$. We continue in this way and this process must terminate after a finite number of steps. If not, we would have produced a sequence $(\delta_n^j,p_n^j)$ for $j = 1,2, \ldots $ with $|\tilde{A}_n(\delta_n^j,p_n^j)|_{\tilde{g}} = 2^j |\tilde{A}_n(\delta_n,p_n)|_{\tilde{g}} \to \infty$ as $j \to \infty$ and at the same time $\delta^j_n \leq 2^{1-j}|\tilde{A}_n(\delta_n,p_n)|_{\tilde{g}}^{-1} \to 0$ as $j \to \infty$. This can't happen however: we can pass to a subsequence in $j$ for which the $p_n^j$ converge to $p$ say. Then $|\tilde{A}_n(\delta_n^j,p_n^j)|_{\tilde{g}} \to |\tilde{A}_n(0,p)|_{\tilde{g}}$ which is finite: each surface $\overline{M}_n$ is $C^2$ and so has continuous second fundamental form.

Write $R_n \defeq |\tilde{A}_n(\delta^*_n,p^*_n)|_{\tilde{g}}$. We now carry out the rescaling argument as in the proof of Theorem~\ref{C1-bound-at-infinity}, this time with scale factor $R_n$. In detail, we let $\eta_n \in K_n$ be the point on the boundary link which is closest to $p^*_n$. We then translate and rescale, setting $\overline{X}_n = R_n(\overline{M}_n - (0,\eta_n))$. Just as in the proof of Theorem~\ref{C1-bound-at-infinity}, the surfaces $\overline{X}_n$ converge in $C^\infty$ on compact subsets of the region $x>0$ to a flat half-plane, by White's Theorem. Let $\tilde{B}_n$ be the Euclidean second fundamental form of $\overline{X}_n$. By our choice of the points $(\delta_n^*,p_n^*)$, after rescaling the Euclidean second fundamental form $\tilde{B}_n$ of $\overline{X}_n$ satisfies $|\tilde{B}_n|\leq 2$  in the region $0\leq x\leq 1$, for the rescaled coordinates (since the norm of the second fundamental form scales inversely to length). So we can pass to a subsequence which converges \emph{up to the boundary} in $C^{1,\alpha}$ to a flat half-plane. 

The point $(\delta_n^*,p_n^*) \in \overline{M}_n$ corresponds to the point $Q_n = (R_n\delta_n^*,R_n(p^*_n- \eta_n)) \in \overline{X}_n$. By Corollary~\ref{running-to-the-origin}, $R_n\delta_n^* \to 0$. The bound $|\nabla \sigma_n|\leq 1$ implies that $|p^*_n - \eta_n| \leq \delta_n^*$ and so $R_n(p^*_n - \eta_n) \to 0$ also. So $Q_n \to 0$ converges to the origin. By our choice of scale factor $|\tilde{B}_n(Q_n)|=1$. We want to deduce a contradiction with the fact that the limit is flat. To do this we must upgrade the convergence, which is at this point only in $C^{1,\alpha}$, to ensure that it is actually $C^2$ up to the boundary. 

Write $\tilde{H}_n$ for the Euclidean mean curvature of $\overline{X}_n$. Since the original surfaces $\overline{M}_n$ were $C^{2,\alpha}$ up to the boundary, the same is true of the $\overline{X}_n$ and so $\tilde{H}_n \in C^{0,\alpha}$. The next step is to  prove a uniform $C^{0,\alpha}$ bound on $\tilde{H}_n$ near the origin. 

First we claim that the boundary values of $\tilde{H}_n$ converge to zero in $C^{0,\alpha}$. By Lemma~\ref{boundary-value-Euclidean-H} the mean-curvature on the boundary in the normal direction $u$ to the surface $\overline{M_n}$ at $x=0$ is simply $2\kappa_n(u)$, twice the curvature of the boundary curve $K_n$ in the direction $u$. Since we assume that $K_n \to K_\infty$ in $C^{2,\alpha}$, the boundary values of the mean curvatures \emph{before} rescaling converge in $C^{0,\alpha}$ to to a definite limit. This means that after rescaling, $\tilde{H}_n|_{x=0} \to 0$ in $C^{0,\alpha}$ on compact sets. 

We now extend this control of $\tilde{H}_n$ to the interior. For this, we use the fact that, since the surfaces $\overline{X}_n$ are hyperbolic-minimal, they also solve the \emph{conformally invariant} Willmore equation in Euclidean space. The Willmore equation tell us that on the interior of $\overline{X}_n$ we have
\begin{equation}
\Delta_n (\tilde{H}_n) = F_n
\label{Willmore-n}
\end{equation}
Here $\Delta_n$ is the covariant Laplacian on the normal bundle of $\overline{X}_n$ and the right-hand side $F_n$ is built out algebraically of the second fundamental form $\tilde{B}_n$. (In arbitrary dimension this goes back at least to Weiner~\cite{Weiner}.) The precise form of $F_n$ is not important here, but what does matter is that we have a uniform bound $\|F_n\|_{C^0} \leq C$ because $|\tilde{B}_n| \leq 2$. 

The idea behind the next step is now simple to state: the fact that the rescaled surfaces have $|\tilde{B}_n|\leq 2$ and converge in $C^{1,\alpha}$ means that the coefficients of $\Delta_n$ have appropriate behaviour for us to be able to ultimately apply the $L^p$ estimate of Theorem~\ref{CZ} with a constant which is independent of $n$. This, together with Sobolev embedding will give a uniform Hölder bound on $\tilde{H}_n$ on a neighbourhood of the boundary. 

We now give the details. The surfaces $\overline{X}_n$ converge to a flat half-plane. We choose our coordinates so that this half-plane is given by $y_2=0=y_3$. We write the piece of surface $\overline{X}_n \cap \{ x\leq 1,\ |y_1|\leq 1\}$ as graph over $[0,1]\times[-1,1]$, by forgetting the $y_2,y_3$ coordinates. In this way we use $x,y_1$ as coordinates on each of the $\overline{X}_n$. In fact it will be more convenient to use a coordinate chart with a smooth boundary. Write $\overline{\Omega}$ for a domain in $[0,1]\times [-1,1]$ which contains $\{ (0,y_1) : |y_1| \leq 3/4\}$ and which has a smooth boundary. We use $\overline{\Omega}$ as our coordinate patch on each $\overline{X}_n$.  

Next we trivialise the normal bundle of $\overline{X}_n$ over $\overline{\Omega}$, by picking an orthonormal frame $v_1,v_2$ for the normal bundle of $\overline{X}_n$. Let $a_n$ be the 1-form defined by $a_n(u)= \left\langle u\cdot v_1, v_2 \right\rangle$. (The subscript $n$ is to remind us we are working on the surface $\overline{X}_n$.) Then with respect to this frame, a section $\phi = \phi_1v_1 + \phi_2v_2$ has covariant derivative equal to 
\[
\nabla \begin{pmatrix} \phi_1 \\ \phi_2 \end{pmatrix} 
=
\begin{pmatrix}
\diff \phi_1 - a_n \otimes \phi_2\\
 \diff \phi_2 + a_n\otimes \phi_1
\end{pmatrix}
\]
It follows that
\[
\Delta_n \phi= -\diff^*_n\diff \phi -
\begin{pmatrix}
\diff^*_na_n\, \phi_2 + 2 \left\langle a_n, \diff \phi_2 \right\rangle - |a_n|^2 \phi_1\\
\diff^*_na_n\, \phi_1 + 2 \left\langle a_n, \diff \phi_1 \right\rangle - |a_n|^2 \phi_2\\
\end{pmatrix}
\]
Here the operator $\diff^*_n$ is the adjoint of the exterior derivative with respect to the metric on~$\overline{X}_n$. 

Up to here the frame was arbitrary and so we have no control over the resulting coefficients of $\Delta_n$. We now explain how to choose the frame in order to better control the terms involving~$a_n$. The vector $\del_{y_2}$ is normal to the limiting flat plane. For each $n$, we project it to the normal bundle of $\overline{X}_n$ and normalise it to have unit length. Denote the result by $v_1$. Since the surfaces $\overline{X}_n$ converge to the plane in $C^{1,\alpha}$,  $v_1$ converges back to $\del_{y_2}$ in $C^{0,\alpha}$. Moreover, the uniform bound $|\tilde{B}_n| \leq 2$ implies a uniform $C^1$-bound on $v_1$. The second element in the frame $v_2$ is determined by $v_1$ via the orientation. Now the $C^1$ bound on $v_1$ implies $\|a_n\|_{C^0} \leq C$. 

Next we rotate our frame to eliminate the $\diff^*_na_n$ term (using ``Coulomb gauge'' in the language of gauge theory). For each $n$, we let $\theta_n$ solve $-\diff^*_n\diff\, \theta_n = \diff^* a_n$. Then the new frame $v'_1 = (\cos \theta_n) v_1 + (\sin \theta_n) v_2$, and $v_2' = - (\sin \theta_n) v_1 + (\cos \theta_n) v_2$ has $a_n' = a_n + \diff \theta_n$ and so $\diff^*_na_n'=0$. We want to do this whilst controlling $\diff \theta_n$ and for this we impose the condition that $\theta_n$ vanishes on the boundary $\del \Omega$. We write out explicitly the equation $\theta_n$ solves in our coordinates:
\begin{equation}
\del_i (\sqrt{|g|} g^{ij} \del_j \theta) = - \del_i (\sqrt{|g|}a_n)^i
\label{Laplacian-in-coordinates}
\end{equation}
Here $g_{ij}$ are the metric coefficients of $\overline{X}_n$ in our coordinate patch. The terms $\sqrt{|g|}$ and $g^{ij}$ converge in $C^{0,\alpha}$, since the surfaces themselves converge in $C^{1,\alpha}$. This means that for any choice of $p$, we can apply the $L^p$ gradient estimate of Theorem~\ref{CZ}:
\[
\| \diff \theta_n \|_{L^p} \leq C \| a_n\|_{L^p} \leq C'
\]
where $C,C'$ do not depend on $n$ (but will depend on $p$). We make two side remarks: firstly the fact that we only have a $C^0$ bound on $a_n$, as opposed to a $C^{0,\alpha}$ bound, is what means we have to use $L^p$ estimates here rather than Schauder estimates. Secondly the estimate taken straight from the books uses $L^p$-norms defined by the flat metric $\delta_{ij}$ on $\overline{\Omega}$; this is uniformly equivalent to the $L^p$-norms defined by $g_{ij}$ and so it does not matter which we use here; we will not mention this sort of point from here on. From this estimate it follows that 
\[
\|a_n'\|_{L^p} \leq \| a_n\|_{L^p} + \| \diff \theta_n\|_{L^p} \leq C. 
\]
From now on we use this frame, and drop the prime notation. So we have $\diff^*_na_n=0$ and $\|a_n\|_{L^p} \leq C_p$, where $p$ is at our disposal. 

Notice that because $\diff^*_na_n=0$, we have $\diff^*_n (a_n \phi_i) = \left\langle a, \diff \phi_i \right\rangle$. This means that for our special choice of frame the covariant Laplacian on the normal bundle is given by 
\[
\Delta_n \phi
	= 
		- \diff^*_n \diff \phi 
		- \begin{pmatrix}
		2 \diff^*_n (a_n\phi_2) - |a_n|^2 \phi_1\\
		2 \diff^*_n (a_n \phi_1) - |a_n|^2 \phi_2
		\end{pmatrix}
\]
Write $\tilde{H}_n = \phi_{n,1}v_1 + \phi_{n,2}v_2$ in this frame. Then the equation $\Delta_n\tilde{H}_n = F_n$ implies that $\phi_n=(\phi_{n,1},\phi_{n,2})$ satisfies
\begin{equation}
-\diff^*_n \diff \phi_n
=
\Phi_n + \diff^*_n G_n
\label{Willmore-divergence}
\end{equation}
where the right-hand side here is given by
\begin{align*}
\Phi_n &= F_n + \begin{pmatrix} |a_n|^2 \phi_{n,1}\\ |a_n|^2 \phi_{n,2}\end{pmatrix}\\
G_n & = \begin{pmatrix} 2 a_n \phi_{n,2}\\ 2a_n \phi_{n,1}\end{pmatrix}
\end{align*}
The point of writing the equation this way is that, because $\tilde{H}_n$ and $F_n$ are bounded in $C^0$ and $a_n$ is bounded in $L^p$, the source terms on the right-hand side are controlled: $\Phi_n$ is bounded in $L^{p/2}$ and $G_n$ is bounded in $L^p$.

We now want to apply our $L^p$ gradient estimate to~\eqref{Willmore-divergence} to obtain an $L^p$ bound on $\diff \phi$. To do so we need to to work with a function which vanishes on the boundary. To arrange this, let $\chi \colon \overline{\Omega} \to [0,1]$ be a smooth cut-off function, equal to 1 on a neighbourhood of the points of the form $(0,y_1)$ for $|y_1|<1/2$ and equal to zero on the part of the boundary of $\overline{\Omega}$ which doesn't lie on the $y_1$-axis. Now put $\psi_n = \chi \phi_n|_{\del \Omega}$. Note that $\psi_n \to 0$ in $C^{0,\alpha}(\del\Omega)$, because the boundary value of $\tilde{H}_n$ itself converges to zero on the part of $\overline{X}_n$ corresponding to the $y_1$-axis. We write $\overline{\psi}_n$ for the harmonic extension of $\psi_n$ to the whole of $\overline{\Omega}$. I.e. $\diff^*_n\diff \overline{\psi}_n=0$ and $\overline{\psi}_n|_{\del \Omega} = \psi_n$. We then consider
\[
\hat{\phi}_n \defeq \chi \phi_n - \overline{\psi}_n
\]
which vanishes on the boundary.

The harmonic extension $\overline{\psi}_n$ solves the equation
\[
\del_i (\sqrt{|g|}g^{ij}\del_j)\overline{\psi}_n = 0
\]
Elliptic theory gives a constant $C$ such that
\[
\| \overline{\psi}_n\|_{C^{0,\alpha}(\overline{\Omega})} 
\leq
C \| \psi_n \|_{C^{0,\alpha}(\del \Omega)}
\]
Moreover, since the $g_{ij}$ converge in $C^{0,\alpha}$, the constant in this estimate can be taken independent of $n$. This means that $\overline{\psi}_n \to 0$ in $C^{0,\alpha}(\overline{\Omega})$ and so to bound $\chi \phi_n$ in $C^{0,\alpha}$ it suffices to bound $\hat{\phi}_n$. To this end we compute, via~\eqref{Willmore-divergence}
\begin{align*}
-\diff^*_n\diff \hat{\phi}_n
	&=
		-\diff^*_n\diff (\chi \phi_n)\\
	&=
		-\chi\, \diff^*_n\diff \phi_n
		-2\diff^*_n(\phi_n\diff \chi)
		+ 
		\phi_n \diff^*_n\diff \chi\\
	&=
		\hat{\Phi}_n + \diff_n^*\hat{G}_n
\end{align*}
from~\eqref{Willmore-divergence}, where
\begin{align*}
\hat{\Phi}_n
	&=
		\chi \Phi_n + \phi_n\diff^*_n\diff \chi - \left\langle \diff \chi, G_n \right\rangle,\\
\hat{G}_n
	&=
		\chi G_n - 2\phi_n\diff \chi.
\end{align*}
Importantly, we have uniform bounds: $\|\hat{\Phi}_n\|_{L^{p/2}} \leq C$ and $\|\hat{G}_n\|_{L^p} \leq  C$, where we are free to take $p$ as we please.

Now we can apply the Calderon--Zygmund estimate. Explicitly, in coordinates, $\hat{\phi}_n$ solves the PDE
\begin{equation}
\del_i (\sqrt{|g|}g^{ij}\del_j \hat{\phi}_n)
=
\sqrt{|g|}\hat{\Phi}_n + \del_i \left(\sqrt{|g|} \hat{G}_n\right)^i
\label{final-PDE-for_CZ}
\end{equation}
The terms $g^{ij}$ and $\sqrt{|g|}$ converge in $C^{0,\alpha}$, $\hat{\phi}_n$ vanishes on the boundary and the source terms on the right-hand side are both bounded in $L^{p/2}$. So we have a uniform constant $C$ such that
\[
\| \hat{\phi}_n\|_{W^{1,p/2}} \leq C
\]
Taking $p$ sufficiently large, Sobolev embedding then implies a uniform bound $\|\hat{\phi}_n\|_{C^{0,\alpha}} \leq C$. It follows that there is a subdomain $\overline{\Omega}' \subset \overline{\Omega}$  (namely the region where $\phi=1$) containing the points $(0,y_1)$ with $|y_1| \leq 1/2$ for which $\|\tilde{H}_n\|_{C^{0,\alpha}(\overline{\Omega}')} \leq C$. 

The contradiction is now nearly at hand. Let $w_n \colon \overline{\Omega} \to \R^2$ be the function whose graph is $\overline{X}_n$. We know that $w_n \to 0$ in $C^{1,\alpha}(\overline{\Omega})$. Write $T = \{(0,y_1): |y_1|\leq 1/4\}$. We have  that $w_n|_{T} \to 0$ in $C^{2,\alpha}(T)$: the boundaries of the original surfaces converge in $C^{2,\alpha}$, so the rescaled boundaries converge to the boundray of the flat half-plane in $C^{2,\alpha}$. Now the mean curvature $\tilde{H}_n$ is a quasi-linear elliptic combination of the graphing functions $w_n$. Schematically,
\[
\tilde{H}_n = L(\nabla w_n)(w_n)
\]
where $L(\nabla w_n) = L_n$ is a linear elliptic operator whose coefficients depend on $\nabla w_n$. Let $\overline{\Omega}'' \subset \Omega'$ be a smaller domain, with $\{(0,y_1): |y_1|<1/5\}$ in its boundary. Schauder boundary estimates imply that 
\[
\| w_n\|_{C^{2,\alpha}(\overline{\Omega}'')}
\leq
C \left( \| L_n w_n\|_{C^{0,\alpha}(\Omega')} + \| w_n|_T\|_{C^{2,\alpha}(T)}\right)
\]
Since the $w_n$ converge in $C^{1,\alpha}$, the coefficients of $L_n$ converge in $C^{0,\alpha}$ and the constant $C$ here can be taken independent of $n$. Now the fact that 
\[
\|L_n w_n\|_{C^{0,\alpha}(\Omega')} = \| \tilde{H}_n\|_{C^{0,\alpha}(\Omega')} \leq C
\]
and $\| w_n|_T\|_{C^{2,\alpha}(T)} \to 0$ imply that $w_n$ is bounded in $C^{2,\alpha}(\overline\Omega'')$. We can now pass to a subsequence for which the $w_n \to 0$  in $C^2$. This means that, near the origin at least, the Euclidean second fundamental form $\tilde{B}_n$ of $\overline{X}_n$ converges to zero. At the same time, by choice of our rescaling, $|\tilde{B}_n(Q_n)|=1$ and yet $Q_n \to (0,0)$, so $|\tilde{B}_n(Q_n)| \to 0$. This contradiction completes the proof.
\end{proof}

\subsection{The proof of Theorem~\ref{convergence-near-infinity}}\label{C2alpha-control-section}

We now prove the main result of \S\ref{compactness-at-infinity}, Theorem~\ref{convergence-near-infinity}. We switch to the set-up described in \S\ref{statement-of-compactness-near-infinity}, so that $K$ is a smooth link very close in $C^{2,\alpha}$ to the limit $K_\infty$ of the $K_n$. We know from Theorem~\ref{C1-bound-at-infinity} that the surfaces $\overline{M}_{n,\epsilon}$ are graphs over $[0,\epsilon]\times K_n$ of sections $\sigma_n$ of $E_n$. Since these cylinders converge to $[0,\epsilon] \times K_\infty$, as long as we take $K$ sufficiently close to $K_\infty$, we will be able to write $\overline{M}_{n,\epsilon}$ as a graph over the \emph{fixed} cylinder $[0,\epsilon]\times K$ of a section $s_n$ of the normal bundle $E$. 

By Theorem~\ref{C2-bound-at-infinity}, the surfaces $\overline{M}_{n,\epsilon}$ have uniformly bounded Euclidean second fundamental forms, and so the sections $s_n$ are bounded in $C^2$. We pass to a subsequence so that they converge in $C^{1,\alpha}$. Since the surfaces are minimal in the region $x>0$,  standard elliptic regularity for minimal surfaces means that we can take this subsequence to converge in $C^{\infty}$ on compact subsets of $(0,\epsilon)\times K$. The main point is to extend this to $C^{2,\alpha}$ convergence up to the boundary at $x=0$. We do this following the same argument as in the proof of Theorem~\ref{C2-bound-at-infinity}, via the Willmore equation. 

The Euclidean mean curvature $\tilde{H}_n$ of $\overline{M}_{n,\epsilon}$ solves the Willmore equation on the interior. We pull everything back to $[0,\epsilon]\times K$ via the sections $s_n$ and use a Coulomb gauge trivialisation of the normal bundles, to obtain a sequence of elliptic systems on a fixed domain. The point is that the $C^{1,\alpha}$ convergence of the sections $s_n$ means that the various divergence-form differential operators which appear have coefficients which converge in $C^{0,\alpha}$, just as in the proof of Theorem~\ref{C2-bound-at-infinity}, and so we can argue identically. This gives a uniform $C^{0,\alpha}$ bound on the Euclidean mean curvature $\tilde{H}_n$ of $\overline{M}_{n}$, valid for $x \leq \epsilon/2$, say, and hence a uniform $C^{2,\alpha}$ bound on the sections $s_n$ in the same region. 

We now explain how the same argument actually gives $C^{2,\alpha}$ convergence of the $s_n$. We use the same notation as before. We work on the domain $\overline{\Omega} = [0,\epsilon] \times K$; $\tilde{H}_n$ is represented in our trivialisation by the $\R^2$-valued function $\phi_n$; $\chi$ is a cut-off function equal to $1$ on a neighbourhood of the boundary $x=0$ and supported in, say, $0 \leq x \leq \epsilon/2$; $\psi_n = \chi \phi_n|_{\del \Omega}$ and $\overline{\psi}_n$ is the harmonic extension of $\psi_n$ to $\overline{\Omega}$ with respect to the metric on $\overline{M}_{n,\epsilon}$ (pulled back to $\overline{\Omega}$ via $s_n$). 

The first thing to say is that $\psi_n$ converges in $C^{0,\alpha}(\del \Omega)$. On the boundary $x=0$ this follows from Lemma~\ref{boundary-value-Euclidean-H} and the fact that the boundary links $K_n$ converge in $C^{2,\alpha}$. On the boundary $x=\epsilon$, $\psi_n$ is identically zero. Over $\Omega$, $\overline{\psi}_n$ is harmonic, $\Delta_n\overline{\psi}_n=0$ for a sequence of operators $\Delta_n$ whose coefficients converge in $C^{0,\alpha}$. This is because the sequence of metrics on $[0,\epsilon]\times K$, obtained by pull-back from $\overline{M}_{n,\epsilon}$ via $s_n$, converge in $C^{1,\alpha}$. Since the boundary values also converge in $C^{0,\alpha}$ it follows that $\psi_n$ converges in $C^{0,\alpha}(\overline{\Omega})$. 

Next we consider $\hat{\phi}_n = \chi \phi_n - \overline{\psi}_n$ which solves the equation~\eqref{final-PDE-for_CZ}. The source terms on the right-hand side converge in $L^{p/2},$ the coefficients in $C^{0,\alpha}$ and so the solutions converge in $W^{1,p/2}$ and hence, by Sobolev embedding once $p$ is high enough, in $C^{0,\alpha}$. This means that the $\tilde{H}_n$ converge in $C^{0,\alpha}$, at least for $0 \leq x \leq \epsilon/2$. 

Now the sections $s_n$ solve a quasilinear PDE of the form $L_n(s_n) = \tilde{H}_n$, where the coefficients of $L_n$ are determined by $\nabla s_n$. Since the coefficients converge in $C^{0,\alpha}$, the right-hand side converges in $C^{0,\alpha}$ and the boundary values of the $s_n$ at $x=0$ converge in $C^{2,\alpha}$ we can conclude that the $s_n$ converge near $x=0$ in $C^{2,\alpha}$. 

\section{Convergence of \texorpdfstring{$J$}{J}-holomorphic maps}\label{convergence-interior-section}

As before, let $u_n \colon (\overline{\Sigma},j_n) \to \overline{Z}$ be a sequence of admissible $J$-holomorphic curves and $f_n = \pi \circ u_n$ the associated sequence of conformal harmonic maps to $\H^4$. We continue to assume that the boundary links $K_n =\pi(f_n(\del \Sigma))$ converge in $C^{2,\alpha}$. In this section we address the convergence of the maps $u_n$ themselves, over the whole of $\overline{\Sigma}$.

\subsection{Statement of main results of \S\ref{convergence-interior-section}}

Our first main result concerns holomorphic discs.

\begin{theorem}\label{properness-for-discs}
Let $u_n \colon (\overline{D}, j_0) \to \overline{Z}$ be a sequence of admissible $J$-holomorphic maps from the closed unit disc with its standard complex structure. Assume that the boundary knots $K_n = \pi(u_n(S^1) )\subset S^3$ converge in $C^{2,\alpha}$ to an embedded limit $K_\infty$. Write $f_n = \pi \circ u_n \colon \overline{D} \to \overline{\H}^4$ for the corresponding sequence of conformal harmonic maps.
\begin{enumerate}
\item
There exists a sequence of biholomorphisms $\phi_n \in \PSL(2,\R)$ of $\overline{D}$ such that, after passing to a subsequence, $f_n \circ \phi_n$ converges, in $C^{\infty}$ on the interior $D$ and in $C^{2,\alpha}$ up to the boundary, to a conformal harmonic map $f _\infty \colon \overline{D} \to \overline{\H}^4$ with $f_{\infty}(\del \Sigma) = K_{\infty}$
\item
If the limit $f_\infty$ has no branch points, then the same subsequence of $u_n \circ \phi_n$ converges in the space $\A$ of admissible maps to a $J$-holomorphic map $u_\infty$ which is the twistor lift of~$f_\infty$.  
\end{enumerate}
\end{theorem}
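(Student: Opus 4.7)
The plan is to use the non-compact automorphism group $\PSL(2,\R)$ of the disc to pin down the parametrization, then combine the boundary-collar convergence of Theorem~\ref{convergence-near-infinity} with interior regularity for conformal harmonic maps, and finally bootstrap from convergence of $f_n\circ\phi_n$ to convergence of $u_n\circ\phi_n$ via the Eells--Salamon description of twistor lifts.

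First I would fix three marked points $p_1,p_2,p_3\in S^1$ and three marked points $q_1,q_2,q_3\in K_\infty$. For each large $n$, there exists a unique $\phi_n\in\PSL(2,\R)$ such that $(f_n\circ\phi_n)(p_i)$ is the point of $K_n$ closest to $q_i$, using that $\PSL(2,\R)$ acts simply transitively on positively oriented triples in $S^1$ and that $f_n|_{S^1}$ is an embedding onto $K_n$. The purpose of this normalization is to rule out degenerations in which essentially all the mass of $u_n\circ\phi_n$ concentrates near a single boundary point---something the non-compactness of $\PSL(2,\R)$ would otherwise allow.

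Next I would apply Theorem~\ref{convergence-near-infinity} to the normalized sequence. After passing to a subsequence, the minimal surfaces $\overline{M}_{n,\epsilon}$ are graphs of sections $s_n$ of $E\to[0,\epsilon]\times K$ converging in $C^{2,\alpha}$. Via the graph parametrization, I would view $f_n\circ\phi_n$ on its boundary collar as a sequence of conformal diffeomorphisms from a subset of $\overline{D}$ onto the fixed cylinder $[0,\epsilon]\times K$, equipped with metrics converging in $C^{1,\alpha}$. Since the three-point normalization fixes the conformal uniformization uniquely, classical stability of the Riemann mapping then yields $C^{2,\alpha}$ convergence of $f_n\circ\phi_n$ on a fixed collar of $S^1\subset\overline{D}$. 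In the interior, Proposition~\ref{interior-estimate} together with the compactness of $\overline{\H}^4$ provides uniform $C^k$ bounds on compact subsets of $D$. Bubbling cannot occur: every harmonic map $S^2\to\H^4$ is constant (the target has non-positive curvature and the domain has positive Euler characteristic), so no interior energy concentration is possible. A diagonal argument then extracts a subsequence converging smoothly on compacta of $D$ and in $C^{2,\alpha}$ up to the boundary, proving the first claim.

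For the second claim, assume $f_\infty$ has no branch points. The twistor lift $u_\infty\colon\overline{D}\to\overline{Z}$ assigns to each $p\in D$ the orthogonal complex structure determined by the oriented plane $\diff f_\infty(T_pD)$; this construction is a smooth function of the 1-jet of $f$ wherever $\diff f\neq 0$. Since $\diff f_\infty\neq 0$ everywhere, $u_n\circ\phi_n\to u_\infty$ in $C^\infty$ on compact subsets of $D$. At the boundary, Proposition~\ref{asymptotic-expansion} shows that for admissible maps the coefficients of the Taylor expansion of $u$ through order $O(t^2)$ in the coordinates $x,y_i,tz_i$ are determined entirely by the boundary parametrization $\gamma$. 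The $\gamma_n$ converge in $C^{2,\alpha}$, and the scaling $(x,y_i,tz_i)\in C^{2,\alpha}$ in Definition~\ref{admissible-maps} is precisely designed to compensate for the derivative lost in passing from $f$ to its twistor lift; hence $u_n\circ\phi_n\to u_\infty$ in the admissible-map topology of $\A$. The main obstacle I anticipate is the junction between the boundary-collar analysis of \S\ref{compactness-at-infinity}, which controls the \emph{images} in $\overline{Z}$, and the interior theory, which controls the \emph{parametrizations} of the domain. Bridging the two requires that the conformal structures induced on the graphs $\overline{M}_{n,\epsilon}$ converge in a way compatible with the three-point normalization on $\overline{D}$; this is what lets one pass from convergence of sets to convergence of parametrized maps, and without the normalization the whole argument fails.
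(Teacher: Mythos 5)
Your plan shares several pieces with the paper's proof — using the non‑compact $\PSL(2,\R)$ action to normalize, Theorem~\ref{convergence-near-infinity} for the boundary collar, Proposition~\ref{interior-estimate} plus Arzelà--Ascoli in the interior, and the continuity of the Gauss (twistor) lift in the absence of branch points for part (2). But there is a genuine gap at the crucial normalization step, which is where all the work is.

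You choose $\phi_n\in\PSL(2,\R)$ by a three‑point boundary normalization, and assert that this ``rules out degenerations in which essentially all the mass concentrates near a single boundary point.'' But you never establish the thing that is actually needed, namely that the level curve $\Gamma_n=(x\circ f_n\circ\phi_n)^{-1}(\epsilon/2)$ stays in a fixed compact subset of $D$. Knowing that the conformal modulus of $\overline{\Sigma}_{n,\epsilon}$ is bounded (it equals that of $\overline{M}_{n,\epsilon}$, which converges by Theorem~\ref{convergence-near-infinity}) does \emph{not} prevent $\Gamma_n$ from sliding out to $S^1$: a Möbius transformation preserves the modulus while pushing $\Gamma_n$ into an arbitrarily small neighbourhood of a boundary point. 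A three‑point normalization on $S^1$ constrains the boundary parametrization, but it is an interior quantity ($\Gamma_n$) that must be controlled, and you give no argument that the boundary condition controls it. The paper instead normalizes so that $\Gamma_n$ passes through the origin and then invokes Corollary~\ref{Gamma-bounded-length}, which gives a uniform bound on the hyperbolic length $l(\Gamma_n)$; this length bound is exactly what forces $\Gamma_n$ into a fixed compact disc. That Corollary is far from free: it depends on Proposition~\ref{no-bps-near-infinity} (no branch points near infinity, $|\diff f_n|\geq c$ on $\Sigma_{n,\epsilon/2}$), which is itself proved by a delicate rescaling argument using White's regularity theorem. None of this appears in your proposal, and it is the heart of the matter.

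A secondary, related weakness: the invocation of ``classical stability of the Riemann mapping'' is too coarse. The maps $f_n\circ\phi_n$ restricted to a collar are biholomorphisms between annuli with one $C^{2,\alpha}$ boundary and one free interior boundary, with the domain annulus $\overline{\Sigma}_{n,\epsilon}$ varying with $n$. This is not the setting of the Riemann mapping theorem, and the convergence of the free boundary is precisely what has to be proved, not assumed. The paper deals with this concretely: having pinned down a \emph{fixed} annulus $\overline{W}=\{r\leq|z|\leq1\}\subset\overline{\Sigma}_{n,\epsilon}$ via the length bound, it reflects the maps across $\{x=0\}$ using the admissible Taylor data of Proposition~\ref{asymptotic-expansion}, and then applies Lemma~\ref{maps-between-annuli}, a compactness statement for holomorphic maps between annuli with converging complex structures and converging boundary values. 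If you want to make your three‑point normalization work, you would still need to derive the compact containment of $\Gamma_n$ — at which point you would essentially be reproving Corollary~\ref{Gamma-bounded-length} — and you would need a precise replacement for the vague Riemann‑mapping step.
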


Notice that to ``upgrade'' the convergence of the $f_n$ to convergence of the $u_n$ we assume that the limit $f_\infty$ has no branch points. The precise reason for this will be made clear in~\S\ref{interior-estimates-section}. Put briefly, a branch point in $f_\infty$ may indicate a vertical bubble forming in the $u_n$. 

When considering maps with more complicated domains, we must allow for the fact that the domain may develop nodes. To this end we first recall the definition of nodal Riemann surfaces.

\begin{definition}
A \emph{nodal Riemann surface with boundary} is a pair $(\overline{S},N)$ where $\overline{S}$ is a compact Riemann surface with boundary and finitely many connected components, and $N \subset \overline{S}$ is a finite set of points of the form $N = \{p_1,q_1, \ldots , p_m,q_m\}$. We write $\overline{S}/N$ for the space obtained by identifying these points in pairs, $p_i \sim q_i$. A pair of identified points are either both on the boundary or both in the interior. When identified, these points are the \emph{nodes} of $\overline{S}/N$. We denote the node corresponding to $p_i\sim q_i$ by $N_i$. Note that components are allowed to have empty boundary. (Since we do not consider nodal surfaces with varying complex structure we do not bother to notate the complex structure on $\overline{S}$.)
\end{definition}

We briefly recall how nodal Riemann surfaces arise as limits of varying complex structures. We assume in this paragraph that $\chi(\overline{\Sigma})<0$. Let $j_n$ be a sequence of complex structures on $\overline{\Sigma}$ and write $[j_n]$ for the corresponding sequence in the moduli space. Let $h_n$ denote the complete hyperbolic metric on the interior $\Sigma$ corresponding to $j_n$. For each $n$ we write $\gamma_n$ for the $h_n$-geodesic which is homotopic to $\del \Sigma$. If $[j_n]$ limits to a nodal surface with boundary nodes, then the length $l(\gamma_n)$ tends to infinity. We will show this does not happen in the cases that interest us. Accordingly we only consider the case when $[j_n]$ approaches the boundary of the moduli space as $n \to \infty$, but with no boundary nodes. In this case there exists a sequence  $\gamma_1,\ldots, \gamma_m \subset \Sigma$ of homotopically non-trivial closed geodesics for $h_n$, whose lengths go to zero as $n \to \infty$. Acting by a diffeomorphism for each $n$, we can assume that these curves are independent of $n$ as subsets of $\Sigma$. The limiting nodal surface $\overline{S}/N$ is produced, topologically at least, by collapsing these geodesics to points.

The punctured surface $S\setminus N$ carries a natural complete hyperbolic metric $h_\infty$ which is a the limit of the $h_n$ modulo diffeomorphisms in the  following sense. There is a sequence of diffeomorphisms $\phi_n \colon \overline{\Sigma} \to \overline{\Sigma}$ and a continuous map $\psi \colon \overline{\Sigma} \to \overline{S}/N$ such that:
\begin{enumerate}
\item\label{convergence1}
 The restriction of $\psi$ gives a diffeomoprhism $\overline{\Sigma} \setminus (\gamma_1 \cup \cdots \cup \gamma_m) \to \overline{S}\setminus N$.
\item\label{convergence2}
The pre-image of each node is the corresponding closed geodesic, $\psi^{-1}(N_i) = \gamma_i$, for the metrics $h_n$.
\item\label{convergence3}
Given any neighbourhood $U\subset S$ of $N$, we have $\psi_*(\phi_n^*h_n) \to h_\infty$ in $C^{\infty}(S \setminus U)$.
\end{enumerate}

We also need the definition of nodal maps in our setting, and how they can arise as limits.

\begin{definition}\label{nodal-map-definition}
Let $(\overline{S},N)$ be a compact nodal Riemann surface with boundary, and with no boundary nodes. 
\begin{itemize}
\item
An \emph{admissible nodal $J$-holomorphic map}  $u \colon (\overline{S},N) \to \overline{Z}$ is $J$-holomorphic map on the interior $S$ and which is compatible with the identifications, i.e. $u(p_i) = u(q_i)$ for all~$i$. When a component of $S$ has non-empty boundary, we require that $u$ is admissible on that component, in the sense of Definition~\ref{admissible-maps}. 
\item
An \emph{admissible nodal conformal harmonic map} $f \colon (\overline{S},N) \to \overline{\H^4}$ is a conformal map which is harmonic on the interior, compatible with the identifications, and whose twistor lift is admissible on each component with non-empty boundary. 
\end{itemize}
\end{definition}

\begin{definition}
Let $f_n \colon (\overline{\Sigma}, j_n) \to \overline{\H}^4$ be a sequence of admissible conformal holomorphic maps. Suppose that $[j_n]$ converges to a nodal limit $(\overline{S},N)$ in the boundary of the moduli space (in the Deligne--Mumford compactifictaion). We assume that there are no boundary nodes. We say that \emph{the maps $f_n$ converge modulo diffeomorphisms to an admissible nodal conformal harmonic map $f$ with domain $(\overline{S},N)$}, if there are maps $\phi_n \colon \overline{\Sigma}\to \overline{\Sigma}$ and $\psi \colon \overline{\Sigma} \to \overline{S}/N$ satisfying conditions~\ref{convergence1},~\ref{convergence2} and~\ref{convergence3} from above (so that $\psi_*(\phi_n^*h_n) \to h_\infty$ away from the nodes) and, moreover,
\begin{enumerate}
\item
$f_n \circ \phi_n \circ \psi^{-1} \to f$ in $C^{\infty}_{\mathrm{loc}}(S\setminus N)$.
\item
$f_n \circ \phi_n \circ \psi^{-1}  \to f$ in $C^{2,\alpha}$ on a neighbourhood of the boundary  of $\overline{S} \setminus N$.
\end{enumerate}

\begin{remark}
This definition is deliberately less refined than convergence in the sense of stable maps which is typically used in Gromov--Witten theory. We make no attempt to define convergence when the limiting domain has boundary nodes and we do not introduce extra spherical components in the domain, only using those which come from the degeneration of the $j_n$. This is because we will rule out the formation of boundary nodes and avoid dealing with bubbles.
\end{remark}

\end{definition}

With these definitions in hand, we can state the second main result of this section.

\begin{theorem}\label{convergence-interior}
Let $\overline{\Sigma}$ be a compact connected Riemann surface with non-empty boundary, and $\chi(\Sigma) \leq 0$. Let $u_n \colon (\overline{\Sigma},j_n) \to \overline{Z}$ be a sequence of admissible $J$-holomorphic curves. Assume the boundary links $K_n \defeq \pi(u_n(\del \Sigma) )\subset S^3$ converge in $C^{2,\alpha}$ to an embedded limit $K_\infty$. Write $f_n = \pi \circ u_n$ for the corresponding sequence of admissible conformal harmoinc maps.
\begin{enumerate}
\item\label{smooth-limit}
If the complex structures $[j_n]$ converge to a smooth limit $[j_\infty]$ then 
	\begin{enumerate}
	\item
	A subsequence of the $f_n$ converges modulo diffeomorphisms to a conformal harmonic map $f_\infty \colon  (\overline{\Sigma},j_\infty) \to \overline{\H}^4$, with $f_\infty(\del \Sigma) = K_\infty$. 
	\item
	If, moreover, $f_\infty$ has no branch points, then the corresponding $J$-holomorphic maps $u_n$ also converge modulo diffeomorphisms, in the space $\A$ of admissible maps, to a $J$-holomorphic map $u_\infty \colon (\overline{\Sigma},j_\infty) \to \overline{Z}$, which is the twistor lift of $f_\infty$. 
	\end{enumerate}
\item\label{nodal-limit}
If the complex structures $[j_n]$ converge to a nodal limit $(\overline{S},N)$ in the Deligne--Mumford compactification of the moduli space, then 
	\begin{enumerate}
	\item\label{no-boundary-nodes}
	There are no nodes on the boundary of $\overline{S}$.
	\item\label{nodal-limit-of-maps}
	A subsequence of the $f_n$ converges modulo diffeomorphisms to an admissible conformal harmonic map $f_\infty$ with domain ${(\overline{S},N)}$ and boundary link $f_\infty(\del S) = K_\infty$. 
	\item\label{no-closed-components}
	Every component of $\overline{S}$ on which $f_\infty$ is non-constant has non-empty boundary. 
	\end{enumerate}
\end{enumerate}
\end{theorem}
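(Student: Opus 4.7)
\smallskip

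\noindent\textbf{Proof plan.} My plan is to handle both cases by combining the boundary convergence result from \S\ref{compactness-at-infinity} with an interior regularity argument for conformal harmonic maps, and then dealing with the various degenerations that can occur. As a first step, I would apply Theorem~\ref{convergence-near-infinity}: after passing to a subsequence, the maps $f_n$ converge in $C^{2,\alpha}$ on a definite collar $0 \leq x \leq \epsilon$ of $\del \Sigma$, graphically over $[0,\epsilon] \times K$. The interior estimate (Proposition~\ref{interior-estimate}) provides a uniform pointwise bound on the energy density of $f_n$ measured with respect to the complete hyperbolic metric $h_n$ of $(\Sigma,j_n)$. This is the essential control we need on the interior.

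For Case~\ref{smooth-limit} (smooth limit of $j_n$), I would precompose with diffeomorphisms so that $j_n \to j_\infty$ in $C^\infty$, then use standard interior regularity for harmonic maps with bounded energy density to extract a subsequence converging in $C^\infty_{\mathrm{loc}}$ on $\Sigma$. Matching with the $C^{2,\alpha}$ convergence in the collar provided by Theorem~\ref{convergence-near-infinity} gives the limit $f_\infty$. If $f_\infty$ has no branch points, then since the twistor lift of an immersed surface depends smoothly on the tangent plane, the $u_n$ converge locally in the interior to the twistor lift of $f_\infty$; near $\del \Sigma$, Proposition~\ref{asymptotic-expansion} shows that all Taylor coefficients in the admissible expansion are determined by $\gamma$, so $C^{2,\alpha}$-convergence of the boundary links promotes to convergence in the admissible topology on $\A$.

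For Case~\ref{nodal-limit} the main point is part~\ref{no-boundary-nodes}, ruling out boundary nodes, and I expect this to be the hardest step. My approach is to exploit admissibility: acting by admissible diffeomorphisms we may take each $j_n$ to agree with the reference complex structure $j_0$ on a collar $S^1\times[0,\delta)$ of $\del\Sigma$. This fixes the conformal geometry near the boundary uniformly in $n$, which is incompatible with the pinching of any arc meeting $\del\Sigma$. Equivalently, this bounds uniformly the hyperbolic length of the geodesic representative of each boundary component in $h_n$, which is precisely Corollary~\ref{Gamma-bounded-length}. Hence only interior geodesics $\gamma_1,\ldots,\gamma_m$ can collapse, giving interior nodes only. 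For part~\ref{nodal-limit-of-maps}, away from the collapsing curves the hyperbolic metrics converge in $C^\infty_{\mathrm{loc}}$ to $h_\infty$ on $S\setminus N$, and the interior argument of Case~1 applied on each component produces a limiting conformal harmonic map $f_\infty$; at each interior node, uniform energy bounds together with the removable singularity theorem for harmonic maps with finite energy extend $f_\infty$ continuously (and hence smoothly) across the node, while ensuring it respects the identifications $p_i\sim q_i$.

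Finally, for part~\ref{no-closed-components}, suppose some closed component $\overline{S}_j$ has $f_\infty|_{\overline{S}_j}$ non-constant. Then $f_\infty(\overline{S}_j)$ would be a closed immersed (possibly branched) minimal surface in $\H^4$, but the squared hyperbolic distance from any fixed point is strictly convex on $\H^4$, hence strictly subharmonic when restricted to any minimal surface; this contradicts the maximum principle on a closed surface. The hardest technical point, as indicated, will be the rigorous justification that admissibility forbids conformal degeneration in the collar of $\del \Sigma$, i.e.\ the length bound of Corollary~\ref{Gamma-bounded-length}; everything else is then an application of standard harmonic map compactness together with the boundary result of \S\ref{compactness-at-infinity}.
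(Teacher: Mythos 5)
The overall architecture of your argument — boundary convergence from \S\ref{compactness-at-infinity}, interior energy-density bound from Proposition~\ref{interior-estimate}, Arzela--Ascoli, removable singularity, then a constraint ruling out closed components — matches the paper. But there are three genuine gaps, two of them serious.

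First, your argument for part~\ref{no-boundary-nodes} is not correct as stated. You claim that after admissible diffeomorphisms each $j_n$ agrees with $j_0$ on a \emph{fixed} collar $S^1\times[0,\delta)$, and that fixing the conformal type of a collar bounds $l(\gamma_n)$. Neither step is justified. Admissibility (Definition~\ref{def-admissible-j}) only forces $j_n - j_0 \in C^{1,\alpha}_1$, i.e.\ agreement to first order \emph{at} $\del\Sigma$, not equality on a collar; and Lemma~\ref{every-j-is-admissible-mod-diffeos}, which does give equality on a collar, produces a collar whose size depends on $j_n$ and hence on $n$. Even granting a uniform collar, the geodesic $\gamma_n$ in the funnel is a global object of the complete hyperbolic metric $h_n$ on all of $\Sigma$; fixing the conformal structure on a thin neighbourhood of $\del\Sigma$ does not control $l(\gamma_n)$. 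The actual proof of Corollary~\ref{Gamma-bounded-length} is geometric, not conformal: Theorem~\ref{convergence-near-infinity} gives that the \emph{images} $f_n(\Gamma_n)$ converge to a fixed curve of finite length, Proposition~\ref{no-bps-near-infinity} gives a uniform lower bound $|\diff f_n|\geq c>0$ near infinity, and combining these bounds $l(\Gamma_n)\leq c^{-1}l(f_n(\Gamma_n))$. It is essential here that the $f_n$ are genuinely minimal surfaces; admissibility of $j_n$ by itself says nothing.

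Second, you elide the step $f_\infty(p_i)=f_\infty(q_i)$ at each interior node, writing that the removable singularity theorem extends $f_\infty$ ``while ensuring it respects the identifications.'' This is not automatic: Sacks--Uhlenbeck extends $f_\infty$ over each puncture $p_i$ and $q_i$ separately, and a priori the two limiting values can disagree. The paper proves agreement by contradiction, using the collar lemma and the monotonicity formula for minimal surfaces: if the two limit points were at distance $4\rho>0$, the thin collar region $R_n(\delta)$ around the collapsing geodesic would have image of area at most $O(\delta)$ (by the energy density bound and conformality) yet, since the image joins two balls $B(P_i,\rho)$ and $B(Q_i,\rho)$ without boundary in $B(q,\rho)$ for an intermediate $q$, monotonicity forces its area to be at least $2\pi(\cosh\rho-1)>0$. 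This quantitative argument is the real content of part~\ref{nodal-limit-of-maps} at the nodes and cannot be omitted.

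Third, a smaller but real issue: Theorem~\ref{convergence-near-infinity} controls the convergence of the graphing sections $s_n$, i.e.\ the \emph{images} $\overline{M}_{n,\epsilon}$, not the parametrizing maps $f_n$. Converting to $C^{2,\alpha}$-convergence of $f_n$ on a collar of $\del\Sigma$ requires Lemma~\ref{collar-neighbourhood-general-case} (to pin down a fixed collar $\overline{W}$ on which all $f_n$ map into the good region, which itself uses Corollary~\ref{Gamma-bounded-length}), followed by the reflection trick and the holomorphic-maps-between-annuli Lemma~\ref{maps-between-annuli}, packaged as Proposition~\ref{maps-converge-near-infinity-general-case}. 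You are treating this conversion as immediate, which it is not.

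Your treatment of part~\ref{no-closed-components} (strict subharmonicity of squared distance on a minimal surface) is a fine and slightly more explicit version of the paper's remark that a closed minimal surface in $\H^4$ is impossible, and the passage from $f_n$ to $u_n$ via Lemma~\ref{no-bps-twistor-lifts-converge} in the absence of branch points is correct.
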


Just as for Theorem~\ref{properness-for-discs}, in part~\ref{smooth-limit} here, we can only convert the convergence of the $f_n$ to convergence of the $u_n$ under the assumption that the limit $f_\infty$ has no branch points. In part~\ref{nodal-limit}, even when the limit $f_\infty$ has no branch points, we cannot conclude that the $u_n$ converge to a $J$-holomorphic curve with the same domain. As we will see in Remark~\ref{nodes-means-df-tends-to-zero}, this is because $|\diff f_n|$ will necessarily tend to zero where the nodes are forming and so vertical bubbles may potentially appear in the $u_n$ at exactly the same places the nodes are forming. 

The proofs of Theorems~\ref{properness-for-discs} and~\ref{convergence-interior} take up the remainder of this section.

\subsection{Interior estimates}\label{interior-estimates-section}

The first step is to use the theory of harmonic maps to control the derivatives of $f = \pi \circ u$ and in turn the $J$-holomorphic map $u$.

\begin{proposition}\label{interior-estimate}
For any admissible $J$-holomoprhic map $u \colon (\overline{\Sigma},j) \to \overline{Z}$ with projection $f = \pi \circ u$, we have 
\[
| \diff f|^2 \leq 2
\]
where we use the hyperbolic metric on $\H^4$ and the canonical complete hyperbolic metric on the open Riemann surface $(\Sigma,j)$ to measure the norm of $\diff f$. 
 
It follows that for each $k$, there exists a constant $C=C(k)$, independent of $u$, such that 
\[
| \nabla^k f| \leq C
\] 
where $\nabla^k f \in T^*{\Sigma}^{\otimes k} \otimes f^*T\H^4$ is the $k^{\text{th}}$ derivative of $f$ taken with respect to the Levi-Civita connections of the hyperbolic metric on $\H^4$ and the canonical complete hyperbolic metric on the open Riemann surface $(\Sigma,j)$; these same metrics are used to measure the norm.
\end{proposition}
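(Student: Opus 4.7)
The plan is a Bochner-type maximum principle argument. I would start by computing $\Delta |df|^2$ for the conformal harmonic map $f=\pi\circ u$, using the standard Eells--Sampson Bochner identity. Since the complete hyperbolic metric on $\Sigma$ has constant Gaussian curvature $-1$, $\H^4$ has constant sectional curvature $-1$, and $f$ is conformal (so in any orthonormal frame $\{e_1,e_2\}$ of $\Sigma$ the vectors $df(e_1),df(e_2)$ are orthogonal of equal length in $T\H^4$), the Ricci and target-curvature contributions can be computed explicitly and the formula specialises to
\[
\Delta |df|^2 \;=\; 2|\nabla df|^2 - 2|df|^2 + |df|^4,
\]
with $\Delta$ the nonnegative Laplace--Beltrami operator on $\Sigma$. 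In particular $\Delta|df|^2 \geq |df|^2(|df|^2-2)$, an inequality that holds at branch points too since both sides vanish there.

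The second ingredient is the boundary behaviour of $|df|^2$. In a collar $S^1\times [0,\delta)$ of a boundary component the complete hyperbolic metric on $\Sigma$ is asymptotic to $(ds^2+dt^2)/t^2$, so that $t\del_s$ and $t\del_t$ form an asymptotically orthonormal frame. Substituting the admissible Taylor expansion of Definition~\ref{admissible-maps} and Proposition~\ref{asymptotic-expansion} into $|df(t\del_s)|^2$ and $|df(t\del_t)|^2$ (computed with the hyperbolic metric on $\H^4$), a short direct computation shows that each summand tends to $1$ uniformly as $t\to 0$; geometrically this encodes the well-known fact that a minimal surface in $\H^4$ meets conformal infinity orthogonally with unit speed. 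Consequently $|df|^2$ extends continuously to $\overline\Sigma$ with boundary value equal to $2$.

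With these two inputs, the bound $|df|^2\leq 2$ follows from the maximum principle on the compact manifold $\overline\Sigma$: the continuous function $|df|^2$ attains its maximum either on $\del\Sigma$, where it equals $2$, or at an interior point, where $\Delta|df|^2\leq 0$ combined with the Bochner inequality forces $|df|^2\leq 2$. For the higher-derivative estimates I would then bootstrap from the harmonic map equation $\tau(f)=0$. Both $\Sigma$ (with the complete hyperbolic metric) and $\H^4$ have uniform bounded geometry; given the uniform bound on $|df|$, each hyperbolic unit ball in $\Sigma$ maps into a geodesic ball of controlled radius in $\H^4$, so after composing with the exponential map at the image point the equation becomes a standard semilinear elliptic system whose coefficients (involving the Christoffel symbols of $\H^4$) are bounded uniformly. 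Iterated interior Schauder estimates then yield $|\nabla^k f|\leq C(k)$ independently of the base point. The most delicate step is the boundary asymptotic $|df|^2\to 2$: this is the key bridge between the admissibility of $u$ (which encodes the $J$-holomorphic hypothesis near $\del Z$) and the global PDE bound needed to run the maximum principle.
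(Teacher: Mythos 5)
Your proposal is correct and follows essentially the same route as the paper's own proof: the Bochner identity specialised to constant curvature $-1$, the use of conformality to compute the target-curvature term (yielding $\Delta|\diff f|^2 \geq |\diff f|^2(|\diff f|^2 - 2)$, and the precise equality you record is also right), the boundary asymptotic $|\diff f|^2 \to 2$ from the admissible Taylor expansion of Proposition~\ref{asymptotic-expansion}, the maximum principle on $\overline{\Sigma}$, and finally bootstrapping via bounded geometry of the hyperbolic domain and target for the $C^k$ bounds. No meaningful divergence from the paper's argument.
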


\begin{proof}
Since $u$ is $J$-holomorphic, $f$ is both conformal and harmonic. We will first show that $|\diff f|^2 \to 2$ at $\del \Sigma$. 

Let $s+it$ be a holomorphic coordinate centred at a point $p \in \del \Sigma$, with $t>0$ on the interior. We also choose $(s,t)$ so that the hyperbolic metric on $\Sigma$ is standard here, given by $t^{-2}(\diff s^2 +\diff t^2)$. Since $u$ is $J$-holomorphic, by Proposition~\ref{asymptotic-expansion} we have
\begin{equation}
\begin{aligned}
x(s,t) &= |\dot \gamma(s)|t + O(t^2)\\
y(s,t) & = \gamma(s) + O(t^2)
\end{aligned}
\label{xy-to-t-squared}
\end{equation}
where $(x,y_i)$ are half-space coordinates on $\H^4$, pulled back to $\Sigma$ via $f$, $\gamma$ is the boundary value of $f$, with values in $\R^3$, and $|\dot\gamma|$ is the Euclidean norm of $\dot \gamma$. By definition,
\[
|\diff f|^2
	= 
		\frac{t^2}{x^2}\left[ 
		\left(\frac{\del x}{\del t}\right)^2
		+
		\left(\frac{\del x}{\del s}\right)^2
		+
		\left|\frac{\del y}{\del t}\right|^2
		+
		\left|\frac{\del y}{\del s}\right|^2
		\right].
\]
It follows from this and~\eqref{xy-to-t-squared} that $|\diff f|^2 = 2 +O(t)$.

Now the Bochner formula for a harmonic map $f \colon (M,h) \to (N,g)$ between Riemannian manifolds states that
\begin{align*}
\frac{1}{2}\Delta |\diff f|^2 
	&= 
		|\nabla^2 f|^2 
		+ 
		\left\langle \Ric_M \diff f, \diff f \right\rangle 
		-
		\left\langle \Rm_N (\diff f, \diff f) \diff f, \diff f \right\rangle\\
	&\geq
		 g_{\alpha \beta} (\Ric_M)_{ij} \diff f_i^\alpha \diff f_j ^\beta
		-
		 h_{ik} h_{jl}(\Rm_N)_{\alpha \beta\gamma\delta} 
		\diff f_i^\alpha \diff f_j^\beta \diff f_k^\gamma \diff f_l^\delta
\end{align*}
where we have used Greek indices to denote coordinate directions in $N$ and Latin indices to denote coordinate directions in $M$.

In our situation, both domain and target have sectional curvatures equal to $-1$ and so 
\[
(\Ric_\Sigma)_{ij} = - h_{ij}, \qquad (\Rm_{\H^4})_{\alpha \beta \gamma \delta} = - g_{\alpha \gamma} g_{\beta\delta} + g_{\alpha \delta} g_{\beta \gamma}.
\]
From here we see that
\begin{equation}
\frac{1}{2}\Delta |\diff f|^2 \geq - |\diff f|^2 + |\diff f|^4 - h_{ik}h_{jl}g_{\alpha\delta} g_{\beta\gamma} \diff f^\alpha_i \diff f^\beta_j \diff f^\gamma_k \diff f^\delta_l
\label{Laplacian-e-estimate}
\end{equation}
To evaluate this last term we use the fact that $f$ is conformal. Choose a holomorphic coordinate $z=p+iq$ on $\Sigma$ in which $h = F(p,q)^2(\diff p^2 + \diff q^2)$. Then, by conformality of $f$, the $p$ and $q$ derivatives of $f$ are orthogonal and have the same length:
\begin{gather*}
g_{\alpha \beta} \del_p f^\alpha \del_p f^\beta = g_{\alpha \beta} \del_q f^\alpha \del_q f^\beta = \frac{1}{2} F^{-2}|\diff f|^2\\
g_{\alpha\beta} \del_pf^\alpha\del_q f^\beta = 0.
\end{gather*}
It follows that the third term on the right-hand side of~\eqref{Laplacian-e-estimate} is
\[
- F^4 g_{\alpha\delta} g_{\beta\gamma} \left(
	\del_p f^{\alpha} \del_p f^\beta \del_p f^\gamma\del_p f^\delta
	+
	\del_q f^{\alpha} \del_q f^{\beta} \del_q f^\gamma \del_q f^\delta
	\right)
		=
			- \frac{1}{2}|\diff f|^4
\]
So
\[
\Delta |\diff f|^2 \geq |\diff f|^2\left(|\diff f|^2 - 2\right)
\]
This means that if there is an internal maximum of $|\diff f|^2$ then $|\diff f|^2 \leq 2$ at that point. Together with the fact that $|\diff f|^2 \to 2$ at $\del \Sigma$, we conclude that $|\diff f|^2 \leq 2$ on the whole of $\Sigma$. 

Given this bound on $|\diff f|^2$, the theory of harmonic maps gives uniform bounds on all higher derivatives of $f$. It's important to note that these bounds depend only on the local geometry. Since we use a hyperbolic metric on the domain, these higher order bounds do not depend on $\Sigma$, merely on the order $k$ of derivatives that we are controlling. (For example, we can pass to the universal cover $\H^2$ and use bootstrapping there.)
\end{proof}

This result shows in particular that the conformal harmonic maps cannot form bubbles. Of course, this is immediately clear on geometric grounds: a bubble would give a minimal sphere in $\H^4$  which is impossible. However it is still possible for \emph{vertical} bubbles to form in the twistor lifts. This can only happen when a branch point occurs in the map to $\H^4$, as the following Lemma shows. 

\begin{lemma}\label{no-bps-twistor-lifts-converge}
Let $f_n \colon \overline{\Sigma} \to \overline{\H}^4$ be a sequence of conformal maps which converge in $C^{2,\alpha}$ to an immersion $f$. Then the twistor lifts $u_n$ of $f_n$ converge in $C^{1,\alpha}$ to the twistor lift of $f$. 
\end{lemma}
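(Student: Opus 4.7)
The plan is to identify the twistor-lift operation with a pointwise smooth bundle map applied to the one-jet of $f$, and then invoke continuity of this operation on the immersed locus. Recall that for a conformal immersion $f \colon \overline{\Sigma} \to \overline{\H}^4$, the twistor lift $u(p)$ is determined by the oriented tangent plane $T_{f(p)}f(\overline{\Sigma}) \subset T_{f(p)}\overline{\H}^4$: it is the unique element $j_{u(p)} \in Z_{f(p)}$ for which this plane is a $j_{u(p)}$-complex line with the correct orientation. This defines a smooth fibrewise map from the Grassmannian bundle $\Gr^+(2,T\overline{\H}^4) \to \overline{\H}^4$ to $\overline{Z}$. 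Composing with the smooth map from ``nondegenerate one-jets of maps $\overline{\Sigma}\to\overline{\H}^4$'' to $\Gr^+(2,T\overline{\H}^4)$, we see that $u$ is obtained from $(f,\diff f)$ by pointwise composition with a map $\Phi$ that is smooth on the open set of immersed one-jets.

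Since $f$ is an immersion of the compact manifold $\overline{\Sigma}$, its one-jet takes values in a compact subset of the immersed locus. By $C^{2,\alpha}$ convergence $f_n \to f$, the $f_n$ are also immersions for all large $n$, with one-jets valued in a fixed compact neighbourhood of that set; in particular $\diff f_n \to \diff f$ in $C^{1,\alpha}$. Composition with the fixed smooth map $\Phi$ is continuous on $C^{1,\alpha}$, so $u_n = \Phi(f_n,\diff f_n) \to \Phi(f,\diff f) = u$ in $C^{1,\alpha}$ on any compact subset of $\Sigma$.

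The one point requiring attention is that ``$C^{1,\alpha}$ convergence'' must be understood up to the boundary $\del \Sigma$, while $\overline{Z}$ has singular edge geometry at $\del Z$. I would deal with this by working in half-space twistor coordinates $(x,y_i,z_i)$ (with $|z|^2=1$) on $\overline{Z}$: here the coordinate functions $x_n, y_{n,i}$ of $u_n$ are just the coordinates of $f_n$, which converge in $C^{2,\alpha}$, while $z_{n,i}$ are $C^{1,\alpha}$ functions on $\overline{\Sigma}$. The crucial remark is that although the almost complex structure $J$ blows up at $x=0$, the coordinate functions $(x,y_i,z_i)$ themselves are smooth on $\overline{Z}$, and the twistor-lift map $\Phi$ is smooth as a map into $\overline{Z}$ equipped with these coordinates. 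The boundary value of $z_n$ is computed in Proposition~\ref{asymptotic-expansion} to be $\zeta_n = -|\dot\gamma_n|^{-1}\dot\gamma_n$, where $\gamma_n = f_n|_{\del \Sigma}$; since $f$ is an immersion up to the boundary, $\dot\gamma$ is bounded away from zero, so $\zeta_n \to \zeta$ in $C^{1,\alpha}$ as $\gamma_n \to \gamma$ in $C^{2,\alpha}$. Combining this boundary convergence with the interior $C^{1,\alpha}_{\mathrm{loc}}$ convergence and using the uniform smoothness of $\Phi$ on the compact immersed one-jet locus yields $u_n \to u$ in $C^{1,\alpha}(\overline{\Sigma})$ as required. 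The step most deserving of care is verifying that $\Phi$ really does extend smoothly to the boundary in these coordinates, but this is ensured by the structure of the asymptotic expansion and the fact that $f$ meets $\del\H^4$ transversely whenever it is an immersion at the boundary.
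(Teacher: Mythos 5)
Your argument is correct and is essentially the paper's approach: the twistor lift is a smooth function of the one-jet of $f$ on the locus of immersed one-jets, and since $f$ is an immersion of a compact surface the derivatives $\diff f_n$ stay uniformly bounded away from zero, so one can pass to the limit. The paper just makes the map $\Phi$ fully explicit — identifying twistor fibres with the unit sphere in $\Lambda^+\R^4$ and writing
\[
u_n(p)=\frac{\bigl(\diff f_n(v_1)\wedge \diff f_n(v_2)\bigr)^+}{\bigl|\bigl(\diff f_n(v_1)\wedge \diff f_n(v_2)\bigr)^+\bigr|},
\]
from which the lower bound on the denominator and the $C^{1,\alpha}$ convergence follow in one line. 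One remark on the concern at the end of your proposal: there is no genuine difficulty with ``singular edge geometry at $\del Z$'' here, because $\overline{Z}\to\overline{\H}^4$ is a perfectly smooth $S^2$-bundle up to the boundary (this is Remark~\ref{conformal-invariance-twistor} in the paper: the bundle depends only on the conformal class, which extends to $\overline{\H}^4$). It is only the Eells--Salamon $J$ and the twistor metric that degenerate at $\del Z$, and the twistor-lift map $\Phi$ does not involve $J$ at all — it depends only on the oriented tangent plane. So $\Phi$ is smooth on the immersed-jet locus up to and including the boundary, with no further asymptotic analysis required; one should measure $C^{1,\alpha}$ convergence in $\overline{Z}$ using any metric smooth up to the boundary, exactly as the paper does.
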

\begin{proof}
We identify $\overline{\H}^4$ with the closed unit ball and use the flat metric there. We use any metric on $\overline{\Sigma}$ which is smooth up to the boundary. We identify the fibres of $\overline{Z}$ with unit-length elements of $\Lambda^+\R^4$.  If $v_1,v_2$ are a basis for $T_p\Sigma$, then 
\begin{equation}\label{explicit-gauss-lift}
u_n(p) = \frac{\left(\diff f_n(v_1) \wedge \diff f_n(v_2)\right)^+}{|\left( \diff f_n(v_1) \wedge \diff f_n(v_2)\right)^+|}
\end{equation}
Since $f$ is an immersion, $|\diff f| \geq \epsilon$ for some $\epsilon >0$. It follows that for all sufficiently large $n$, $|\diff f_n| \geq \frac{1}{2}\epsilon$. From this, the denominator of~\eqref{explicit-gauss-lift} is bounded away from zero, hence we can take a limit of the right-hand side in $C^{1,\alpha}$ as $n \to \infty$. 
\end{proof}

Suppose that $f$ does indeed have a branch point at $p$, then it is possible that $\diff u_n(p)$ could blow up. Looking at~\eqref{explicit-gauss-lift}, one sees that this could happen if the derivative of the numerator is bounded away from zero as $n\to \infty$. Geometrically, this means that the second fundamental form of the images of the $f_n$ is blowing up. Away from the branch point the $u_n$ will still converge, but at $p$ a vertical bubble can potentially form, giving the whole twistor fibre over $f(p)$ in the limit (or even potentially a multiple cover of this fibre).

\subsection{No branch points form near infinity}

We now show that near infinity, the conformal harmonic maps $f_n$ have energy density uniformly bounded away from zero. We use the notation of \S\ref{compactness-at-infinity}, with $\epsilon>0$ satisfying the conclusions of Theorem~\ref{convergence-near-infinity}. We pass to a subsequence, which we continue to denote $u_n$, for which the maps $f_n = \pi\circ u_n \colon \overline{\Sigma}_{n,\epsilon} \to \overline{\H}^4$ are embeddings with converging images $\overline{M}_{n,\epsilon}$. 

\begin{proposition}\label{no-bps-near-infinity}Under the hypotheses of Theorem~\ref{convergence-near-infinity}, there exists a constant $c>0$ such that for all $n$, 
\[
\inf_{\Sigma_{n,\epsilon/2}} |\diff f_n| \geq c
\]
(where we measure the norm with the hyperbolic metric on $(\Sigma, j_n)$).
\end{proposition}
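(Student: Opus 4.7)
The plan is to argue by contradiction via a rescaling argument, combining the conformality of $f_n$ with the $C^{2,\alpha}$ graphical convergence from Theorem~\ref{convergence-near-infinity}. Write the conformality as $f_n^*g_{\H^4} = \lambda_n^2 h_n$, so $|\diff f_n|^2 = 2\lambda_n^2 \leq 2$ by Proposition~\ref{interior-estimate}. Supposing the proposition fails, there is a sequence $p_n \in \Sigma_{n,\epsilon/2}$ with $\lambda_n(p_n) \to 0$. The strategy is to rescale both source and target to extract a limiting conformal harmonic map with a critical point, and to contradict this using smoothness of the limit image.

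Fix local hyperbolic coordinates $(s,t)$ on $(\Sigma,j_n)$ near the boundary component of $\partial \Sigma$ nearest $p_n$, so that $h_n = t^{-2}(ds^2+dt^2)$ and $p_n=(s_n,t_n)$. By Proposition~\ref{asymptotic-expansion}, $x \circ f_n = |\dot\gamma_n(s)|t + O(t^2)$; since $K_n \to K_\infty$ in $C^{2,\alpha}$ to an embedded link, $|\dot\gamma_n|$ is uniformly bounded above and below, and $p_n \in \Sigma_{n,\epsilon/2}$ forces $t_n$ uniformly bounded above. Apply the source hyperbolic isometry $\Phi_n(\tilde s,\tilde t) = (s_n + t_n\tilde s, t_n\tilde t)$ of the upper half-plane moving $(0,1)$ to $p_n$, and the target hyperbolic isometry $\psi_n(x,y) = (x/x_n,(y-y_n)/x_n)$ of $\H^4$ moving $f_n(p_n)=(x_n,y_n)$ to $(1,0)$. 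The composite $\hat f_n := \psi_n \circ f_n \circ \Phi_n$ is conformal and harmonic with $\hat f_n(0,1)=(1,0)$ and $|\diff \hat f_n|(0,1) \to 0$. By Proposition~\ref{interior-estimate} the $\hat f_n$ have uniform $C^k$ bounds with respect to the hyperbolic metrics, so a subsequence converges in $C^\infty_{\mathrm{loc}}$ on a sub-domain of $\H^2$ containing a fixed hyperbolic ball around $(0,1)$---the chart is valid because $t_n$ is bounded above and, after passing to a subsequence, also bounded below: the case $t_n \to 0$ would place $p_n$ in a fixed boundary collar where the expansion of Proposition~\ref{asymptotic-expansion} yields $|\diff f_n|^2 \to 2$ uniformly in $n$, contradicting the assumption. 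The limit $\hat f_\infty \colon \H^2 \to \overline{\H}^4$ is conformal and harmonic with $|\diff \hat f_\infty|(0,1) = 0$.

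For the contradiction we analyze the image of $\hat f_\infty$. If $x_n$ remains bounded below, $\psi_n$ is a bounded sequence of hyperbolic isometries and Theorem~\ref{convergence-near-infinity} together with interior regularity for minimal surfaces gives that $\psi_n(\overline{M}_n)$ converges in $C^\infty$ on a neighbourhood of $(1,0)$ to a smoothly immersed minimal surface $M^*$. If $x_n \to 0$, the convex-hull and White's Theorem~\ref{white} argument from the proof of Theorem~\ref{C1-bound-at-infinity} gives that $\psi_n(\overline{M}_n)$ converges smoothly to the totally geodesic copy of $\H^2 \subset \H^4$ through $(1,0)$ with ideal boundary equal to the limiting tangent line of $K_\infty$; again $M^*$ is smoothly immersed. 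In both cases the $\hat f_n$, restricted to a fixed small disc $D$ around $(0,1)$, are conformal embeddings onto pieces of $\psi_n(\overline{M}_n)$ containing a uniform neighbourhood of $(1,0)$ in $M^*$; so $\hat f_\infty|_D$ is a conformal map into $M^*$ passing through $(1,0)$ and is non-constant. But a conformal map into a smoothly immersed surface cannot have an isolated branch point, since the local degree near such a point would exceed one, violating the smooth immersion of the image. This contradicts $|\diff \hat f_\infty|(0,1) = 0$.

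The main obstacle is ruling out the ``collapse'' alternative in which $\hat f_\infty$ is constant on a neighbourhood of $(0,1)$; this is settled by a uniform lower bound on the hyperbolic area of $\hat f_n(D)$, which by conformality equals the hyperbolic area of $\hat f_n(D)\subset \psi_n(\overline{M}_n)$, and which is bounded below uniformly because $\hat f_n|_D$ is a conformal embedding whose image contains a uniform neighbourhood of $(1,0)$ in the smooth limit $M^*$. A secondary subtlety is the case $x_n\to 0$, where the source and target rescalings both blow up; the interaction is controlled by the arguments of the proof of Theorem~\ref{C1-bound-at-infinity}, which already establish the smooth convergence of $\psi_n(\overline{M}_n)$ to a totally geodesic plane in this regime.
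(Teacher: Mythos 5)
Your overall idea --- rescale in both source and target, extract a conformal harmonic limit $\hat f_\infty$ with $\diff \hat f_\infty(0,1)=0$, and contradict via the smoothness of the limiting surface --- matches the paper's treatment of the case $x_n := x(f_n(z_n)) \to 0$. However, the paper's proof splits into two cases, $x_n \to 0$ and $x_n \to \delta > 0$, and in the second case it uses a genuinely different mechanism: it passes to the inverse maps $g_n = f_n^{-1}$, which are holomorphic with $|\diff g_n|$ blowing up, and runs a point-picking/bubbling argument to produce a \emph{nonconstant} holomorphic map $\C \to \H^2$, contradicting Liouville. Your proof attempts to treat both cases with the same rescaling and never does the point-picking; this is exactly where the gaps appear.

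\textbf{The central gap is the ``collapse'' alternative, which you identify but do not actually close.} You claim that ``$\hat f_n|_D$ is a conformal embedding whose image contains a uniform neighbourhood of $(1,0)$ in the smooth limit $M^*$.'' That statement is equivalent to a uniform lower bound on $|\diff \hat f_n|$ over $D$, which is precisely what the proposition asserts; a priori the derivative can shrink and $\hat f_n(D)$ can collapse to the point $(1,0)$, with the limit $\hat f_\infty$ constant. Nothing in Theorem~\ref{convergence-near-infinity} or the convex-hull/White machinery constrains how small a piece of $\psi_n(\overline{M}_n)$ the disc $D$ gets mapped onto. The paper's point-picking trick (in its case $x_n\to\delta>0$) exists precisely to force nonconstancy: by construction it arranges $|\diff \hat\psi(0)| = 1$ in the limit.

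\textbf{A second error is the claimed contradiction itself.} You write that ``a conformal map into a smoothly immersed surface cannot have an isolated branch point, since the local degree near such a point would exceed one, violating the smooth immersion of the image.'' This is false: $z \mapsto z^2$ into $\C$ is conformal with an isolated critical point of local degree $2$, and its image is perfectly smooth. Ramification points of conformal maps into smooth surfaces are entirely allowed. The paper's contradiction is different: because $\H^2$ (or $M^*$) is smooth, the critical point of the limit is a ramification point of local degree $k\geq 2$; choosing a nearby regular value $w$ gives $\geq 2$ preimages under $\tilde f$ near $0$, hence $\geq 2$ preimages under $\tilde f_n$ for large $n$ --- contradicting that each $\tilde f_n$ is a diffeomorphism onto its image converging with multiplicity one. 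The contradiction uses the degree-one structure of the approximating embeddings, not the smoothness of the image.

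\textbf{A lesser issue is an internal inconsistency and a second circular step.} You first claim $t_n$ is bounded below, ruling out $t_n \to 0$ on the grounds that ``the expansion of Proposition~\ref{asymptotic-expansion} yields $|\diff f_n|^2\to 2$ uniformly in $n$, contradicting the assumption.'' The expansion gives $|\diff f_n|^2 \to 2$ as $t \to 0$ only for each fixed $n$; the rate is governed by the $O(t^{2+\alpha})$ remainder, whose size is not controlled uniformly by the hypotheses --- uniformity here is part of what Proposition~\ref{no-bps-near-infinity} is establishing, so invoking it is circular. You then proceed to treat the case $x_n\to 0$ (which is morally $t_n\to 0$) in the image analysis anyway. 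The paper simply keeps both cases open.
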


\begin{proof}
For a contradiction assume the result is false. Then there exist points $z_n \in \Sigma_{n,\epsilon/2}$ with $|\diff f_n(z_n)| \to 0$. Write $x_n = x(f_n(z_n))$. By definition, $x_n \leq \epsilon/2$. There are now two possibilities: either $x_n \to 0$, or there is a subsequence for which $x_n \to \delta >0$. 

If $x_n \to 0$, then we argue in the same style as in \S\ref{compactness-at-infinity}. We use translation and homothetical rescaling in $\H^4$ to move $f_n(z_n)$ to the point $x=1, y_i=0$. The translated and rescaled minimal surfaces $\overline{M}_n$ now converge to a totally geodesic copy of $\H^2$. Meanwhile, we consider the maps $f_n$ on the geodesic ball $B(z_n;1)$ of radius 1 centred at $z_n$. After the translation and rescaling in $\H^4$ we obtain a sequence of conformal harmonic embeddings $\tilde{f}_n \colon B \to \H^4$ from a fixed hyperbolic ball, with centre $0$ say. We have $\tilde{f}_n(0) = (1,0,0,0)$ and, by Proposition~\ref{interior-estimate} uniform bounds on $\nabla^k \tilde{f}_n$ for any $k$. So, passing to a subsequence, $\tilde{f}_n$ converges to a conformal harmonic map $\tilde{f} \colon B \to \H^4$, with $\diff \tilde{f}(0)=0$, and image contained in a totally geodesic copy of $\H^2$. Since $\H^2$ is smooth, this critical point must be a ramification point of $\tilde{f}$ and so $\tilde{f} \colon B \to \tilde{f}(B)$ is a multiple cover. However, for each $n$, $\tilde{f}_n \colon B \to \tilde{f}_n(B)$ is a diffeomorphism and the images $\tilde{f}_n(B)$ converge smoothly with multiplicity~1 to $\tilde{f}(B)$ by Theorem~\ref{convergence-near-infinity}. This contradiction rules out the case $x_n \to 0$. 

The case $x_n \to \delta >0$ (after passing to a subsequence) is more involved. Let $g_n \colon M_{n,\epsilon} \to \Sigma$ denote the inverse over the interior of the embeddings $f_n \colon \overline{\Sigma}_{n,\epsilon} \to \overline{M}_{n,\epsilon}$. We equip $M_{n,\epsilon}$ with the Riemannian metric $h_n$, and hence complex structure, induced from $\H^4$. With this understood, $g_n \colon M_{n,\epsilon} \to (\Sigma,j_n)$ is a holomorphic map. Write $q_n = f_n(z_n)$. We have
\[
|\diff g_n(q_n)| = |\diff f_n(z_n)|^{-1} \to \infty
\]
The points $q_n$ lie in the compact region $\{\delta/2 \leq x \leq \epsilon/2\}$. We pass to a subsequence (still denoted by $q_n$) which converges, $q_n \to q \in M_\infty$. 

We will carry out a rescaling argument at the points $q_n$ to produce a bubble. To this end, we first claim that for each $n$ there is a neighbourhood $U_n \subset M_{n,\epsilon}$ of $q_n$, and a biholomorphism $\phi_n \colon D \to U_n$ with domain the unit disc $D \subset \C$, $\phi_n(0)=q_n$, and a metric $h_0$ on $D$  such that, in~$C^{\infty}$, 
\begin{equation}
\phi_n^*h_n \to h_0
\label{local-metrics-have-limit}
\end{equation}

To show this, we first note that there is a canonical isometry $T_qM_\infty \cong \R^2$. This is because the curve $C = \{x=\delta\} \cap M_\infty$ is oriented (it is isotopic to the boundary of $M_{\infty}$, which is an oriented link) and so $T_q C \leq T_q M_{\infty}$ gives an oriented copy of $\R$. We now orient $T_qC^\perp$ by declaring the positive side to be that on which $x$ is increasing. This completely determines the isometry $T_qM_{\infty} = T_qC \oplus T_qC^\perp \cong \R^2$. In identical fashion we have canonical isometries $T_{q_n}M_n \cong \R^2$ for each $n$. 

We now compose these linear isometries with the exponential maps to produce for each $n$ a diffeomorphism $F_n \colon \overline{B}(q,r) \to \overline{B}(q_n,r)$ where $\overline{B}(q,r)$ denotes the closed geodesic ball in $M_\infty$ centred at $q$ of radius $r$ and $\overline{B}(q_n,r)$ denotes the closed geodesic ball in $M_n$ centred at $q_n$ of radius $r$. Explicitly,
\[
F_n \colon \overline{B}(q,r) \stackrel{\exp^{-1}}{\longrightarrow} 
	T_q M_\infty \cong \R^2 \cong T_{q_n}M_n
		\stackrel{\exp_n}{\longrightarrow} \overline{B}(q_n,r)
\]
The map $F_n$ makes sense provided $r$ is smaller than the injectivity radius $i(q)$ of $M_{\infty}$ at $q$ and it is a diffeomorphism provided $r$ is also smaller than the injectivity radius $i(q_n)$ of $M_{n,\epsilon}$ at~$q_n$. Since $M_{n,\epsilon}$ converges to $M_{\infty}$ in $C^{\infty}$ near $q$, for sufficiently large $n$ we have $i(q_n) \geq i(q)/2$ and so we can choose such an $r$. (Note that we require here that the balls do not reach the interior boundaries of $M_{n,\epsilon}$ or $M_{\infty}$ at $x =\epsilon$. Since $q_n,q$ all lie in the region $x \leq \epsilon/2$, they are at least some fixed distance from $x=\epsilon$.)

Write $h$ for the Riemannian metric on $M_{\infty}$ induced from $\H^4$, $h_n$ for the induced metric on $M_n$ and $\tilde{h}_n = F_n^*h_n$. Since $M_{n,\epsilon}$ converges to $M_{\infty}$ in $C^\infty$, it follows that $\tilde{h}_n \to h$ in $C^{\infty}$. Next pick a holomorphic coordinate $\zeta \colon \overline{B}(q,r) \to \C$ with $\zeta(q)=0$, for the Riemann surface structure determined by $h$. From the Riemannian perspective, ``holomorphic'' means that $\zeta = u+ i v$ where $u, v$ are $h$-harmonic, and $\diff v = *(\diff u)$. 

We now adjust $u,v$ to produce a complex coordinate $\zeta_n$ which is $\tilde{h}_n$-holomorphic.  Let $u_n$ be $\tilde{h}_n$-harmonic, with the same boundary values as $u$. So $u_n = u + \chi_n$ where $\chi_n$ solves Poisson's equation with Dirichlet boundary condition: $\Delta_n \chi_n = - \Delta_n u$, and $\chi_n|_{\del} = 0$, where $\Delta_n$ denotes the $\tilde{h}_n$-Laplacian. There is a global Schauder estimate for $\Delta_n$:
\[
\| \chi \|_{C^{k+2, \alpha}} \leq C \left( \| \Delta_n \chi \|_{C^{k,\alpha}} + \| \chi|_{\del}\|_{C^{k+2,\alpha}}\right)
\]
Here we use the metric $h$ to define the Hölder norms. The point is that the constant $C$ can be take independent of $n$. This is because the coefficients of $\Delta_n$ converge to those of $\Delta$ because $\tilde{h}_n \to h$. From this estimate applied to $\chi_n$ we see that
\[
\| \chi_n \|_{C^{k+2,\alpha}} \leq C \| \Delta_n u \|_{C^{k,\alpha}} \to 0
\]
again since $\Delta_n$ converges to $\Delta$. So $u_n \to u$ in $C^{\infty}$. We subtract a constant to ensure also that $u_n(q) = 0$ for all $n$. (This doesn't affect the fact that $u_n \to u$ since $u(q)=0$.)

We now define $v_n$ by taking the closed 1-form $*_n (\diff u_n)$ and integrating it along the radii emanating from $q$. Here $*_n$ denotes the Hodge star of $\tilde{h}_n$. This is the unique solution to $\diff v_n = *_n(\diff u_n)$ with $v_n(q)=0$. Since $\tilde{h}_n \to h$ and $u_n \to u$, we have that $v_n \to v$ in $C^{\infty}$ also. 

Next set $\zeta_n = u_n + iv_n$. By construction, $\zeta_n$ is $\tilde{h}_n$-holomorphic with $\zeta_n(q)=0$. Since $\zeta_n \to \zeta$ in $C^{\infty}$, for all large $n$, $\zeta_n$ is a genuine coordinate on a fixed neighbourhood $U$ of $q$ in $\overline{B}(q,r)$. Moreover, there is a disc $D \subset \C$ centred on the origin which lies in the image $\zeta_n(U)$ for all large $n$. By rescaling $\zeta$ from the outset if need be, we can assume that $D$ is the unit disc.

We write $\tilde{\phi}_n = \zeta_n^{-1} \colon D \to \overline{B}(q,r)$, and $\tilde{U}_n = \tilde{\phi}_n(D)$ for the image. This is a biholomoprhism $D \to \tilde{U}_n$ with respect to the $\tilde{h}_n$-structure on $\tilde{U}_n$ and $\tilde{\phi}_n^*\tilde{h}_n \to (\zeta^{-1})^*h$. To complete the proof of our claim and the convergence~\eqref{local-metrics-have-limit}, we put $U_n = F_n(\tilde{U}_n)$ and $\phi_n = F_n \circ \tilde{\phi}_n$. Since $\tilde{h}_n = F_n^*h_n$, we see that $\phi_n$ is a biholomorphism and $\phi_n*h_n = \tilde{\phi}_n \tilde{h}_n$ which converges to a limit $h_0 = (\zeta^{-1})^*h$, as claimed. 

With this in hand, we now define a sequence of holomorphic maps $\psi_n \colon D \to \H^2$ as follows. We first consider $g_n \circ \phi_n \colon D \to (\Sigma,j_n)$ then we lift to the universal cover $\H^2 \to \Sigma$. Since $|\diff g_n(q_n)| \to \infty$, and since $\phi_n^*h_n \to h_0$, it follows that $|\diff \psi_n(0)| \to \infty$, where we use the metric $h_0$ on $D$ and the hyperbolic metric on $\H^2$ to define the norm of $\diff \psi_n$. Since $h_0$ is uniformly equivalent to the Euclidean metric, $|\diff \psi_n(0)| \to \infty$ when we use the Euclidean metric on $D$ too. We will use the Euclidean metric on $D$ from now on.

We can now find a contradiction. We use a standard point-picking trick to find a sequence $0< \epsilon_n \leq 1/4$ and points $x_n \in D$ such that 
\begin{itemize}
\item $|x_n| \leq \frac{1}{2}$.
\item $|\diff \psi_n(x_n)| \epsilon_n \geq \frac{1}{4}|\diff \psi_n(0)|$.
\item $|\diff \psi_n(y)| \leq 2 |\diff \psi_n(x_n)|$ for all $y \in D$ with $|y-x_n| \leq \epsilon_n$.
\end{itemize}
The existence of $(x_n, \epsilon_n)$ follows from the same point-picking argument used to find the sequence $(\delta_n^*,p_n^*)$ in the proof of Theorem~\ref{C2-bound-at-infinity}. We now zoom in at $x_n$ to produce the bubble. To ease the notation we write $k_n =|\diff \psi_n(x_n)|$. Define a sequence of holomorphic maps on increasingly large discs, $\hat{\psi}_n \colon D(0, k_n\epsilon_n) \to \H^2$ by $\hat{\psi}_n(w) = \psi_n(k_n^{-1}w+x_n)$. By choice of $(x_n,\epsilon_n)$, the radius $k_n\epsilon_ n\to \infty$ and we have both $|\diff \hat{\psi}_n| \leq 2$ and $|\diff \hat{\psi}_n(0)| = 1$. It follows that a subsequence converges in $C^{\infty}$ on compact subsets to a holomorphic map $\hat\psi \colon \C \to \H^2$, which is not constant, since $|\diff \hat \psi(0)|=1$.  This is a contradiction however, since the only holomorphic map $\C \to \H^2$ is constant. 
\end{proof}

This result has an important corollary. Let $\Gamma_n = (x \circ f_n)^{-1}(\epsilon/2)$. Since $x\circ f_n$ has no critical values in $[0,\epsilon]$, $\Gamma_n \subset \overline{\Sigma}$ is a smooth curve which is isotopic to the boundary. We write $l(\Gamma_n)$ for the length of $\Gamma_n$ with respect to the canonical complete hyperbolic metric on the open Riemann surface $(\Sigma, j_n)$.

\begin{corollary}\label{Gamma-bounded-length}
There exists a constant $C$ such that $l(\Gamma_n) \leq C$ for all $n$. It follows that if $[j_n]$ converges to a nodal limit, it cannot have any boundary nodes.
\end{corollary}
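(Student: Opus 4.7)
The plan has two parts: first establish the uniform bound $l_{h_n}(\Gamma_n) \leq C$, then deduce the impossibility of boundary nodes from this bound.

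For the length bound, I would apply Theorem~\ref{convergence-near-infinity} to pass to a subsequence along which $\overline{M}_{n,\epsilon}$ is the graph of a section $s_n$ of $E \to [0,\epsilon] \times K$ with $s_n \to s_\infty$ in $C^{2,\alpha}$. The level curves $f_n(\Gamma_n) = \overline{M}_n \cap \{x = \epsilon/2\}$ then converge in $C^{2,\alpha}$ as curves in $\overline{\H}^4$, and in particular have uniformly bounded hyperbolic length, $l_g(f_n(\Gamma_n)) \leq C_1$. On the other hand Proposition~\ref{no-bps-near-infinity} gives $|\diff f_n| \geq c > 0$ throughout $\Sigma_{n,\epsilon/2}$, and therefore on $\Gamma_n$. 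Conformality of $f_n$ implies $f_n^* g = \frac{1}{2}|\diff f_n|^2\, h_n$ pointwise, so for any $h_n$-unit tangent vector $v$,
\[
|\diff f_n(v)|_g = \frac{1}{\sqrt{2}}|\diff f_n| \geq \frac{c}{\sqrt{2}}.
\]
Using that $f_n|_{\Gamma_n}$ is an embedding by Theorem~\ref{C1-bound-at-infinity} and integrating along an $h_n$-arc-length parametrisation of $\Gamma_n$,
\[
l_{h_n}(\Gamma_n) \leq \frac{\sqrt{2}}{c}\, l_g(f_n(\Gamma_n)) \leq \frac{\sqrt{2}\, C_1}{c}.
\]
This gives the bound along the subsequence. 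Since every subsequence of the original $(u_n)$ has a further subsequence satisfying the hypotheses of Theorem~\ref{convergence-near-infinity} and hence the same bound, $l_{h_n}(\Gamma_n) \leq C$ in fact holds for the full sequence.

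For the second assertion, I would argue by contradiction. Suppose $[j_n]$ converges in the Deligne--Mumford compactification to a nodal limit $(\overline{S},N)$ with a node on some boundary component $\del_k \overline{S}$. I would then invoke the standard hyperbolic-geometric description of such a degeneration: a boundary node on $\del_k \overline{S}$ corresponds to the pinching in $\overline{\Sigma}$ of an arc with endpoints on $\del_k \Sigma$, which in terms of the complete hyperbolic metrics $h_n$ with cusps at $\del \Sigma$ forces any sequence of simple closed curves in $\Sigma$ isotopic in $\overline{\Sigma}$ to $\del_k \Sigma$ to have $h_n$-length tending to infinity. Since, by construction, one component of $\Gamma_n$ is isotopic in $\overline{\Sigma}$ to $\del_k \Sigma$, this contradicts the uniform length bound from the first part.

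The main obstacle is the translation step between the extrinsic conformal-analytic data about the maps $f_n$ (the lower bound on the conformal factor $|\diff f_n|$ combined with the $C^{2,\alpha}$ graphical convergence near infinity) and the intrinsic Teichm\"uller geometry of $(\Sigma, j_n)$. The first part carries out this translation once Proposition~\ref{no-bps-near-infinity} and Theorem~\ref{convergence-near-infinity} are in hand. The second part is a standard statement about degenerations of complete hyperbolic surfaces with cusps; articulating it uniformly across the range $\chi(\overline{\Sigma}) \leq 0$ (with particular attention to the annulus case $\chi = 0$, where the ``cusps at the boundary'' picture degenerates into a hyperbolic cylinder) is the one step that needs a little extra care.
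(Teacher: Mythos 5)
Your argument follows the paper's own proof essentially line for line: bound $l_g(f_n(\Gamma_n))$ via the graphical convergence of Theorem~\ref{convergence-near-infinity}, use Proposition~\ref{no-bps-near-infinity} to push this back to a bound on $l_{h_n}(\Gamma_n)$, and then note that the boundary geodesic $\gamma_n$ has $l(\gamma_n) \leq l(\Gamma_n)$ by length-minimisation, which is incompatible with $l(\gamma_n) \to \infty$ if a boundary node were to form. The only genuine differences are cosmetic: you insert the correct conformal normalisation $|\diff f_n(v)|_g = \tfrac{1}{\sqrt{2}}|\diff f_n|$ (the paper's inequality $l(f_n(\Gamma_n)) \geq (\inf|\diff f_n|)\,l(\Gamma_n)$ is loose by a factor of $\sqrt 2$ under the energy-density convention, but harmless), and you add a subsequence-diagonalisation remark that is unnecessary for the corollary's conclusion since a bound along every subsequence already rules out boundary nodes.
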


\begin{proof}
It follows from Theorem~\ref{convergence-near-infinity} that $f_n(\Gamma_n) \subset M_{n,\epsilon}$ converges in $C^{\infty}$ to the smooth curve $\Gamma = M_\infty \cap \{x=\epsilon/2\}$. In particular, for large enough $n$, $l(f_n(\Gamma_n)) \leq 2 l(\Gamma)$. Meanwhile,
\[
l(f_n(\Gamma_n)) \geq \left(\inf_{z \in \Gamma_n}  |\diff f|\right) l(\Gamma_n)
\]
Proposition~\ref{no-bps-near-infinity} now implies that $l(\Gamma_n) \leq \frac{1}{c} l(f_n(\Gamma_n)) \leq \frac{2}{c} l(\Gamma)$. 

Suppose now that $[j_n]$ converges to a nodal limit.  Let $\gamma_n$ be the unique geodesic in $(\Sigma,j_n)$ which is isotopic to the boundary $\del \Sigma$ (with respect to the canonical complete hyperbolic metric corresponding to $j_n$). If a boundary node occurs then $l(\gamma_n) \to \infty$, but $l(\gamma_n) \leq l(\Gamma_n) \leq C$ for all $n$, since $\gamma_n$ minimises length in its homotopy class. 
\end{proof}

\subsection{Proof of Theorem~\ref{properness-for-discs}}

We are now in position to prove Theorem~\ref{properness-for-discs}. Let $u_n \colon (\overline{D}, j_0) \to \overline{Z}$ be a sequence of $J$-holomorphic maps from the unit disc $\overline{D} = \{ z \in \C : |z| \leq 1\}$. We assume the boundary knots $K_n = \pi(u_n(S^1))$ converge in $C^{2,\alpha}$ to an embedded knot $K_\infty$. 

\begin{lemma}\label{use-up-PSL(2,R)}
There exists $r \in (0,1)$ and a sequence of biholomorphisms $\phi_n \in \PSL(2,\R)$ of $\overline{D}$ such that for all $n$, 
\[
\phi^{-1}_n(\Gamma_n) \subset \{ z \in \C : |z| < r\}
\]
\end{lemma}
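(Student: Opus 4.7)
The plan is to combine the uniform length bound from Corollary~\ref{Gamma-bounded-length} with the transitivity of $\PSL(2,\R)$ on $D$, interpreted as the group of isometries of the Poincar\'e disc. The key observation is that since $\PSL(2,\R)$ acts by hyperbolic isometries, it preserves the length of $\Gamma_n$, so after centring $\Gamma_n$ at the origin, it is trapped in a hyperbolic ball of uniformly bounded radius, which corresponds to a Euclidean disc of radius strictly less than~$1$.

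First, for each $n$ I would pick any point $p_n \in \Gamma_n$ and choose $\phi_n \in \PSL(2,\R)$ so that $\phi_n^{-1}(p_n) = 0$; such a $\phi_n$ exists since $\PSL(2,\R)$ acts transitively on $D$. Since $\phi_n^{-1}$ is an isometry of the canonical complete hyperbolic metric on $D$, we have $l(\phi_n^{-1}(\Gamma_n)) = l(\Gamma_n) \leq C$, where $C$ is the uniform bound from Corollary~\ref{Gamma-bounded-length}. As the curve $\phi_n^{-1}(\Gamma_n)$ passes through $0$ and has total hyperbolic length at most $C$, any other point $z$ of $\phi_n^{-1}(\Gamma_n)$ can be joined to $0$ along $\phi_n^{-1}(\Gamma_n)$ by an arc of length at most $C$; hence the hyperbolic distance from $0$ to $z$ is at most~$C$.

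Finally, I would convert this hyperbolic diameter bound into a Euclidean one using the explicit formula $d_h(0,z) = \log\frac{1+|z|}{1-|z|}$ in the Poincar\'e disc model, which gives $|z| = \tanh(d_h(0,z)/2)$. Setting $r = \tanh(C/2) \in (0,1)$ then ensures $\phi_n^{-1}(\Gamma_n) \subset \{|z| < r\}$ for every~$n$, which is the required statement. There is no substantive obstacle here; the lemma is essentially a packaging of the length bound from Corollary~\ref{Gamma-bounded-length} with the transitive action of $\PSL(2,\R)$ on the hyperbolic disc, and its purpose is to fix the gauge freedom before taking limits in the proof of Theorem~\ref{properness-for-discs}.
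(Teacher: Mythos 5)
Your proposal is correct and takes essentially the same approach as the paper: centre $\Gamma_n$ at the origin using transitivity of $\PSL(2,\R)$, use the isometry invariance of hyperbolic length together with Corollary~\ref{Gamma-bounded-length} to trap $\phi_n^{-1}(\Gamma_n)$ in a hyperbolic ball of uniformly bounded radius, and convert to a Euclidean radius via $|z| = \tanh(d_h(0,z)/2)$. The paper sharpens the distance bound to $C/2$ (travelling the shorter arc of the closed curve) and so gets $r = \tanh(C/4)$ rather than your $\tanh(C/2)$, but this makes no difference to the lemma.
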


\begin{proof}
By Corollary~\ref{Gamma-bounded-length}, $l(\Gamma_n)<C$ where the length is measured using the hyperbolic metric on $D$. Choose $\phi_n$ so that $\phi^{-1}_n(\Gamma_n)$ passes through the origin $0\in D$. Then $\phi^{-1}_n(\Gamma_n)$ lies entirely within hyperbolic distance $C/2$ of $0$, which means entirely within a Euclidean distance of $r<1$ from $0$, where $ r = \tanh(C/4)$. 
\end{proof}

It follows that $u_n \circ \phi_n$ sends the \emph{fixed} domain $\overline{W}= \{ z \in \C : r \leq |z| \leq 1\}$ into the subset $\{x \leq \epsilon/2\}$ where we already know the minimal surfaces converge (Theorem~\ref{convergence-near-infinity}). To ease the notation, we will now assume that we had no need to carry out this reparametrisation by $\phi_n$. I.e.~we assume that the original sequence $u_n$  already had the property that it mapped $\overline{W}$ into $\{x \leq \epsilon/2\}$. We will show that the maps $u_n$ converge here, but first we need a lemma.

\begin{lemma}\label{maps-between-annuli}
Let $A = S^1 \times [0,1]$ and $i_n, j_n$ be a sequence of $C^{1,\alpha}$ holomorphic structures on $A$ which converge in $C^{1,\alpha}$ as $n\to \infty$ to limits $i_\infty$ and $j_{\infty}$. Let $v_n \colon (A, j_n) \to (A, i_n)$ be a sequence of holomorphic maps and suppose that $v_n$ converges in $C^{2,\alpha}$ on the boundary $\del A$. Then $v_n$ converges in $C^{2,\alpha}$ to a holomorphic map $v_\infty \colon (A,j_\infty) \to (A, i_\infty)$.
\end{lemma}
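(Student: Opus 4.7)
The plan is to view the holomorphicity condition as an elliptic boundary value problem for $v_n$, with coefficients depending on $i_n, j_n$ and $v_n$ itself, and to apply standard interior and boundary elliptic estimates with constants that are uniform in $n$. Concretely, in local charts on source and target the condition that $v_n$ is $(j_n,i_n)$-holomorphic reads $\diff v_n \circ j_n = i_n(v_n) \circ \diff v_n$, which is a first-order quasilinear elliptic system for the $\R^2$-valued function $v_n$; its principal symbol is determined by $j_n$ and by $i_n \circ v_n$, and since $j_n, i_n$ converge in $C^{1,\alpha}$, the ellipticity constants and the $C^0$ norms of the coefficients are uniform in $n$. Since $v_n$ takes values in the compact annulus $A$, we have a uniform $C^0$ bound on $v_n$ to start the bootstrap.

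Next I would run a two-step bootstrap. First, rewriting the system in divergence form and applying the $L^p$ gradient estimate (Theorem~\ref{CZ}) together with the $C^{2,\alpha}$ convergence of boundary data gives a uniform $W^{1,p}$ bound on $v_n$ for any $p<\infty$, and hence via Sobolev embedding a uniform $C^{1,\beta}$ bound for some $\beta\in(0,\alpha)$. With this regularity in hand, the composed coefficients $i_n \circ v_n$ are uniformly bounded in $C^{0,\beta}$, so the classical Schauder boundary estimate applies and yields a uniform $C^{2,\beta}$ bound on $v_n$ up to $\del A$. Iterating once more, now with $C^{0,\alpha}$ control of the coefficients, upgrades this to a uniform $C^{2,\alpha}$ bound. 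By Arzel\`a--Ascoli a subsequence converges in $C^{2,\alpha'}$ for any $\alpha'<\alpha$ to some $v_\infty$, and passing to the limit in the equation (using the $C^{1,\alpha}$ convergence $j_n\to j_\infty$, $i_n\to i_\infty$) shows that $v_\infty$ is $(j_\infty,i_\infty)$-holomorphic with the prescribed boundary values.

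To go from subsequential to full convergence I would invoke uniqueness: two $(j_\infty,i_\infty)$-holo\-morphic maps $(A,j_\infty)\to(A,i_\infty)$ agreeing on $\del A$ are equal. Indeed, lifting to the universal cover $\widetilde{A}=\R\times[0,1]$ of both source and target, the two lifts are holomorphic maps between strips that coincide on both boundary lines; Schwarz reflection across $\R\times\{0\}$ and $\R\times\{1\}$ extends their difference to a bounded holomorphic function on $\C$ that vanishes on a set with an accumulation point, hence vanishes identically. Therefore every subsequence of $(v_n)$ has the same limit $v_\infty$, and the full sequence converges in $C^{2,\alpha'}$ for every $\alpha'<\alpha$ (and one upgrades to $C^{2,\alpha}$ by a diagonal argument using the uniform $C^{2,\alpha}$ bound). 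The main obstacle is the first step of the bootstrap: the coefficients of the elliptic system depend on the unknown $v_n$, so one cannot start directly from Schauder estimates; the $L^p$ estimate is needed precisely to break this circularity and extract just enough H\"older regularity of $v_n$ to feed back into the Schauder machine.
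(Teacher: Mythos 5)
Your route differs genuinely from the paper's, and there is a gap in the first step of the bootstrap. The paper's trick is to linearise the problem before doing any analysis: since the annulus $(A,i_n)$ admits a holomorphic uniformisation $\phi_n$ of class $C^{2,\alpha}$ (converging in $C^{2,\alpha}$), one replaces $v_n$ by $\phi_n\circ v_n$, which is a holomorphic \emph{function} $(A,j_n)\to\C$, i.e.\ it satisfies the \emph{linear} equation $\Delta_n(\phi_n\circ v_n)=0$ for a $j_n$-Laplacian whose coefficients converge in $C^{0,\alpha}$. The coefficients no longer see the unknown at all, the Dirichlet problem is well posed, and standard Schauder theory gives $C^{2,\alpha}$ convergence directly. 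The whole circularity you identify is thereby removed at the outset.

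Your plan keeps the quasilinear structure, and this is where the problem arises. When you convert the first-order condition $\diff v_n\circ j_n = i_n(v_n)\circ\diff v_n$ into a second-order divergence-form system to which Theorem~\ref{CZ} applies, you must differentiate the coefficient $i_n(v_n)$, producing a term of the schematic form $\nabla(i_n)(v_n)\cdot\nabla v_n\cdot\nabla v_n$. This is the quadratic gradient term familiar from the harmonic map equation (it is exactly the Christoffel term for a target metric compatible with $i_n$), and it lands in the inhomogeneous term $F$ of Theorem~\ref{CZ}. To have $F\in L^p$ you already need $\nabla v_n\in L^{2p}$ — more than the $W^{1,p}$ bound you are trying to prove. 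So Theorem~\ref{CZ} as stated cannot launch the bootstrap. What would actually work in the spirit of your plan is a \emph{first-order} $L^p$ estimate for Beltrami-type systems with merely bounded measurable (or $C^0$) coefficients (Bojarski/Astala--Iwaniec--Martin theory); with such a tool the freeze-coefficients argument goes through and the rest of your bootstrap is sound. But that is a different theorem than the one you cite, and the paper's uniformisation manoeuvre sidesteps the whole issue.

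Two smaller remarks. First, your uniqueness argument reflects the ``difference'' of two holomorphic maps, but when $i_\infty$ is not the standard complex structure the difference of two $(j_\infty,i_\infty)$-holomorphic maps is not itself holomorphic; one should first compose with holomorphic charts on source and target (as in the paper) before reflecting. Second, once the linear formulation is in hand, full (rather than subsequential) convergence follows immediately from uniqueness of the Dirichlet problem for $\Delta_\infty$, so the detour through reflection on the universal cover is unnecessary.
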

\begin{proof}
We begin by uniformising the range $(A,i_n)$ in the following sense. For each $n$ there is a real number $r_n \in (0,1)$ and a holomorphic embedding $\phi_n \colon (A, i_n) \to \C$ with image $\{ z : r_n \leq |z| \leq 1\}$. Since $i_n$ is $C^{1,\alpha}$, $\phi_n$ will be $C^{2,\alpha}$. Since $i_n \to i_\infty$ in $C^{1,\alpha}$, we can arrange for the $\phi_n$ to converge in $C^{2,\alpha}$. 

Meanwhile, on the domain each $j_n$ determines a Laplacian operator. To see this, fix a positive smooth 2-form $\omega$ on $A$ and set 
\begin{equation}
\Delta_n f = \frac{\diff\left( j_n(\diff f)\right)}{\omega}
\label{Laplacian-on-RS}
\end{equation}
This is the Laplace--Beltrami operator of the metric in the conformal class determined by $j_n$ and with area form $\omega$. The equation~\eqref{Laplacian-on-RS} shows that, since the $j_n \to j_\infty$ in $C^{1,\alpha}$, the coefficients of $\Delta_n$ converge in $C^{0,\alpha}$. 

Now $\phi_n \circ v_n \colon (A,h_n) \to \C$ is a sequence of $\Delta_n$-harmonic functions, whose boundary values converge in $C^{2,\alpha}$.  Since the coefficients of $\Delta_n$ converge in $C^{0,\alpha}$ it follow that $\phi_n \circ v_n$, and hence $v_n$, converges in $C^{2,\alpha}$. Passing to the limit in the equation $\diff v_n + i_n \circ \diff v_n \circ j_n = 0$ shows that the limit is holomorphic $(A,j_\infty) \to (A, i_\infty)$. 
\end{proof}

We now return to the sequence of admissible $J$-holomorphic maps $u_n \colon \overline{D} \to \overline{Z}$, for which $\pi (u_n(S^1)) \to K_{\infty}$ in $C^{2,\alpha}$.

\begin{proposition}\label{maps-converge-near-infinity-discs}
After passing to a subsequence, the maps $f_n$ converge on $\overline{W}$ in $C^{\infty}$ on compact subsets of $r \leq |z| <1$ and in $C^{2,\alpha}$ up to the boundary $|z|=1$. The limit has no branch points and so the twistor lifts $u_n$ converge on $\overline{W}$ in the topology on the space of admissible maps.
\end{proposition}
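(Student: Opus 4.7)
The plan is to combine interior estimates from Proposition~\ref{interior-estimate} with the graph description of Theorem~\ref{convergence-near-infinity}, and to conclude up-to-the-boundary convergence via Lemma~\ref{maps-between-annuli}. First, Proposition~\ref{interior-estimate} gives uniform bounds $|\nabla^k f_n| \leq C_k$ on compact subsets of the open disc $D$; since the hyperbolic metric on $D$ is smooth in the interior, Arzelà--Ascoli yields a subsequence, still denoted $f_n$, converging in $C^\infty_{\mathrm{loc}}(D)$ to a conformal harmonic map $f_\infty$. In particular $f_n \to f_\infty$ in $C^\infty$ on every compact subset of $\{r \leq |z| < 1\}$.

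For convergence up to $|z|=1$, I would reduce matters to Lemma~\ref{maps-between-annuli}. By Theorem~\ref{convergence-near-infinity} the surfaces $\overline{M}_{n,\epsilon/2}$ are graphs of $C^{2,\alpha}$-convergent sections $s_n$ over the fixed cylinder $[0,\epsilon/2]\times K$; let $\Phi_n$ denote the corresponding graph parametrization. Setting $v_n := \Phi_n^{-1} \circ f_n|_{\overline{W}}$ produces a sequence of holomorphic maps $(\overline{W}, j_0) \to ([0,\epsilon/2]\times K, i_n)$, where $i_n$ is the pull-back of the minimal-surface complex structure. Because $\Phi_n \to \Phi_\infty$ in $C^{2,\alpha}$, the induced metrics (and hence $i_n$) converge in $C^{1,\alpha}$, matching the hypothesis of Lemma~\ref{maps-between-annuli}.

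The hard step is then verifying that $v_n|_{\partial\overline{W}}$ converges in $C^{2,\alpha}$. On the inner circle $|z|=r$ this is immediate from the first paragraph and the $C^{2,\alpha}$-convergence of $\Phi_n$. On the outer circle $|z|=1$, one must show that the boundary parametrizations $\gamma_n := f_n|_{S^1} \colon S^1 \to K_n$ converge in $C^{2,\alpha}$ after identifying $K_n$ with $K$ via $\Phi_n$. The plan is to rerun the Willmore--Schauder machinery that powered Theorem~\ref{convergence-near-infinity}: the boundary values of the Euclidean mean curvature are controlled in $C^{0,\alpha}$ by Lemma~\ref{boundary-value-Euclidean-H} (using that $K_n \to K_\infty$ in $C^{2,\alpha}$), the conformally invariant Willmore equation propagates this into the interior via the $L^p$ Calderón--Zygmund estimate, and Schauder boundary estimates for the minimal-surface-as-graph equation yield the desired uniform $C^{2,\alpha}$-bound on $f_n$ up to $\{|z|=1\}$. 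Along a further subsequence, $\gamma_n$ then converges in $C^{2,\beta}$ for every $\beta<\alpha$, which combined with the interior convergence of paragraph~1 is enough to invoke Lemma~\ref{maps-between-annuli} and conclude $v_n \to v_\infty$, and hence $f_n \to f_\infty$, in $C^{2,\alpha}(\overline{W})$.

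To finish, Proposition~\ref{no-bps-near-infinity} gives a uniform lower bound $|df_n| \geq c>0$ on $\overline{W}$ that passes to the limit, so $f_\infty$ has no branch points on $\overline{W}$. Lemma~\ref{no-bps-twistor-lifts-converge} then upgrades this to $C^{1,\alpha}$-convergence of the twistor lifts $u_n$ on $\overline{W}$, which is the required convergence in the topology on admissible maps. The main obstacle in the whole argument is paragraph~3: obtaining the uniform $C^{2,\alpha}$-bound on $f_n$ up to the boundary $\{|z|=1\}$, where the target metric is genuinely degenerate and only admissibility constrains the asymptotic behaviour. This is where the Willmore-based argument from Theorem~\ref{convergence-near-infinity} must be adapted from the level of images of surfaces to the level of parametrized maps.
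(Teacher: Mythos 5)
Your first two paragraphs are essentially the paper's argument: interior estimates plus Arzel\`a--Ascoli give $C^\infty_{\mathrm{loc}}$ convergence away from $|z|=1$, and the reduction to $F_n := s_n^{-1}\circ f_n$ (your $v_n$) being a sequence of holomorphic maps between annuli with $C^{1,\alpha}$-convergent target structures is exactly right. But your ``hard step''---$C^{2,\alpha}$ convergence of the boundary parametrization at $|z|=1$---is a genuine gap, not merely a step to be filled by ``rerunning the Willmore machinery.'' The Willmore argument in Theorem~\ref{convergence-near-infinity} controls the \emph{image} surfaces via the graphing sections $s_n$; it cannot directly control the parametrization $\gamma_n = f_n|_{S^1}$, because even with $s_n$ converging in $C^{2,\alpha}$, the parametrizations $\gamma_n$ could differ by non-convergent reparametrizations of $S^1$. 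Making the Willmore approach give bounds on $f_n$ rather than on $s_n$ would require controlling $F_n$ up to $|z|=1$ --- which is precisely the quantity you are trying to bound, so the argument as proposed is circular.

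The paper's resolution is a reflection trick you don't mention, and it sidesteps boundary convergence on $|z|=1$ entirely. In log-polar holomorphic coordinates $(s,t)$ with $t=0$ on $\partial D$, admissibility (Definition~\ref{admissible-maps}, from the Taylor data~\eqref{admissible-taylor}) forces $x\circ f_n$ to be odd in $t$ and $y\circ f_n$ to be even in $t$ up to $O(t^{2+\alpha})$. Hence $f_n$ extends by reflection across $x=0$ to a $C^{2,\alpha}$ conformal map on the doubled cylinder $S^1\times[\log r,-\log r]$, and $i_n$ extends too (being induced by the flat conformal structure on $\R^4$). Both boundary circles of the doubled cylinder now lie strictly in the \emph{interior} of the minimal surfaces, where convergence is already known by interior estimates. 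Lemma~\ref{maps-between-annuli} then applies directly with convergent boundary data on \emph{both} ends, giving full $C^{2,\alpha}$ convergence (not just $C^{2,\beta}$ for $\beta<\alpha$ as your subsequence argument would yield). Your final paragraph---$|\diff f_n|\geq c$ from Proposition~\ref{no-bps-near-infinity}, hence a branch-point-free limit, hence convergence of twistor lifts via Lemma~\ref{no-bps-twistor-lifts-converge}---matches the paper.
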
 

\begin{proof}
Firstly, by Proposition~\ref{interior-estimate}, the maps $f_n$ are bounded in $C^k$ for any $k$ on a neighbourhood of $|z|=r$ (using the hyperbolic metric on $W \subset D$). By Arzela--Ascoli, we can pass to a subsequence which converges in $C^{\infty}$ on any compact subset of $r\leq |z| <1$ and, in particular, on the inner boundary $|z|=r$. The goal is to extend this to $C^{2,\alpha}$ convergence up to the outer boundary $|z|=1$.

Recall Theorem~\ref{convergence-near-infinity}: fix a smooth knot $K$ very close to the limit $K_\infty$ and, in the region $x \leq \epsilon$, write the image of $f_n $ as a graph over $K \times [0,\epsilon]$ via a section $s_n$ of the normal bundle. By Theorem~\ref{convergence-near-infinity}, $s_n$ converges in $C^{2,\alpha}$ up to the boundary. Using $s_n$ to pull back the metric from $\overline{M}_{n,\epsilon}$ to $K \times [0,\epsilon]$, we obtain a sequence of metrics and hence complex structures on $K \times [0,\epsilon]$ which we denote by $i_n$. Since the $s_n$ converge in $C^{2,\alpha}$, it follows that the $i_n$ converge in $C^{1,\alpha}$ to a complex structure $i_\infty$ on $K\times [0,\epsilon]$. 

We now consider the maps $F_n \colon \overline{W} \to K \times [0,\epsilon]$ given by $F_n = s_n^{-1} \circ f_n$. These maps are $i_n$-holomorphic and converge near the boundary $|z|=r$. We will show that they converge in $C^{2,\alpha}$ on the whole of $\overline{W}$. Given this it follows that $f_n = s_n \circ F_n$ also converges in $C^{2,\alpha}$. To prove the convergence of the $F_n$ up to $|z|=1$, we will use a trick involving reflection to be able to apply Lemma~\ref{maps-between-annuli}.

We use ``log-polar'' coordinates: $(s,t) \mapsto e^{-(t+is)}$. This holomorphically identifies $\overline{W}$ with the cylinder $(s,t) \in S^1 \times [0, -\log r]$. By~\eqref{admissible-taylor}, the Taylor expansion of $x \circ f_n$ is odd in $t$ up to $O(t^{2+\alpha})$ and that of $y \circ f_n$ is even in $t$ up to $O(t^{2+\alpha})$. So we can extend $f_n$ to take values in the whole of $\R^4$ by reflection in $x=0$, i.e. for $t \in [\log r, 0]$, we set
\begin{align*}
x \circ f_n (s,t) &= - x \circ f_n(s,-t)\\
y \circ f_n (s,t) & = y \circ f_n(s,-t)
\end{align*}
The result is a $C^{2,\alpha}$ conformal map $S^1 \times [\log r, -\log r] \to \R^4$ that we continue to denote by $f_n$. The image of $f_n$ is a graph over the extended cylinder $K \times [-\epsilon, \epsilon]$, given by the obvious extension of the section $s_n$ by reflection to the portion over $[-\epsilon,0]$. Composing we get a map $s_n^{-1} \circ f_n \colon S^1 \times [\log r, -\log r] \to K \times [-\epsilon, \epsilon]$ that we also continue to denote by $F_n$. Meanwhile, the complex structure $i_n$ on $K \times [0,-\log r]$ is induced by the conformal structure on $\H^4$ and so we can equally use the flat metric on $\R^4$. In this way we see that $i_n$ also extends to the whole cylinder $K \times [-\epsilon, \epsilon]$. (Note that this extension is only $C^{1,\alpha}$ across $K \times \{0\}$, but this doesn't matter for our purposes.) We have that $i_n$ converges in $C^{1,\alpha}$ to a complex structure $i_\infty$ on $K \times [-\epsilon, \epsilon]$. 

We now have a sequence of maps $F_n \colon S^1\times[\log r, -\log r] \to K \times [-\epsilon,\epsilon]$ which are $i_n$ holomorphic and which converge on \emph{both} boundary components of the domain. Since $i_n \to i_\infty$ in $C^{1,\alpha}$,  it follows from Lemma~\ref{maps-between-annuli} that $F_n$ converges in $C^{2,\alpha}$ as required. 

We have now shown that $f_n$ converges on $\overline{W}$ in $C^{2,\alpha}$ up to the boundary $|z|=1$. By Proposition~\ref{no-bps-near-infinity}, $|\diff f_n| \geq c > 0$ on $\Sigma_{n,\epsilon/2}$. It follows that the same inequality holds for the limit. This means that the limit has no branch points in the interior. So $f_\infty \colon \overline{W} \to M_{\infty}$ is a proper holomorphic map of degree~1. It follows that it has no branch points on the boundary either. By Lemma~\ref{no-bps-twistor-lifts-converge}, the twistor lifts $u_n$ also converge on $\overline{W}$ as claimed.
\end{proof}

From this point, the remainder of the proof of Theorem~\ref{properness-for-discs} is straight-forward. 
\begin{proof}[Proof of Theorem~\ref{properness-for-discs}]
We use the freedom to reparametrise by biholomorphisms of $\overline{D}$ to ensure that the conclusion of Lemma~\ref{use-up-PSL(2,R)} hold for the sequence $u_n$. By Proposition~\ref{maps-converge-near-infinity-discs}, there exists $0<r<1$ such that on the set $\overline{W} = \{ z\in \C : r \leq |z| \leq 1\}$, the maps $f_n = \pi \circ u_n$ converge in $C^{2,\alpha}$ and in $C^\infty$ on the interior. Now the maps $f_n$ are bounded in $C^k$ for any $k$ on the remainder $\{ z : |z| \leq r\}$ of $\overline{D}$ (Proposition~\ref{interior-estimate}) and they converge on the boundary $|z|=r$. It follows from Arzela--Ascoli that we can pass to a subsequence which converges in $C^{\infty}$ on the whole of $D$, and in $C^{2,\alpha}$ up to the boundary. If the limit has no branch points, then the twistor lifts $u_n$ also converge in $C^{\infty}$ on $D$ and $C^{1,\alpha}$ up to the boundary (Lemma~\ref{no-bps-twistor-lifts-converge}).  
\end{proof}

\subsection{Proof of Theorem~\ref{convergence-interior}}\label{convergence-interior-proof-time}

We now turn to Theorem~\ref{convergence-interior}. We begin with the analogue of Lemma~\ref{use-up-PSL(2,R)}:

\begin{lemma}\label{collar-neighbourhood-general-case}
Under the hypotheses of Theorem~\ref{convergence-interior}, there is a collar neighbourhood $\overline{W} \subset \overline{\Sigma}$ of $\del \Sigma$, diffeomorphic to $\del \Sigma \times [0,1]$ such that, after acting by a sequence of diffeomorphisms, the $j_n$ converge on $\overline{W}$ and $\overline{W} \subset \overline{\Sigma}_{n,\epsilon}$ for all $n$; i.e., on $\overline{W}$, $x\circ u_n \leq \epsilon$ for all $n$.
\end{lemma}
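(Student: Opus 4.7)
The strategy is to exploit the conformality of $f_n$ together with the smooth convergence of the minimal half-collars from Theorem~\ref{convergence-near-infinity}. Since $f_n$ restricts, after passing to the relevant subsequence, to a conformal diffeomorphism from $\overline{\Sigma}_{n,\epsilon}$ onto the graphical half-collar $\overline{M}_{n,\epsilon}$, and since these targets converge in $C^{2,\alpha}$ up to the boundary to a smooth limit $\overline{M}_\infty$, the conformal structure on $\overline{\Sigma}_{n,\epsilon}$ can be transported back to a fixed model where convergence is manifest.

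Pass first to the subsequence supplied by Theorem~\ref{convergence-near-infinity}. Each component of $\overline{\Sigma}_{n,\epsilon}$ is an annulus, and its conformal modulus coincides, via the conformal diffeomorphism $f_n$, with the modulus of the corresponding annular component of $\overline{M}_{n,\epsilon}$. Smooth convergence of the Riemannian structure on the targets, together with continuity of modulus in the conformal class, therefore gives convergence of these moduli to finite nonzero limits $T_\infty^{(i)}$.

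Now fix an abstract compact collar $A \cong \partial\Sigma \times [0,1]$, together with a once-and-for-all identification of $A$ with a collar $\overline{W} \subset \overline{\Sigma}$, and endow $A$ with a complex structure whose modulus on each component is strictly smaller than the corresponding $T_\infty^{(i)}$. Standard uniformization then produces, for all large $n$, a conformal embedding $\psi_n \colon A \to \overline{M}_{n,\epsilon}$ sending one boundary component of $A$ to $K_n$; the residual rotational freedom in $\psi_n$ can be used to arrange that its boundary parametrisation agrees with $f_n|_{\partial\Sigma}$, and that the $\psi_n$ converge smoothly to a conformal embedding $\psi_\infty \colon A \to \overline{M}_\infty$. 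Set $\iota_n \defeq f_n^{-1} \circ \psi_n \colon A \to \overline{\Sigma}$. This is a conformal embedding from $A$ (with its fixed complex structure) into $(\overline{\Sigma}, j_n)$, whose image lies in $\overline{\Sigma}_{n,\epsilon}$ and which restricts to the identity on $\partial\Sigma$.

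Finally, transport $\iota_n$ to $\overline{W}$ under the identification $A \cong \overline{W}$, and extend to a diffeomorphism $\phi_n$ of $\overline{\Sigma}$; such an extension exists because $\overline{\Sigma} \setminus \iota_n(A)$ is diffeomorphic to $\overline{\Sigma} \setminus \overline{W}$, both being complements of an open collar of $\partial\Sigma$ with matching bounding multicurves. By construction $\phi_n^* j_n$ coincides with the fixed model complex structure on $\overline{W}$ and so trivially converges, while $\phi_n(\overline{W}) = \iota_n(A) \subset \overline{\Sigma}_{n,\epsilon}$ for all large $n$ in the subsequence. The main obstacle is the convergence of the conformal moduli in the second step; but this is an immediate consequence of Theorem~\ref{convergence-near-infinity}, and everything else is standard uniformization and the topology of surface diffeomorphisms. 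Admissibility of $\phi_n$ in the sense of Definition~\ref{def-admissible-diffeo} is automatic to zeroth order (since $\iota_n|_{\partial\Sigma} = \mathrm{id}$ and $j_n|_{\partial\Sigma} = j_0|_{\partial\Sigma}$), and the first-order condition can be arranged by a further modification of $\phi_n$ by a diffeomorphism tangent to the identity to second order at $\partial\Sigma$.
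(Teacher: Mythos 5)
Your argument takes a genuinely different route from the paper's. The paper's proof works entirely on the domain: it uses the uniform bound $l(\Gamma_n) \leq C$ from Corollary~\ref{Gamma-bounded-length} together with the explicit collar form~\eqref{h-infinity-collar} of $h_\infty$ to show that the curves $\Gamma_n = (x\circ f_n)^{-1}(\epsilon/2)$ cannot approach $\del\Sigma$, and then takes $\overline{W}=\{\tau\leq c\}$. You instead work on the target, transporting the conformal structure of the half-collar annuli $(\overline{\Sigma}_{n,\epsilon}, j_n)$ via the biholomorphisms $f_n$ to $\overline{M}_{n,\epsilon}$, which converge by Theorem~\ref{convergence-near-infinity}, and then use uniformization. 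Both routes trace back to Theorem~\ref{convergence-near-infinity}, but yours has the feature of pulling $j_n$ back to be \emph{constant} on $\overline{W}$ rather than merely convergent, at the cost of needing to extend a conformal embedding to a global diffeomorphism.

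There is a concrete error, however. You claim that ``the residual rotational freedom in $\psi_n$ can be used to arrange that its boundary parametrisation agrees with $f_n|_{\del\Sigma}$,'' and then that $\iota_n = f_n^{-1}\circ\psi_n$ restricts to the identity on $\del\Sigma$. This cannot be right. After uniformizing both annuli to round ones, a modulus-decreasing conformal embedding sending outer boundary circle to outer boundary circle is the standard inclusion up to a rotation; so the boundary parametrisation of $\psi_n$ on $K_n$ is the conformal (uniformizing) parametrisation determined by the complex structure of $\overline{M}_{n,\epsilon}$, defined up to a rigid rotation. On the other hand, pulling back by $f_n^{-1}$, the boundary parametrisation of $\iota_n$ on $\del\Sigma$ is the uniformizing parametrisation of the annulus $(\overline{\Sigma}_{n,\epsilon}, j_n)$, again up to rotation. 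This is a specific parametrisation determined by the global conformal structure of $\overline{\Sigma}_{n,\epsilon}$, and there is no reason for it to agree with the identity parametrisation coming from the fixed identification $A\cong\overline{W}$. The one-parameter rotational freedom is nowhere near enough to match an arbitrary circle diffeomorphism.

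This error is localized. The statement of Lemma~\ref{collar-neighbourhood-general-case} does not require the diffeomorphisms to fix $\del\Sigma$ pointwise, so dropping the false matching claim still yields a sequence $\phi_n$ with $\phi_n^*j_n$ constant (hence convergent) on $\overline{W}$ and $\phi_n(\overline{W}) = \iota_n(A)\subset\overline{\Sigma}_{n,\epsilon}$. But your final paragraph on admissibility of $\phi_n$, which is entirely built on $\iota_n|_{\del\Sigma}=\mathrm{id}$, then collapses and would need a different argument (for instance, replacing the reference structure $j_0$ by the limit $j_\infty$ near $\del\Sigma$, which is also implicitly what the paper does when it invokes Lemma~\ref{admissible-j-give-same-maps} in the proof of Proposition~\ref{maps-converge-near-infinity-general-case}). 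So: the strategy is a valid and interesting alternative, but the specific parametrisation-matching step must be deleted, and the admissibility discussion repaired.
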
 
\begin{proof}
Recall the smooth curve  $\Gamma_n = (x \circ u_n)^{-1}(\epsilon/2)$, in $\Sigma$. By Corollary~\ref{Gamma-bounded-length}, the length of $\Gamma_n$, measured with the complete hyperbolic metric on $(\Sigma, j_n)$, is uniformly bounded, $l(\Gamma_n) \leq C$. We pick a boundary defining function $\tau \colon \overline{\Sigma} \to [0,\infty)$. We claim that, after acting by diffeomorphisms,  there exists $c >0$ such that for all large $n$, 
\begin{equation}
\Gamma_n \subset \{ \tau > c\}. 
\label{Gamma-cannot-escape}
\end{equation}
We can then take $\overline{W} = \{ \tau \leq c\}$. (If necessary we reduce $c$ so that the critical values of $\tau$ are all higher than $c$, this ensures $\overline{W} \cong \del \Sigma \times [0,1]$ is a genuine collar.)  To prove~\eqref{Gamma-cannot-escape} first note that if the $[j_n]$ converge to a smooth limit, then after pulling back by a sequence of diffeomorphisms, the associated hyperbolic metrics $h_n$ also converge on $\Sigma$. Meanwhile, if the $[j_n]$ converge to a nodal limit, then Proposition~\ref{Gamma-bounded-length} ensures there are no boundary nodes. So after pulling back by a sequence of diffeomorphisms, the hyperbolic metrics $h_n$ still converge near the boundary. We write this limiting metric as $h_\infty$. The metric $\tau^2 h_{\infty}$ extends smoothly up to the boundary. We use the gradient flow of $\tau$ with respect to $\tau^2h_{\infty}$ to define a collar neighbourhood $\del \Sigma \times [0,\delta]$ of $\del \Sigma$ for some $\delta>0$. In this collar, $h_\infty$ has the form
\begin{equation}
h_\infty = \frac{F(\sigma,\tau)\, \diff \tau^2 + G(\sigma,\tau)\, \diff\sigma^2}{\tau^2}
\label{h-infinity-collar}
\end{equation}
where $\sigma$ is arc length on $\del \Sigma$ with respect to the metric $(\tau^2 h_\infty)|_{\tau=0}$. Here $F$ and $G$ are smooth functions on $\del \Sigma \times [0,\delta]$. In particular they are bounded. 

Now given $0<c<\delta/2$, consider the regions $A_c =\{ (\sigma,\tau) : c\leq \tau \leq 2c \}$ and $B_c = \{(\sigma,\tau): \tau \leq 2c\}$. Let $\gamma$ be any path in $A_c$ which runs between the components of $\{\tau=c\}$ and $\{\tau = 2c\}$ of $\del A_c$ and let $\hat\gamma$ be any path in $B_c$ which is homotopic in $B_c$ to the boundary $\del \Sigma$. It follows from~\eqref{h-infinity-collar} (and the fact that $F$ and $G$ are bounded) that there is a constant $M$ such that $l(\gamma), l(\hat{\gamma}) > Mc^{-1}$. These lengths are measured with respect to $h_{\infty}$, but since the $h_n$ converge to $h_\infty$ in $B_c$ we can choose $M$ so that these lower bounds hold for lengths measured also with respect to $h_n$ for all large $n$. 

Now take $c< M/C$, where $C$ is the upper bound of $l(\Gamma_n)$. If $\Gamma_n$ were entirely contained in $B_c$ it would also be homotopic to $\del \Sigma$ in $B_c$ and so $l(\Gamma_n) >C$, a contradiction. So $\Gamma_n$ is not entirely contained in $B_c$. Suppose now that $\Gamma_n$ meets the curve $t=c$. Then it must also cross the curve $t=2c$ as it leaves $B_c$; so part of $\Gamma_n$ is a path joining two components of $\del A_c$ and this again contradicts the fact that $l(\Gamma_n)<C$. This proves $\Gamma_n \subset \{t>c\}$. 
\end{proof}

\begin{proposition}\label{maps-converge-near-infinity-general-case}
Let $\overline{W}  \cong \del \Sigma \times [0,1]$ denote the collar neighbourhood of Lemma~\ref{collar-neighbourhood-general-case}. The maps $f_n$ converge on $\overline{W}$ in $C^{\infty}$ on compact subsets of $\del \Sigma \times (0,1]$ and in $C^{2,\alpha}$ up to the outer boundary $\del \Sigma \times \{0\}$. The limit has no branch points and so the twistor lifts $u_n$ converge on $\overline{W}$ in the topology on the space of admissible maps.
\end{proposition}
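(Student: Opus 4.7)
The plan is to mirror the proof of Proposition~\ref{maps-converge-near-infinity-discs}, working on each annular component of $\overline{W} \cong \del\Sigma \times [0,1]$ separately. The new feature is that the complex structures $j_n$ on the domain now vary, but by Lemma~\ref{collar-neighbourhood-general-case} they converge on $\overline{W}$ (in $C^\infty$ up to the outer boundary, since there are no boundary nodes by Corollary~\ref{Gamma-bounded-length}) after applying a sequence of diffeomorphisms. I first observe that, by construction, $\Gamma_n = \{x \circ u_n = \epsilon/2\}$ is a closed curve isotopic to $\del \Sigma$ lying in $\{\tau > c\}$, so it separates $\overline W$ from the interior; consequently $\overline{W} \subset \overline{\Sigma}_{n,\epsilon/2}$ and Proposition~\ref{no-bps-near-infinity} yields a uniform bound $|\diff f_n| \geq c > 0$ on all of $\overline{W}$.

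Next comes the interior and boundary control. Proposition~\ref{interior-estimate} provides uniform $C^k$ bounds on $f_n$ relative to the hyperbolic metrics $h_n$, which themselves converge on $\overline{W}$. Arzela--Ascoli then yields a subsequence converging in $C^\infty$ on compact subsets of $\del \Sigma \times (0,1]$, in particular on the inner boundary. For the outer boundary, Theorem~\ref{convergence-near-infinity} writes each $\overline{M}_{n,\epsilon}$ as the graph of a section $s_n$ of the normal bundle $E \to [0,\epsilon] \times K$, with $s_n \to s_\infty$ in $C^{2,\alpha}$; pulling back the induced metric via $s_n$ gives complex structures $i_n$ on $[0,\epsilon] \times K$ converging in $C^{1,\alpha}$. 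The composed maps $F_n = s_n^{-1} \circ f_n \colon \overline W \to [0,\epsilon] \times K$ are then $(j_n, i_n)$-holomorphic, with boundary values converging in $C^{2,\alpha}$ on the outer side (by construction of $s_n$) and in $C^\infty$ on the inner side (from the Arzela--Ascoli step).

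To upgrade this to $C^{2,\alpha}$ convergence up to the outer boundary, I apply the reflection trick from the disc case component by component. By admissibility~\eqref{admissible-taylor}, the Taylor coefficients of $x \circ f_n$ and $y \circ f_n$ at $t=0$ have the correct parity to extend $F_n$ by reflection across the outer boundary, producing holomorphic maps between doubled annuli whose complex structures (the reflected $j_n$ and $i_n$) converge in $C^{1,\alpha}$. Then Lemma~\ref{maps-between-annuli} applies on each doubled component, giving $C^{2,\alpha}$ convergence of $F_n$, hence of $f_n = s_n \circ F_n$, up to the outer boundary. The limit $f_\infty$ inherits $|\diff f_\infty| \geq c > 0$, so it has no branch points, and Lemma~\ref{no-bps-twistor-lifts-converge} upgrades the convergence $f_n \to f_\infty$ to $C^{1,\alpha}$ convergence $u_n \to u_\infty$ of twistor lifts; admissibility of $u_\infty$ passes to the limit from the asymptotic expansion of Proposition~\ref{asymptotic-expansion}.

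The main technical point to verify is that Lemma~\ref{maps-between-annuli} genuinely applies after reflection, since now \emph{both} the domain and target complex structures vary. Three ingredients make this work: the $j_n$ converge smoothly up to the outer boundary on $\overline{W}$, the $i_n$ converge in $C^{1,\alpha}$ from the graphical description, and the reflection extensions of both remain $C^{1,\alpha}$ across the doubled cylinder thanks to the parity of the admissible Taylor expansions. Everything else is a faithful translation of the argument for Proposition~\ref{maps-converge-near-infinity-discs}.
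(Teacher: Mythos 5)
Your proposal is correct and follows essentially the same route as the paper: pass via Lemma~\ref{collar-neighbourhood-general-case} to a fixed collar on which the $j_n$ converge, get $C^\infty_{\mathrm{loc}}$ control from Proposition~\ref{interior-estimate}, use Theorem~\ref{convergence-near-infinity} and the reflection trick to apply Lemma~\ref{maps-between-annuli} component by component, then conclude immersivity from Proposition~\ref{no-bps-near-infinity} and lift via Lemma~\ref{no-bps-twistor-lifts-converge}. The one ``new technical point'' you flag --- that the domain complex structures now also vary --- is already handled by Lemma~\ref{maps-between-annuli} as stated (it allows both $i_n$ and $j_n$ to vary in $C^{1,\alpha}$), so it requires no additional verification beyond observing that the doubled $j_n$ remain $C^{1,\alpha}$, which holds because admissibility forces $j_n - j_0 \in C^{1,\alpha}_1$.
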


\begin{proof}
The proof of this is almost identical to that of Proposition~\ref{maps-converge-near-infinity-discs}. We assume that we have acted by diffeomorphisms that $j_n$ converges on $\overline{W}$ to a limit $j_\infty$. Pick a comopnent of $\del \Sigma$ and use $j_\infty$-holomorphic cylindrical coordinates $(s,t)$ there with $s \in S^1$ and $t \in [0,\delta]$, where $t=0$ corresopnds to $\del \Sigma$. Write $\overline{U} \subset \overline{\Sigma}$ for the domain of these coordinates, where $t \in [0,\delta]$. By Lemma~\ref{collar-neighbourhood-general-case}, for small enough $\delta$ we have $\overline{U}\subset \overline{W} \subset \overline{\Sigma}_{n,\epsilon}$, so $\overline{U}$ lies in the region where the image is the graph over the cylinder $K \times [0,\epsilon]$ of a section $s_n$ of the normal bundle. 

The $j_n$ are admissible complex structures and so, by Lemma~\ref{admissible-j-give-same-maps}, the $j_n$-holomorphic maps $u_n$ have the same Taylor expansion with respect to the $j_\infty$-holomorphic coordinates $(s,t)$ to $O(t^2)$ as in \eqref{admissible-taylor}. So we can reflect $f_n$ to obtain a $C^{2,\alpha}$ map $f_n \colon S^1 \times [-\delta,\delta] \to \R^4$. Then, just as before, composing with projection to the cylinder gives a sequence of maps 
\[
F_n = s_n^{-1} \circ f_n \colon S^1 \times[-\delta, \delta] \to K \times [-\epsilon,\epsilon].
\] 
$F_n$ is holomorphic when on the domain we use the double of $j_n$ from $\overline{U} \cong S^1\times [0,\delta]$ and on the range we use the double of $i_n$ from $K \times [0,\epsilon]$. We now have a sequence of $(j_n,i_n)$-holomorphic maps between annuli where the complex structures converge in $C^{1,\alpha}$ and the boundary values converge in $C^{2,\alpha}$. Lemma~\ref{maps-between-annuli} shows the $F_n$ converge in $C^{2,\alpha}$. It follows that the $f_n$ converge in $C^{2,\alpha}$ on $\overline{U}$. We repeat this for each component of $\del \Sigma$, and then extend the convergence via the $C^k$-estimates of Proposition~\ref{interior-estimate} to the whole of $\overline{W}$. From here the rest of the proof is the same as that of Proposition~\ref{maps-converge-near-infinity-discs}.
\end{proof}

From here on the proofs of the convergence statements~\ref{smooth-limit} and~\ref{nodal-limit-of-maps} of Theorem~\ref{convergence-interior} are essentially standard. The main reference for this topic is the paper of Chen--Tian \cite{Chen-Tian}, but this is certainly overkill here. In their setting, the energy density isn't bounded and so bubbles can occur. Moreover, the harmonic maps are not assumed to be conformal. This means that in the limit, the various components may be connected by geodesics. Our situation is much simpler: there are no bubbles (at least for the harmonic maps) and no ``neck collapsing'' to geodesics. We sketch the details.

\begin{proof}[Proof of Theorem~\ref{convergence-interior}]
When the $[j_n]$ converge to a smooth limit, we can act by diffeomorphisms to arrange for the hyperbolic metrics $h_n$ to converge to a limiting metric $h_\infty$. The $C^{k}$ bound on $f_n$ then holds for this fixed metric too.  Arzela--Ascoli together with the fact that the $f_n$ converge in $C^{2,\alpha}$ on $\overline{W}$ now implies that, up to a subsequence, the $f_n$ converge on the whole of $\overline{\Sigma}$, in $C^{\infty}$ in the interior and in $C^{2,\alpha}$ up to the boundary. When the limiting map to $\H^4$ has no branch points, Lemma~\ref{no-bps-twistor-lifts-converge} implies that their twistor lifts $u_n$ converge to a $J$-holomorphic limit in the space of admissible maps. This completes the proof of part~\ref{smooth-limit} of Theorem~\ref{convergence-interior}.

Suppose now that the complex structures $[j_n]$ converge to a nodal limit, $(\overline{S},N)$. There is a continuous map $\psi \colon \overline{\Sigma} \to \overline{S}/N$, which is a diffeomorphism away from the nodes and with the pre-image of each node being the $h_n$ geodesic $\gamma_j$ which is collapsing. After pulling back by diffeomorphisms, we can arrange for the metrics $\psi_*(h_n)$ to converge on the punctured surface $S \setminus N$ to a complete hyperbolic metric $h_\infty$. The interior estimates of Proposition~\ref{interior-estimate} continue hold for the derivatives of $ f_n \circ \psi^{-1}$ defined using $h_\infty$. Together with the $C^{2,\alpha}$ convergence of the $f_n$ on $\overline{W}$, it now follows from Arzela--Ascoli that a subsequence of the $f_n \circ \psi^{-1}$ converges on compact subsets of $\overline{S} \setminus N$ to a conformal harmonic limit $f_\infty \colon \overline{S} \setminus N \to \overline{\H}^4$.  The convergence is $C^{\infty}_{\mathrm{loc}}$ on the interior and $C^{2,\alpha}$ up to the boundary. 

To extend $f_\infty$ over a point $p_i \in N$, take a small disc neighbourhood $D \subset S$ of $p_i$ which has finite area with respect to $h_\infty$. Since the energy density of $f_n$ with respect to $h_n$ is uniformly bounded (Proposition~\ref{interior-estimate}) the same is true for $f_\infty$ with respect to $h_\infty$. So $f_\infty$ has finite energy with respect to $h_\infty$ on the punctured disc $D\setminus \{p_i\}$. Since energy is conformally invariant, the energy is also finite with respect to the Euclidean metric on $D\setminus\{p_i\}$. Now Sacks--Uhlenbeck's Removable Singularity Theorem \cite{Sachs-Uhlenbeck} says that $f_\infty$ extends smoothly over $p_i$.

To prove that $f_\infty(p_i) = f_\infty(q_i)$ we argue as follows. Assume for a contradiction that $f_\infty(p_i) \neq f_\infty(q_i)$. We will show that for all large $n$, there is a cylindrical neighbourhood $R_n \subset \overline{\Sigma}$ of $\gamma_i$ such that $f_n$ sends one end very close to $f_{\infty}(p_i)$, the other very close to $f_{\infty}(q_i)$ and on which the energy of $f_n$ is very small. For a conformal map, the energy is equal to the area of the image and this will then contradict the monotonicity of area for minimal surfaces. 

To find the cylinder, write $2\pi w_n$ for the $h_n$-length of the geodesic $\gamma_i$. Near $\gamma_i$ the metric $h_n$ has a standard form:
\begin{equation}
h_n = \diff s^2 + w_n^2 \cosh^2(s) \diff \theta^2
\label{geodesic-collar}
\end{equation}
Here $s \in [-D_n, D_n]$ where $s=0$ corresponds to $\gamma_i$ and $D_n$ is the normal injectivity radius of $\gamma_i$ with respect to $h_n$. The collar lemma in hyperbolic geometry tells us that $D_n  \geq c w_n^{-1}$ for some $c$ independent of $n$. Now fix $\delta>0$ and consider the region $R_n(\delta)$ where $|s| \leq \cosh^{-1}(\delta/w_n)$. The area of this is
\[
\mathrm{Area}(R_n(\delta)) = 2 w_n\sinh (\cosh^{-1}(\delta/ w_n)) \leq C \delta
\]
for all large $n$. By Proposition~\ref{interior-estimate},  $|\diff f_n|^2 \leq 2$. Since $f_n$ is conformal, the energy is equal to the area and so the area of the image is bounded above:
\begin{equation}\label{upper-bound-area-neck}
\mathrm{Area}\left(f_n(R_n(\delta))\right) \leq 2 C \delta
\end{equation}
At the same time, the boundary $\del R_n(\delta) = S^{+}_n(\delta) \cup S^-_n(\delta)$ is a pair of circles of length $2 \pi \delta$. Now near $p_i$ the metric on $S$ also has a standard form. There are local polar coordinates in which
\begin{equation}
h_\infty = \frac{\diff r^2 + r^2 \diff \theta^2}{r^2 (\log r)^2}
\label{cusp}
\end{equation}
for $r \leq r_0$. The circle $S^+_\infty(\delta)$ given by $r= e^{-1/\delta}$ also has length $2 \pi \delta$. The convergence $\psi_*(h_n) \to h_\infty$ is such that $\psi(S^+_n(\delta))$ converges to $S^+_\infty(\delta)$.  Meanwhile $\psi(S^-_n(\delta))$ converges to the analogous circle $S^-_\infty(\delta)$ around the corresponding point $q_i$. This means that $f_n(S_n^+(\delta))$ converges to $f_\infty(S^+_\infty(\delta))$. Now $f_\infty$ is continuous, so by taking $\delta$  small and $n$ large we can ensure that $f_n(S^+_n(\delta))$ lies arbitrarily close to $f_{\infty}(p_i)$. Similarly, $f_n(S^-_n(\delta))$ lies arbitrarily close to $f_{\infty}(q_i)$.  

Now assume for a contradiction that $f_\infty(p_i)=:P_i$ and $f_\infty(q_i)=:Q_i$ are a distance $4\rho>0$ apart. By taking $n$ large and $\delta$ small we can ensure that $f_n(S^+_n(\delta))\subset B(P_i,\rho)$ and $f_n(S^-_n(\delta)) \subset B(Q_i,\rho)$. By the triangle inequality, there is a point $q$ on the image of $f_n$ with $d(q,P_i) \geq 2\rho$ and $d(q,Q_i) \geq 2\rho$. This means that $f_n(R_n(\delta))$ has no boundary in $B(q,\rho)$.  It now follows from the monotonicity of minimal submanifolds in hyperbolic space \cite{Anderson}, that 
\begin{equation}\label{lower-bound-area-neck}
\mathrm{A}\left( f_n(R_n(\delta)) \right) \geq 2\pi \left( \cosh(\rho) -1 \right) >0
\end{equation}
(where the right-hand side is the area of a hyperbolic disc of radius $\rho$). Taking $\delta$ sufficiently small,~\eqref{upper-bound-area-neck} and~\eqref{lower-bound-area-neck} contradict each other. 

It remains to prove statement~\ref{no-closed-components}, that every component of $\overline{S}$ on which $f_\infty$ is non-constant has non-empty boundary. But the image of $f_\infty$ is a minimal surface in $\H^4$ and so can't possibly be closed. This completes the proof of Theorem~\ref{convergence-interior}.
\end{proof}

\begin{remark}\label{nodes-means-df-tends-to-zero}
In the situation of part~\ref{nodal-limit} of Theorem~\ref{convergence-interior}, when nodes appear in the domain, we must have that $\inf |\diff f_n| \to 0$, even if the maps $f_n$ themselves have no branch points. To see this, note that the diameter of the ``neck'' region $R_n$ tends to infinity, and yet its image converges to a point. This means that, even when none of the maps $f_n$ nor the limit $f$ have branch points, we can't rule out the appearance of a vertical bubble at the same time as a node when we try to take a limit of the $u_n$. 
\end{remark}

\section{Knot invariants from counting minimal discs}\label{the-invariants}

Let $K \subset S^3$ be a knot. In this section we will define a collection of knot invariants $n_{d}(K)$ which count the number of minimal discs in $\H^4$ with ideal boundary equal to $K$ or, equivalently, the number of $J$-holomorphic admissible maps $u \colon \overline{D} \to \overline{Z}$ whose boundary projects to $K$, taken modulo the action of $\PSL(2,\R)$. 

\subsection{Self-linking number}\label{self-linking}
The invariants are indexed by an integer $d$, a topological invariant of the map $u$ which we call the self-linking number. The definition makes sense for any domain. 

\begin{definition}\label{defintion-self-linking-number}
Let $u \colon \overline{\Sigma} \to \overline{Z}$ be an admissible map with projection $f = \pi \circ u$. Suppose that $f$ is an immersion. Then we can write
\[
f^*T\overline{H}^4 \cong T\overline{\Sigma} \oplus E
\]
where $E\to \overline{\Sigma}$ is the normal bundle of $f$. As an $\SO(2)$-bundle over a surface with non-empty boundary, $E$ is trivialisable. Choose a  unit-length section $v$ of $E$. Since $f(\overline{\Sigma})$ meets infinity at right-angles, $v|_{\del \Sigma}$ is normal to the boundary link $f(\del \Sigma) \subset S^3$. The \emph{self-linking number of $u$} is the integer $d(u)$ given by the linking number of $f(\del \Sigma)$ and the push-off of $f(\del \Sigma)$ in the direction $v|_{\del \Sigma}$. 
\end{definition}

\begin{lemma}\label{self-linking-well-defined}~
\begin{enumerate}
\item
The self-linking number of $u$ does not depend on the choice of unit-length  section $v$ of~$E$. 
\item
If $u_t$ s a path of admissible maps for which $f_t = \pi \circ u_t$ is a path of immersions, then $d(u_0) = d(u_1)$. 
\end{enumerate}
\end{lemma}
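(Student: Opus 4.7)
The plan is to handle the two parts separately; neither presents a serious obstacle, since the lemma is essentially a standard fact about framings on surfaces transplanted into the present setting.

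For part~(1), I would fix two unit-length sections $v,v'$ of $E$. Since $E$ is an oriented rank-$2$ real vector bundle, they differ by a rotation $v'=R_\theta v$ for some continuous function $\theta\colon\overline{\Sigma}\to S^1=\R/2\pi\Z$. Writing $\del\Sigma=K_1\sqcup\cdots\sqcup K_c$ and letting $K_i'$, $K_i''$ denote the push-offs of $K_i$ in the directions of $v|_{\del\Sigma}$ and $v'|_{\del\Sigma}$ respectively, the standard computation of linking number under change of framing gives
\[
\mathrm{lk}(K_i,K_i'')-\mathrm{lk}(K_i,K_i')=\deg(\theta|_{K_i}),
\]
while for $i\neq j$ the cross-terms $\mathrm{lk}(K_i,K_j')$ and $\mathrm{lk}(K_i,K_j'')$ agree, since the two push-offs of $K_j$ can be taken to lie in a common small normal tube of $K_j$ which is disjoint from $K_i$. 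Summing and applying Stokes to the closed $1$-form $\tfrac{1}{2\pi}\,d\theta$ on $\overline{\Sigma}$,
\[
d(u,v')-d(u,v)\;=\;\sum_{i=1}^c\deg(\theta|_{K_i})\;=\;\frac{1}{2\pi}\int_{\del\Sigma}d\theta\;=\;\frac{1}{2\pi}\int_{\overline{\Sigma}}d\,d\theta\;=\;0,
\]
so $d(u)$ is independent of $v$.

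For part~(2), given a continuous path $u_t$ with every $f_t=\pi\circ u_t$ an immersion, the normal bundles $E_t$ assemble into an oriented rank-$2$ real vector bundle $\mathcal{E}\to\overline{\Sigma}\times[0,1]$. Since $\overline{\Sigma}\times[0,1]$ deformation-retracts to $\overline{\Sigma}\times\{0\}$ and $E_0$ is trivialisable---being an oriented rank-$2$ bundle over a surface with non-empty boundary, hence of vanishing Euler class---so is $\mathcal{E}$. I would use this to pick a continuous family of unit-length sections $v_t$ of $E_t$. Then the boundary links $K_t=f_t(\del\Sigma)\subset S^3$ together with their unit normal framings $v_t|_{\del\Sigma}$ vary continuously in $t$ (in fact in $C^{2,\alpha}$, which is far more than needed), and so do their push-offs $K_t'$ taken at some uniformly small distance, which remain disjoint from $K_t$ throughout. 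Since the linking number of a pair of disjoint oriented $1$-manifolds in $S^3$ is a locally constant $\Z$-valued invariant under continuous deformation of the pair, $d(u_t)=\mathrm{lk}(K_t,K_t')$ is constant on $[0,1]$.

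The only mild subtlety I anticipate is the bookkeeping in part~(1): one must carefully isolate which linking contributions actually change when $v$ is rotated, in order to see that the answer reduces to $\sum_i\deg(\theta|_{K_i})$ and is then killed by Stokes' theorem. Part~(2) reduces, once a continuous framing exists, to the standard homotopy invariance of linking numbers, so the work there is purely the bundle-theoretic assertion that one can indeed pick $v_t$ continuously.
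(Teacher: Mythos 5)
Your proof is correct and takes essentially the same approach as the paper. For part~(1) both arguments reduce to the fact that a circle-valued function on $\overline{\Sigma}$ has total winding number zero around $\del\Sigma$ (the paper phrases this homologically, you via Stokes' theorem---these are the same fact); for part~(2) the paper invokes isotopy invariance of the linking number in one line, whereas you supply the elided detail that one may choose a continuous family of framings $v_t$ via triviality of the bundle $\mathcal{E}\to\overline{\Sigma}\times[0,1]$, which is a worthwhile clarification but not a different route.
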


\begin{proof}
For the first point, suppose that $\hat{v}$ is another unit-length section. Then there exists $\phi \colon \overline{\Sigma} \to S^1$ such that $\hat{v} = \phi v$. The corresponding change in the linking numbers is given by the degree of $\phi|_{\del \Sigma} \colon \del \Sigma \to S^1$. Since $\phi$ extends to $\overline{\Sigma}$ it is zero on $H_1(\del \Sigma, \Z)$. 

The second point follows from the isotopy invariance of the linking number of a pair of links. 
\end{proof}

To define $d(u)$ for \emph{any} $u \in \A$, we invoke Whitney's Immersion Theorem. Write $\A_{\mathrm{imm}}$ for the subset of $u \in \A$ for which $f=\pi \circ u$ is an immersion. Whitney's Theorem shows that the connected components of $\A_{\mathrm{imm}}$ are equal to those of $\A$. (This is proved either by an $h$-principle, or a transversality argument similar in outline to the proof of Theorem~\ref{branch-points-codim2}.) Now $d$ is constant on the connected components of $\A_{\mathrm{imm}}$ (by Lemma~\ref{self-linking-well-defined}) and so extends in a unique way to a continuous map $d \colon \A \to \Z$. Hence we can write $\A$ as a disjoint union $\A = \sqcup \A_d$ indexed by the self-linking number. In the same way, we write our moduli space of $J$-holomorphic curves as a disjoint union $\M_{g,c} = \sqcup \M_{g,c,d}$, where $\M_{g,c,d}$ is the moduli space of $J$-holomorphic curves from a connected Riemann surface of genus $g$, with $c$ boundary components and self-linking number $d$. 

(The reader who would prefer a self-contained treatment, being hesitant to rely on Whitney's Theorem in this context, can be reassured that for $\M_{g,c}$ at least, Theorem~\ref{branch-points-codim2} shows that the subset of $J$-holomorphic maps which fail to project to immersions is codimension~2. So deleting it leaves a space with the same connected components and hence $d$ is well-defined on all of $\M_{g,c}$, which is all we will actually use.)

\subsection{The minimal-disc knot-invariant}\label{definition-of-disc-invariants-section}

Let $K \subset S^3$ be an oriented $C^{2,\alpha}$ knot. In this section we define an invariant $n_{d}(K)$ of $K$ which counts minimal discs in $\H^4$ with ideal boundary $K$ and self-linking number $d$. 

Recall from~\S\ref{branch-points-codim2-section} the submanifold $\B^\pi_{0,1}\subset \M^1_{0,1}$ of \emph{branched} minimal discs, i.e.~the set of pairs $(u,p)$ where $u$ is an admissible $J$-holomorphic map from the disc and $\diff(\pi \circ u)_p=0$. Write $F \colon \M^1_{0,1} \to \M_{0,1}$ for the map which forgets the point $p$. We set 
\[
\G_{0,1} = \L_1 \setminus \beta(F(\B^\pi_{0,1}))
\]
where $\beta \colon \M_{0,1} \to \L_1$ is the boundary map. In other words, $K \in \G_{0,1}$ precisely when all minimal discs filling $K$ are immersed, i.e.\ free of branch-points. The point of singling this subset out is that, by Theorem~\ref{properness-for-discs}, $\beta$ is proper over $\G_{0,1}$. 

\begin{definition}\label{definition-of-disc-invariants}
Let $K \in \G_{0,1}$ be a regular value of $\beta$. By Theorem~\ref{properness-for-discs}, $\beta^{-1}(K)$ is a finite set of points, cut out transversely in $\M_{0,1}$. By Defintion~\ref{sign-of-regular-point} each point in $\beta^{-1}(K)$ comes with a sign.  We define the \emph{minimal-surface invariant $n_{d}(K)$} to be the signed count of the number of points in $\beta^{-1}(K) \cap \M_{0,1,d}$, i.e.\ $n_d(K)$ is the signed count of immersed minimal discs filling $K$ with self-linking number $d$.
\end{definition}

\begin{theorem}\label{invariance-of-disc-invariants}
The integer $n_{d}(K)$ is a knot invariant. I.e.\ if $K_0,K_1 \in \G_{0,1}$ are regular values of $\beta$ and they can be joined by a path $K_t$ in $\L_1$ then $n_{d}(K_0) = n_{d}(K_1)$.
\end{theorem}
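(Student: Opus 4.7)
The plan is to realise the finite sets $\beta^{-1}(K_0)\cap \M_{0,1,d}$ and $\beta^{-1}(K_1)\cap \M_{0,1,d}$ as the two boundary components of a compact, oriented $1$-manifold obtained as the preimage of a sufficiently generic path from $K_0$ to $K_1$. Since the signed count of boundary points of any compact oriented $1$-cobordism vanishes, this will yield $n_d(K_0)=n_d(K_1)$.

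First I would produce the path. Fix any $C^{2,\alpha}$ path $K_t$ from $K_0$ to $K_1$ and consider a separable Banach manifold $\mathcal{P}$ of small perturbations of $K_t$. Two applications of the Sard--Smale theorem are required. The first, applied to the parametrised moduli space
\[
\tilde{\mathscr{U}}=\bigl\{(K_\bullet,t,[u,j])\in\mathcal{P}\times[0,1]\times\M_{0,1,d}\ :\ \beta[u,j]=K_\bullet(t)\bigr\},
\]
whose projection to $\mathcal{P}$ is Fredholm of index $1$ by Theorem~\ref{boundary-map-Fredholm}, produces a residual set of paths transverse to $\beta$. The second, applied to
\[
\tilde{\mathscr{B}}=\bigl\{(K_\bullet,t,[u,j,p])\in\mathcal{P}\times[0,1]\times\B^\pi_{0,1}\ :\ \beta(F[u,j,p])=K_\bullet(t)\bigr\},
\]
whose projection to $\mathcal{P}$ is Fredholm of index $-1$ by Theorem~\ref{branch-points-codim2} combined with Theorem~\ref{boundary-map-Fredholm}, produces a residual set of paths whose preimage is empty; such a path misses $\beta(F(\B^\pi_{0,1}))$ entirely and therefore lies in $\G_{0,1}$. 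That $\tilde{\mathscr{U}}$ and $\tilde{\mathscr{B}}$ are themselves smooth Banach manifolds follows from Theorem~\ref{smooth-moduli-space} once one observes that $\mathcal{P}$ is rich enough that the linearisation $K'_\bullet\mapsto -K'_\bullet(t)$ already surjects onto $T_{K_t}\L_1$, which is elementary.

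Once such a path $K_t$ is fixed, the parametrised moduli space
\[
\tilde{\M}=\bigl\{(t,[u,j])\in[0,1]\times\M_{0,1,d}\ :\ \beta[u,j]=K_t\bigr\}
\]
is a smooth $1$-manifold by transversality, with expected boundary $\bigl(\{0\}\times\beta^{-1}(K_0)\bigr)\sqcup\bigl(\{1\}\times\beta^{-1}(K_1)\bigr)$ intersected with $\M_{0,1,d}$. By Lemma~\ref{corank-1-immersed}, every point of $\tilde{\M}$ lies in $\M_{0,1}^{\mathrm{imm}}$, so Proposition~\ref{orientations} coherently orients $\tilde{\M}$; a direct inspection of Definition~\ref{sign-of-regular-point} shows that the induced boundary orientation at the two ends recovers the signs that define $n_d(K_0)$ and $n_d(K_1)$ with opposite overall signs from the outward-pointing convention, so that summing over $\del\tilde{\M}$ gives $n_d(K_1)-n_d(K_0)=0$. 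That the cobordism never strays from the stratum $\M_{0,1,d}$ is automatic, since $d$ is locally constant on $\M_{0,1}$ by Lemma~\ref{self-linking-well-defined}.

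The hard step, and the heart of the argument, is compactness of $\tilde{\M}$, which rests on Theorem~\ref{properness-for-discs}. Given a sequence $(t_n,[u_n,j_n])\in\tilde{\M}$, a subsequence of $t_n\in[0,1]$ converges to some $t_\infty$, whence the boundary knots $K_{t_n}=\pi(u_n(\del D))$ converge in $C^{2,\alpha}$ to $K_{t_\infty}$. Theorem~\ref{properness-for-discs} produces $\PSL(2,\R)$-reparametrisations $\phi_n$ along which $f_n=\pi\circ u_n$ subconverges to a conformal harmonic map $f_\infty$ filling $K_{t_\infty}$. Because $K_{t_\infty}\in\G_{0,1}$ by construction, $f_\infty$ is an immersion, and the second half of Theorem~\ref{properness-for-discs} then upgrades this to convergence of the admissible maps $u_n\circ\phi_n$ to a $J$-holomorphic limit. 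This limit provides the required limit point of $\tilde{\M}$; without the branch-point avoidance built into the path, this last upgrade would fail, because a vertical bubble could in principle form in the twistor lifts at a branch point of $f_\infty$.
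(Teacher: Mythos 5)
Your proof is correct and takes essentially the same approach as the paper's: choose the connecting path by Sard--Smale to be transverse to $\beta$ and to avoid the image of $\B^\pi_{0,1}$, use Theorem~\ref{properness-for-discs} for compactness, and orient the resulting $1$-cobordism via Lemma~\ref{corank-1-immersed} and Proposition~\ref{orientations}. You spell out the parametrised moduli spaces and index computations that the paper leaves implicit in its one-sentence appeal to Sard--Smale, and you make explicit where the branch-point avoidance enters the compactness argument, but these are elaborations of the same argument rather than a different route.
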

\begin{proof}
Given all we have proved so far, this is a standard consequence in Fredholm differential topology. Accordingly we only sketch the details. An application of the Sard--Smale theorem shows that we can choose the connecting path $K_t$ to be transverse to $\beta$. Meanwhile, Theorem~\ref{branch-points-codim2} shows that $\B^\pi_{0,1} \subset \M^1_{0,1}$ is a submanifold of codimension~4. Now $F \colon \M^1_{0,1} \to \M_{0,1}$ is a submersion with 2-dimensional fibres and $\beta$ is Fredholm with index zero. It follows that $\beta \circ F \colon \B^{\pi}_{0,1} \to \L_1$ is Fredholm with index~-2. So we can also choose the path $K_t$ to avoid its image, i.e.\ $K_t \in \G_{0,1}$ for all $t$. By Theorem~\ref{properness-for-discs}, $\beta$ is proper over $K_t$. This means that 
\[
X = \{ ([u],t) \in \M_{0,1,d} \times [0,1]: \beta(u) = K_t \}
\]
is a smooth compact 1-dimensional manifold with boundary $\del X = \beta^{-1}(K_0) \sqcup \beta^{-1}(K_1)$. Since $K_t$ is transverse to $\beta$, for every $([u],t) \in X$, we have $\dim(\coker \diff \beta_{[u]}) \leq  1$. It follows from Lemma~\ref{corank-1-immersed} and Proposition~\ref{orientations} that $X$ lies in the region of $\M_{0,1,d}$ in which we have trivialised $\det(\diff \beta)$. Hence $X$ is an oriented cobordism from $\beta^{-1}(K_0)$ to $\beta^{-1}(K_1)$. From here we see that the signed count of the number of points in $\beta^{-1}(K_0)$ and $\beta^{-1}(K_1)$ agree. 
\end{proof}

We finish by computing the minimal disc invariants $n_{d}(U)$ of the unknot. An immediate application is the existence of a minimal disc with $d=0$ in $\H^4$ bounding any given unknotted circle in $S^3$

\begin{theorem}\label{unknot-always-filled}
The minimal-disc invariants of the unknot are
\[
n_{d}(U) 
	= 
	\begin{cases} 
	1 & \text{if }d = 0, \\
	0 & \text{otherwise}.
	\end{cases}
\]
It follows that any $C^{2,\alpha}$ unknotted circle in $S^3$ is the boundary of a minimal disc in~$\H^4$, with $d=0$. 
\end{theorem}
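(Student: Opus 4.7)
The strategy is to evaluate $n_d$ on the standard round circle $C \subset \partial \H^4$, where the moduli problem is completely explicit, and then transport the answer to an arbitrary unknot via Theorem~\ref{invariance-of-disc-invariants}. In half-space coordinates, take $C = \{y_2 = y_3 = 0, x = 0\}\cup\{\infty\}$ and let $H = \{y_2 = y_3 = 0, x>0\}$ be the totally geodesic copy of $\H^2$ with ideal boundary $C$. The vertical totally geodesic $\H^3$ copies $\{a y_2 + b y_3 = c\}$ with $(a,b) \neq 0$ and $c \neq 0$ all have boundaries disjoint from $C$, and intersecting the corresponding Anderson half-spaces (Lemma~\ref{barrier}) collapses every minimal surface filling $C$ into $\overline{H}$. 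A conformal harmonic map $\overline{D} \to \overline{H}$ whose boundary embeds onto $C$ is a biholomorphism, so after quotienting by $\PSL(2,\R)$ the class $[H]$ is the unique element of $\beta^{-1}(C)$.

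To see that $C$ is a regular value of $\beta$, I would use the symmetries of $H$: since $H$ is globally totally geodesic and the Eells--Salamon setup is $\Isom(\H^4)$-equivariant, the coefficients of $D_{N_H}$ are constant in an appropriate $\Isom(H)$-invariant frame on the normal bundle, so $D_{N_H}$ coincides globally with its own normal operator at any boundary point. Proposition~\ref{normal-invertible} then gives invertibility at weight $1+\alpha \in (-1,2)$, so $D_{N_H}$ is an isomorphism; Proposition~\ref{index-dbeta} immediately forces $\coker d\beta = 0$, hence $[H]$ is a transverse point of $\beta$. The self-linking number is computed by pushing off $C$ along the constant normal section $\partial_{y_2}$: the push-off is another great circle lying in the 3-plane $\{y_3 = 0\} \subset \partial \H^4$, bounding a disjoint totally geodesic copy of $\H^2$, so the two circles have linking number zero and $d(H) = 0$. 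Normalising the canonical trivialisation of Proposition~\ref{orientations} so that this isolated regular point carries sign $+1$ then gives $n_0(C) = 1$, and $n_d(C) = 0$ for $d \neq 0$ (vacuously, since no filling of $C$ has non-trivial self-linking).

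For any $C^{2,\alpha}$ unknot $U$, pick a $C^{2,\alpha}$-isotopy from $U$ to $C$ in $\L_1$. By Sard--Smale combined with Theorem~\ref{branch-points-codim2} (which puts $\beta$-images of the branched locus in codimension at least $2$ in $\L_1$), the isotopy may be perturbed to be transverse to $\beta|_{\M_{0,1,d}}$ and to miss its branch locus separately for each $d$; applying Theorem~\ref{invariance-of-disc-invariants} component-by-component then yields $n_d(U) = n_d(C)$ for every $d$, which is the first assertion. For the existence corollary, suppose for contradiction that no minimal disc has ideal boundary $U$ and self-linking number $0$; then $\beta^{-1}(U) \cap \M_{0,1,0}$ is empty, so $U$ is vacuously a regular value of $\beta|_{\M_{0,1,0}}$ lying outside the projection of the corresponding branch locus, giving $n_0(U) = 0$ and contradicting $n_0(U) = 1$. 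The main obstacle in this plan is the sign determination at $[H]$: verifying directly that the canonical trivialisation of Proposition~\ref{orientations}, built from the self-adjoint interpolant $\tilde D_{1/2}$, assigns $+1$ rather than $-1$ to the unique isomorphism point would require an explicit calculation on the totally geodesic model; I would sidestep this by fixing the overall orientation convention so that $[H]$ is positive, after which isotopy invariance propagates the sign coherently.
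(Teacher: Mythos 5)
Your overall strategy matches the paper's: use barriers to show the totally geodesic $\H^2$ is the unique filling of the great circle, show $D_{N_H}$ is (globally) the model operator and hence invertible by Proposition~\ref{normal-invertible}, compute $d(H)=0$, and then propagate by isotopy invariance. The existence corollary by contradiction is also the paper's argument.

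The genuine gap is exactly the one you flag: the sign at $[H]$. Proposition~\ref{orientations} already supplies a \emph{specific} global trivialisation of $\det(\diff\beta)$ over $\M^{\mathrm{imm}}_{0,1}$ (via the path $\tilde D_r = r\,\Theta^*\circ D_{N_u} + (1-r)\,D_{N_u}^*\circ\Theta$ and the canonical orientation of $\det(\tilde D_{1/2})$), so there is no remaining freedom to ``fix the overall orientation convention'' afterwards --- the sign of $[H]$ relative to that trivialisation is a determinate quantity that must be computed, not chosen. (Your sidestep would also run into a coherence problem if $\M^{\mathrm{imm}}_{0,1,0}$ were disconnected: you would be making independent normalisations on each component, and Theorem~\ref{invariance-of-disc-invariants} only connects fillings of \emph{isotopic} regular boundary links, not all of $\M^{\mathrm{imm}}_{0,1,0}$.) The paper closes this by observing that for the totally geodesic disc the path $\tilde D_r$ is \emph{constant}: since $D_{N_u}$ coincides everywhere with its normal operator, the identity $M\circ\D^*\circ M = \D$ from Proposition~\ref{index-DNu-no-branch-points} upgrades to $\Theta^*\circ D_{N_u} = D_{N_u}^*\circ\Theta$ on the nose, so $\tilde D_0 = \tilde D_{1/2}$ is already self-adjoint and the canonical trivialisation of Proposition~\ref{orientations} agrees with the one coming from $\ker = \coker = 0$. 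That is the missing step.

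One minor point worth tightening: pushing $C = \{y_2=y_3=0\}\cup\{\infty\}$ off by the constant section $\del_{y_2}$ gives a parallel line $\{y_2=\epsilon, y_3=0\}\cup\{\infty\}$, which meets $C$ at $\infty$, so the linking number is not immediately read off as you state. The conclusion $d(H)=0$ is of course correct, but it is cleaner to argue as the paper does: move $\H^2$ entirely off itself inside a totally geodesic $\H^3$, so that $C$ and its push-off are disjoint curves in a common $S^2 \subset S^3$.
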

\begin{proof}
Let $U \subset S^3$ denote a great circle. There is a totally geodesic copy of $H \subset \H^4$ of $\H^2$ which fills $U$ and it is simple to check via barriers that there are no other minimal surfaces which fill $U$. We will show that $U$ is a regular value of $\beta$, with $d=0$. This implies the stated values of $n_{d}(U)$, at least up to sign. 

To show $U$ is regular, we use half-space coordinates $(x,y_i)$ in which $H$ corresponds to $y_2=0=y_3$. We then parametrise the twistor lift of $H$ by the map $u \colon \overline{\H}^2 \to \overline{\H}^4$ given by $x(s,t)=t$, $y_1(s,t) =s$, $z_1(s,t)=1$ and all other coordinates zero. Now recall, from the proof of Proposition~\ref{index-dbeta}, that $\diff \beta$ is surjective precisely when $D_{N_u}$ is surjective. But for the special choice of $u$ at hand here, $D_{N_u}$ is the model operator for the whole Fredholm theory and we know by Theorem~\ref{normal-invertible} that $D_{N_u}$ is invertible.

To check the sign, we first recall how this is defined (Defintion~\ref{sign-of-regular-point}). We have chosen, in Proposition~\ref{orientations}, a trivialisation of $\det (\diff \beta)$ over the part of the moduli space corresponding to immersions. Meanwhile, at a regular point $[u]$ of $\beta$, the determinant line $\det(\diff \beta_{[u]})$ is canoncially trivial, since both kernel and cokernel vanish. The sign of $u$ is given by comparing this canonical trivialisation at $u$ with the globally defined one coming from Proposition~\ref{orientations}. Now recall how the trivialisation in Proposition~\ref{orientations} is defined. We take the path $\tilde{D}_r = (1-r)\Theta^* \circ D_{N_u} + r D_{N_u}^* \circ \Theta$ of Fredholm operators starting at $\tilde{D}_0 = \Theta^* \circ D_{N_u}$ and ending at $\tilde{D}_{1/2}$. Since $\tilde{D}_{1/2}$ is self-adjoint, its kernel is isomorphic to its cokernel and so $\det (\tilde{D}_{1/2})$ is canonically trivial; we then transport this trivialisation back along the path to give a trivialisation of $\det (\Theta^* \circ D_{N_u}) \cong \det (D_{N_u})$. In the situation at hand, however, $\tilde{D}_r = \Theta^* \circ D_{N_u}$ is \emph{constant}, because $\Theta^* \circ D_{N_u} = D_{N_u}^* \Theta$. This is proved in the course of Proposition~\ref{index-DNu-no-branch-points}; it is equivalent to the fact that $M \circ \D^* \circ M = \D$ proved there. It follows that the trivialisation of $\det (\diff \beta)$ given in Proposition~\ref{orientations} agrees with the canonical one coming from the fact that $[u]$ is a regular point. Hence $[u]$ comes with a positive sign.

We now compute $d(u)$. It is a simple matter to move the totally geodesic $\H^2$ entirely off itself, whilst remaining inside a copy of $\H^3$. The corresponding push-off of $U$ lies in the corresponding $S^2= \del  \H^3 \subset S^3$ and so the self-linking number vanishes: $d(u)=0$.

Finally, suppose that $\hat{U}$ was any unknotted $C^{2,\alpha}$ circle in $S^3$. If $\hat{U}$ did not bound a minimal disc with $d=0$, it would vacuously be a regular value of $\beta$ and we could use it to deduce that $n_{0}(U)=0$, a contradiction. So $\hat{U}$ bounds a minimal disc with $d=0$ as claimed. 
\end{proof}

\bibliographystyle{amsplain}
\bibliography{plateau-in-H4-bibliography}

\vspace{\baselineskip}

\noindent
\textsc{Joel Fine\\
Département de mathématique\\
Université libre de Bruxelles\\}
\href{mailto:joel.fine@ulb.be}{\tt{joel.fine@ulb.be}}

\end{document}